\tikzset{>=latex}
\newtheorem{theorem}{Theorem}[section]
\newtheorem{lemma}[theorem]{Lemma}
\newtheorem{prop}[theorem]{Proposition}
\newtheorem{cor}[theorem]{Corollary}
\newtheorem{conjecture}[theorem]{Conjecture}
\theoremstyle{definition}
\newtheorem{defn}[theorem]{Definition}
\newtheorem{remark}[theorem]{Remark}
\newtheorem{example}[theorem]{Example}%[subsection]
\newtheorem{notation}[theorem]{Notation}
\numberwithin{equation}{section}
\DeclareMathAlphabet{\mathpzc}{OT1}{pzc}{m}{it}
\def\beq{\begin{equation}}
\def\eeq{\end{equation}}
\def\longra{\longrightarrow}
\def\hooklongra{\lhook\joinrel\longrightarrow}
\newcommand{\hr}[1]{\left(#1\right)} % round, aligned
\newcommand{\hm}[1]{\left|#1\right|} % modulo, aligned
\newcommand{\ha}[1]{\left\langle#1\right\rangle} % angle, aligned
\newcommand{\hs}[1]{\left[#1\right]} % square, aligned
\newcommand{\hc}[1]{\left\{#1\right\}} % calligraphic, aligned
\def\le{\leqslant}
\def\ge{\geqslant}
\def\Ac{\mathcal A}
\def\Ad{\operatorname{Ad}}
\def\bgt{\mathfrak b}
\def\Bc{\mathcal B}
\def\C{\mathbb C}
\def\Cc{\mathcal C}
\def\Dc{\mathcal D}
\def\Dgt{\mathfrak D}
\def\Dcox{\Dc_n^{\mathrm{cox}}}
\def\Dsph{\Dc_n^{\mathrm{sph}}}
\def\Dsq{\Dc_n^{\mathrm{sq}}}
\def\Dstd{\Dc_n^{\mathrm{std}}}
\def\Dsym{\Dc_n^{\mathrm{sym}}}
\def\eps{\varepsilon}
\def\Fc{\mathcal F}
\def\Frac{\operatorname{Frac}}
\def\hgt{\mathfrak h}
\def\Hc{\mathcal H}
\def\i{\mathbf i}
\def\icox{\i_{\mathrm{cox}}}
\def\ifull{\i_{\mathrm{full}}}
\def\ihalf{\i_{\mathrm{half}}}
\def\isf{\i_{\mathrm{sf}}}
\def\isym{\i_{\mathrm{sym}}}
\def\la{\lambda}
\def\La{\Lambda}
\def\Mc{\mathcal{M}}
\def\Nc{\mathcal N}
\def\pr{\mathrm{pr}}
\def\Pc{\mathcal P}
\def\Qc{\mathcal Q}
\def\Qcox{\Qc_n^{\mathrm{cox}}}
\def\Qfull{\Qc_n^{\mathrm{full}}}
\def\Qhalf{\Qc_n^{\mathrm{half}}}
\def\Qsph{\Qc_n^{\mathrm{sph}}}
\def\Qsq{\Qc_n^{\mathrm{sq}}}
\def\Qstd{\Qc_n^{\mathrm{std}}}
\def\Qsym{\Qc_n^{\mathrm{sym}}}
\def\rk{\operatorname{rank}}
\def\R{\mathbb R}
\def\sl{\mathfrak{sl}}
\def\tLambda{\widetilde{\Lambda}}
\def\Tc{\mathcal T}
\def\Vc{\mathcal V}
\def\wdt{\widetilde}
\def\wh{\widehat}
\def\Xc{\mathcal X}
\def\Z{\mathbb Z}
\begin{document}

\title[Continuous tensor categories from quantum groups I]{Continuous tensor categories from quantum groups I: \\ algebraic aspects.}
\author{Gus Schrader}
\author{Alexander Shapiro}

\begin{abstract}
We describe the algebraic ingredients of a proof of the conjecture of Frenkel and Ip that the category of positive representations $\mathcal{P}_\lambda$ of the quantum group $U_q(\mathfrak{sl}_{n+1})$ is closed under tensor products. Our results generalize those of Ponsot and Teschner in the rank~1 case of $U_q(\mathfrak{sl}_2)$. In higher rank, many nontrivial features appear, the most important of these being a surprising connection to the quantum integrability of the open Coxeter-Toda lattice. We show that the closure under tensor products follows from the orthogonality and completeness of the Toda eigenfunctions (i.e. the $q$-Whittaker functions), and obtain an explicit construction of the Clebsch-Gordan intertwiner giving the decomposition of $\mathcal{P}_{\lambda}\otimes\mathcal{P}_{\mu}$ into irreducibles. 
\end{abstract}

\maketitle

\section{Introduction}
A remarkable class of infinite-dimensional representations of the quantum group $U_q(\mathfrak{sl}_2)$ has been studied by Bytsko, Ponsot, and Teschner, see~\cite{PT99, PT01,BT03}. These representations, which we refer to as {\em positive representations}, are defined for $q=e^{\pi i \hbar^2}$, $\hbar \in \R_{>0} \setminus \mathbb{Q}$, and are labelled by points $s\in \R_{\ge 0}$ of a Weyl chamber of $\mathfrak{sl}_2$. The positive representation $\mathcal{P}_s$ is modeled on a dense subspace of the Hilbert space $L^2(\R)$, on which the Chevalley-Serre generators $E,F,K$ of $U_q(\mathfrak{sl}_2)$ act by positive, essentially self-adjoint operators. The positive representations have several unusual features. Firstly, they possess a remarkable {\em modular duality} property: setting $q^\vee=e^{\pi i/\hbar^2}$, the space $\mathcal{P}_s$ carries an action of $U_{q^\vee}(\mathfrak{sl}_2)$ commuting with that of $U_q(\mathfrak{sl}_2)$. Thus, the positive representations can be regarded as representations of the modular double of the quantum group introduced by Faddeev in~\cite{Fad99}. Furthermore, the category of positive representations of $U_q(\mathfrak{sl}_2)$ turns out to be closed under taking tensor products. More precisely, in \cite{PT01} it was proved that the tensor product $\mathcal{P}_{s_1}\otimes \mathcal{P}_{s_2}$ admits a direct integral decomposition
\beq
\label{sl2-closure}
\mathcal{P}_{s_1}\otimes \mathcal{P}_{s_2} \simeq \int^{\oplus}_{\R_{\ge0}}\mathcal{P}_s~dm(s),
\eeq
where the integration measure $dm$ on the Weyl chamber $\R_{\ge0}$ is given by
$$
dm(s)=4\sinh(2\pi\hbar s)\sinh(2\pi\hbar^{-1}s).
$$
 Thus, the positive representations of $U_q(\mathfrak{sl}_2)$ can be said to form a ``continuous tensor category''.

Subsequently, a definition of positive representations of the quantum group $U_q(\mathfrak{sl}_{n+1})$ was proposed by Frenkel and Ip in~\cite{FI13}, and later extended by Ip to the other finite Dynkin types, see~\cite{Ip12a,Ip12b}. The positive representations $\mathcal{P}_{\lambda}$ of $U_q(\mathfrak{g})$ are again parameterized by points $\lambda\in \mathcal{C}^+$ of a Weyl chamber, and have the property that all Chevalley generators act by positive, essentially self adjoint operators. They also exhibit modular duality, in complete analogy with the rank 1 case. Establishing that the categories of higher rank positive representations are closed under tensor product, however, has remained a challenging open problem.

To give a sense of the difficulties involved, let us briefly recall how the decomposition \eqref{sl2-closure} was constructed by Ponsot and Teschner. Both for $\mathfrak{sl}_2$ and in higher rank, the positive representations are defined by providing an explicit embedding of the quantum group into a quantum torus algebra; that is, an algebra generated by symbols $X_j$ subject to the skew-commutativity relations $q^{\eps_{jk}}X_jX_k=q^{\eps_{kj}}X_kX_j$ for some skew-symmetric matrix $\eps$. The positive representations then arise as pullbacks of quantum torus algebra representations where the generators $X_j$ act by positive self-adjoint operators. Since the positive representations of $U_q(\mathfrak{sl}_2)$ are determined by their central character, the strategy of Ponsot and Teschner is to analyze the action of the Casimir $\Omega$ of $U_q(\mathfrak{sl}_2)$ on $\mathcal{P}_{s_1}\otimes \mathcal{P}_{s_2}$, which is a dense subspace in $L^2(\R^2)$. They exhibit an explicit unitary transformation of $L^2(\R^2)$ bringing $\Delta(\Omega)$ to $H \otimes 1$, where $H$ is the unbounded operator
\beq
\label{kashaev-operator}
H = e^{2\pi \hbar x} + e^{2\pi \hbar p}+e^{-2\pi \hbar p},
\eeq
on $L^2(\R)$, with $p=\frac{1}{2\pi i}\frac{\partial}{\partial x}$. This operator coincides with Kashaev's geodesic length operator from quantum Teichm\"uller theory. In particular, it is self-adjoint with simple spectrum $[2,\infty)$, and has eigenfunctions $\psi(x|s)$ satisfying
$$
H \cdot \psi(x|s) = (e^{2\pi \hbar s}+e^{-2\pi \hbar s})\psi(x|s), \quad s \in \R_{\ge0}. 
$$
Moreover, the eigenfunctions $\psi(x|s)$ are delta-function orthogonalized and complete in $L^2(\R)$:
\beq
\label{sl2-orthog}
\int_{\R}\psi(x|s)\psi(x|s')dx=\delta(s-s'),
\eeq
\beq
\label{sl2-plancherel}
\int_{\R_{\ge0}}\psi(x|s)\psi(x'|s)dm(s)=\delta(x-x').
\eeq
Thus, the transformation of the form
$$
f(x,y)\mapsto \hat{f}(y,s)=\int_{\R}f(x,y)\psi(x|s)dx
$$
delivers an explicit unitary equivalence \eqref{sl2-closure}.

Let us consider the ingredients involved in extending this argument to the higher rank positive representations. The first step is an essentially algebraic one. The center of $U_q(\sl_{n+1})$ is generated by $n$ Casimir elements that can be taken, for instance, to be the quantum traces $\Omega_k$ of the fundamental representations of $\sl_{n+1}$. One could hope to analyze the diagonal action of these elements on $\mathcal{P}_\lambda \otimes \mathcal{P}_{\mu}$, and attempt to find a unitary transformation under which they become the Hamiltonians $H_k$ of some simpler and hopefully known quantum integrable system, in the same way that Ponsot and Teschner arrive at Kashaev's geodesic length operator in the rank 1 case. 

The second step is analytic in nature and involves determination of the joint spectrum and eigenfunctions of the commuting quantum Hamiltonians $H_k$. Namely, one must prove analogs of the orthogonality and completeness relations \eqref{sl2-orthog} and \eqref{sl2-plancherel}. 

Unfortunately, for $n>1$, the algebraic part of the problem appears to be highly nontrivial. Indeed, even for $n=2$ the fundamental quantum Casimirs $\Omega_1,\Omega_2$ acting on $\mathcal{P}_\lambda\otimes\mathcal{P}_\mu$ both become noncommutative polynomials with 192 terms when written in terms of Frenkel and Ip's original quantum torus realization. It seems unfeasible to analyze these operators by direct calculation as was done by Ponsot and Teschner, and thus new ideas are needed. 

In the present paper, we address this algebraic problem for $\mathfrak{g}=\mathfrak{sl}_{n+1}$ using the tools of quantum cluster algebras and higher Teichm\"uller theory. The general theory of quantum cluster $\mathcal{X}$-varieties has been developed by Fock and Goncharov in~\cite{FG06a,FG06b,FG09}. The quantum analog of cluster mutation in direction $k$ can be realized as the algebra automorphism of conjugation by the {\em non-compact quantum dilogarithm} $\Phi^\hbar(x_k)$, where $x_k$ is related to the cluster variable $X_k$ by $X_k=e^{2\pi \hbar x_k}$. In particular, if $x_k$ acts in some representation by a self-adjoint operator $\hat{x}_k$, then the operator $\Phi^\hbar(\hat{x}_k)$ is unitary. Thus, quantum cluster transformations provide a large supply of unitary equivalences, which we shall exploit in our study of the positive representations of $U_q(\mathfrak{sl}_{n+1})$. 

Our approach is based on the cluster realization of $U_q(\mathfrak{sl}_{n+1})$ obtained in \cite{SS16}. In turn, that realization is formulated in terms of the quantum cluster structure associated to moduli spaces of framed $PGL_{n+1}$-local systems on a marked surface, see~\cite{FG06a}. Cluster charts on these varieties can be obtained from an ideal triangulation of the surface by `amalgamating' certain simpler cluster charts associated to each triangle. In the case of moduli spaces of $PGL_{n+1}$-local systems, a flip of a triangulation corresponds to a sequence of $\binom{n+2}{3}$ cluster mutations. 

Taking a particular cluster chart on the moduli space associated to a triangulation of the punctured disk $D_{2,1}$ with two marked points on its boundary, we obtain a quantum torus algebra~$\Dgt_n$ and an explicit embedding of $U_q(\mathfrak{sl}_{n+1})$ into $\Dgt_n$. Moreover, the representations of $U_q(\mathfrak{sl}_{n+1})$ obtained by pulling back positive representations of $\Dgt_n$ are precisely the positive representations of Frenkel and Ip, see~\cite{Ip16}.

Again following \cite{SS16}, a cluster realization of the tensor product $\mathcal{P}_\lambda\otimes \mathcal{P}_\mu$ can be given by considering a quantum cluster chart on the moduli space of framed $PGL_{n+1}$-local systems on a twice punctured disk $D_{2,2}$ with two boundary marked points. Now, the basic geometric idea behind our approach can be described as follows. As shown in Figure~\ref{fig-pants}, the marked surface $D_{2,2}$ can be visualized as a pair of pants with two marked points on one of its boundary components. Consider a loop $\gamma$ in $D_{2,2}$ winding once around each puncture. In Figure~\ref{fig-pants} it is drawn in orange and is labelled by $\nu \in \mathcal C^+$. In this geometric setting, the diagonal action of the fundamental Casimirs of $U_q(\mathfrak{sl}_{n+1})$ on $\mathcal{P}_\lambda\otimes \mathcal{P}_\mu$ is given by the operators $H_k$ quantizing the elementary symmetric functions of the eigenvalues of the local system's monodromy around $\gamma$.

\begin{figure}[h]
\begin{tikzpicture}[x=0.5cm,y=0.5cm]

\draw[thick] (0,3) to (8,3);
\draw[thick] (8,3) to [bend left = 15] (8,1);
\draw[thick] (8,3) to [bend right = 15] (8,1);
\draw[thick,domain=90:270] plot ({8+3*cos(\x)}, {sin(\x)});
\draw[thick] (8,-1) to [bend left=15] (8,-3);
\draw[thick] (8,-1) to [bend right=15] (8,-3);
\draw[thick] (8,-3) to (0,-3);
\draw[thick] (0,-3) to [bend left=15] (0,3);
\draw[thick, dashed] (0,-3) to [bend right=15] (0,3);
\draw[BurntOrange, thick] (3,-3) to [bend left = 15] (3,3);
\draw[BurntOrange, thick, dashed] (3,-3) to [bend right = 15] (3,3);
\draw[thick,fill] (0,3) circle (0.15);
\draw[thick,fill] (0,-3) circle (0.15);

\node at (8.7,2) {$\lambda$};
\node at (8.7,-2) {$\mu$};
\node[BurntOrange] at (3,3.5) {$\nu$};

\draw[ultra thick] (9.5,0.5) to (12.5,0.5);
\draw[ultra thick] (9.5,-0.5) to (12.5,-0.5);

\draw[thick, shift={(16,0)}] (-15:2cm) to [out=180,in=0] (-2,-3);
\draw[thick, shift={(16,0)}] (-2,-3) to [bend left=15] (-2,3);
\draw[thick, shift={(16,0)}, dashed] (-2,-3) to [bend right=15] (-2,3);
\draw[thick, shift={(16,0)}] (-2,3) to [out=0,in=180] (15:2cm);
\draw[thick, shift={(16,0)}, BurntOrange] (15:2cm) to [bend left=15] (-15:2cm);
\draw[thick, shift={(16,0)}, BurntOrange] (15:2cm) to [bend right=15] (-15:2cm);
\draw[thick,fill] (14,3) circle (0.15);
\draw[thick,fill] (14,-3) circle (0.15);

\node[BurntOrange] at (19.9,1.6) {$\nu$};

\draw[ultra thick] (20.6,0.5) to (20.6,0);
\draw[ultra thick,domain=0:180] plot ({21+cos(\x)*2/5}, {-sin(\x)/2});
\draw[ultra thick] (21.4,0) to (21.4,0.5);

\node[BurntOrange] at (21,-1.2) {$\nu$};

\draw[thick, shift={(26,0)}] (195:2cm) to [out=0,in=120] (-75:2cm);
\draw[thick, shift={(26,0)}] (-75:2cm) to [bend left=15] (-45:2cm);
\draw[thick, shift={(26,0)}] (-75:2cm) to [bend right=15] (-45:2cm);
\draw[thick, shift={(26,0)}] (-45:2cm) to [out=120,in=-120] (45:2cm);
\draw[thick, shift={(26,0)}] (45:2cm) to [bend left=15] (75:2cm);
\draw[thick, shift={(26,0)}] (45:2cm) to [bend right=15] (75:2cm);
\draw[thick, shift={(26,0)}] (75:2cm) to [out=-120,in=0] (165:2cm);
\draw[thick, shift={(26,0)}, BurntOrange, dashed] (165:2cm) to [bend left=15] (195:2cm);
\draw[thick, shift={(26,0)}, BurntOrange] (165:2cm) to [bend right=15] (195:2cm);

\node[BurntOrange] at (22.1,1.6) {$\nu$};
\node at (28.2,4.2) {$\lambda$};
\node at (28.2,-4.2) {$\mu$};

\end{tikzpicture}
\caption{Cutting out a pair of pants: $D_{2,2} = D_{2,1} \cup_{\gamma} S_3$.}
\label{fig-pants}
\end{figure}
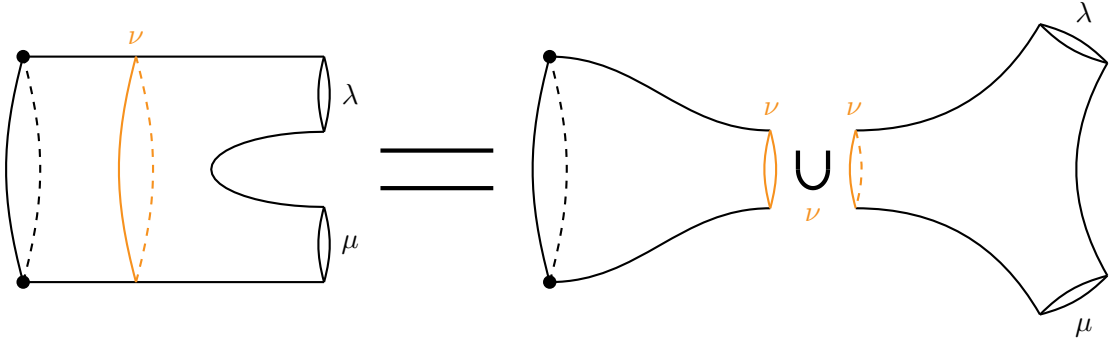

The essential idea is that the decomposition of the tensor product $P_\lambda\otimes P_\mu$ of two positive representations arises from the isomorphism of quantum higher Teichm\"uller spaces corresponding to cutting along the simple closed curve $\gamma$ as shown in Figure~\ref{fig-pants}. The right hand side of the figure corresponds to a ``fiber product'' of two quantum Teichm\"uller spaces: the diagonally acting copy of $U_q(\mathfrak{sl}_{n+1})$ represented by $D_{2,1}$, a punctured disk with two marked points on its boundary, and the quantum Teichm\"uller space on a thrice-punctured sphere $S_3$, where the eigenvalues of two monodromies are specified to $\lambda$ and $\mu$ respectively. The fiber product is taken over the spectrum of the unspecified monodromy $\nu$ around the loop $\gamma$ which is associated to the remaining third puncture of the sphere, and parameterizes the central characters of $U_q(\mathfrak{sl}_{n+1})$.

At the level of classical higher Teichm\"uller spaces, the existence of such an isomorphism is a theorem~\cite[Theorem 7.6]{FG06a} of Fock and Goncharov. Its quantum counterpart is more subtle, and can be viewed as a particular case of the ``modular functor conjecture'' of the same authors, see \cite{FG09}.

Our first step towards establishing this isomorphism is to pass to a triangulation of $ D_{2,2}$ which is well-adapted to this cutting and gluing decomposition. We explain the sequence of flips required to achieve this in Section~\ref{subsec-full-toda}; the upshot is that in the resulting cluster chart the $H_k$ are identified with the quantization of conjugation-invariant functions on the reduced open double Bruhat cell $PGL_{n+1}^{w_0,w_0}/\Ad_H$. 
 
Next, we observe that these operators can be simplified further by performing an additional sequence of cluster transformations of a more refined nature than those corresponding to flips of triangulation. These mutations, which we detail in Section~\ref{sec-muts}, are related to the combinatorics of factorization in the double Weyl group $W\times W$. This being done, we arrive at a cluster chart in which our monodromy operators are given by the Hamiltonians of a genuine quantum integrable system, namely the quantum open Coxeter-Toda chain of type~$A_n$. 

This completes the algebraic side of the story: the problem of decomposing the tensor product $\mathcal{P}_\lambda\otimes \mathcal{P}_\mu$ is now reduced to that of understanding the spectrum of the quantum Coxeter-Toda Hamiltonians. The eigenfunctions of these operators, known as $q$-{\em Whittaker functions}, have been determined and studied in \cite{KLS02}. The only missing ingredient is the general analog of the Plancherel inversion formula~\eqref{sl2-plancherel}, established by Kashaev in the rank~1 case in~\cite{Kas01}. More precisely, in Section 9, we prove the following main theorem of the paper:
\begin{theorem}
Suppose that the Plancherel inversion formula~\eqref{planch-formula} for the $q$-Whittaker functions holds. Then there is an isomorphism of $U_q(\mathfrak{sl}_{n+1})$-modules
$$
\mathfrak{I}  \ \colon  \ \Pc_\lambda \otimes \Pc_\mu \longrightarrow \int^{\oplus}_{\mathcal{C}^+}\Pc_{\nu}\otimes \Mc_{\lambda,\mu}^{\nu}~dm(\nu),
$$
where the measure $dm(\nu)$ on the Weyl chamber $\mathcal{C}^+$ is the Sklyanin measure defined in~\eqref{sklyanin-mes}, and $U_q(\mathfrak{sl}_{n+1})$ acts trivially on $\Mc_{\lambda,\mu}^{\nu}$. In turn, the tensor factor $\Mc_{\lambda,\mu}^\nu$ is a positive representation of the quantum torus algebra associated to the moduli space of $PGL_{n+1}$-local systems on a thrice-punctured sphere, where eigenvalues of the three monodromies are specified to $\lambda$, $\mu$, and $\nu$.
\end{theorem}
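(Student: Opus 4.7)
The plan is to realize the decomposition by following the introductory strategy: transform the quantum cluster realization of $\Pc_\lambda\otimes\Pc_\mu$ on $D_{2,2}$ by a sequence of unitary cluster mutations into a chart in which the diagonal Casimirs of $U_q(\sl_{n+1})$ become Coxeter-Toda Hamiltonians, then diagonalize these Hamiltonians via the $q$-Whittaker transform using the assumed Plancherel inversion formula, and finally read off the direct-integral decomposition on each spectral fiber.

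For the first step, I would start from the quantum cluster chart on $D_{2,2}$ in which $\Pc_\lambda\otimes\Pc_\mu$ is realized as a positive representation of the associated quantum torus algebra. Performing the sequence of triangulation flips described in Section~\ref{subsec-full-toda} followed by the refined mutations of Section~\ref{sec-muts}, each implemented as conjugation by a non-compact quantum dilogarithm $\Phi^\hbar(\hat x_k)$ and hence unitary, produces a unitary operator $\Fc$ on $\Pc_\lambda\otimes\Pc_\mu$ that conjugates the diagonal Casimirs $\Delta(\Omega_k)$ into the Hamiltonians $H_k$ of the quantum open Coxeter-Toda chain of type $A_n$. In addition, the resulting cluster chart is compatible with the cutting $D_{2,2}=D_{2,1}\cup_\gamma S_3$: the cluster variables split into those adapted to $D_{2,1}$ (which account for the diagonal $U_q(\sl_{n+1})$-action with central character $\nu$) and those adapted to $S_3$ (which furnish the positive representation $\Mc_{\lambda,\mu}^\nu$ of the thrice-punctured-sphere quantum torus algebra), with the $\nu$-variables shared along the cut $\gamma$ and the $H_k$ depending only on them.

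Next, by~\cite{KLS02} the joint eigenfunctions of $H_1,\dots,H_n$ are the $q$-Whittaker functions $\Psi_\nu$ labelled by $\nu\in\Cc^+$. Assuming the Plancherel inversion formula~\eqref{planch-formula}, the $q$-Whittaker transform provides a unitary equivalence on the $\nu$-variables, matching $L^2(\R^n)$ with the direct integral $\int^\oplus_{\Cc^+}\C_\nu\,dm(\nu)$ against the Sklyanin measure~\eqref{sklyanin-mes}. Composing this transform with $\Fc$ produces the intertwiner $\mathfrak I$; since the Toda Hamiltonians are images of central elements, the diagonal $U_q(\sl_{n+1})$-action commutes with the transform and descends fiberwise. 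On the fiber over $\nu$, the remaining cluster data split precisely as $\Pc_\nu\otimes\Mc_{\lambda,\mu}^\nu$ by the decomposition established in the previous step, with $U_q(\sl_{n+1})$ acting on $\Pc_\nu$ by uniqueness of positive representations with prescribed central character, and trivially on the $S_3$-factor $\Mc_{\lambda,\mu}^\nu$.

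The hardest part is the construction and analysis of $\Fc$: this unitary must be written as an explicit product of quantum dilogarithms and shown to conjugate $\Delta(\Omega_k)$ to the Coxeter-Toda form on the nose. This is the algebraic heart of the paper and requires careful bookkeeping of Laurent polynomials under the full sequence of flips and refined mutations, together with verification that the final chart respects the $D_{2,1}/S_3$ decomposition along $\gamma$. A secondary but still substantial technical point is the measurable identification of the positive representation $\Pc_\nu$ on each spectral fiber, which must be handled carefully in the direct-integral setting rather than pointwise in $\nu$.
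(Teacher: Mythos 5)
Your overall strategy---mutate to a cluster chart adapted to the Coxeter--Toda chain, then diagonalize via the $q$-Whittaker transform---is the same as the paper's, and the construction of the unitary $\Fc=\Phi^{\mathrm{cox}}\circ\Phi^{\mathrm{top}}$ and of $\mathfrak L$ matches. However, the mechanism you propose for proving that $\mathfrak I$ intertwines the $U_q(\sl_{n+1})$-actions has a genuine gap. You argue that since the Toda Hamiltonians are images of central elements, the whole diagonal action commutes with the Whittaker transform and ``descends fiberwise,'' and you then identify the fiber over $\nu$ with $\Pc_\nu\otimes\Mc_{\lambda,\mu}^\nu$ by appealing to ``uniqueness of positive representations with prescribed central character.'' Neither step is available: commutation with the joint spectral decomposition of the $H_k$ only tells you that the action is decomposable over $\Cc^+$, and it does not identify the measurable field of fiber representations; and the paper nowhere establishes (nor is it known in this generality) that a positive representation of $U_q(\sl_{n+1})$ for $n>1$ is determined by its central character, so you cannot conclude that the fiber is $\Pc_\nu$ by that route.

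The paper closes this gap by an explicit, generator-by-generator computation rather than an abstract spectral argument. In the Coxeter chart one has the concatenation $\Pc_\lambda\otimes\Pc_\mu\simeq\Vc_n\otimes L^2(\R^n)\otimes\Mc_{\lambda,\mu}$ (Proposition~\ref{prop-concat}), and formula~\eqref{easy-gens} shows that \emph{every} Chevalley generator except $F_1$ acts only on the first tensor factor, by literally the same boundary-measurement formula as in the single-copy realization $\iota_{\mathrm{sym}}$ on $\Pc_\nu$. The only generator touching the middle factor is $F_1$, for which $\tau_{\mathrm{cox}}(F_1)=\sum_k A_k^{(n+1)}\otimes H_k^{(n+1)}\otimes 1$ (formula~\eqref{double-formula}), while on a single $\Pc_\nu$ one has $\iota_{\mathrm{sym}}(F_1)=\sum_k q^{1-k}A_k^{(n+1)}\otimes e_k(e^{2\pi\hbar\nu_0},\dots,e^{2\pi\hbar\nu_n})$ (formula~\eqref{single-formula}). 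Since $\mathfrak L$ converts the self-adjoint $H_k^{(n+1)}$ into multiplication by exactly these eigenvalues, the two formulas match fiberwise on the nose, which simultaneously proves the intertwining property and identifies each fiber with $\Pc_\nu\otimes\Mc_{\lambda,\mu}^\nu$ without any appeal to uniqueness or to a separate measurable-selection argument. Note also that the paper never verifies that the full set of diagonal Casimirs $\Delta(\Omega_k)$ is conjugated to the $H_k$; the single identity for $F_1$ suffices. To repair your proof you would need to supply the analogues of~\eqref{easy-gens} and of the comparison between~\eqref{single-formula} and~\eqref{double-formula}, at which point the Casimir/uniqueness argument becomes unnecessary.
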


The analytic problem of establishing the Plancherel formula~\eqref{planch-formula} will be addressed in the sequel~\cite{SS17} to the present article. Let us conclude the introduction by mentioning some interesting directions for future work suggested by our results. 

\begin{itemize}
\item
\emph{Modular functor conjecture.}
We hope to be able to establish the ``cutting and gluing'' part of the modular functor conjecture of~\cite{FG09} by similar methods to the ones described here. Indeed, as explained in~\cite{Tes05} in addition to the operation of cutting a marked pair of pants we have already considered, one must also treat the case of cutting a punctured torus along a non-separating cycle. We will return to this problem in a future work. 

\item
\emph{Peter--Weyl theorem for the modular double.} We believe that the techniques of this paper, combined with the analytic ones developed by Ip in the $\mathfrak{sl}_2$ case \cite{Ip13}, should allow us to establish a positive analog of the Peter-Weyl theorem for the modular double of $U_q(\mathfrak{sl}_{n+1})$.  

\item
\emph{Relation with the results of Gekhtman--Shapiro--Vainshtein.}
The cluster chart on the moduli space of $PGL_{n+1}$ local systems on $D_{2,2}$ whose quiver is illustrated in Figure~\ref{Q4-sym} is remarkably similar to the one discovered in~\cite{GSV16} that determines the cluster structure on the dual Poisson-Lie group $PGL_{n+1}^*$. The precise connection between the two cluster structures will be made explicit in a separate publication. 

\item
\emph{$3j$-- and $6j$-- symbols for positive and finite dimensional representations.}
It was shown in~\cite{Ip15} that the Clebsch-Gordon maps for finite-dimensional representations of $U_q(\sl_2)$ can be recovered from those of the positive representations. It is thus natural to ask whether similar results can be obtained in higher rank using our construction of the Clebsch-Gordan intertwiners for $U_q(\sl_{n+1})$. Similarly, the results presented here open the door to an explicit calculation of the Racah-Wigner coefficients (also known as the $6j$-symbols) for positive representations of $U_q(\sl_{n+1})$. One could thus hope to obtain those for finite-dimensional representations by a similar procedure to that outlined in~\cite{Ip15}. 

\item
\emph{Quantum monodromies and higher rank AGT.} Quantum monodromy operators similar to the ones corresponding to the action of the center of $U_q(\mathfrak{sl}_{n+1})$ on $\mathcal{P}_\lambda\otimes \mathcal{P}_\mu$ have recently emerged as an important tool in the program to extend the AGT correspondence to class $\mathcal{S}$ theories of type $A_{n}$ \cite{CGT15}. We hope that the algebraic techniques presented here can help elucidate this construction.

\end{itemize}

The paper is organized as follows. Sections~\ref{sec-cl-alg} and~\ref{sec-std-real} contain some background and technical preliminaries that we shall use extensively. Section~\ref{sec-cl-alg} presents the basic setup of quantum cluster $\mathcal{X}$-varieties and their representations following \cite{FG09}. Section~\ref{sec-std-real} describes the quantum cluster algebras associated to moduli spaces of $G$-local systems on marked surfaces \cite{FG06a}. We also recall the results of \cite{SS16} on the cluster realization of $U_q(\mathfrak{sl}_{n+1})$, and give the precise definition of its positive representations $\mathcal{P}_\lambda$. Section~\ref{sec-net} gives an exposition of the cluster combinatorics of directed networks on the disk and punctured disk. As far as we can tell, the network formalism has not previously been applied to quantum cluster algebras in the literature, so we provide self-contained proofs of various results that we use later in the paper. We apply the network formalism to re-express the cluster embedding of $U_q(\mathfrak{sl}_{n+1})$ in different cluster charts obtained from the initial one by flips of triangulation. Section~\ref{sec-muts} recalls the Poisson geometry and cluster structure of double Bruhat cells $G^{u,v}$ in a complex semisimple Lie group of adjoint type, and presents several important mutation sequences that we use extensively in the paper. Using these sequences, we analyze the action of $U_q(\mathfrak{sl}_{n+1})$ on the tensor product $\mathcal{P}_\lambda\otimes\mathcal{P}_\mu$ of two positive representations, and obtain combinatorial formulas for the diagonal action of the Chevalley generators of the quantum group. In Section~\ref{sec-Toda}, we review the definition of the $q$-deformed open Coxeter-Toda chain and its eigenfunctions, and use the combinatorial formulas from Section~\ref{sec-muts} to express the action of the quantum group generators in terms of the quantum Toda Hamiltonians. In Section~\ref{sec-int} we explain how completeness of the Toda eigenfunctions implies the fact that the tensor product $\mathcal{P}_\lambda\otimes\mathcal{P}_\mu$ decomposes as a direct integral of positive representations. Finally, in Section~\ref{sec-comp}, we compare our results with those obtained by Ponsot and Teschner in the case $n=1$. Throughout the paper, we illustrate our constructions via examples for $n=4$.

\section*{Acknowledgements}

We are very grateful to Vladimir Fock for many excellent discussions and observations. We thank Nicolai Reshetikhin for his support and encouragement throughout the course of this work. Our special thanks go to Michael Gekhtman and Michael Shapiro for many helpful explanations and suggestions. We are grateful to Igor Frenkel and Ivan Ip for explaining their results. Finally, this work benefited enormously from many discussions with our colleagues: we would like to thank Alexander Braverman, Davide Gaiotto, Alexander Goncharov, Joel Kamnitzer, and Ian Le. The second author has been supported by the University of Toronto Faculty of Arts and Science Fellowship, by the NSF Postdoctoral Fellowship DMS-1703183, and by the RFBR grant 17-01-00585.

\section{Quantum cluster algebras}
\label{sec-cl-alg}

\subsection{Quantum cluster $\mathcal{X}$-seeds and their mutations}

In this section we recall a few basic facts about cluster tori and their quantization following~\cite{FG09}. We shall only need the quantum cluster algebras related to quantum groups of type A, and we incorporate this in the definition of a cluster seed.

\begin{defn}
A cluster seed\footnote{Our definition differs from the general one by requiring the form $(\cdot,\cdot)$ to be $\Z/2$-valued on frozen vectors rather than just $\mathbb Q$-valued, and by setting all multipliers $d_i=1$.} is a datum $\Theta=\hr{\La, (\cdot,\cdot),\hc{e_i},I_0}$ where
\begin{itemize}
\item $\La$ is a lattice;
\item $(\cdot,\cdot)$ is a skew-symmetric $\Z/2$-valued form on $\La$;
\item $\hc{e_i}$ is a basis of the lattice $\La$;
\item $I_0$ is a subset of $I = \hc{1,2,\dots, \rk(\La)}$
\end{itemize}
and the following integrality conditions are satisfied:
$$
\eps_{ij} = \hr{e_i,e_j} \in \Z \quad\text{unless}\quad (i,j) \in I_0 \times I_0.
$$
\end{defn}

\begin{notation}
In what follows, given a graph $\Gamma$ we denote the set of its faces by $F(\Gamma)$, the set of its edges by $E(\Gamma)$, and the set of its vertices by $V(\Gamma)$.
\end{notation}

To a seed $\Theta$, we can associate a quiver $\Qc$ with vertices labelled by the set $I$ and arrows given by the adjacency matrix $\eps = \hr{\eps_{ij}}$. Then, a vertex $i \in V(\Qc)$ corresponds to the basis vector $e_i$, which gives rise to a lattice as $i$ runs through $I$; the adjacency matrix defines the form $(\cdot, \cdot)$; finally, we draw vertices $i \in I \setminus I_0$ as circles, while vertices $i \in I_0$ are depicted as squares and are referred to as $\emph{frozen vertices}$. 
 
The pair $\hr{\Lambda,(\cdot, \cdot)}$ determines a \emph{quantum torus algebra} $\mathcal{T}_\Lambda$, which is the free $\Z[q^{\pm1/2}]$-module spanned by $X_{\lambda}$, $\lambda\in \Lambda$, with $X_0 = 1$ and the multiplication defined\footnote{This definition gives the opposite algebra structure to the one defined in~\cite{FG09}} by
\beq
q^{(\lambda,\mu)}X_\lambda X_\mu = X_{\lambda+\mu}.
\eeq 
The seed also gives rise to a distinguished system of generators for the quantum torus algebra $\mathcal{T}_\Lambda$, namely the elements $X_i=X_{e_i}$. The generators $X_i$ for $i\in I_0$ are called \emph{frozen variables}.

Let $\Theta=(\Lambda, (\cdot,\cdot), \hc{e_i}, I_0)$ be a seed, and $k \in I \setminus I_0$ a non-frozen vertex of the corresponding quiver $\Qc$. Then one obtains a new seed, $\mu_k(\Theta)$, called the \emph{mutation of $\Theta$ in direction $k$}, by changing the basis $\hc{e_i}$ while the rest of the data remains the same. The new basis $\{e_i'\}$ is defined by
\beq
\label{eq-basis}
e'_i = 
\begin{cases}
-e_k &\text{if} \; i=k, \\
e_i + [\eps_{ik}]_+e_k &\text{if} \; i \ne k,
\end{cases}
\eeq
where $[a]_+=\max(a,0)$. We remark that bases the $\hc{e_i}$ and $\hc{\mu_k^2(e_i)}$ do not necessarily coincide, although the seeds $\Theta$ and $\mu_k^2(\Theta)$ are isomorphic.

For each mutation $\mu_k$ we define an algebra automorphism of the skew field $\Frac(\mathcal{T}_\Lambda)$, which by abuse of notation we call a \emph{quantum mutation} and denote by the same symbol $\mu_k$, as follows. Recall the compact quantum dilogarithm function
\beq
\label{q-dilog}
\Psi^q(z) = \prod_{k=0}^{\infty}\frac1{(1+q^{2k+1}z)}.
\eeq
%satisfies the $q$-difference equation
%\beq
%\label{qdiff}
%\Psi^q(q^2z)=(1+qz)\Psi^q(z).
%\eeq
The quantum mutation $\mu_k$ is the automorphism of $\mathcal{T}_\Lambda$ defined by
\beq
\label{mut}
\mu_k = \Ad_{\Psi^q\hr{X_{e'_k}}}.
\eeq
The fact that $(e_i,e_j)$ is integral unless $(i,j) \in I_0 \times I_0$ guarantees that conjugation by the formal power series $\Psi^q$ yields a genuine birational automorphism. For example,
$$
\mu_k\hr{X_{e_i}} =
\begin{cases}
X_{e'_i}\hr{1+qX_{e'_k}} &\text{if} \quad \eps_{ki}=1, \\
X_{e'_i}\hr{1+qX_{e'_k}^{-1}}^{-1} &\text{if} \quad \eps_{ki}=-1.
\end{cases}
$$
Equivalently, for $\eps_{ki}=1$ we have
\beq
\label{reg-mut}
\mu_k(X_{e_i}) = X_{e_i'}+ X_{e_i'+e_k'},
\eeq
while if $\eps_{ki}=-1$ we have
\beq
\label{denom-mut}
\mu_k(X_{e_i}) = X_{e_i'+e_k'}\hr{1+q^{-1}X_{e_k'}}^{-1}.
\eeq

Let us remark that the quantum mutation~\eqref{mut} is inverse to that defined in~\cite{SS16}, and the change of basis~\eqref{eq-basis} corresponds to the inverse of the monomial transformation $\mu'_k$ in the decomposition $\mu_k = \mu_k^\sharp \circ \mu'_k$, see~\cite[Section 1]{SS16}. Finally, we would like to recall that the semi-classical limit of the quantum mutations~\eqref{mut} can be viewed as gluing maps between \emph{cluster charts} $\Xc_\Theta\simeq \hr{\C^*}^{|I|}$, whose ring of regular functions is the quasi-classical limit of the quantum torus algebra $\mathcal T_\Lambda$.

\subsection{Positive representations of quantum tori}
\label{positive-rep-sec}
Suppose that $\mathcal{T}_\Lambda$ is a quantum torus algebra equipped with a choice of generators $\{X_j\}$, and $\hbar \in \R$. Then we can consider an associated Heisenberg $*$-algebra $\mathcal{H}_\Lambda$. It is a topological $*$-algebra over $\C$ generated by elements $\{x_i\}$ satisfying
\beq
\label{heis-gens}
[x_j,x_k]=\frac{1}{2\pi i} \eps_{jk} \qquad\text{and}\qquad *x_j=x_j. 
\eeq
Then the assignments
$$
X_j = e^{2\pi \hbar x_j} \qquad\text{and}\qquad q=e^{\pi i \hbar^2}
$$
define an embedding of algebras $\Tc_\La \hookrightarrow \Hc_\La$. Write $\Lambda_{\R}=\Lambda\otimes\R$, and let $Z_{\Lambda}\subset \Lambda_{\R}$ be the kernel of the skew form $(\cdot,\cdot)$. The algebra $\Hc_\La$ has a family of irreducible $*$-representations $V_{\Lambda,\chi}$ parameterized by central characters $\chi\in \Lambda_{\R}^*$, in which the generators $x_j$ act by unbounded operators in a Hilbert space. To obtain an explicit realization of $V_{\Lambda,\chi}$ one should pick a symplectic basis $(p_j,q_j)$ in the symplectic vector space $\Lambda_{\R}/Z_{\Lambda}$; then the Hilbert space $V_{\Lambda,\chi}$ can be taken as the space of $L^2$ functions on the Lagrangian subspace spanned by the $q_j$, where operators $q_j$ act by multiplication while operators $p_j$ act as $\partial/\partial q_j$. The realizations corresponding to different choices of symplectic basis are all unitarily equivalent, the interwiners being provided by the action of the metaplectic group $Mp\hr{\Lambda_{\R}/Z_\Lambda}$. The space $V_{\Lambda,\chi}$ is a positive representation of the quantum torus algebra $\mathcal{T}_{\Lambda}$, in the sense that the generators $X_j$ act by positive, essentially self-adjoint, unbounded operators. 

%This representation of $\mathcal{T}_{\Lambda}$ admits a large centralizer: indeed, the modular dual quantum torus algebra generated by the $\tilde{X}_j = e^{4\pi x_j/\hbar}$ commutes with the action of $\mathcal{T}_{\Lambda}$.

\subsection{Non-compact quantum dilogarithm}
\label{h-dilog-section}

In order to extend the notion of mutation to positive representations, one must replace the quantum dilogarithm \eqref{q-dilog} by its non-compact analog $\Phi^\hbar$. Let us introduce the quantities
$$
c_{\hbar} = \hbar + \hbar^{-1} \qquad\text{and}\qquad \rho_\hbar = e^{\pi i \hr{\frac{\hbar^2+\hbar^{-2}}{6}+ \frac{1}{2}}}.
$$
Consider the function $G_\hbar(z)$ defined for $0<Re(z)<c_\hbar$ by
$$
G_\hbar(z):= \overline{\rho_\hbar}~\exp\hr{-\frac{1}{4}\int_{C}\frac{e^{\pi t z}}{\hr{e^{\pi\hbar t}-1}\hr{e^{\pi\hbar^{-1}t}-1}}\frac{dt}{t}},
$$
where the contour of integration $C$ runs along the real axis, bypassing the pole of the integrand at $t=0$ from above. The non-compact quantum dilogarithm $\Phi^\hbar(z)$ is then defined as
$$
\Phi^\hbar(z) := G_\hbar\hr{\frac{c_\hbar}{2}-iz}.
$$
The functions $G_\hbar$ and $\Phi^\hbar$ can be analytically continued to meromorphic functions on $\C$, and have many remarkable properties, see \cite{FG09,FK94,Kas01}. For us, the most important of these are gathered in the following lemma.

\begin{lemma}
\label{dilog-properties}
The non-compact quantum dilogarithm $\Phi^\hbar(z)$ has the following properties:
\begin{enumerate}
\item (Modular duality) $\Phi^\hbar$ is self-dual under the modular transformation $\hbar\mapsto \hbar^{-1}$: 
$$
\Phi^\hbar(z) = \Phi^{\hbar^{-1}}(z).
$$

\item (Difference equations)
$\Phi^\hbar$ satisfies the dual pair of difference equations
\beq
\label{dilog-diff-eqs}
\begin{aligned}
\Phi^{\hbar}(z+i\hbar) &= (1+qe^{2\pi \hbar z})\Phi^{\hbar}(z),\\
\Phi^{\hbar}(z+i\hbar^{-1}) &= (1+qe^{2\pi \hbar^{-1} z})\Phi^{\hbar}(z).
\end{aligned}
\eeq

\item
\label{dilog-unitary}
(Unitarity) If $\hbar\in\R$, then we have
\beq
\overline{\Phi^\hbar(z)}=\frac{1}{\Phi^\hbar(\overline{z})}.
\eeq
\end{enumerate}
\end{lemma}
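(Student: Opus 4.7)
The proof is a direct analysis of the integral representation defining $G_\hbar$, and the three properties can be verified independently.

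Modular duality (1) is essentially immediate. The integrand $\frac{e^{\pi t z}}{t(e^{\pi\hbar t}-1)(e^{\pi\hbar^{-1}t}-1)}$ is manifestly invariant under $\hbar \leftrightarrow \hbar^{-1}$, the constant $\overline{\rho_\hbar}$ depends only on $\hbar^2+\hbar^{-2}$, and the shift $c_\hbar/2 - iz$ appearing in the definition of $\Phi^\hbar$ is likewise symmetric; hence $\Phi^\hbar(z) = \Phi^{\hbar^{-1}}(z)$.

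For the difference equations (2), the plan is to compute the log-difference directly. Since the shift $w \mapsto w+\hbar$ multiplies the numerator of the integrand by $e^{\pi\hbar t}$, one denominator factor collapses and yields
$$
\log G_\hbar(w+\hbar) - \log G_\hbar(w) = -\frac{1}{4}\int_C \frac{e^{\pi t w}}{t(e^{\pi\hbar^{-1}t}-1)}\,dt.
$$
One then closes the contour in the upper half-plane (valid for $w$ in a suitable strip). The integrand has simple poles at $t = 2i\hbar n$ for $n \geq 1$ with residues $e^{2\pi i \hbar n w}/(2\pi i n)$, summing via $\sum_{n\geq 1}z^n/n = -\log(1-z)$ to give $\log(1 - e^{2\pi i \hbar w})$. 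Setting $w = c_\hbar/2 - iz$ and using $c_\hbar = \hbar+\hbar^{-1}$ transforms $1 - e^{2\pi i \hbar w}$ into $1 + qe^{2\pi \hbar z}$ for $q = e^{\pi i \hbar^2}$, establishing the first equation; the second then follows from modular duality already proved in (1).

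For unitarity (3), assume $\hbar \in \R$. The factors $e^{\pi\hbar t}-1$ and $e^{\pi\hbar^{-1}t}-1$ are real along the real axis, so complex conjugation sends the integrand for $G_\hbar(z)$ to that for $G_\hbar(\bar z)$ except that the orientation of the small arc around $t=0$ is reversed. Deforming this reversed arc back to its original position picks up the residue at $t=0$, which is a double pole since the denominator factors vanish as $\pi^2 t^2$ at the origin; the residue contribution precisely pairs with $|\overline{\rho_\hbar}|^{-2}$ to produce $\overline{G_\hbar(z)}\,G_\hbar(\bar z) = 1$, and hence $\overline{\Phi^\hbar(z)}\,\Phi^\hbar(\bar z) = 1$. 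The main technical obstacle is the residue accounting in steps (2) and (3): one must carefully verify decay of the integrand when closing the contour, track whether the pole at $t=0$ is enclosed, and extract the correct phase from the double pole. These are well-known computations originally carried out by Faddeev and Kashaev, and for that reason I would expect to cite the references \cite{FK94, Kas01} rather than reproduce them in full.
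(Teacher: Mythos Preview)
The paper does not give a proof of this lemma at all: it simply states the three properties and refers the reader to \cite{FG09,FK94,Kas01}. Your proposal therefore goes well beyond what the paper does, and your final remark that one would ``cite the references \cite{FK94, Kas01} rather than reproduce them in full'' is exactly the route the paper takes.

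Your sketch follows the standard contour-integral derivation from those references and is correct in outline. A small caution: the bookkeeping in step (2) is delicate --- the contour $C$ runs left to right and passes \emph{above} $t=0$, so closing in the upper half-plane encloses the poles $t=2i\hbar n$, $n\ge 1$, with clockwise orientation, and one must track the prefactor $-\tfrac14$ through to the end; as written, your intermediate expression $\log(1-e^{2\pi i\hbar w})$ does not obviously match the target after multiplying by $-\tfrac14$. This is purely a sign/constant accounting issue, not a conceptual one, and is exactly the sort of detail for which deferring to \cite{FK94} is appropriate.
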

Now suppose that $\Theta$ is a cluster seed, and $V_{\chi,\Lambda}$ is a positive representation of the corresponding quantum torus algebra $\mathcal{T}_{\Lambda}$. Then the Heisenberg algebra generators $\{x_k\}$ from~\eqref{heis-gens} act by essentially self-adjoint operators on $V_{\chi,\Lambda}$. Thus, by property~\eqref{dilog-unitary} from Lemma~\ref{dilog-properties}, we have unitary operators $\Phi^\hbar\hr{x_j}$ acting on $V_{\chi,\Lambda}$ for each generator $x_j$. Let us now consider the quantum torus algebra generators $X_j = e^{2\pi\hbar x_j}$, which act on $V_{\chi,\Lambda}$ by positive essentially self-adjoint operators. Then in view of the difference equations~\eqref{dilog-diff-eqs}, for each non-frozen variable $X_k$ we have the equalities
$$
\mu_k\hr{X_j} = \Ad_{\Phi^\hbar\hr{X_k^{-1}}}\hr{X_j}.
$$
Thus, the mutation in direction $k$ is realized in the representation $V_{\chi,\Lambda}$ via the unitary operator $\Phi^{\hbar}(x_k)$. In particular, the positive representations of $\mathcal{T}_\Lambda$ obtained from an initial one by applying mutation automorphisms are all unitary equivalent. We will exploit this fact extensively in the sequel, as we apply various mutation sequences to analyze the action of $U_q(\mathfrak{sl}_{n+1})$ on its positive representations. 

\subsection{Concatenation of representations}

Suppose we have a direct sum decomposition of lattices $\Lambda = \Lambda_{-1} \oplus \Lambda_{0} \oplus \Lambda_{1}$, such that $\Lambda_{-1}$ is orthogonal to $\Lambda_1$ with respect to the form $(\cdot,\cdot)_{\Lambda}$, and $\Lambda_0$ is isotropic. Thus any $\lambda \in \Lambda$ can be uniquely written $\lambda = \lambda_{-1} + \lambda_0 + \lambda_1$, with $(\lambda_{-1},\lambda_1)=0$. Set 
$$
\Lambda_{\le0}=\Lambda_{-1}\oplus \Lambda_0, \qquad \Lambda_{\ge0}=\Lambda_{0}\oplus \Lambda_1.
$$
Now consider the lattice $\Lambda_{\le0}\oplus\Lambda_{\ge0}$, equipped with the skew-form 
$$((\lambda,\mu),(\lambda',\mu'))_{\Lambda_{\le0}\oplus\Lambda_{\ge0}} = (\lambda,\lambda')_{\Lambda} + (\mu,\mu')_{\Lambda}.
$$
 The positive representations of the corresponding Heisenberg algebra $\mathcal{H}_{\Lambda_{\le0}\oplus\Lambda_{\ge0}}$ are Hilbert space tensor products of positive representations of $\mathcal{H}_{\Lambda_{\le0}}$ and those of $\mathcal{H}_{\Lambda_{\ge0}}$:
 $$
 V_{\chi_{\le 0}\oplus\chi_{\ge0}} \simeq V_{\chi_{\le 0}}\otimes V_{\chi_{\ge 0}}.
 $$ 
There is an isometric embedding of lattices 
$$
\Lambda\hookrightarrow \Lambda_{\le0}\oplus\Lambda_{\ge0}, \quad \lambda \mapsto (\lambda_{-1}+\lambda_0)\oplus (\lambda_{0}+\lambda_1). 
$$
which induces an embedding of the corresponding Heisenberg algebras $\mathcal{H}_\Lambda\hookrightarrow \mathcal{H}_{\Lambda_{\le0}\oplus\Lambda_{\ge0}}$. Then we have the following lemma, which is a simple exercise in linear algebra.

\begin{lemma} 
\label{rep-concatenation}
Suppose that $Z_\Lambda = \Lambda\cap Z_{\Lambda_{\le0}\oplus\Lambda_{\ge0}}$, and that the induced map
$$
\Lambda/Z_\Lambda \hookrightarrow \Lambda_{\le0}/Z_{\Lambda_{\le0}}\oplus\Lambda_{\ge0}/Z_{\Lambda_{\ge0}}
$$
is an isomorphism. Then each positive representation $V_{\Lambda,\chi}$ is isomorphic to the Hilbert space tensor product
$$
V_{\Lambda,\chi} \simeq V_{\Lambda_{\le0},\chi_{\le0}}\otimes V_{\Lambda_{\ge0},\chi_{\ge0}}.
$$
\end{lemma}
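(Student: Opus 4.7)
The plan is to restrict $V := V_{\Lambda_{\le 0},\chi_{\le 0}} \otimes V_{\Lambda_{\ge 0},\chi_{\ge 0}}$ along the $*$-algebra embedding $\mathcal{H}_\Lambda \hookrightarrow \mathcal{H}_{\Lambda_{\le 0}} \otimes \mathcal{H}_{\Lambda_{\ge 0}}$ and to identify the result, via an explicit Schrödinger realization, with $V_{\Lambda,\chi}$. The key analytic input is the uniqueness of positive representations of a Heisenberg $*$-algebra with prescribed central character, already implicit in the discussion of Section~\ref{positive-rep-sec}.

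The first step is to verify that the map $\Lambda \hookrightarrow \Lambda_{\le 0} \oplus \Lambda_{\ge 0}$ is an isometry of lattices equipped with their respective skew forms. Expanding the codomain form on the images of $\lambda, \lambda' \in \Lambda$ produces $(\lambda,\lambda')_{\Lambda} + (\lambda_0,\lambda'_0) - (\lambda_{-1},\lambda'_1) - (\lambda_1,\lambda'_{-1})$, and each of the three extra terms vanishes: the first by the isotropy of $\Lambda_0$, the remaining two by the orthogonality $(\Lambda_{-1},\Lambda_1)=0$. This produces the asserted embedding of Heisenberg $*$-algebras, and hence a well-defined restriction functor.

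The second step is to choose $\chi_{\le 0}$ and $\chi_{\ge 0}$ compatible with the given $\chi$. Since the embedding $Z_\Lambda \hookrightarrow Z_{\Lambda_{\le 0}} \oplus Z_{\Lambda_{\ge 0}}$, after tensoring with $\R$, is an inclusion of finite-dimensional real vector spaces, any real-linear lift of $\chi$ yields a valid pair of central characters, and the hypothesis $Z_\Lambda = \Lambda \cap Z_{\Lambda_{\le 0} \oplus \Lambda_{\ge 0}}$ ensures that the restriction of $\chi_{\le 0} \oplus \chi_{\ge 0}$ back to $Z_\Lambda$ reproduces $\chi$. With such choices made, $V$ is a positive representation of the combined Heisenberg algebra whose restriction to $\mathcal{H}_\Lambda$ is positive and has central character $\chi$ by construction.

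The heart of the argument is the third step: showing the restriction is irreducible, so that the Stone--von Neumann uniqueness delivers the required isomorphism. The hypothesis that $\Lambda/Z_\Lambda \to \Lambda_{\le 0}/Z_{\Lambda_{\le 0}} \oplus \Lambda_{\ge 0}/Z_{\Lambda_{\ge 0}}$ is an isomorphism of symplectic lattices lets us transport a symplectic basis $\{p_j, q_j\}$ of $\Lambda_\R/Z_\Lambda$ to a symplectic basis of the right-hand side, along which $V$ admits a Schrödinger realization on the $L^2$-space of the $q_j$-Lagrangian; by the metaplectic equivalence recalled in Section~\ref{positive-rep-sec}, this coincides with the canonical Schrödinger model for $V_{\Lambda,\chi}$, and the desired isomorphism follows. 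I expect the main obstacle to be purely bookkeeping: carefully tracking the several centers and their embeddings, and verifying that the resulting isomorphism is independent of the non-unique lift of $\chi$ to the pair $(\chi_{\le 0}, \chi_{\ge 0})$.
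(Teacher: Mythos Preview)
Your proposal is correct. The paper does not actually give a proof of this lemma: it simply states that it ``is a simple exercise in linear algebra,'' and your three steps (isometry of the embedding, lifting the central character, and transporting a symplectic basis through the isomorphism $\Lambda/Z_\Lambda \simeq \Lambda_{\le0}/Z_{\Lambda_{\le0}}\oplus\Lambda_{\ge0}/Z_{\Lambda_{\ge0}}$ to invoke Stone--von Neumann) are exactly what that exercise amounts to.
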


\section{Cluster realization of $U_q(\mathfrak{sl}_{n+1})$ and its positive representations}
\label{sec-std-real}

\subsection{Moduli spaces of framed local systems and their cluster structure}
\label{locsys-intro}

We now recall some basics of the theory of quantum character varieties following~\cite{FG06a}. Let $\wh S$ be a decorated surface --- that is, a topological surface $S$ with boundary $\partial S$, equipped with a finite collection of marked points $x_1,\ldots, x_r\in \partial S$ and punctures $p_1,\ldots, p_s$. In \cite{FG06a}, the moduli space $\mathcal{X}_{\wh S,PGL_m}$ of $PGL_m$-local systems on $S$ with reductions to Borel subgroups at each marked point $x_i$ and each puncture $p_i$, was defined and shown to admit the structure of a cluster $\mathcal{X}$-variety. In particular, suppose that~$T$ is an ideal triangulation of~$S$: recall that this means that all vertices of~$T$ are at either marked points or punctures. Then it was shown in \cite{FG06a} that for each such ideal triangulation, one can produce a cluster $\mathcal{X}$-chart on $\mathcal{X}_{\wh S,PGL_m}$. Moreover, the Poisson algebra of functions on such a chart admits a canonical quantization, whose construction we shall now recall. 
 
%Let $M$ be a $G$-set. The elements of $M^n/G$ are called configurations of $n$ elements in $M$. We shall consider cyclically ordered configurations of 3 points in $M = G/B \times G/P$. Here $P \subset G$ denotes the maximal parabolic subgroup. We will be concerned with $G = PGL_m(\C)$ only, in which case $M$ is a variety of pairs of a flag and a line. It was explained in~\cite{FG06a} that the above configuration space admits a system of cluster $\Xc$-coordinates. For the present paper we shall not need the construction of cluster coordinates, but only the resulting cluster torus, which we describe in this section.

The first step is to describe the quantum cluster $\mathcal{X}$-chart associated to a single triangle. To do this, consider a triangle $ABC$ given by the equation $x+y+z=m$, $x,y,z\ge0$ and intersect it with lines $x=p$, $y=p$, and $z=p$ for all $0 < p < m$, $p \in \Z$. The resulting picture is called the $m$-triangulation of the triangle~$ABC$. Let us now color the triangles of the $m$-triangulation in black and white, as in Figure~\ref{fig-triang} so that triangles adjacent to vertices $A$, $B$, or $C$ are black, and two triangles sharing an edge are of different color. We shall also orient the edges of white triangles counterclockwise. Finally, we connect the vertices of the $m$-triangulation lying on the same side of the triangle $ABC$ by dashed arrows in the clockwise direction. The resulting graph is shown in Figure~\ref{fig-triang}. Note that the vertices on the boundary of $ABC$ are depicted by squares. Throughout the text we will use square vertices for frozen variables. All dashed arrows will be of weight $\frac12$, that is a dashed arrow $v_i \to v_j$ denotes the commutation relation $X_iX_j = q^{-1}X_jX_i$.

\begin{figure}[h]
\begin{tikzpicture}[thick, y=0.866cm, x=0.5cm]

\node at (-0.5,-0.1) {A};
\node at (5,5.4) {B};
\node at (10.5,-0.1) {C};

\draw (0,0) -- (10,0);
\draw (1,1) -- (9,1);
\draw (2,2) -- (8,2);
\draw (3,3) -- (7,3);
\draw (4,4) -- (6,4);

\draw (0,0) -- (5,5);
\draw (2,0) -- (6,4);
\draw (4,0) -- (7,3);
\draw (6,0) -- (8,2);
\draw (8,0) -- (9,1);

\draw (5,5) -- (10,0);
\draw (4,4) -- (8,0);
\draw (3,3) -- (6,0);
\draw (2,2) -- (4,0);
\draw (1,1) -- (2,0);

\fill[pattern=north east lines] (0,0) -- (1,1) -- (2,0);
\fill[pattern=north east lines] (2,0) -- (3,1) -- (4,0);
\fill[pattern=north east lines] (4,0) -- (5,1) -- (6,0);
\fill[pattern=north east lines] (6,0) -- (7,1) -- (8,0);
\fill[pattern=north east lines] (8,0) -- (9,1) -- (10,0);

\fill[pattern=north east lines] (1,1) -- (2,2) -- (3,1);
\fill[pattern=north east lines] (3,1) -- (4,2) -- (5,1);
\fill[pattern=north east lines] (5,1) -- (6,2) -- (7,1);
\fill[pattern=north east lines] (7,1) -- (8,2) -- (9,1);

\fill[pattern=north east lines] (2,2) -- (3,3) -- (4,2);
\fill[pattern=north east lines] (4,2) -- (5,3) -- (6,2);
\fill[pattern=north east lines] (6,2) -- (7,3) -- (8,2);

\fill[pattern=north east lines] (3,3) -- (4,4) -- (5,3);
\fill[pattern=north east lines] (5,3) -- (6,4) -- (7,3);

\fill[pattern=north east lines] (4,4) -- (5,5) -- (6,4);

\end{tikzpicture}
\qquad\qquad
\begin{tikzpicture}[every node/.style={inner sep=0, minimum size=0.4cm, thick}, thick, x=0.5cm, y=0.866cm]

\node at (2,-0.2) {};

\node (1) at (1,1) [draw] {\scriptsize{1}};
\node (2) at (2,0) [draw] {\scriptsize{2}};
\node (3) at (2,2) [draw] {\scriptsize{3}};
\node (4) at (3,1) [circle, draw] {\scriptsize{4}};
\node (5) at (4,0) [draw] {\scriptsize{5}};
\node (6) at (3,3) [draw] {\scriptsize{6}};
\node (7) at (4,2) [circle, draw] {\scriptsize{7}};
\node (8) at (5,1) [circle, draw] {\scriptsize{8}};
\node (9) at (6,0) [draw] {\scriptsize{9}};
\node (10) at (4,4) [draw] {\scriptsize{10}};
\node (11) at (5,3) [circle, draw] {\scriptsize{11}};
\node (12) at (6,2) [circle, draw] {\scriptsize{12}};
\node (13) at (7,1) [circle, draw] {\scriptsize{13}};
\node (14) at (8,0) [draw] {\scriptsize{14}};
\node (15) at (6,4) [draw] {\scriptsize{15}};
\node (16) at (7,3) [draw] {\scriptsize{16}};
\node (17) at (8,2) [draw] {\scriptsize{17}};
\node (18) at (9,1) [draw] {\scriptsize{18}};

\draw[->, thick] (1) -- (2);
\draw[->, thick] (2) -- (4);
\draw[->, thick] (4) -- (7);
\draw[->, thick] (7) -- (11);
\draw[->, thick] (11) -- (15);
\draw[->, thick] (15) -- (10);
\draw[->, thick] (10) -- (11);
\draw[->, thick] (11) -- (12);
\draw[->, thick] (12) -- (13);
\draw[->, thick] (13) -- (14);
\draw[->, thick] (14) -- (18);
\draw[->, thick] (18) -- (13);
\draw[->, thick] (13) -- (8);
\draw[->, thick] (8) -- (4);
\draw[->, thick] (4) -- (1);

\draw[->, thick] (3) -- (4);
\draw[->, thick] (4) -- (5);
\draw[->, thick] (5) -- (8);
\draw[->, thick] (8) -- (12);
\draw[->, thick] (12) -- (16);
\draw[->, thick] (16) -- (11);
\draw[->, thick] (11) -- (6);
\draw[->, thick] (6) -- (7);
\draw[->, thick] (7) -- (8);
\draw[->, thick] (8) -- (9);
\draw[->, thick] (9) -- (13);
\draw[->, thick] (13) -- (17);
\draw[->, thick] (17) -- (12);
\draw[->, thick] (12) -- (7);
\draw[->, thick] (7) -- (3);

\draw[->, thick, dashed] (1) -- (3);
\draw[->, thick, dashed] (3) -- (6);
\draw[->, thick, dashed] (6) -- (10);
\draw[->, thick, dashed] (15) -- (16);
\draw[->, thick, dashed] (16) -- (17);
\draw[->, thick, dashed] (17) -- (18);
\draw[->, thick, dashed] (14) -- (9);
\draw[->, thick, dashed] (9) -- (5);
\draw[->, thick, dashed] (5) -- (2);

\end{tikzpicture}
\caption{Cluster $\Xc$-coordinates on the configuration space of 3 flags and 3 lines.}
\label{fig-triang}
\end{figure}
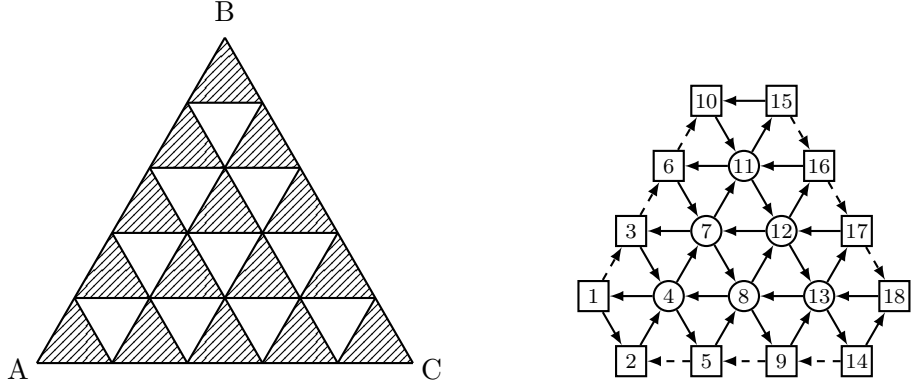

Now, let us recall the procedure of \emph{amalgamating} two quivers by a subset of frozen variables, following~\cite{FG06b}. Simply put, amalgamation is nothing but the gluing of two quivers by identifying a number of frozen vertices. More formally, let $\Qc_1$, $\Qc_2$ be a pair of quivers, and $I_1$, $I_2$ be certain subsets of frozen variables in $\Qc_1$, $\Qc_2$ respectively. Given a bijection $\phi \colon I_1 \to I_2$ we can amalgamate quivers $\Qc_1$ and $\Qc_2$ by the subsets $I_1$, $I_2$ along $\phi$. The result is a new quiver $\Qc$ constructed in the following three steps:
\begin{enumerate}
\item unfreeze vertices $v_i \in \Qc_1$ and $v_j \in \Qc_2$ for all $i \in I_1$ and $j \in I_2$;
\item for any $i \in I_1$ identify vertices $v_i \in \Qc_1$ and $v_{\phi(i)} \in \Qc_2$ in the union $\Qc_1 \sqcup \Qc_2$;
\item for any pair $i,j \in I_1$ with an arrow $v_i \to v_j$ in $\Qc_1$ labelled by $\eps_{ij}$ and an arrow $v_{\phi(i)} \to v_{\phi(j)}$ in $\Qc_2$ labelled by $\eps_{\phi(i),\phi(j)}$, label the arrow between corresponding vertices in $\Qc$ by $\eps_{ij} + \eps_{\phi(i),\phi(j)}$
\end{enumerate}
Amalgamation of a pair of quivers $\Qc_1$, $\Qc_2$ into a quiver $\Qc$ induces an embedding $\Xc \to \Xc_1 \otimes \Xc_2$ of the corresponding cluster $\Xc$-tori:
$$
X_i \mapsto
\begin{cases}
X_i \otimes 1, &\text{if} \; i \in \Qc_1 \setminus I_1, \\
1 \otimes X_i, &\text{if} \; i \in \Qc_2 \setminus I_2, \\
X_i \otimes X_{\phi(i)}, &\text{otherwise.}
\end{cases}
$$

An example of amalgamation is shown in Figure~\ref{fig-flip}. There, the left quiver is obtained by amalgamating a triangle $ABC$ from Figure~\ref{fig-triang} with a similar triangle along the side $BC$ (or more precisely, along frozen vertices 15, 16, 17, and 18 on the edge $BC$). 

As explained in~\cite{FG06a}, in order to construct the cluster $\Xc$-coordinate chart on $\Xc_{\wh S,PGL_m}$
corresponding to an ideal triangulation $T$ of $\wh S$, one performs the following procedure:
\begin{enumerate}
\item $m$-triangulate each of the ideal triangles in $T$;
\item for any pair of ideal triangles in $T$ sharing an edge, amalgamate the corresponding pair of quivers by this edge.
\end{enumerate}
In general, different ideal triangulations of $\wh S$ result in different quivers, and hence different cluster $\Xc$-tori. However, any triangulation can be transformed into any other by a sequence of \emph{flips} that replace one diagonal in an ideal 4-gon with the other one. Each flip corresponds to the following sequence of cluster mutations that we shall recall in the $m=5$ example shown in Figure~\ref{fig-flip}. There, a flip is obtained in four steps. First, mutate at vertices 15, 16, 17 18; second, mutate at vertices 11, 12, 13, 20, 21, 22; third, at vertices 7, 8, 16, 17, 25, 26; finally, mutate at 4, 12, 21, 29. Note, that the order of mutations within one step does not matter. In general, a flip in an $m$-triangulated 4-gon consists of $m-1$ steps. On the $i$-th step, one should do the following. First, inscribe an $i$-by-$(m-i)$ rectangle in the 4-gon, such that vertices of the rectangle coincide with boundary vertices of the $m$-triangulation and the side of the rectangle of length $m-i$ goes along the diagonal of a 4-gon. Second, divide the rectangle into $i (m-i)$ squares and mutate at the center of each square. As in the example, the order of mutations within a single step does not matter.

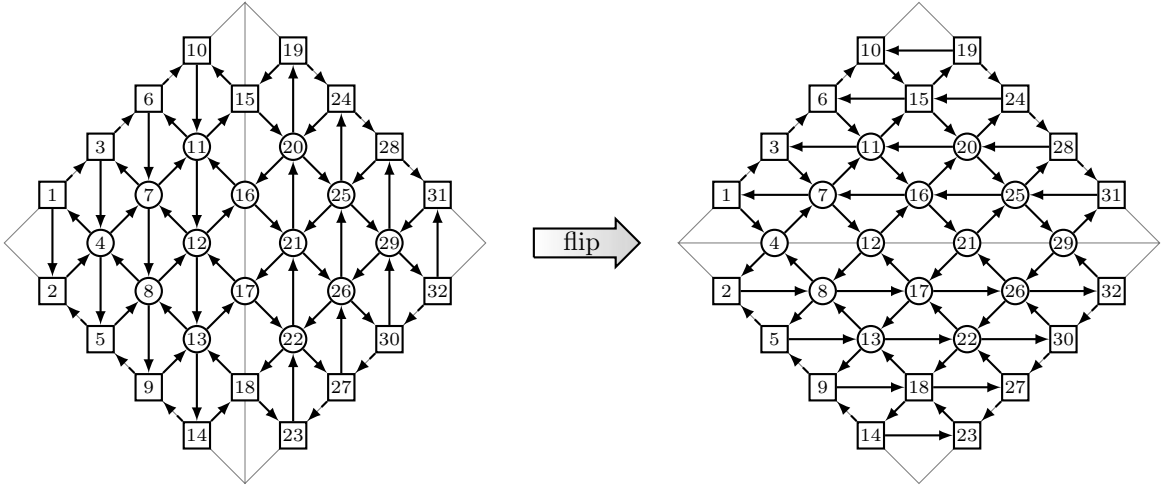
\begin{figure}[h]
\begin{tikzpicture}[every node/.style={inner sep=0, minimum size=0.35cm, thick, draw, fill=white}, x=0.64cm, y=0.64cm]

\draw [gray] (-7,5) to (-2,0) to (-7,-5) to (-12,0) to (-7,5) to (-7,-5);

\node (1) at (-11,1) {\tiny{1}};
\node (2) at (-11,-1) {\tiny{2}};
\node (3) at (-10,2) {\tiny{3}};
\node (4) at (-10,0) [circle] {\tiny{4}};
\node (5) at (-10,-2) {\tiny{5}};
\node (6) at (-9,3) {\tiny{6}};
\node (7) at (-9,1) [circle] {\tiny{7}};
\node (8) at (-9,-1) [circle] {\tiny{8}};
\node (9) at (-9,-3) {\tiny{9}};
\node (10) at (-8,4) {\tiny{10}};
\node (11) at (-8,2) [circle] {\tiny{11}};
\node (12) at (-8,0) [circle] {\tiny{12}};
\node (13) at (-8,-2) [circle] {\tiny{13}};
\node (14) at (-8,-4) {\tiny{14}};
\node (15) at (-7,3) {\tiny{15}};
\node (16) at (-7,1) [circle] {\tiny{16}};
\node (17) at (-7,-1) [circle] {\tiny{17}};
\node (18) at (-7,-3) {\tiny{18}};
\node (19) at (-6,4) {\tiny{19}};
\node (20) at (-6,2) [circle] {\tiny{20}};
\node (21) at (-6,0) [circle] {\tiny{21}};
\node (22) at (-6,-2) [circle] {\tiny{22}};
\node (23) at (-6,-4) {\tiny{23}};
\node (24) at (-5,3) {\tiny{24}};
\node (25) at (-5,1) [circle] {\tiny{25}};
\node (26) at (-5,-1) [circle] {\tiny{26}};
\node (27) at (-5,-3) {\tiny{27}};
\node (28) at (-4,2) {\tiny{28}};
\node (29) at (-4,0) [circle] {\tiny{29}};
\node (30) at (-4,-2) {\tiny{30}};
\node (31) at (-3,1) {\tiny{31}};
\node (32) at (-3,-1) {\tiny{32}};

\draw[->, thick] (1) -- (2);
\draw[->, thick] (2) -- (4);
\draw[->, thick] (4) -- (7);
\draw[->, thick] (7) -- (11);
\draw[->, thick] (11) -- (15);
\draw[->, thick] (15) -- (10);
\draw[->, thick] (10) -- (11);
\draw[->, thick] (11) -- (12);
\draw[->, thick] (12) -- (13);
\draw[->, thick] (13) -- (14);
\draw[->, thick] (14) -- (18);
\draw[->, thick] (18) -- (13);
\draw[->, thick] (13) -- (8);
\draw[->, thick] (8) -- (4);
\draw[->, thick] (4) -- (1);

\draw[->, thick] (3) -- (4);
\draw[->, thick] (4) -- (5);
\draw[->, thick] (5) -- (8);
\draw[->, thick] (8) -- (12);
\draw[->, thick] (12) -- (16);
\draw[->, thick] (16) -- (11);
\draw[->, thick] (11) -- (6);
\draw[->, thick] (6) -- (7);
\draw[->, thick] (7) -- (8);
\draw[->, thick] (8) -- (9);
\draw[->, thick] (9) -- (13);
\draw[->, thick] (13) -- (17);
\draw[->, thick] (17) -- (12);
\draw[->, thick] (12) -- (7);
\draw[->, thick] (7) -- (3);

\draw[->, thick, dashed] (14) -- (9);
\draw[->, thick, dashed] (9) -- (5);
\draw[->, thick, dashed] (5) -- (2);
\draw[->, thick, dashed] (1) -- (3);
\draw[->, thick, dashed] (3) -- (6);
\draw[->, thick, dashed] (6) -- (10);

\draw[->, thick] (32) -- (31);
\draw[->, thick] (31) -- (29);
\draw[->, thick] (29) -- (26);
\draw[->, thick] (26) -- (22);
\draw[->, thick] (22) -- (18);
\draw[->, thick] (18) -- (23);
\draw[->, thick] (23) -- (22);
\draw[->, thick] (22) -- (21);
\draw[->, thick] (21) -- (20);
\draw[->, thick] (20) -- (19);
\draw[->, thick] (19) -- (15);
\draw[->, thick] (15) -- (20);
\draw[->, thick] (20) -- (25);
\draw[->, thick] (25) -- (29);
\draw[->, thick] (29) -- (32);

\draw[->, thick] (30) -- (29);
\draw[->, thick] (29) -- (28);
\draw[->, thick] (28) -- (25);
\draw[->, thick] (25) -- (21);
\draw[->, thick] (21) -- (17);
\draw[->, thick] (17) -- (22);
\draw[->, thick] (22) -- (27);
\draw[->, thick] (27) -- (26);
\draw[->, thick] (26) -- (25);
\draw[->, thick] (25) -- (24);
\draw[->, thick] (24) -- (20);
\draw[->, thick] (20) -- (16);
\draw[->, thick] (16) -- (21);
\draw[->, thick] (21) -- (26);
\draw[->, thick] (26) -- (30);

\draw[->, thick, dashed] (19) -- (24);
\draw[->, thick, dashed] (24) -- (28);
\draw[->, thick, dashed] (28) -- (31);
\draw[->, thick, dashed] (32) -- (30);
\draw[->, thick, dashed] (30) -- (27);
\draw[->, thick, dashed] (27) -- (23);

\draw [thick, shade, top color = gray!35, shading angle=-90] (-1,-0.3) -- (-1,0.3) -- (0.7,0.3) -- (0.7,0.5) -- (1.2,0) -- (0.7,-0.5) -- (0.7,-0.3) -- (-1,-0.3);

\node [draw=none, fill=none] at (0,0) {\small{flip}};

\draw [gray] (2,0) to (7,5) to (12,0) to (7,-5) to (2,0) to (12,0);

\node (1) at (3,1) {\tiny{1}};
\node (2) at (3,-1) {\tiny{2}};
\node (3) at (4,2) {\tiny{3}};
\node (4) at (4,0) [circle] {\tiny{4}};
\node (5) at (4,-2) {\tiny{5}};
\node (6) at (5,3) {\tiny{6}};
\node (7) at (5,1) [circle] {\tiny{7}};
\node (8) at (5,-1) [circle] {\tiny{8}};
\node (9) at (5,-3) {\tiny{9}};
\node (10) at (6,4) {\tiny{10}};
\node (11) at (6,2) [circle] {\tiny{11}};
\node (12) at (6,0) [circle] {\tiny{12}};
\node (13) at (6,-2) [circle] {\tiny{13}};
\node (14) at (6,-4) {\tiny{14}};
\node (15) at (7,3) {\tiny{15}};
\node (16) at (7,1) [circle] {\tiny{16}};
\node (17) at (7,-1) [circle] {\tiny{17}};
\node (18) at (7,-3) {\tiny{18}};
\node (19) at (8,4) {\tiny{19}};
\node (20) at (8,2) [circle] {\tiny{20}};
\node (21) at (8,0) [circle] {\tiny{21}};
\node (22) at (8,-2) [circle] {\tiny{22}};
\node (23) at (8,-4) {\tiny{23}};
\node (24) at (9,3) {\tiny{24}};
\node (25) at (9,1) [circle] {\tiny{25}};
\node (26) at (9,-1) [circle] {\tiny{26}};
\node (27) at (9,-3) {\tiny{27}};
\node (28) at (10,2) {\tiny{28}};
\node (29) at (10,0) [circle] {\tiny{29}};
\node (30) at (10,-2) {\tiny{30}};
\node (31) at (11,1) {\tiny{31}};
\node (32) at (11,-1) {\tiny{32}};

\draw[->, thick] (19) -- (10);
\draw[->, thick] (10) -- (15);
\draw[->, thick] (15) -- (20);
\draw[->, thick] (20) -- (25);
\draw[->, thick] (25) -- (29);
\draw[->, thick] (29) -- (31);
\draw[->, thick] (31) -- (25);
\draw[->, thick] (25) -- (16);
\draw[->, thick] (16) -- (7);
\draw[->, thick] (7) -- (1);
\draw[->, thick] (1) -- (4);
\draw[->, thick] (4) -- (7);
\draw[->, thick] (7) -- (11);
\draw[->, thick] (11) -- (15);
\draw[->, thick] (15) -- (19);

\draw[->, thick] (24) -- (15);
\draw[->, thick] (15) -- (6);
\draw[->, thick] (6) -- (11);
\draw[->, thick] (11) -- (16);
\draw[->, thick] (16) -- (21);
\draw[->, thick] (21) -- (25);
\draw[->, thick] (25) -- (28);
\draw[->, thick] (28) -- (20);
\draw[->, thick] (20) -- (11);
\draw[->, thick] (11) -- (3);
\draw[->, thick] (3) -- (7);
\draw[->, thick] (7) -- (12);
\draw[->, thick] (12) -- (16);
\draw[->, thick] (16) -- (20);
\draw[->, thick] (20) -- (24);

\draw[->, thick, dashed] (1) -- (3);
\draw[->, thick, dashed] (3) -- (6);
\draw[->, thick, dashed] (6) -- (10);
\draw[->, thick, dashed] (19) -- (24);
\draw[->, thick, dashed] (24) -- (28);
\draw[->, thick, dashed] (28) -- (31);

\draw[->, thick] (14) -- (23);
\draw[->, thick] (23) -- (18);
\draw[->, thick] (18) -- (13);
\draw[->, thick] (13) -- (8);
\draw[->, thick] (8) -- (4);
\draw[->, thick] (4) -- (2);
\draw[->, thick] (2) -- (8);
\draw[->, thick] (8) -- (17);
\draw[->, thick] (17) -- (26);
\draw[->, thick] (26) -- (32);
\draw[->, thick] (32) -- (29);
\draw[->, thick] (29) -- (26);
\draw[->, thick] (26) -- (22);
\draw[->, thick] (22) -- (18);
\draw[->, thick] (18) -- (14);

\draw[->, thick] (9) -- (18);
\draw[->, thick] (18) -- (27);
\draw[->, thick] (27) -- (22);
\draw[->, thick] (22) -- (17);
\draw[->, thick] (17) -- (12);
\draw[->, thick] (12) -- (8);
\draw[->, thick] (8) -- (5);
\draw[->, thick] (5) -- (13);
\draw[->, thick] (13) -- (22);
\draw[->, thick] (22) -- (30);
\draw[->, thick] (30) -- (26);
\draw[->, thick] (26) -- (21);
\draw[->, thick] (21) -- (17);
\draw[->, thick] (17) -- (13);
\draw[->, thick] (13) -- (9);

\draw[->, thick, dashed] (32) -- (30);
\draw[->, thick, dashed] (30) -- (27);
\draw[->, thick, dashed] (27) -- (23);
\draw[->, thick, dashed] (14) -- (9);
\draw[->, thick, dashed] (9) -- (5);
\draw[->, thick, dashed] (5) -- (2);

\end{tikzpicture}
\caption{A pair of triangles amalgamated by 1 side.}
\label{fig-flip}
\end{figure}

\subsection{Cluster realization of $U_q(\mathfrak{sl}_{n+1})$}
\label{subsec-embed}

In what follows, we consider the complex simple Lie algebra $\sl_{n+1}=\sl_{n+1}(\C)$, equipped with a pair of opposite Borel subalgebras $\bgt_\pm$ and a Cartan subalgebra $\hgt = \bgt_+ \cap \bgt_-$. The corresponding root system $\Delta$ is equipped with a polarization $\Delta =\Delta_+ \sqcup \Delta_-$, consistent with the choice of Borel subalgebras $\bgt_\pm$, and a set of simple roots $\hc{\alpha_1,\ldots, \alpha_n} \subset \Delta_+$. We denote by $(\cdot,\cdot)$ the unique symmetric bilinear form on $\hgt^*$ invariant under the Weyl group $W$, such that $(\alpha,\alpha)=2$ for all roots $\alpha\in \Delta$. Entries of the Cartan matrix are denoted $a_{ij} = (\alpha_i, \alpha_j)$. 

Let $q$ be a formal parameter, consider an associative $\C(q)$-algebra $\Dgt_n$ generated by elements\footnote{The generators $E_i$ and $F_i$ that we consider here coincide with those in~\cite{SS16} multiplied by $\sqrt{-1}$.}

$$
\hc{E_i, F_i, K_i, K'_i \,|\, i=1,\dots, n},
$$
subject to the relations
\begin{align*}
&K_i E_j =q^{a_{ij}} E_j K_i, &\qquad K_iK_j = K_jK_i, \\
 &K'_i E_j =q^{-a_{ij}} E_j K'_i, &\qquad K'_iK_j = K_jK'_i, \\
&K_i F_j = q^{-a_{ij}} F_j K_i, &\qquad K'_iK'_j = K'_jK'_i, \\
&K'_i F_j =q^{a_{ij}} F_j K'_i,
\end{align*}
the relations
$$
[E_i,F_j] =\delta_{ij}\hr{q-q^{-1}}\hr{K'_i-K_i},
$$
and the quantum Serre relations
\begin{align*}
&E_i^2 E_{i\pm1} - (q+q^{-1})E_i E_{i\pm1} E_i + E_{i\pm1} E_i^2 = 0, \\
&F_i^2 F_{i\pm1} - (q+q^{-1})F_i F_{i\pm1} F_i + F_{i\pm1} F_i^2 = 0, \\[2pt]
&[E_i,E_j] = [F_i, F_j] = 0 \quad\text{if}\quad |i-j|>1.
\end{align*}
The algebra $\Dgt_n$ is a Hopf algebra, with the comultiplication
\begin{align*}
&\Delta(E_i) = E_i\otimes 1+K_i\otimes E_i, & &\Delta(K_i) = K_i \otimes K_i, \\
&\Delta(F_i) = F_i\otimes K'_i+1\otimes F_i, & &\Delta(K'_i) = K'_i \otimes K'_i,
\end{align*}
the antipode
\begin{align*}
&S(E_i)=-K_i^{-1}E_i, & &S(K_i)=K_i^{-1}, \\
&S(F_i)=-F_iK_i, & &S(K'_i) = (K'_i)^{-1},
\end{align*}
and the counit
$$
\epsilon(K_i)=\epsilon(K'_i) = 1, \qquad \epsilon(E_i) = \epsilon(F_i)=0.
$$

The quantum group $U_q(\sl_{n+1})$ is defined as the quotient 
$$
U_q(\sl_{n+1}) = \Dgt_n / \ha{K_iK_i'=1 \,|\, i=1, \dots, n}.
$$
Note that $U_q(\sl_{n+1})$ inherits a well-defined Hopf algebra structure from~$\Dgt_n$. 
The subalgebra $U_q(\bgt) \subset \Dgt_n$ generated by all $K_i, E_i$ is a Hopf subalgebra in~$\Dgt_n$. The algebra $U_q(\bgt)$ is isomorphic to its image under the projection onto $U_q(\sl_{n+1})$ and is called the quantum Borel subalgebra of $U_q(\sl_{n+1})$. The Hopf algebra $\Dgt_n$ is nothing but the Drinfeld double of $U_q(\bgt)$.

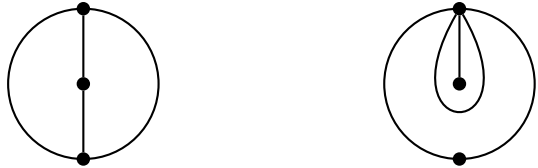
\begin{figure}[h]
\begin{tikzpicture}[every node/.style={inner sep=0, minimum size=0.15cm, circle, draw, fill=black}, x=0.25cm,y=0.25cm]

\node [thick] (t) at (-10,4) {};
\node [thick] (c) at (-10,0) {};
\node [thick] (b) at (-10,-4) {};

\draw[thick] (-10,0) circle (4);
\draw[thick] (t) -- (c) -- (b);

\node [thick] (t) at (10,4) {};
\node [thick] (c) at (10,0) {};
\node [thick] (b) at (10,-4) {};

\draw[thick] (10,0) circle (4);
\draw[thick] (t) -- (c);
\draw[thick] (t) to [out=-60,in=-120,min distance = 2cm] (t);

\end{tikzpicture}
\caption{Standard triangulation (left) and self-folded triangulation (right) of $D_{2,1}$, the punctured disk with a pair of marked points on its boundary.}
\label{triangulations}
\end{figure}

Let us recall the results of \cite{SS16} on the cluster algebraic realization of $U_q(\mathfrak{sl}_{n+1})$. We denote by $D_{2,1}$ a punctured disk with two marked points on its boundary, and consider the quantized moduli space $\mathcal{X}_{PGL_{n+1},D_{2,1}}$ of framed $PGL_{n+1}$-local systems on $D_{2,1}$. Let $\Theta_n^{\mathrm{std}}$ be the quantum cluster seed determined by the ideal triangulation of $D_{2,1}$ consisting of a pair of triangles glued by two sides; this triangulation is referred to as the standard one in Figure~\ref{triangulations}. The resulting quiver for $n=4$ is shown in Figure~\ref{fig-D4}. We shall write $\Dstd$ for the corresponding quantum torus algebra. 

\begin{theorem} \cite{SS16}
\label{original-embed}
There is an embedding of algebras
$$
\iota \colon \Dgt_n \longra \Dstd,
$$
such that for each Chevalley generator $E_i, F_i, K_i$ and $K'_i$ of $\Dgt_n$, there exists a quantum cluster chart, mutation-equivalent to $\Theta_n^{\mathrm{std}}$, in which that Chevalley generator is mapped under $\iota$ to a cluster monomial.
\end{theorem}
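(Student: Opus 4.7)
The plan is to construct the embedding $\iota$ by giving explicit formulas on the Chevalley generators, verify the defining relations of $\Dgt_n$ using the combinatorics of the underlying quiver, and finally exhibit the mutation sequences that witness the monomiality statement.

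First, I would define $\iota$ on generators. The quiver of $\Theta_n^{\text{std}}$ is obtained by amalgamating two copies of the triangle quiver from Figure~\ref{fig-triang} along two internal edges, corresponding to the two arcs running from the boundary marked points to the puncture. Label the $n$ non-frozen vertices appearing along the upper boundary of the upper triangle (and symmetrically the lower) by the simple roots $\alpha_1, \ldots, \alpha_n$ of $\sl_{n+1}$. Set $\iota(K_i)$ to be a monomial in the frozen variables along the upper boundary chain indexed by the position~$i$, and $\iota(K'_i)$ similarly for the lower chain. For $\iota(E_i)$ take a binomial $X_{a_i}+X_{b_i}$ supported on the upper half, and for $\iota(F_i)$ a binomial $X_{c_i}+X_{d_i}$ supported on the lower half; the exponent vectors are chosen so that $b_i-a_i = e'_{k_i}$ for a particular non-frozen vertex $k_i$ in the mutated seed $\mu_{k_i}(\Theta_n^{\text{std}})$, and symmetrically for $F_i$.

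Second, I would verify the defining relations of $\Dgt_n$. The commutation relations of $E_j, F_j$ with $K_i, K'_i$ reduce to computing pairings in the lattice $\Lambda$, and the positions of the monomials are set up so that the form $(\cdot,\cdot)$ yields exactly the Cartan entries $a_{ij}$. For the cross commutator $[E_i, F_j]$ with $i\ne j$, the supports of $E_i$ and $F_j$ will be chosen to commute in $\Dstd$, making the relation immediate; for $i=j$, the bracket expands to a four-term expression that, upon normal-ordering the $q$-commutators, collapses to the desired $(q-q^{-1})(K'_i-K_i)$. The quantum Serre relations between adjacent $E_i, E_{i\pm1}$ reduce, by the locality of the supporting subquiver, to a rank-two verification inside a small quantum torus realizing $U_q(\sl_3)$, which is a finite direct calculation.

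Third, the monomiality claim follows from the mutation formula~\eqref{reg-mut}. By construction $\iota(K_i)$ and $\iota(K'_i)$ are already monomials in $\Theta_n^{\text{std}}$. For each $i$, the condition $b_i-a_i=e'_{k_i}$ ensures that the binomial $\iota(E_i)=X_{a_i}+X_{b_i}$ is precisely the image under $\mu_{k_i}$ of the cluster variable attached to vertex $k_i$; hence in the chart $\mu_{k_i}(\Theta_n^{\text{std}})$ the generator $\iota(E_i)$ is a single cluster monomial, and symmetrically for $\iota(F_i)$. Injectivity of $\iota$ then follows from a PBW-type argument: the images of the Chevalley generators generate a subalgebra of $\Dstd$ whose associated graded with respect to a natural weight filtration matches that of $\Dgt_n$. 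The main obstacle I anticipate is the global calibration of the monomials $X_{a_i}, X_{b_i}, X_{c_i}, X_{d_i}$: one must pin down a single combinatorial rule for their exponents that simultaneously produces the correct $K$-weights, the vanishing of $[E_i,F_j]$ for $i\ne j$, the exact four-term collapse for $i=j$, and the Serre identities for $|i-j|=1$. The key insight making this possible is that the natural labelling of the boundary vertices of the $(n+1)$-triangulated triangles by simple roots forces the pairings to reproduce the Cartan matrix automatically.
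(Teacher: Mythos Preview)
The theorem is quoted from \cite{SS16} and is not proved in the present paper; the text following the statement only records the explicit formulas for $\iota$ in the chart $\Theta_n^{\mathrm{std}}$. So there is no proof here to compare against, only the shape of the actual embedding.

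Your overall architecture (define on generators, verify relations, produce mutations for monomiality, PBW for injectivity) is the right one, but the concrete description you commit to is wrong in a way that breaks the argument. You assert that $\iota(E_i)$ is a binomial $X_{a_i}+X_{b_i}$. In the actual embedding (see the formulas after the theorem and the example $E_1\mapsto S(X_1,X_2)$, $F_4\mapsto S(X_{24},X_{25})$), the image of $E_i$ is the nested sum $S(X_{j_1},\dots,X_{j_{2i}})$ supported on the $i$-th ``rhombus row'' of $\Qstd$, which has $2i$ terms, not two. Consequently a single mutation $\mu_{k_i}$ cannot turn $\iota(E_i)$ into a monomial; one needs a sequence of $2i-1$ mutations along the row, each application of \eqref{reg-mut}--\eqref{denom-mut} collapsing one term of the $S$-polynomial. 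Your ``$b_i-a_i=e'_{k_i}$'' mechanism is the $i=1$ special case of this, but it does not generalize.

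Two secondary issues follow from this. First, your description of where the generators live (``$n$ non-frozen vertices along the upper boundary'') does not match the quiver: each $E_i$ is supported on an entire horizontal row of $\Qstd$ of length $2i+1$, with $K_i$ the row monomial $P$ on $2i+1$ variables, not on a boundary chain. Second, the Serre-relation verification you sketch as a ``rank-two $U_q(\sl_3)$'' check is correct in spirit, but because the supports are longer than you assume, the local subquiver on which the check takes place is larger than a pair of adjacent vertices; the argument in \cite{SS16} handles this by an induction along the $S$-expansion rather than a single two-variable computation. If you correct the shape of $\iota(E_i),\iota(F_i)$ to the $S$-polynomials on the rhombus rows and replace ``one mutation'' by ``the obvious sequence of mutations along the row'', your plan becomes the one actually carried out in \cite{SS16}.
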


For a sequence of cluster variables $(X_1, \cdots, X_m)$ let us define
\begin{align*}
S(X_1, \dots, X_m) &= X_1(1+qX_2(1+qX_3(\dots(1+qX_m)))), \\
P(X_1, \dots, X_m) &= q^{m-1} X_1 \dots X_m.
\end{align*}
Then the explicit formulas for $\iota$ in case $n=4$ are as follows:
$$
E_i \longmapsto S_{[n^2,n^2+2n-1]} \qquad\text{and}\qquad K_i \longmapsto P_{[n^2,n^2+2n]}
$$
where
$$
S_{[i,j]} = S(X_i,X_{i+1},\dots,X_j), \qquad P_{[i,j]} = P(X_i,X_{i+1},\dots,X_j).
$$
Formulas for $F_{n+1-i}$ and $K'_{n+1-i}$ are obtained respectively from those for $E_i$ and $K_i$ by rotating the corresponding quiver $180^\circ$, for example, we have
\begin{align*}
&E_1 \longmapsto S(X_1,X_2) & &K_1 \longmapsto P(X_1,X_2,X_3) \\
&F_4 \longmapsto S(X_{24},X_{25}) & &K'_4 \longmapsto P(X_{24},X_{25},X_{16}).
\end{align*}

\begin{figure}[h]
\begin{tikzpicture}[every node/.style={inner sep=0, minimum size=0.4cm, thick, draw}, x=0.75cm, y=0.5cm]

\node (1) at (-4,-3) {\scriptsize{1}};
\node (2) at (0,-7) [circle] {\scriptsize{2}};
\node (3) at (4,-3) {\scriptsize{3}};
\node (4) at (-4,-1) {\scriptsize{4}};
\node (5) at (-3,-2) [circle] {\scriptsize{5}};
\node (6) at (0,-5) [circle] {\scriptsize{6}};
\node (7) at (3,-2) [circle] {\scriptsize{7}};
\node (8) at (4,-1) {\scriptsize{8}};
\node (9) at (-4,1) {\scriptsize{9}};
\node (10) at (-3,0) [circle] {\scriptsize{10}};
\node (11) at (-2,-1) [circle] {\scriptsize{11}};
\node (12) at (0,-3) [circle] {\scriptsize{12}};
\node (13) at (2,-1) [circle] {\scriptsize{13}};
\node (14) at (3,0) [circle] {\scriptsize{14}};
\node (15) at (4,1) {\scriptsize{15}};
\node (16) at (-4,3) {\scriptsize{16}};
\node (17) at (-3,2) [circle] {\scriptsize{17}};
\node (18) at (-2,1) [circle] {\scriptsize{18}};
\node (19) at (-1,0) [circle] {\scriptsize{19}};
\node (20) at (0,-1) [circle] {\scriptsize{20}};
\node (21) at (1,0) [circle] {\scriptsize{21}};
\node (22) at (2,1) [circle] {\scriptsize{22}};
\node (23) at (3,2) [circle] {\scriptsize{23}};
\node (24) at (4,3) {\scriptsize{24}};
\node (25) at (0,7) [circle] {\scriptsize{25}};
\node (26) at (0,5) [circle] {\scriptsize{26}};
\node (27) at (0,3) [circle] {\scriptsize{27}};
\node (28) at (0,1) [circle] {\scriptsize{28}};

\draw [->, thick] (1) -- (2);
\draw [->, thick] (2) -- (3);

\draw [->, thick] (4) -- (5);
\draw [->, thick] (5) -- (6);
\draw [->, thick] (6) -- (7);
\draw [->, thick] (7) -- (8);

\draw [->, thick] (9) -- (10);
\draw [->, thick] (10) -- (11);
\draw [->, thick] (11) -- (12);
\draw [->, thick] (12) -- (13);
\draw [->, thick] (13) -- (14);
\draw [->, thick] (14) -- (15);

\draw [->, thick] (16) -- (17);
\draw [->, thick] (17) -- (18);
\draw [->, thick] (18) -- (19);
\draw [->, thick] (19) -- (20);
\draw [->, thick] (20) -- (21);
\draw [->, thick] (21) -- (22);
\draw [->, thick] (22) -- (23);
\draw [->, thick] (23) -- (24);

\draw [->, thick] (24) -- (25);
\draw [->, thick] (25) -- (16);

\draw [->, thick] (15) -- (23);
\draw [->, thick] (23) -- (26);
\draw [->, thick] (26) -- (17);
\draw [->, thick] (17) -- (9);

\draw [->, thick] (8) -- (14);
\draw [->, thick] (14) -- (22);
\draw [->, thick] (22) -- (27);
\draw [->, thick] (27) -- (18);
\draw [->, thick] (18) -- (10);
\draw [->, thick] (10) -- (4);

\draw [->, thick] (3) -- (7);
\draw [->, thick] (7) -- (13);
\draw [->, thick] (13) -- (21);
\draw [->, thick] (21) -- (28);
\draw [->, thick] (28) -- (19);
\draw [->, thick] (19) -- (11);
\draw [->, thick] (11) -- (5);
\draw [->, thick] (5) -- (1);

\draw [->, thick, dashed] (1) -- (4);
\draw [->, thick, dashed] (4) -- (9);
\draw [->, thick, dashed] (9) -- (16);
\draw [->, thick, dashed] (24) -- (15);
\draw [->, thick, dashed] (15) -- (8);
\draw [->, thick, dashed] (8) -- (3);

\draw [->, thick] (2) -- (5);
\draw [->, thick] (5) -- (10);
\draw [->, thick] (10) -- (17);
\draw [->, thick] (17) -- (25);
\draw [->, thick] (25) -- (23);
\draw [->, thick] (23) -- (14);
\draw [->, thick] (14) -- (7);
\draw [->, thick] (7) -- (2);

\draw [->, thick] (6) -- (11);
\draw [->, thick] (11) -- (18);
\draw [->, thick] (18) -- (26);
\draw [->, thick] (26) -- (22);
\draw [->, thick] (22) -- (13);
\draw [->, thick] (13) -- (6);

\draw [->, thick] (12) -- (19);
\draw [->, thick] (19) -- (27);
\draw [->, thick] (27) -- (21);
\draw [->, thick] (21) -- (12);

\end{tikzpicture}
\caption{The quiver $\Qc^{\mathrm{std}}_4$.}
\label{fig-D4}
\end{figure}

\begin{remark} Figure~\ref{fig-D4} contains four rhombi: the first one with vertices 1, 2, 3, 28, the second one with vertices 4, 6, 8, 27, the third one with vertices 9, 12, 15, 26, and finally the fourth one with vertices 16, 20, 24, 25. Note that for each simple root $\alpha_i$, the image of the corresponding $\sl_2$-triple depends only on cluster coordinates lying on the $i$-th rhombus.
\end{remark}

\subsection{Positive representations of $U_q\hr{\mathfrak{sl}_{n+1}}$}
\label{pos-rep-single}
The positive representations of $U_q(\mathfrak{sl}_{n+1})$ are obtained by restricting positive representations of the quantum torus algebra $\Dstd$. As explained in Section~\ref{positive-rep-sec}, positive representations of the algebra $\Dstd$ are labelled by its central characters. The center of $\Dstd$ can be described as follows. The quiver $\Qc_n^{\mathrm{std}}$ contains $n$ distinguished counter-clockwise oriented cycles $C_1, \dots, C_{n}$; in the notations of Figure~\ref{fig-D4} they are
\begin{align*}
C_1 &= \hc{2, 5, 10, 17, 25, 23, 14, 7}; &
C_3 &= \hc{12, 19, 27, 21}; \\
C_2 &= \hc{6, 11, 18, 26, 22, 13}; &
C_4 &= \hc{20, 28}.
\end{align*}
The 2-cycle $C_n$ is a degenerate one: its arrows $v \to w$ and $w \to v$ are cancelled out when the two triangles are amalgamated. For $j = 1, \dots, n$, we define cluster monomials $\Omega_j\in\Dstd$ by
\beq
\label{single-center-gens}
\Omega_{j} = X_{\sum_{k\in C_j}e_k}.
\eeq

\begin{lemma}
The center of the quantum torus algebra $\Dstd$ is the Laurent polynomial ring generated by the elements
\beq
\label{center-1}
\Omega_{j} \qquad\text{and}\qquad \iota\hr{K_jK_j'}
\eeq
for all $1 \le j \le n$.
\end{lemma}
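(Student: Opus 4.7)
The plan is to reduce the statement to a lattice-theoretic claim and verify it on the quiver $\Qc_n^{\mathrm{std}}$. Since the center of a quantum torus algebra $\Tc_\Lambda$ is the Laurent polynomial subring generated by the monomials $X_\mu$ with $\mu$ in the kernel $Z_\Lambda = \hc{\mu \in \Lambda : (\mu,\cdot)=0}$, it suffices to show that the $2n$ exponent vectors corresponding to the listed elements form a $\Z$-basis of $Z_\Lambda$.

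I would first establish that $\rk Z_\Lambda = 2n$. Geometrically, the rank of $Z_\Lambda$ counts the Casimir functions on $\Xc_{PGL_{n+1},D_{2,1}}$: one block of $n$ Casimirs is attached to the puncture (parametrizing eigenvalues of the monodromy around it) and another to the unique boundary component. This predicts the corank of $\eps$ to be $2n$, matching the expected dimension $n^2+n$ of the generic symplectic leaves of the classical chart $\Xc_{\Theta_n^{\mathrm{std}}}$. Alternatively, the corank can be computed by induction on $n$, exploiting the nested structure of the cycles $C_j$; I would use whichever proof is cleaner to present.

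Next, I would verify that each proposed generator is central. For $\Omega_j$, this amounts to checking that $\sum_{k \in C_j}\eps_{k,\ell}=0$ for all $\ell \in I$, where $v_j = \sum_{k \in C_j}e_k$. This follows from the fact that $C_j$ is an oriented cycle encircling the puncture at a fixed combinatorial distance in the amalgamated $(n+1)$-triangulation: arrows from any external vertex into $C_j$ come in cancelling in/out pairs, while cycle-internal contributions balance by orientation. For $\iota(K_jK_j')$, I would use the explicit formulas for $\iota$ from Section~\ref{subsec-embed} to write $\iota(K_jK_j')=X_{w_j}$ as a single Laurent monomial in the generators of $\Dstd$, and then verify $(w_j,e_\ell)=0$ for every $\ell$ by inspection on the quiver. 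The key observation is that $w_j$ is supported on the rhombi for $\alpha_j$ and $\alpha_{n+1-j}$ together with certain boundary frozen vertices, with the non-commuting contributions between the two rhombi cancelling pairwise.

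Finally, $\Z$-linear independence of $v_1,\dots,v_n,w_1,\dots,w_n$ is easy: the $v_j$ are supported away from the frozen vertices, while the $w_j$ project to pairwise distinct nonzero elements of the frozen sublattice, indexed by the simple roots. Combined with the corank computation, this identifies the $2n$ vectors with a basis of $Z_\Lambda$. The main obstacle I anticipate is the bookkeeping for $\iota(K_jK_j')$: neither $\iota(K_j)$ nor $\iota(K_j')$ is individually central in $\Dstd$, so combining them requires a careful accounting of all arrows connecting the rhombi for $\alpha_j$ and $\alpha_{n+1-j}$ to each other and to the frozen boundary, verifying that the imbalances cancel only after multiplication. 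By contrast, the cycle check for $\Omega_j$ is essentially immediate once the cycle geometry is identified.
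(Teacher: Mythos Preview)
Your approach is essentially the same as the paper's, which simply says the lemma ``follows from counting the rank of the adjacency matrix of the quiver $\Qc_n^{\mathrm{std}}$, and an observation that the monomials are primitive and independent.'' You have filled in the details the paper leaves implicit.

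One small gap: in your final step you conclude that $2n$ independent vectors in the rank-$2n$ lattice $Z_\Lambda$ form a basis, but independence plus correct count only gives a finite-index sublattice. The paper gestures at this by adding the word ``primitive,'' though strictly speaking even that is not sufficient in general; what is actually needed is that the sublattice generated by $v_1,\dots,v_n,w_1,\dots,w_n$ is saturated in $\Lambda$. In this particular case one can verify saturation directly (for instance by exhibiting coordinates on $\Lambda$ in which the $v_j$ and $w_j$ become part of a standard basis, or by checking that the relevant minors are $\pm1$), but you should flag that this step requires a brief check rather than following automatically from independence.
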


\begin{proof}
The statement of the lemma follows from counting the rank of the adjacency matrix of the quiver $\Qc_n^{\mathrm{std}}$, and an observation that the monomials~\eqref{center-1} are primitive and independent.
\end{proof}

\begin{defn}
\label{pos-rep-def}
Let
$$
\mathfrak{h}_n = \hc{\hr{\lambda_0,\lambda_1,\ldots,\lambda_n}\in\R^{n+1} \bigg | \sum_{k=0}^n\lambda_k=0}
$$
Then for $\lambda\in\mathfrak{h}_n$, the positive representation $\Pc_\lambda$ of $U_q(\sl_{n+1})$ is the restriction to $U_q(\sl_{n+1})$ of the positive representation of the quantum torus algebra $\Dstd$ where for all $1\le j\le n$ we have
$$
\Omega_j\longmapsto e^{2\pi \hbar \hr{\lambda_j-\lambda_{j-1}}}, \qquad
\iota\hr{K_jK_j'}\longmapsto 1.
$$
\end{defn}

\subsection{Diagonally embedded quantum group}
\label{diag-embed-std}

\begin{figure}[b]
\begin{tikzpicture}[every node/.style={inner sep=0, minimum size=0.4cm, thick, draw, circle}, x=0.75cm, y=0.5cm]

\node[rectangle] (1) at (-8,-3) {\scriptsize{1}};
\node (2) at (-4,-7) {\scriptsize{2}};
\node (3) at (0,-3) {\scriptsize{3}};
\node (4) at (4,-7) {\scriptsize{4}};
\node[rectangle] (5) at (8,-3) {\scriptsize{5}};
\node[rectangle] (6) at (-8,-1) {\scriptsize{6}};
\node (7) at (-7,-2) {\scriptsize{7}};
\node (8) at (-4,-5) {\scriptsize{8}};
\node (9) at (-1,-2) {\scriptsize{9}};
\node (10) at (0,-1) {\scriptsize{10}};
\node (11) at (1,-2) {\scriptsize{11}};
\node (12) at (4,-5) {\scriptsize{12}};
\node (13) at (7,-2) {\scriptsize{13}};
\node[rectangle] (14) at (8,-1) {\scriptsize{14}};
\node[rectangle] (15) at (-8,1) {\scriptsize{15}};
\node (16) at (-7,0) {\scriptsize{16}};
\node (17) at (-6,-1) {\scriptsize{17}};
\node (18) at (-4,-3) {\scriptsize{18}};
\node (19) at (-2,-1) {\scriptsize{19}};
\node (20) at (-1,0) {\scriptsize{20}};
\node (21) at (0,1) {\scriptsize{21}};
\node (22) at (1,0) {\scriptsize{22}};
\node (23) at (2,-1) {\scriptsize{23}};
\node (24) at (4,-3) {\scriptsize{24}};
\node (25) at (6,-1) {\scriptsize{25}};
\node (26) at (7,0) {\scriptsize{26}};
\node[rectangle] (27) at (8,1) {\scriptsize{27}};
\node[rectangle] (28) at (-8,3) {\scriptsize{28}};
\node (29) at (-7,2) {\scriptsize{29}};
\node (30) at (-6,1) {\scriptsize{30}};
\node (31) at (-5,0) {\scriptsize{31}};
\node (32) at (-4,-1) {\scriptsize{32}};
\node (33) at (-3,0) {\scriptsize{33}};
\node (34) at (-2,1) {\scriptsize{34}};
\node (35) at (-1,2) {\scriptsize{35}};
\node (36) at (0,3) {\scriptsize{36}};
\node (37) at (1,2) {\scriptsize{37}};
\node (38) at (2,1) {\scriptsize{38}};
\node (39) at (3,0) {\scriptsize{39}};
\node (40) at (4,-1) {\scriptsize{40}};
\node (41) at (5,0) {\scriptsize{41}};
\node (42) at (6,1) {\scriptsize{42}};
\node (43) at (7,2) {\scriptsize{43}};
\node[rectangle] (44) at (8,3) {\scriptsize{44}};
\node (45) at (4,7) {\scriptsize{45}};
\node (46) at (4,5) {\scriptsize{46}};
\node (47) at (4,3) {\scriptsize{47}};
\node (48) at (4,1) {\scriptsize{48}};
\node (49) at (-4,7) {\scriptsize{49}};
\node (50) at (-4,5) {\scriptsize{50}};
\node (51) at (-4,3) {\scriptsize{51}};
\node (52) at (-4,1) {\scriptsize{52}};

\draw [->, thick] (1) -- (2);
\draw [->, thick] (2) -- (3);
\draw [->, thick] (3) -- (4);
\draw [->, thick] (4) -- (5);

\draw [->, thick] (6) -- (7);
\draw [->, thick] (7) -- (8);
\draw [->, thick] (8) -- (9);
\draw [->, thick] (9) -- (10);
\draw [->, thick] (10) -- (11);
\draw [->, thick] (11) -- (12);
\draw [->, thick] (12) -- (13);
\draw [->, thick] (13) -- (14);

\draw [->, thick] (15) -- (16);
\draw [->, thick] (16) -- (17);
\draw [->, thick] (17) -- (18);
\draw [->, thick] (18) -- (19);
\draw [->, thick] (19) -- (20);
\draw [->, thick] (20) -- (21);
\draw [->, thick] (21) -- (22);
\draw [->, thick] (22) -- (23);
\draw [->, thick] (23) -- (24);
\draw [->, thick] (24) -- (25);
\draw [->, thick] (25) -- (26);
\draw [->, thick] (26) -- (27);

\draw [->, thick] (28) -- (29);
\draw [->, thick] (29) -- (30);
\draw [->, thick] (30) -- (31);
\draw [->, thick] (31) -- (32);
\draw [->, thick] (32) -- (33);
\draw [->, thick] (33) -- (34);
\draw [->, thick] (34) -- (35);
\draw [->, thick] (35) -- (36);
\draw [->, thick] (36) -- (37);
\draw [->, thick] (37) -- (38);
\draw [->, thick] (38) -- (39);
\draw [->, thick] (39) -- (40);
\draw [->, thick] (40) -- (41);
\draw [->, thick] (41) -- (42);
\draw [->, thick] (42) -- (43);
\draw [->, thick] (43) -- (44);

\draw [->, thick] (44) -- (45);
\draw [->, thick] (45) -- (36);
\draw [->, thick] (36) -- (49);
\draw [->, thick] (49) -- (28);

\draw [->, thick] (27) -- (43);
\draw [->, thick] (43) -- (46);
\draw [->, thick] (46) -- (37);
\draw [->, thick] (37) -- (21);
\draw [->, thick] (21) -- (35);
\draw [->, thick] (35) -- (50);
\draw [->, thick] (50) -- (29);
\draw [->, thick] (29) -- (15);

\draw [->, thick] (14) -- (26);
\draw [->, thick] (26) -- (42);
\draw [->, thick] (42) -- (47);
\draw [->, thick] (47) -- (38);
\draw [->, thick] (38) -- (22);
\draw [->, thick] (22) -- (10);
\draw [->, thick] (10) -- (20);
\draw [->, thick] (20) -- (34);
\draw [->, thick] (34) -- (51);
\draw [->, thick] (51) -- (30);
\draw [->, thick] (30) -- (16);
\draw [->, thick] (16) -- (6);

\draw [->, thick] (5) -- (13);
\draw [->, thick] (13) -- (25);
\draw [->, thick] (25) -- (41);
\draw [->, thick] (41) -- (48);
\draw [->, thick] (48) -- (39);
\draw [->, thick] (39) -- (23);
\draw [->, thick] (23) -- (11);
\draw [->, thick] (11) -- (3);
\draw [->, thick] (3) -- (9);
\draw [->, thick] (9) -- (19);
\draw [->, thick] (19) -- (33);
\draw [->, thick] (33) -- (52);
\draw [->, thick] (52) -- (31);
\draw [->, thick] (31) -- (17);
\draw [->, thick] (17) -- (7);
\draw [->, thick] (7) -- (1);

\draw [->, thick] (18) -- (31);
\draw [->, thick] (31) -- (51);
\draw [->, thick] (51) -- (33);
\draw [->, thick] (33) -- (18);

\draw [->, thick] (8) -- (17);
\draw [->, thick] (17) -- (30);
\draw [->, thick] (30) -- (50);
\draw [->, thick] (50) -- (34);
\draw [->, thick] (34) -- (19);
\draw [->, thick] (19) -- (8);

\draw [->, thick] (2) -- (7);
\draw [->, thick] (7) -- (16);
\draw [->, thick] (16) -- (29);
\draw [->, thick] (29) -- (49);
\draw [->, thick] (49) -- (35);
\draw [->, thick] (35) -- (20);
\draw [->, thick] (20) -- (9);
\draw [->, thick] (9) -- (2);

\draw [->, thick] (24) -- (39);
\draw [->, thick] (39) -- (47);
\draw [->, thick] (47) -- (41);
\draw [->, thick] (41) -- (24);

\draw [->, thick] (12) -- (23);
\draw [->, thick] (23) -- (38);
\draw [->, thick] (38) -- (46);
\draw [->, thick] (46) -- (42);
\draw [->, thick] (42) -- (25);
\draw [->, thick] (25) -- (12);

\draw [->, thick] (4) -- (11);
\draw [->, thick] (11) -- (22);
\draw [->, thick] (22) -- (37);
\draw [->, thick] (37) -- (45);
\draw [->, thick] (45) -- (43);
\draw [->, thick] (43) -- (26);
\draw [->, thick] (26) -- (13);
\draw [->, thick] (13) -- (4);

\draw [->, thick, dashed] (1) -- (6);
\draw [->, thick, dashed] (6) -- (15);
\draw [->, thick, dashed] (15) -- (28);

\draw [->, thick, dashed] (44) -- (27);
\draw [->, thick, dashed] (27) -- (14);
\draw [->, thick, dashed] (14) -- (5);

\end{tikzpicture}
\caption{Quivers $\Qc_4^{\mathrm{sq}}$.}
\label{Z4}
\end{figure}

Let $\Dsq \subset \Dstd \otimes \Dstd$ be the quantum torus obtained by amalgamating two copies of $\Dstd$ as shown in Figure~\ref{Z4}. Consider a homomorphism $\tau \colon \Dgt_n \longra \Dstd \otimes \Dstd$ defined by
\beq
\label{tau}
\tau = (\iota \otimes \iota)\Delta.
\eeq
As explained in~\cite{SS16}, the image $\tau(\Dgt_n)$ lies in the subalgebra $\Dsq$. Let us describe this embedding
$$
\tau \colon \Dgt_n \longra \Dsq
$$
via the running example for when $n=4$. In the notations of Figure~\ref{Z4} we have
$$
E_i \longmapsto S_{[2n^2-n,2n^2+3n-1]} \qquad\text{and}\qquad K_i \longmapsto P_{[2n^2-n,2n^2+3n]}
$$
Formulas for $F_{n+1-i}$ and $K'_{n+1-i}$ are obtained respectively from those for $E_i$ and $K_i$ by rotating the corresponding quiver $180^\circ$, for example, we have
\begin{align*}
&E_1 \longmapsto S(X_1,X_2,X_3,X_4) &
&K_1 \longmapsto P(X_1,X_2,X_3,X_4,X_5) \\
&F_4 \longmapsto S(X_{44},X_{45},X_{36},X_{49}) &
&K'_4 \longmapsto P(X_{44},X_{45},X_{36},X_{49},X_{28}).
\end{align*}

\begin{lemma}
The center of the quantum torus algebra $\Dsq$ is the Laurent polynomial ring generated by the elements 
$$
\Omega_j^1, \qquad
\Omega_j^2, \qquad\text{and}\qquad
\tau\hr{K_jK_j'}
$$
for $j = 1, \dots, n$. Here $\Omega_j^1$ and $\Omega_k^2$ are the central elements~\eqref{single-center-gens} corresponding to the left and right $\Qstd$-subquivers of the quiver $\Qsq$.
\end{lemma}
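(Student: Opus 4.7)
The approach parallels the proof of the previous lemma. Since the center of a quantum torus algebra $\mathcal{T}_\Lambda$ is the Laurent polynomial ring on a basis of the kernel $Z_\Lambda = \ker(\cdot,\cdot)$, the statement reduces to two tasks: exhibiting $3n$ primitive, mutually independent lattice vectors whose associated monomials are central, and then showing that $\rk Z_{\Lambda^{\mathrm{sq}}_n} = 3n$ by a direct rank computation on the adjacency matrix of $\Qsq$.

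First, I would verify centrality of the proposed generators. The cycles supporting $\Omega_j^1$ and $\Omega_j^2$ sit entirely within the left and right copies of $\Qstd$ inside $\Qsq$, respectively, and in particular do not pass through any of the vertices identified under amalgamation. Their centrality in $\Dsq$ therefore reduces immediately to the centrality of $\Omega_j$ in $\Dstd$ already established. For $\tau(K_jK_j') = \iota(K_jK_j') \otimes \iota(K_jK_j')$, the exponent vector in $\Lambda^{\mathrm{sq}}_n$ is the sum of two copies of the central exponent vector of $\iota(K_jK_j')$, one in each copy of $\Dstd$. The only possible obstruction to centrality in $\Dsq$ comes from the arrows incident to the amalgamated boundary vertices, and I would dispatch this by a short direct check using the explicit expression of $K_jK_j'$ as a monomial in the frozen variables along the boundary arcs of $\Qstd$ that are glued in the amalgamation. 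Independence and primitivity of the $3n$ listed elements are then immediate by inspection: $\Omega_j^1$ and $\Omega_j^2$ have disjoint supports, the vectors $\tau(K_jK_j')$ are supported on the amalgamated vertices with the remaining $\Omega$-supports, and all exponents appearing are $0$ or $1$.

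The remaining and principal step is the dimension count $\rk Z_{\Lambda^{\mathrm{sq}}_n} = 3n$, which I expect to be the main obstacle. I would argue it by direct analysis of the kernel of the amalgamated skew form. Any $\lambda \in Z_{\Lambda^{\mathrm{sq}}_n}$ restricts on each copy of $\Lambda^{\mathrm{std}}_n$ to a vector that lies in the kernel up to a defect supported on the amalgamated vertices, and matching these defects across the two copies along the amalgamation provides precisely $n$ additional independent kernel vectors beyond the $2n$-dimensional kernels inherited from the two copies individually. Combined with the $3n$ central elements constructed above, this identifies them as a free basis of $Z_{\Lambda^{\mathrm{sq}}_n}$ and completes the proof.
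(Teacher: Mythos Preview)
Your proposal is correct and follows essentially the same approach as the paper: the paper's proof is a single sentence stating that the result follows from computing the rank of the adjacency matrix of $\Qsq$, relying on the same primitivity and independence observations as in the preceding lemma. Your write-up simply unpacks that sentence into its constituent steps (centrality, primitivity/independence, rank count), so there is no substantive difference in method.
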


\begin{proof}
Again, the proof follows from calculating the rank of the adjacency matrix of $\Qsq$.
\end{proof}

Now let $\mathcal{P}_\lambda, \mathcal{P}_\mu$ be two positive representations of $U_q(\mathfrak{sl}_{n+1})$, see Definition~\ref{pos-rep-def}. Then their tensor product $\mathcal{P}_{\lambda}\otimes\mathcal{P}_\mu$ is the restriction to $U_q\hr{\mathfrak{sl}_{n+1}}$ of the positive representation of $\Dsq$ where for all $1\le j\le n$ we have
$$
\Omega^1_{j} \longmapsto e^{2\pi \hbar \hr{\lambda_j-\lambda_{j-1}}}, \qquad
\Omega^2_{j} \longmapsto e^{2\pi \hbar \hr{\mu_j-\mu_{j-1}}}, \qquad
\tau\hr{K_jK_j'} \longmapsto 1.
$$

\section{Cluster combinatorics of directed networks}
\label{sec-net}

\subsection{Planar directed networks}

Let $D_k$ be a closed disk with $k$ boundary points removed. Recall~\cite{Pos06,GSV12, GSV09} that a directed network in $D_k$ is a planar oriented graph drawn in $D_k$ with vertices of three types: \emph{black} vertices of valency at least three, which have exactly one incoming edge, \emph{white} vertices of valency at least three, which have exactly one outgoing edge, and \emph{terminal} vertices of valency one, which we regard as lying on the boundary~$\partial D_k$. We will refer to terminal vertices as sources and sinks depending on the orientation of the adjacent edge. Sometimes we will omit drawing terminal vertices, and simply draw hanging edges instead. For example, Figure~\ref{fig-dir-net} shows a directed network without terminal vertices. Here we only consider networks that satisfy the following two conditions, which we incorporate in the definition of a network:
\begin{enumerate}
\item there are no oriented cycles;
\item a terminal vertex can only share an edge with a white vertex.
\end{enumerate}
Given a directed network $\mathcal{N}$, one can recover a quiver $\Qc = \Qc\hr{\Nc}$ in the following way:
\begin{itemize}
\item draw a vertex of the quiver $\Qc$ inside each face of the network $\Nc$ that has vertices of different colors;
\item for each edge $e \in E(\Nc)$ with vertices of different colors, draw an arrow of the quiver~$\Qc$ that crosses $e$ in such a way that the white vertex lies on the right.
\item for each $e \in E(\Nc)$ joining a terminal vertex with an interior (white) vertex, draw a \emph{dotted} arrow of weight $\frac{1}{2}$ crossing $e$ in such a way that the interior vertex lies on the right. 
\end{itemize}
The quiver $\Qc$ obtained by this procedure is a planar directed graph in $D_k$ with the property that all of its faces are oriented either clockwise (those dual to white vertices) or counterclockwise (those dual to black ones). We regard the nodes of $\Qc$ dual to faces that meet a boundary component of $D_k$ as being frozen. It is important to note that $\Qc$ does not actually depend on direction of the edges of the network $\Nc$, but only on the underlying bi-colored graph. Moreover, the quiver $\Qc$ is unchanged if we contract or create edges in $\Nc$ between vertices of the same color, as shown on Figure~\ref{net-id}. For this reason, we consider two networks to be equivalent if they can be obtained from each other by applying such transformations.

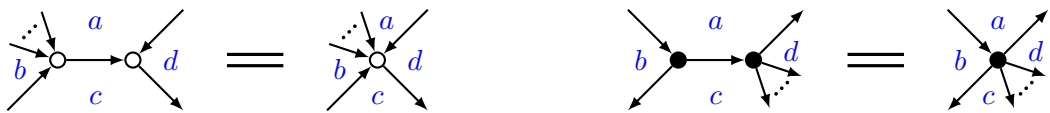
\begin{figure}[t]
\begin{tikzpicture}[every node/.style={inner sep=0, minimum size=0.2cm, circle, draw, thick}, x=0.25cm,y=0.25cm]

\node[draw=none] (a1') at (0,1) {};
\node[draw=none] (a1) at (2,3) {};
\node[draw=none] (a2) at (0,-3) {};
\node[fill=white] (a3) at (3,0) {};
\node[fill=white] (a4) at (7,0) {};
\node[draw=none] (a5) at (10,3) {};
\node[draw=none] (a6) at (10,-3) {};

\node[fill, minimum size=0.02cm] at (1.2,1.2) {};
\node[fill, minimum size=0.02cm] at (1.5,1.5) {};
\node[fill, minimum size=0.02cm] at (1.8,1.8) {};
\node[draw=none, text=blue] at (1,-0.5) {$b$};
\node[draw=none, text=blue] at (5,2) {$a$};
\node[draw=none, text=blue] at (9,0) {$d$};
\node[draw=none, text=blue] at (5,-2) {$c$};

\draw[thick, ->] (a1') to (a3);
\draw[thick, ->] (a1) to (a3);
\draw[thick, ->] (a2) to (a3);
\draw[thick, ->] (a3) to (a4);
\draw[thick, ->] (a5) to (a4);
\draw[thick, ->] (a4) to (a6);

\node[draw=none] (a7') at (17,1) {};
\node[draw=none] (a7) at (19,3) {};
\node[draw=none] (a8) at (17,-3) {};
\node[fill=white] (a9) at (20,0) {};
\node[draw=none] (a10) at (23,3) {};
\node[draw=none] (a11) at (23,-3) {};

\node[fill, minimum size=0.02cm] at (18.2,1.2) {};
\node[fill, minimum size=0.02cm] at (18.5,1.5) {};
\node[fill, minimum size=0.02cm] at (18.8,1.8) {};
\node[draw=none, text=blue] at (18,-0.5) {$b$};
\node[draw=none, text=blue] at (20.5,2) {$a$};
\node[draw=none, text=blue] at (22,0) {$d$};
\node[draw=none, text=blue] at (20,-2) {$c$};

\draw[thick, ->] (a7') to (a9);
\draw[thick, ->] (a7) to (a9);
\draw[thick, ->] (a8) to (a9);
\draw[thick, ->] (a10) to (a9);
\draw[thick, ->] (a9) to (a11);

\draw [-, very thick] (12,-0.3) to (15,-0.3);
\draw [-, very thick] (12,0.3) to (15,0.3);

\node[draw=none] (a1) at (33,3) {};
\node[draw=none] (a2) at (33,-3) {};
\node[fill] (a3) at (36,0) {};
\node[fill] (a4) at (40,0) {};
\node[draw=none] (a5) at (43,3) {};
\node[draw=none] (a6) at (43,-1) {};
\node[draw=none] (a6') at (41,-3) {};

\node[fill, minimum size=0.02cm] at (41.8,-1.2) {};
\node[fill, minimum size=0.02cm] at (41.5,-1.5) {};
\node[fill, minimum size=0.02cm] at (41.2,-1.8) {};
\node[draw=none, text=blue] at (34,0) {$b$};
\node[draw=none, text=blue] at (38,2) {$a$};
\node[draw=none, text=blue] at (42,0.5) {$d$};
\node[draw=none, text=blue] at (38,-2) {$c$};

\draw[thick, ->] (a1) to (a3);
\draw[thick, ->] (a3) to (a2);
\draw[thick, ->] (a3) to (a4);
\draw[thick, ->] (a4) to (a5);
\draw[thick, ->] (a4) to (a6);
\draw[thick, ->] (a4) to (a6');

\node[draw=none] (a7) at (50,3) {};
\node[draw=none] (a8) at (50,-3) {};
\node[fill] (a9) at (53,0) {};
\node[draw=none] (a10) at (56,3) {};
\node[draw=none] (a11) at (56,-1) {};
\node[draw=none] (a11') at (54,-3) {};

\node[fill, minimum size=0.02cm] at (54.8,-1.2) {};
\node[fill, minimum size=0.02cm] at (54.5,-1.5) {};
\node[fill, minimum size=0.02cm] at (54.2,-1.8) {};
\node[draw=none, text=blue] at (51,0) {$b$};
\node[draw=none, text=blue] at (53,2) {$a$};
\node[draw=none, text=blue] at (55,0.5) {$d$};
\node[draw=none, text=blue] at (52.5,-2) {$c$};

\draw[thick, ->] (a7) to (a9);
\draw[thick, ->] (a9) to (a8);
\draw[thick, ->] (a9) to (a10);
\draw[thick, ->] (a9) to (a11);
\draw[thick, ->] (a9) to (a11');

\draw [-, very thick] (45,-0.3) to (48,-0.3);
\draw [-, very thick] (45,0.3) to (48,0.3);

\end{tikzpicture}
\caption{Network identities.}
\label{net-id}
\end{figure}

\begin{figure}[b]
\begin{tikzpicture}[every node/.style={inner sep=0, minimum size=0.2cm, draw, thick, circle}, x=0.3cm, y=0.346cm]

\draw[gray] (0,0) to (10,15) to (20,0) to (0,0);

\foreach \i in {4,8,12,16}
	\node[blue, thick, rectangle, inner sep=0, minimum size=0.35cm, fill=white] (\i_0) at (\i,0) {};
\foreach \i in {2,18}
	\node[blue, thick, rectangle, inner sep=0, minimum size=0.35cm, fill=white] (\i_3) at (\i,3) {};
\foreach \i in {4,16}
	\node[blue, thick, rectangle, inner sep=0, minimum size=0.35cm, fill=white] (\i_6) at (\i,6) {};
\foreach \i in {6,14}
	\node[blue, thick, rectangle, inner sep=0, minimum size=0.35cm, fill=white] (\i_9) at (\i,9) {};
\foreach \i in {8,12}
	\node[blue, thick, rectangle, inner sep=0, minimum size=0.35cm, fill=white] (\i_12) at (\i,12) {};
\foreach \i in {6,10,14}
	\node[blue, thick, inner sep=0, minimum size=0.35cm, fill=white] (\i_3) at (\i,3) {};
\foreach \i in {8,12}
	\node[blue, thick, inner sep=0, minimum size=0.35cm, fill=white] (\i_6) at (\i,6) {};
\foreach \i in {10}
	\node[blue, thick, inner sep=0, minimum size=0.35cm, fill=white] (\i_9) at (\i,9) {};

\draw[blue, thick,->] (4_0) to (6_3);
\draw[blue, thick,->] (6_3) to (8_6);
\draw[blue, thick,->] (8_6) to (10_9);
\draw[blue, thick,->] (10_9) to (12_12);
\draw[blue, thick,->] (12_12) to (8_12);
\draw[blue, thick,->] (8_12) to (10_9);
\draw[blue, thick,->] (10_9) to (12_6);
\draw[blue, thick,->] (12_6) to (14_3);
\draw[blue, thick,->] (14_3) to (16_0);
\draw[blue, thick,->] (16_0) to (18_3);
\draw[blue, thick,->] (18_3) to (14_3);
\draw[blue, thick,->] (14_3) to (10_3);
\draw[blue, thick,->] (10_3) to (6_3);
\draw[blue, thick,->] (6_3) to (2_3);
\draw[blue, thick,->] (2_3) to (4_0);

\draw[blue, thick,->] (4_6) to (6_3);
\draw[blue, thick,->] (6_3) to (8_0);
\draw[blue, thick,->] (8_0) to (10_3);
\draw[blue, thick,->] (10_3) to (12_6);
\draw[blue, thick,->] (12_6) to (14_9);
\draw[blue, thick,->] (14_9) to (10_9);
\draw[blue, thick,->] (10_9) to (6_9);
\draw[blue, thick,->] (6_9) to (8_6);
\draw[blue, thick,->] (8_6) to (10_3);
\draw[blue, thick,->] (10_3) to (12_0);
\draw[blue, thick,->] (12_0) to (14_3);
\draw[blue, thick,->] (14_3) to (16_6);
\draw[blue, thick,->] (16_6) to (12_6);
\draw[blue, thick,->] (12_6) to (8_6);
\draw[blue, thick,->] (8_6) to (4_6);

\draw[blue, thick,dashed,->] (16_0) to (12_0);
\draw[blue, thick,dashed,->] (12_0) to (8_0);
\draw[blue, thick,dashed,->] (8_0) to (4_0);
\draw[blue, thick,dashed,->] (2_3) to (4_6);
\draw[blue, thick,dashed,->] (4_6) to (6_9);
\draw[blue, thick,dashed,->] (6_9) to (8_12);
\draw[blue, thick,dashed,->] (12_12) to (14_9);
\draw[blue, thick,dashed,->] (14_9) to (16_6);
\draw[blue, thick,dashed,->] (16_6) to (18_3);

\foreach \i in {2,6,10,14,18}
	\draw[thick] (\i,1) to (\i,0) {};
\foreach \i in {2,4,6,8,10}
	\draw[thick] (\i,3/2*\i-2) to (\i-1,3/2*\i-1.5) {};
\foreach \i in {10,12,14,16,18}
	\draw[thick] (\i,28-3/2*\i) to (\i+1,28.5-3/2*\i) {};
	
\foreach \i in {4,8,12,16}
{
	\draw[thick] (\i,2) to (\i,4) {};
	\draw[thick] (\i,2) to (\i-2,1) {};
	\draw[thick] (\i,2) to (\i+2,1) {};
}
\foreach \i in {6,10,14}
{
	\draw[thick] (\i,5) to (\i,7) {};
	\draw[thick] (\i,5) to (\i-2,4) {};
	\draw[thick] (\i,5) to (\i+2,4) {};
}
\foreach \i in {8,12}
{
	\draw[thick] (\i,8) to (\i,10) {};
	\draw[thick] (\i,8) to (\i-2,7) {};
	\draw[thick] (\i,8) to (\i+2,7) {};
}
\foreach \i in {10}
{
	\draw[thick] (\i,11) to (\i,13) {};
	\draw[thick] (\i,11) to (\i-2,10) {};
	\draw[thick] (\i,11) to (\i+2,10) {};
}

\foreach \i in {2,6,10,14,18}
	\node[fill=white] (\i_1) at (\i,1) {};
\foreach \i in {4,8,12,16}
	\node[fill=white] (\i_4) at (\i,4) {};
\foreach \i in {6,10,14}
	\node[fill=white] (\i_7) at (\i,7) {};
\foreach \i in {8,12}
	\node[fill=white] (\i_10) at (\i,10) {};
\foreach \i in {10}
	\node[fill=white] (\i_13) at (\i,13) {};

\foreach \i in {4,8,12,16}
	\node[fill] (\i_2) at (\i,2) {};
\foreach \i in {6,10,14}
	\node[fill] (\i_5) at (\i,5) {};
\foreach \i in {8,12}
	\node[fill] (\i_8) at (\i,8) {};
\foreach \i in {10}
	\node[fill] (\i_11) at (\i,11) {};

\draw[thick, opacity=0, ->, shift={(10,15)}, rotate=240] (8,13.5) to (10,12.5) to (12,13.5);

\end{tikzpicture}
\caption{$PGL_5$-quiver in $D_3$ and the dual bi-colored graph.}
\label{network}
\end{figure}
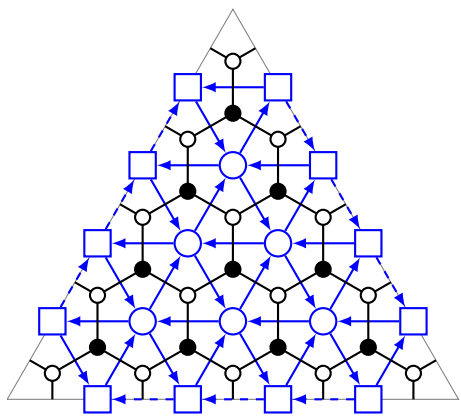

Figure~\ref{network} shows a bi-colored graph (in black) whose dual quiver (in blue) coincides with the one discussed in~\ref{locsys-intro}. Note, that the concatenation of such bi-colored graphs corresponds to the amalgamation of quivers. Let us choose a triangulation of $D_k$. We turn the corresponding bi-colored graph into a network $\Nc$ by orienting its edges. Let $\Qc$ be the dual quiver, which we assume is associated to a cluster seed $\Theta$. By collapsing each component of the boundary $\partial D_k$ to a point, we obtain a network $\overline\Nc$. The construction of $\Qc$ as the dual graph to $\mathcal{N}$ delivers an isomorphism $H_1(\overline{\mathcal{N}},\Z)\rightarrow \Lambda$. This isomorphism sends the (clockwise oriented) cycle $\gamma_k$ around the face $\mathcal{F}_k \in F(\overline\Nc)$ corresponding to a vertex $k\in V(\Qc)$, to the basis vector $e_k$ in the seed $\Theta$. 

Suppose that $(v,w)$ and  $(v_0,w_0)$ are two pairs vertices in $\Nc$ that both project to the same pair of vertices $(\bar v, \bar w)$ in the network $\overline\Nc$. Then for every pair of directed paths $\gamma \colon v \to w$ and $\gamma_0 \colon v_0 \to w_0$ in the network $\mathcal{N}$, the class $[\gamma\gamma_0^{-1}]$ defines an element of $H_1(\overline{\mathcal{N}},\Z) \simeq \Lambda$. Thus, we can define the weighted path count
\beq
\label{path-count}
Z_\mathcal{N}(v,w;v_0,w_0) = \sum_{\gamma, \gamma_0} X_{\hs{\gamma\gamma_0^{-1}}},
\eeq
where the sum is taken over all pairs of directed paths $\gamma \colon v \to w$ and $\gamma_0 \colon v_0 \to w_0$ in the network $\mathcal{N}$.
 
\begin{remark}
In practice, the vertices $v_0,w_0$ will usually be chosen in such a way that there is a unique directed path $\gamma_0 \colon v_0 \to w_0$ in the network $\mathcal{N}$; this path then serves as a ``reference path'' used to define the weights of all directed paths $\gamma \colon v\to w$. 
\end{remark}

In what follows, it will be useful to describe certain quiver mutations in the language of directed networks. Namely, consider a directed network $\Nc$ with dual quiver $\Qc$ and corresponding cluster seed $\Theta$. Let $k \in V(\Qc)$ be a 4-valent vertex with alternating incoming and outgoing arrows. Contracting or creating edges between vertices of the same color, as shown in Figure~\ref{net-id}, we can always make the corresponding face $\mathcal{F}_k \in F(\Nc)$ quadrilateral, with vertices of alternating color and of valency 3. Consider the seed $\Theta'=\mu_k(\Theta)$ obtained by mutating $\Theta$ in direction $k$. Then the network $\Nc'$ corresponding to the mutated cluster seed $\Theta'$ is obtained from $\Nc$ by a simple local rule, shown in Figure~\ref{net-mut}.

\begin{figure}[h]
\begin{tikzpicture}[every node/.style={inner sep=0, minimum size=0.2cm, circle, draw, thick}, x=0.25cm,y=0.25cm]

\node[fill=black] (a1) at (3,3) {};
\node[fill=white] (a2) at (7,3) {};
\node[fill=black] (a3) at (7,7) {};
\node[fill=white] (a4) at (3,7) {};

\node[draw=none, fill=white] (b1) at (0,0) {};
\node[draw=none, fill=white] (b2) at (10,0) {};
\node[draw=none, fill=white] (b3) at (10,10) {};
\node[draw=none, fill=white] (b4) at (0,10) {};

\draw [->, thick] (b1) to (a1);
\draw [<-, thick] (b2) to (a2);
\draw [<-, thick] (b3) to (a3);
\draw [->, thick] (b4) to (a4);

\draw [->, thick] (a1) to (a2);
\draw [->, thick] (a2) to (a3);
\draw [<-, thick] (a3) to (a4);
\draw [->, thick] (a4) to (a1);

\node[draw=none, text=blue] at (5,5) {$k$};
\node[draw=none, text=blue] at (5,1) {$a$};
\node[draw=none, text=blue] at (1,5) {$b$};
\node[draw=none, text=blue] at (5,9) {$c$};
\node[draw=none, text=blue] at (9,5) {$d$};

\draw [<->, very thick] (12,5) to (18,5);
\node[draw=none] at (15,6) {$\mu_k$};

\node[fill=white] (c1) at (3+20,3) {};
\node[fill=black] (c2) at (7+20,3) {};
\node[fill=white] (c3) at (7+20,7) {};
\node[fill=black] (c4) at (3+20,7) {};

\node[draw=none, fill=white] (d1) at (0+20,0) {};
\node[draw=none, fill=white] (d2) at (10+20,0) {};
\node[draw=none, fill=white] (d3) at (10+20,10) {};
\node[draw=none, fill=white] (d4) at (0+20,10) {};

\draw [->, thick] (d1) to (c1);
\draw [<-, thick] (d2) to (c2);
\draw [<-, thick] (d3) to (c3);
\draw [->, thick] (d4) to (c4);

\draw [->, thick] (c1) to (c2);
\draw [<-, thick] (c2) to (c3);
\draw [<-, thick] (c3) to (c4);
\draw [<-, thick] (c4) to (c1);

\node[draw=none, text=blue] at (25,5) {$k'$};
\node[draw=none, text=blue] at (25,1) {$a'$};
\node[draw=none, text=blue] at (21,5) {$b'$};
\node[draw=none, text=blue] at (25,9) {$c'$};
\node[draw=none, text=blue] at (29,5) {$d'$};

\end{tikzpicture}
\caption{Network mutation.}
\label{net-mut}
\end{figure}
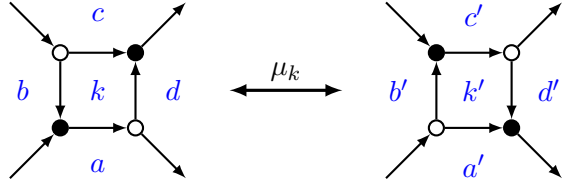

\begin{prop}
\label{measure-invariance}
Let $\Nc$ be a directed network, and let $\Nc'$ be the network obtained from $\Nc$ by mutating at the face $\Fc_k \in F(\Nc)$ as shown in Figure~\ref{net-mut}. Then for any pairs $(v,w)$ and $(v_0,w_0)$ that have the same image in $\overline\Nc$ we have
$$
\mu_k\hr{Z_{\Nc}(v,w;v_0,w_0)} = Z_{\Nc'}(v,w;v_0,w_0).
$$
\end{prop}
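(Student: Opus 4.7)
The plan is to reduce the identity to a purely local calculation in a small neighborhood of the mutated face $\Fc_k$, and then verify the resulting local identity by enumerating directed path segments through $\Fc_k$ on either side of the mutation.

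First, observe that $\Nc'$ differs from $\Nc$ only inside a small neighborhood of $\Fc_k$, and that the isomorphism $H_1(\overline\Nc,\Z)\simeq \Lambda$ sends the cycle around $\Fc_k$ to $e_k$, while the cycles around the four adjacent faces $a,b,c,d$ map to the corresponding basis vectors. Since~\eqref{eq-basis} changes only $e_k$ and the vectors $e_a,e_b,e_c,e_d$ (while fixing every other $e_j$), any directed path $\gamma$ in $\Nc$ decomposes into maximal segments lying outside a fixed neighborhood of $\Fc_k$, which contribute to $Z_\Nc$ and $Z_{\Nc'}$ in exactly the same way, together with finitely many local segments that enter and exit the neighborhood of $\Fc_k$. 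It therefore suffices to establish the proposition for the weighted count of such local segments, with fixed entry and exit arrows among the four external arrows shown in Figure~\ref{net-mut}.

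Second, I will enumerate the local path segments. Because of the alternating colors around $\Fc_k$, each external arrow uniquely determines which interior vertex of $\Fc_k$ a path-segment enters or leaves through, and the possible traversals of the interior are short and easily listed. For each of the finitely many entry--exit patterns, one obtains on the $\Nc$-side a sum of a bounded number of Laurent monomials in $X_a,X_b,X_c,X_d,X_k$, while on the $\Nc'$-side one obtains a similar sum in the new cluster variables. Applying the formulas~\eqref{reg-mut} and~\eqref{denom-mut} to the $\Nc'$-side then reduces the claim, for each fixed entry--exit pattern, to an explicit identity in $\Tc_\Lambda$, which is verified by inspection.

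Third, one must handle the patterns for which a local segment can wind arbitrarily many times around the boundary of $\Fc_k$ before leaving. Summing the weighted paths over the number of windings produces a formal geometric series in $X_k$, which upon conjugation by $\Psi^q(X_k')$ becomes the precise factor $(1+qX_k^{\pm1})^{\pm1}$ dictated by~\eqref{mut} and matches the mutated $\Nc'$-side expression. This is the step where the quantum dilogarithm appears organically from the path combinatorics.

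The principal obstacle will be to keep the non-commutativity bookkeeping straight: the monomial assigned to a pair of paths $(\gamma,\gamma_0)$ in~\eqref{path-count} is determined by the class $[\gamma\gamma_0^{-1}] \in H_1(\overline\Nc,\Z)$, and summing over winding paths requires a consistent choice of ordering that respects the commutation relations in $\Tc_\Lambda$. The right-of-white-vertex convention used to build $\Qc$ from $\Nc$ is exactly what guarantees that the natural ordering of edges along a path matches the signs $\eps_{ij}=(e_i,e_j)$, and once this is established the local verifications in the previous step are mechanical.
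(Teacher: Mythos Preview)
Your steps 1 and 2 are essentially the paper's own argument: localize to the mutated face and verify each entry--exit pattern directly against the mutation formulas~\eqref{reg-mut} and~\eqref{denom-mut}. The paper carries out two representative cases explicitly (a path crossing below the face, and a pair of paths crossing above it) and declares the remaining patterns similar.

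Your step 3, however, rests on a misconception. The directed networks considered here are \emph{acyclic by definition} (condition (1) in the paper's definition of a network), and inspection of Figure~\ref{net-mut} confirms that the four boundary edges of $\Fc_k$ do not form a directed cycle on either side of the mutation. Hence no local segment can wind around $\Fc_k$ even once, let alone arbitrarily many times; each entry--exit pattern admits only a bounded number of local segments, and no geometric series arises. The quantum dilogarithm $\Psi^q$ enters only as the conjugating element defining $\mu_k$, and its action on each monomial $X_\lambda$ is already polynomial via~\eqref{reg-mut} and~\eqref{denom-mut}. So step 3 is vacuous, and the ``principal obstacle'' you identify does not occur: steps 1--2 already constitute the complete proof, with the non-commutativity handled automatically by the lattice-element formalism $X_\lambda X_\mu = q^{(\lambda,\mu)}X_{\lambda+\mu}$.
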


\begin{proof} 
The analogous statement for commutative cluster algebras is well-known, see for example~\cite[Lemma 12.2]{Pos06} or~\cite[Theorem 4.7]{GK11}. In formula~\eqref{path-count}, each class $[\gamma\gamma_0^{-1}]$ is a sum of simple closed curves, so we may assume that $[\gamma\gamma_0^{-1}]$ is a simple closed curve itself. Here we will consider only the following two cases; all the other cases can be treated similarly:
\begin{enumerate}
\item the cycle $[\gamma\gamma_0^{-1}]$ enters the graph on the left-hand-side of Figure~\ref{net-mut} through the bottom left source, leaves through the bottom right sink, and is otherwise disconnected from the faces of $\Nc$ shown in the Figure;
\item the cycle $[\gamma\gamma_0^{-1}]$ enters the graph on the left-hand-side of Figure~\ref{net-mut} through the top left source, leaves through the top right sink, and is otherwise disconnected from the faces of $\Nc$ shown in the Figure.
\end{enumerate}

Let us treat the first case. Then $X_{[\gamma\gamma_0^{-1}]} = X_{\lambda+e_a}$ for some $\lambda \in \Lambda$ satisfying $(\lambda,e_k)=0$. Every such path in the left network gives rise to two paths in the right one, with weights $X_{\lambda + e_{a'}}$ and $X_{\lambda + e_{a'}+e_{k'}}$ respectively. On the other hand, by \eqref{reg-mut}, we find
$$
\mu_k\hr{X_{\lambda+e_a}} = X_{\lambda + e_{a'}} + X_{\lambda + e_{a'}+e_{k'}}
$$
which proves the claim.

In the second case, there are two such paths in the left network, with weights $X_{\lambda+e_c}$ and $X_{\lambda + e_c + e_k}$, and only one such path in the right network, with weight $X_{\lambda + e_{c'}}$. By~\eqref{denom-mut} we have
$$
\mu_k(X_{\lambda+e_c}+ X_{\lambda + e_c + e_k})
= \mu_k\hr{X_{\lambda+e_c}(1+qX_{e_k})}
= X_{\lambda+ e_c}(1+q^{-1}X_{e_{k'}})^{-1}(1+qX_{e_k}).
$$
Using the equalities $e_{k'}=-e_k$ and $e_{c'}=e_c+e_k$, we obtain
$$
\mu_k(X_{\lambda+e_c}+ X_{\lambda + e_c + e_k})
= qX_{\lambda+e_c}X_{e_k}
= X_{\lambda+e_c+e_k}
= X_{\lambda+e_{c'}}.
$$
The Proposition is proved.

\end{proof}

\subsection{Zig-zag paths}
The bi-colored graph in Figure~\ref{network} admits different network structures, which can be described as follows. Recall that a \emph{zig-zag path} in a bipartite graph is a path which turns maximally right at white vertices and maximally left at black ones. Figure~\ref{fig-path} shows all zig-zag paths on an the bipartite graph corresponding to the ideal triangle $D_3$ with $n=4$ . In an ideal triangle, each zig-zag path goes along the direction of one of the three sides. Thus, we can divide the zig-zag paths in the triangle into three types, which we label by different colors as shown in Figure~\ref{fig-path}. Paths of the same type do not intersect, whereas any two paths of different type intersect inside the triangle exactly once.

\begin{figure}[h]
\begin{minipage}{.5\textwidth}
\centering
\begin{tikzpicture}[every node/.style={inner sep=0, minimum size=0.2cm, draw, thick, circle}, x=0.3cm, y=0.346cm]

\draw[gray] (0,0) to (10,15) to (20,0) to (0,0);

\foreach \i in {2,6,10,14,18}
	\draw[thick] (\i,1) to (\i,0) {};
\foreach \i in {2,4,6,8,10}
	\draw[thick] (\i,3/2*\i-2) to (\i-1,3/2*\i-1.5) {};
\foreach \i in {10,12,14,16,18}
	\draw[thick] (\i,28-3/2*\i) to (\i+1,28.5-3/2*\i) {};
	
\foreach \i in {4,8,12,16}
{
	\draw[thick] (\i,2) to (\i,4) {};
	\draw[thick] (\i,2) to (\i-2,1) {};
	\draw[thick] (\i,2) to (\i+2,1) {};
}
\foreach \i in {6,10,14}
{
	\draw[thick] (\i,5) to (\i,7) {};
	\draw[thick] (\i,5) to (\i-2,4) {};
	\draw[thick] (\i,5) to (\i+2,4) {};
}
\foreach \i in {8,12}
{
	\draw[thick] (\i,8) to (\i,10) {};
	\draw[thick] (\i,8) to (\i-2,7) {};
	\draw[thick] (\i,8) to (\i+2,7) {};
}
\foreach \i in {10}
{
	\draw[thick] (\i,11) to (\i,13) {};
	\draw[thick] (\i,11) to (\i-2,10) {};
	\draw[thick] (\i,11) to (\i+2,10) {};
}

\draw[red, thick, <-] (0,1.5) to (2,0.5) to (4,1.5) to (6,0.5) to (8,1.5) to (10,0.5) to (12,1.5) to (14,0.5) to (16,1.5) to (18,0.5) to (20,1.5);
\draw[red, thick, <-] (2,4.5) to (4,3.5) to (6,4.5) to (8,3.5) to (10,4.5) to (12,3.5) to (14,4.5) to (16,3.5) to (18,4.5);
\draw[red, thick, <-] (4,7.5) to (6,6.5) to (8,7.5) to (10,6.5) to (12,7.5) to (14,6.5) to (16,7.5);
\draw[red, thick, <-] (6,10.5) to (8,9.5) to (10,10.5) to (12,9.5) to (14,10.5);
\draw[red, thick, <-] (8,13.5) to (10,12.5) to (12,13.5);

\draw[BurntOrange, thick, <-, shift={(20,0)}, rotate=120] (0,1.5) to (2,0.5) to (4,1.5) to (6,0.5) to (8,1.5) to (10,0.5) to (12,1.5) to (14,0.5) to (16,1.5) to (18,0.5) to (20,1.5);
\draw[BurntOrange, thick, <-, shift={(20,0)}, rotate=120] (2,4.5) to (4,3.5) to (6,4.5) to (8,3.5) to (10,4.5) to (12,3.5) to (14,4.5) to (16,3.5) to (18,4.5);
\draw[BurntOrange, thick, <-, shift={(20,0)}, rotate=120] (4,7.5) to (6,6.5) to (8,7.5) to (10,6.5) to (12,7.5) to (14,6.5) to (16,7.5);
\draw[BurntOrange, thick, <-, shift={(20,0)}, rotate=120] (6,10.5) to (8,9.5) to (10,10.5) to (12,9.5) to (14,10.5);
\draw[BurntOrange, thick, <-, shift={(20,0)}, rotate=120] (8,13.5) to (10,12.5) to (12,13.5);

\draw[OliveGreen, thick, <-, shift={(10,15)}, rotate=240] (0,1.5) to (2,0.5) to (4,1.5) to (6,0.5) to (8,1.5) to (10,0.5) to (12,1.5) to (14,0.5) to (16,1.5) to (18,0.5) to (20,1.5);
\draw[OliveGreen, thick, <-, shift={(10,15)}, rotate=240] (2,4.5) to (4,3.5) to (6,4.5) to (8,3.5) to (10,4.5) to (12,3.5) to (14,4.5) to (16,3.5) to (18,4.5);
\draw[OliveGreen, thick, <-, shift={(10,15)}, rotate=240] (4,7.5) to (6,6.5) to (8,7.5) to (10,6.5) to (12,7.5) to (14,6.5) to (16,7.5);
\draw[OliveGreen, thick, <-, shift={(10,15)}, rotate=240] (6,10.5) to (8,9.5) to (10,10.5) to (12,9.5) to (14,10.5);
\draw[OliveGreen, thick, <-, shift={(10,15)}, rotate=240] (8,13.5) to (10,12.5) to (12,13.5);

\foreach \i in {2,6,10,14,18}
	\node[fill=white] (\i_1) at (\i,1) {};
\foreach \i in {4,8,12,16}
	\node[fill=white] (\i_4) at (\i,4) {};
\foreach \i in {6,10,14}
	\node[fill=white] (\i_7) at (\i,7) {};
\foreach \i in {8,12}
	\node[fill=white] (\i_10) at (\i,10) {};
\foreach \i in {10}
	\node[fill=white] (\i_13) at (\i,13) {};

\foreach \i in {4,8,12,16}
	\node[fill] (\i_2) at (\i,2) {};
\foreach \i in {6,10,14}
	\node[fill] (\i_5) at (\i,5) {};
\foreach \i in {8,12}
	\node[fill] (\i_8) at (\i,8) {};
\foreach \i in {10}
	\node[fill] (\i_11) at (\i,11) {};

\end{tikzpicture}
\caption{Zig-zag paths.}
\label{fig-path}

\end{minipage}%
\begin{minipage}{.5\textwidth}
\centering

\begin{tikzpicture}[every node/.style={inner sep=0, minimum size=0.2cm, draw, thick, circle}, x=0.3cm, y=0.346cm]

\draw[gray] (0,0) to (10,15) to (20,0) to (0,0);

\foreach \i in {2,6,10,14,18}
	\node[fill=white] (\i_1) at (\i,1) {};
\foreach \i in {4,8,12,16}
	\node[fill=white] (\i_4) at (\i,4) {};
\foreach \i in {6,10,14}
	\node[fill=white] (\i_7) at (\i,7) {};
\foreach \i in {8,12}
	\node[fill=white] (\i_10) at (\i,10) {};
\foreach \i in {10}
	\node[fill=white] (\i_13) at (\i,13) {};

\foreach \i in {4,8,12,16}
	\node[fill] (\i_2) at (\i,2) {};
\foreach \i in {6,10,14}
	\node[fill] (\i_5) at (\i,5) {};
\foreach \i in {8,12}
	\node[fill] (\i_8) at (\i,8) {};
\foreach \i in {10}
	\node[fill] (\i_11) at (\i,11) {};

\draw[OliveGreen, thick, ->] (2,-1) to (2_1);
\draw[OliveGreen, thick, ->] (2_1) to (4_2);
\draw[OliveGreen, thick, ->] (4_2) to (4_4);
\draw[OliveGreen, thick, ->] (4_4) to (6_5);
\draw[OliveGreen, thick, ->] (6_5) to (6_7);
\draw[OliveGreen, thick, ->] (6_7) to (8_8);
\draw[OliveGreen, thick, ->] (8_8) to (8_10);
\draw[OliveGreen, thick, ->] (8_10) to (10_11);
\draw[OliveGreen, thick, ->] (10_11) to (10_13);
\draw[OliveGreen, thick, ->] (10_13) to (12,14);

\draw[OliveGreen, thick, ->] (6,-1) to (6_1);
\draw[OliveGreen, thick, ->] (6_1) to (8_2);
\draw[OliveGreen, thick, ->] (8_2) to (8_4);
\draw[OliveGreen, thick, ->] (8_4) to (10_5);
\draw[OliveGreen, thick, ->] (10_5) to (10_7);
\draw[OliveGreen, thick, ->] (10_7) to (12_8);
\draw[OliveGreen, thick, ->] (12_8) to (12_10);
\draw[OliveGreen, thick, ->] (12_10) to (14,11);

\draw[OliveGreen, thick, ->] (10,-1) to (10_1);
\draw[OliveGreen, thick, ->] (10_1) to (12_2);
\draw[OliveGreen, thick, ->] (12_2) to (12_4);
\draw[OliveGreen, thick, ->] (12_4) to (14_5);
\draw[OliveGreen, thick, ->] (14_5) to (14_7);
\draw[OliveGreen, thick, ->] (14_7) to (16,8);

\draw[OliveGreen, thick, ->] (14,-1) to (14_1);
\draw[OliveGreen, thick, ->] (14_1) to (16_2);
\draw[OliveGreen, thick, ->] (16_2) to (16_4);
\draw[OliveGreen, thick, ->] (16_4) to (18,5);

\draw[OliveGreen, thick, ->] (18,-1) to (18_1);
\draw[OliveGreen, thick, ->] (18_1) to (20,2);

\draw[BurntOrange, thick, ->] (0,2) to (2_1);
\draw[BurntOrange, thick, ->] (4_2) to (6_1);
\draw[BurntOrange, thick, ->] (8_2) to (10_1);
\draw[BurntOrange, thick, ->] (12_2) to (14_1);
\draw[BurntOrange, thick, ->] (16_2) to (18_1);

\draw[BurntOrange, thick, ->] (2,5) to (4_4);
\draw[BurntOrange, thick, ->] (6_5) to (8_4);
\draw[BurntOrange, thick, ->] (10_5) to (12_4);
\draw[BurntOrange, thick, ->] (14_5) to (16_4);

\draw[BurntOrange, thick, ->] (4,8) to (6_7);
\draw[BurntOrange, thick, ->] (8_8) to (10_7);
\draw[BurntOrange, thick, ->] (12_8) to (14_7);

\draw[BurntOrange, thick, ->] (6,11) to (8_10);
\draw[BurntOrange, thick, ->] (10_11) to (12_10);

\draw[BurntOrange, thick, ->] (8,14) to (10_13);

\end{tikzpicture}
\caption{Directed network.}
\label{fig-dir-net}
\end{minipage}
\end{figure}

Note that each edge of the bipartite graph has exactly two zig-zag paths passing through it, traversing the edge in opposite directions. Thus, to choose a direction on each edge in a triangle amounts to choosing a color for that edge. Now, suppose that we choose one side of the triangle and orient all the corresponding boundary edges pointing outward. Then, the network condition that each white vertex has exactly one outgoing edge while each black vertex has exactly one incoming completely determines the orientation of all the other edges. In particular, note that all other boundary edges are oriented inward, as illustrated in Figure~\ref{fig-dir-net}.

\subsection{Quivers on the punctured disk and the quantum universal cover}
\label{ann-net-sec}

In what follows, we will need to work not only with quivers drawn on a disk but also with those on a punctured disk. All quivers considered in this paper are of a very special kind -- they arise as (certain projections of) quivers dual to networks constructed in the following 4 steps:
\begin{enumerate}
\item choose an ideal triangulation of a marked surface $\wh S$;
\item in every ideal triangle draw a bi-colored graph of the form shown in Figure~\ref{network};
\item turn the resulting bi-colored graph into a network by orienting its edges;
\item mutate the resulting network in a (possibly infinite) sequence of faces of the form shown in Figure~\ref{net-mut}.
\end{enumerate}

Given a quiver $\Qc$ on the punctured disk one can draw a bi-colored graph $\Gamma$ such that $\Qc$ is dual to $\Gamma$. Let $\wdt\Gamma$ be the lift of $\Gamma$ to the universal cover of a punctured disk, and $\wdt \Qc$ be the planar quiver dual to $\wdt\Gamma$. We denote the cluster seeds associated to $\Qc$ and $\wdt \Qc$ by $\Theta$ and~$\wdt\Theta$ respectively. Part of the data of the seed $\Theta$ consists of the lattice~$\Lambda$ with basis $\{e_i\}$, $i\in V(\Qc)$. Let us choose a connected ``fundamental domain'' in $\wdt\Gamma$, so that the projection from the universal cover onto a punctured disk establishes a bijection between $\Gamma$ and the fundamental domain of~$\wdt\Gamma$. Then,  to the seed $\wdt\Theta$ is associated a lattice $\tLambda$ and a basis $\{e^n_i\}$ where $i\in V(\Qc)$ and $n\in\Z$. Here, vectors $\{e_i^0\}$ correspond to the faces of the fundamental domain, while vectors $\{e_i^n\}$, for any fixed $n$, correspond to the faces of $\wdt\Gamma$ on the $n$-th sheet of the universal cover. We denote by $\pi \colon \tLambda\rightarrow \Lambda$ the lattice projection defined by $\pi(e^n_i) = e_i$. By an abuse of notation, the corresponding projections $F(\wdt\Gamma) \to F(\Gamma)$ and $V(\wdt \Qc) \to V(\Qc)$ are denoted by the same symbol.

Let $k \in V(\Qc)$ be such that $(e_k^n, e_k^m)=0$ for all $n,m \in \Z$, so that the mutations at all vertices in the fiber $\pi^{-1}(k)$ commute. Let $\tilde\mu_k$ be the infinite sequence of mutations at all vertices in the fiber $\pi^{-1}(k)$. We denote the mutated cluster seed and quiver respectively by $\wdt\Theta'=\tilde\mu_k\big(\wdt\Theta\big)$ and $\wdt \Qc' = \tilde\mu_k(\wdt \Qc)$. By construction, there is an action of the group of deck transformations $\pi_1(S^1)=\Z$ on $\wdt \Qc'$: we have $\pi(\tilde\mu_k(e_i^n))=\pi(\tilde\mu_k(e_i^m))$ for all $n,m\in \Z$ and $i \in V(\Qc)$. One can therefore form a quiver $\wdt \Qc'/\Z$ as the quotient of $\wdt \Qc'$ by this action. We caution the reader that the quiver $\wdt \Qc'/\Z$ need not coincide with the quiver $\Qc' = \mu_k(\Qc)$. Let us note that the quivers $\tilde{\Qc}'/\Z$ were previously considered in~\cite{FG06a} in the context of \emph{special fatgraphs} in higher Teichm\"uller theory.

The following Lemma follows directly from the definition of quiver mutations and describes the relation between quivers $\wdt \Qc'/\Z$ and $\Qc'$ in the two cases of interest for this paper.

\begin{lemma}
\label{geom-mut}
\begin{enumerate}
\item Suppose $k \in V(\Qc)$ is a 4-valent vertex with alternating incoming and outgoing edges, such as for example vertices 2, 6, 12 in Figure~\ref{fig-D4}. Then $\wdt \Qc'/\Z = \Qc'$. 
\item Suppose $i,j,k,l \in V(\Qc)$ are as follows: for every $n \in \Z$ there are exactly 4 basis vectors $e_a^m \in \tLambda$ such that $(e_k^n, e_a^m) \ne 0$, namely
$$
(e_i^n,e_k^n)=(e_k^n,e_j^n)=(e_l^{n-1},e_k^n)=(e_k^n,e_l^n)=1.
$$
In particular, this implies that the vertex $k \in V(\Qc)$ is 2-valent with adjacent arrows $i \to k$ and $k \to j$. For example, in the notation of Figure~\ref{fig-D4} we can set $(i,j,k,l) = (19,21,20,28)$. Denote the bases of $\Lambda$ corresponding to the quivers $\Qc'$ and $\wdt \Qc'/\Z$ respectively by $\hc{e'_a}$ and $\hc{\hat e'_a}$, where $a \in V(\Qc)$. Then we have
$$
e'_l = e_l, \qquad \hat e'_l = e_l+e_k, \qquad\text{and}\qquad \hat e'_a = e'_a \qquad\text{if}\quad a \neq l.
$$
\end{enumerate}
\end{lemma}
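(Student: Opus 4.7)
The plan is to apply the mutation formula~\eqref{eq-basis} directly to the basis $\{e_a^n\}$ of $\tLambda$, then project under $\pi\colon \tLambda \to \Lambda$ and compare with the basis obtained by mutating $\Theta$ itself via $\mu_k$. The hypothesis $(e_k^n, e_k^m) = 0$ for all $n,m$ ensures that the simultaneous mutation $\tilde\mu_k$ at every vertex of $\pi^{-1}(k)$ is well-defined, since the individual mutations commute pairwise. Explicitly, $\tilde e_k^n = -e_k^n$, and for $a \ne k$ one gets $\tilde e_a^m = e_a^m + \sum_n [(e_a^m, e_k^n)]_+\, e_k^n$. The $\Z$-action of deck transformations permutes $\pi^{-1}(k)$, so $\tilde\mu_k$ is equivariant and descends to a well-defined basis on the quotient $\wdt\Qc'/\Z$.

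For part (1), I would exploit the fact that a $4$-valent vertex with alternating incoming and outgoing edges has a local neighborhood in $\wdt\Qc$ isomorphic to the one in $\Qc$: each of the four neighbors $a_1,\ldots,a_4$ of $k$ in $\Qc$ has a unique lift $e_{a_s}^{f_s(n)}$ paired nontrivially with $e_k^n$. The sum defining $\tilde e_{a_s}^{f_s(n)}$ therefore collapses to the single term $[(e_{a_s},e_k)]_+\, e_k^n$, whose image under $\pi$ is $e_{a_s} + [(e_{a_s},e_k)]_+\, e_k$, matching the formula for $e'_{a_s}$ in $\Qc'$. The check for $a = k$ is immediate, so the descended basis of $\wdt\Qc'/\Z$ agrees with that of $\Qc'$, giving $\wdt\Qc'/\Z = \Qc'$.

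For part (2), the essential phenomenon is that $l$ has two distinct lifts paired with $e_k^n$, on sheets $n$ and $n-1$, but with opposite signs: $(e_l^{n-1},e_k^n) = 1$ while $(e_l^n,e_k^n) = -1$. Consequently, for any $m$, the sum $\tilde e_l^m = e_l^m + \sum_n [(e_l^m,e_k^n)]_+\, e_k^n$ contains the single nonzero term $e_k^{m+1}$, so after projection $\hat e'_l = e_l + e_k$. The remaining vectors project as $\hat e'_k = -e_k$, $\hat e'_i = e_i + e_k$, $\hat e'_j = e_j$, all matching the corresponding $e'_a$ in $\Qc'$. On the other hand, in $\Qc$ the vertex $k$ is only $2$-valent with no arrow to $l$, so direct mutation yields $e'_l = e_l$, producing the stated discrepancy $\hat e'_l = e_l + e_k \ne e_l = e'_l$.

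The main subtlety is the sheet-index bookkeeping in part (2): correctly identifying that between the two lifts $e_l^{n-1}, e_l^n$ adjacent to $e_k^n$, only one contributes nontrivially because of the opposite signs of the corresponding pairings. Once this asymmetry is pinned down, both claims reduce to a direct application of~\eqref{eq-basis} followed by the projection $\pi$.
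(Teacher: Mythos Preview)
Your proposal is correct and is precisely the direct verification from the mutation formula~\eqref{eq-basis} that the paper has in mind when it asserts the lemma ``follows directly from the definition of quiver mutations.'' The paper gives no further detail, so your write-up is simply an explicit unpacking of that one-line justification, with the sheet-index bookkeeping in part (2) carried out exactly as required.
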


Describing the relation between the mutation automorphisms of the quantum torus algebras $\mathcal{T}_{\tLambda}$ and $\mathcal{T}_{\Lambda}$ is a bit more subtle. 
The crucial point is that the projection map $\pi$ does not generally define a homomorphism of quantum torus algebras $\mathcal{T}_{\tLambda}\rightarrow \mathcal{T}_{\Lambda}$, for the simple reason that a pair of non-adjacent faces of $\wdt\Gamma$ may project under $\pi$ to a pair of adjacent faces of $\Gamma$. Moreover, the automorphism $\tilde{\mu}_k$ of $\mathcal{T}_{\tLambda}$ fails to descend to a well-defined automorphism of $ \mathcal{T}_{\Lambda}$ under the naive projection $\pi$. To get around this problem, we introduce a map of $\Z[q,q^{-1}]$-modules (although not a map of algebras)
\beq
\label{pr}
\pr \colon \Tc_{\tLambda} \to \Tc_\Lambda
\eeq
that can be regarded as the ``quantum correction'' of the covering map~$\pi$.
%$$
%\pr \circ \tilde\mu_k = \mu_k \circ \pr.
%$$

Let us define a symmetric bilinear form $\ha{\cdot, \cdot} \colon \tLambda \times \tLambda \to \C[q,q^{-1}]$ by the assignment
\beq
\label{formdef}
\ha{e^n_i, e^m_j} =
\begin{cases}
(e_i^n,e_j^m) - (e_i,e_j) & \text{if} \quad n>m, \\
(e_i^n,e_j^{n-1}-e_j^{n+1}) & \text{if} \quad n=m, \\
(e_i,e_j) - (e_i^n,e_j^m) & \text{if} \quad n<m.
\end{cases}
\eeq
The symmetry of the form $\ha{\cdot, \cdot}$ is made manifest by using the deck group action to write
$$
\ha{e_i^n, e_j^n} = (e_i^{n+1}, e_j^n) + (e_j^{n+1}, e_i^n).
$$
The definition of the form $\ha{\cdot,\cdot}$ is independent of our choice of fundamental domain for the deck group and is invariant under cluster mutations, see Lemmas~\ref{lem-index} and~\ref{lem-pr-mut}. Finally, we define
$$
\pr \colon \Tc_{\tLambda} \longra \Tc_\Lambda, \qquad X_\la \longmapsto q^{\frac{1}{2}\langle \la, \la \rangle} X_{\pi(\la)}
$$
for all $\la \in \tLambda$.

The following Proposition allows us to use directed networks on the universal cover in order to describe quiver mutations on a punctured disk. Its proof is given in the Appendix~\ref{appendix-1}.

\begin{prop}
\label{mut-descent}
Suppose that $k \in V(\Qc)$ is such that $(e_k^n, e_k^m)=0$ for all $n,m$. Then we have
$$
\pr\circ\tilde\mu_k  = \mu_k\circ\pr.
$$
\end{prop}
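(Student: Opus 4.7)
The plan is to verify $\pr\circ\tilde\mu_k = \mu_k\circ\pr$ by direct evaluation on every monomial $X_\lambda$, $\lambda\in\tLambda$, and then reduce the statement to a single $q$-combinatorial identity among products of non-compact dilogarithms. The hypothesis $(e_k^n, e_k^m)=0$ is used in two essential ways: first, it makes the infinite product $\prod_n \Psi^q(X_{-e_k^n})$ a commuting family, so that $\tilde\mu_k = \mathrm{Ad}_{\prod_n \Psi^q(X_{-e_k^n})}$ is well-defined; second, a short calculation from \eqref{formdef} shows $\langle e_k^n, e_k^m\rangle = 0$ as well, which will be decisive in step two below.

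First, I would compute $\tilde\mu_k(X_\lambda)$ explicitly. Using the standard conjugation identity $X_\lambda^{-1} X_{-e_k^n} X_\lambda = q^{-2a_n}X_{-e_k^n}$ with $a_n = (\lambda, e_k^n)$, together with the functional equation for $\Psi^q$, one obtains
\[
\tilde\mu_k(X_\lambda) \;=\; X_\lambda\,\prod_n g(a_n)\bigl[X_{-e_k^n}\bigr], \qquad g(a)[Y]:=\frac{\Psi^q(q^{-2a}Y)}{\Psi^q(Y)},
\]
where the product is effectively finite since $a_n=0$ for all but finitely many $n$. In parallel, $\mu_k(\pr(X_\lambda)) = q^{\frac12\langle\lambda,\lambda\rangle}\,X_{\pi(\lambda)}\,g(a)[X_{-e_k}]$ where $a = (\pi(\lambda), e_k) = \sum_n a_n$ by the sheet-summation formula $(e_i,e_j)_\Lambda = \sum_m (e_i^n,e_j^m)_\tLambda$.

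Next, I would apply $\pr$ termwise to $\tilde\mu_k(X_\lambda)$. The crucial preliminary fact is that $\pr$ restricts to an \emph{algebra homomorphism} on the commutative subalgebra generated by $\{X_{-e_k^n}\}_n$: using $\langle e_k^n, e_k^m\rangle = 0$ and $(e_k^n, e_k^m)=0$, one checks that $\pr(X_{-e_k^n}) = X_{-e_k}$ and $\pr(X_{-e_k^n} X_{-e_k^m}) = \pr(X_{-e_k^n})\pr(X_{-e_k^m})$. The remaining work is bookkeeping of the $q$-factor that arises when one moves the factor $X_\lambda$ past the product. Expanding everything and using $\sum_{n,m}j_nj_m\langle e_k^n,e_k^m\rangle = 0$, the equality $\pr\circ\tilde\mu_k = \mu_k\circ\pr$ reduces, for each $J\in\Z$, to the identity
\[
\prod_n g(a_n)[q^{\beta_n}Y] \;=\; g(a)[q^{a}Y] \quad\text{as rational functions of } Y,
\]
where $\beta_n := a_n - \langle\lambda, e_k^n\rangle$.

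The final step is to establish this $q$-identity, which is equivalent to an equality of multisets of exponents of $\Psi^q$ factors: $\{\beta_n-2a_n\}_n\cup\{a\} = \{\beta_n\}_n\cup\{-a\}$. The main obstacle is matching these two multisets. The plan is to order the indices $n_1<\cdots<n_N$ with $a_{n_i}\neq 0$ and verify, by a short case analysis using the three branches of \eqref{formdef}, that $\beta_{n_i} = a - 2\sum_{j<i}a_{n_j}$; this makes the product telescope,
\[
\frac{\Psi^q(q^{-2a_{n_1}+\beta_{n_1}}Y)}{\Psi^q(q^{\beta_{n_1}}Y)}\cdots\frac{\Psi^q(q^{-2a_{n_N}+\beta_{n_N}}Y)}{\Psi^q(q^{\beta_{n_N}}Y)} \;=\; \frac{\Psi^q(q^{-a}Y)}{\Psi^q(q^{a}Y)},
\]
giving the desired identity. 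The identity $\beta_{n_i} = a-2A_{i-1}$ with $A_{i-1}=\sum_{j<i}a_{n_j}$ is the heart of the argument, and its proof is where the specific geometric structure of the quivers arising from networks enters (ensuring, via Lemma~\ref{geom-mut}, that the pattern of non-zero $(e_i^p, e_k^n)$ across sheets is compatible with the telescoping). With the telescoping established, comparing coefficients of $Y^J$ yields the required equality and completes the proof.
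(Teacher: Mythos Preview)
Your approach is essentially the same as the paper's: both compute $\tilde\mu_k(X_\lambda)$ as $X_\lambda$ times a finite product of dilogarithm ratios, push $\pr$ through, and reduce to a telescoping identity among the resulting $q$-shifts. The paper packages the bookkeeping via an explicit auxiliary quantity $\beta_{k,m}(\lambda) = (\lambda^{<m+1}, e_k^m) + (\lambda^m, e_k^{m+1}) - (\pi(\lambda^{<m+1}), e_k)$, proves the one-step recursion $\beta_{k,m+1} = \beta_{k,m} + (e_k^m,\lambda)$, and observes $\beta_{k,r}(\lambda)=0$ for the smallest $r$ with $a_r\neq 0$. Your quantity $\beta_n = a_n - \langle\lambda, e_k^n\rangle$ satisfies $\beta_n = 2\beta_{k,n} + a$, so your claimed closed form $\beta_{n_i}=a-2A_{i-1}$ is exactly the paper's recursion and initial condition in disguise.

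One correction: the reference to Lemma~\ref{geom-mut} is misplaced. That lemma concerns how certain mutations on the cover descend to the quotient quiver and plays no role here. The structural input you actually need for the ``short case analysis'' is the nearest-neighbour property $(e_i^n, e_j^m)=0$ for $|n-m|>1$ (equation~\eqref{dist} in the appendix); this is what makes both your computation of $\beta_{n_i}$ and the paper's recursion for $\beta_{k,m}$ go through. Since this is the one step you have deferred, you should carry it out explicitly rather than appeal to Lemma~\ref{geom-mut}.
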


\subsection{Network for the standard triangulation}
\label{subsec-embed}

The embedding $\iota$ from Theorem~\ref{original-embed} can be described concisely using the language of directed networks on the punctured disk. The quiver $\Qstd$, depicted in Figure~\ref{fig-D4} for $n=4$, is obtained from the standard triangulation of the marked punctured disk $D_{2,1}$ shown in Figure~\ref{triangulations}. Let us consider the universal cover of the punctured disk, equipped with the corresponding lifted triangulation. It consists of an infinite sequence of triangles that share a common vertex, and such that any two adjacent triangles are glued by one side. The dual bi-colored graph can be made it into a directed network, by choosing an edge $e$ of the triangulation that belongs to exactly one triangle, and orienting outward all of the terminal edges of the bipartite graph that are adjacent to $e$. 

Two of the networks that arise this way are denoted $\Nc_n^{\mathrm{std}_\pm}$, and are shown in Figures~\ref{fig-std-net+} and~\ref{fig-std-net-}. The color-coding of the networks is consistent with the one in Figure~\ref{fig-path}, and the labelling of the faces is consistent with labelling of vertices in Figure~\ref{fig-D4}. In Figure~\ref{fig-std-net+} we label the sinks of the chosen boundary edge $e$ with $\hc{h_1, \dots, h_5}$ and the sources of an adjacent boundary edge by $\hc{t_1, \dots, t_5}$.  Figure~\ref{fig-std-net-} is drawn similarly, with the exception that the order of the sources and sinks is chosen differently differently. Note that the dual quiver to the underlying bi-colored graph of $\Nc_n^{\mathrm{std}_\pm}$ projects onto $\Qc^{\mathrm{std}}_n$ under the action of the deck group.

In addition to these, we have the network $\Nc_n^{\mathrm{std}}$ depicted in Figure~\ref{fig-std-net}. It can be thought of as a fundamental domain of $\Nc_n^{\mathrm{std}_\pm}$ with respect to the group of deck transformations. We use it to describe the Casimirs $\Omega_j$ of the quantum group $U_q(\sl_{n+1})$.

The following proposition gives a combinatorial reformulation of the embedding $\iota$ from Theorem~\ref{original-embed}.

\begin{prop}
\label{prop-measure}
Let
$$
M_n^{\mathrm{std}_\pm}(i,j;k,l) = \pr\hr{Z_{\Nc^{\mathrm{std}_\pm}_n}(t_i,h_j;t_k,h_l)}
$$
be the boundary measurements in the networks $\Nc_n^{\mathrm{std}_\pm}$. Then
\begin{align*}
&\iota(K_i) = {M_n^{\mathrm{std}_+}(i+1,i+1;i,i)}, &
&\iota(E_i) = {M_n^{\mathrm{std}_+}(i+1,i;i,i)}, \\
&\iota(K'_i) = {M_n^{\mathrm{std}_-}(i,i;i+1,i+1)}, &
&\iota(F_i) = {M_n^{\mathrm{std}_-}(i,i+1;i+1,i+1)}.
\end{align*}
\end{prop}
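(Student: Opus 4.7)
The plan is to verify the identities by direct combinatorial computation, matching the network path sums against the explicit cluster-monomial formulas for $\iota$ provided in Theorem~\ref{original-embed}. Since the $n=4$ pattern is generic, it will suffice to identify and weigh all directed paths in $\Nc_n^{\mathrm{std}_\pm}$ contributing to each boundary measurement, and then compare with the formulas $\iota(E_i)=S_{[\cdot,\cdot]}$, $\iota(K_i)=P_{[\cdot,\cdot]}$ (and their $180^\circ$-rotated analogues for $F_i,K_i'$).

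First, I would treat the ``K-type'' statements. The networks $\Nc_n^{\mathrm{std}_\pm}$ are lifts to the universal cover of the dual bi-colored graph of $\Qc_n^{\mathrm{std}}$, so paths live on a planar grid and can be read off directly. For $\iota(K_i)$ one looks for all directed paths from $t_{i+1}$ to $h_{i+1}$ relative to the reference path $t_i\to h_i$. The grid structure of the network inside an ideal triangle forces a \emph{unique} such pair of paths, whose symmetric difference $[\gamma\gamma_0^{-1}]$ traces out precisely the boundary of the $i$-th rhombus in $\Qc_n^{\mathrm{std}}$ (the rhombi highlighted in the remark following Figure~\ref{fig-D4}). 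Summing the face vectors around this rhombus on the universal cover and applying $\pr$ produces, up to the explicit $q^{\frac{1}{2}\langle\lambda,\lambda\rangle}$ correction from~\eqref{formdef}, exactly the cluster monomial $P_{[n^2,n^2+2n]}$. The same argument applied to $\Nc_n^{\mathrm{std}_-}$ with $i\leftrightarrow i+1$ swapped (i.e.\ the $180^\circ$ rotation) yields the formula for $\iota(K_i')$.

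Next, I would treat the ``E-type'' statements, which are the heart of the calculation. For $\iota(E_i)=M_n^{\mathrm{std}_+}(i+1,i;i,i)$ one enumerates all directed paths from $t_{i+1}$ to $h_i$, again relative to $t_i\to h_i$. Because of the zig-zag structure described in Figures~\ref{fig-path}--\ref{fig-dir-net}, any such path initially runs parallel to the reference path and can peel off into the interior at any of a finite sequence of ``decision'' vertices before returning to the boundary. Labelling these decision points $1,2,\dots,2n$ and reading off the corresponding face cycles yields a bijection between the paths and the monomials in
\[
S(X_{n^2},X_{n^2+1},\dots,X_{n^2+2n-1})=X_{n^2}\bigl(1+qX_{n^2+1}(1+qX_{n^2+2}(\cdots(1+qX_{n^2+2n-1})))\bigr).
\]
Verifying that the weights $X_{[\gamma\gamma_0^{-1}]}\in\Tc_{\tLambda}$, after $\pr$, reproduce each summand then reduces to checking (i) that consecutive face-cycles differ by a single basis vector $e_k$ of $\Lambda$, and (ii) that the commutation $q$-factors accumulated by concatenating these vectors along the nested expression agree with the quantum correction coming from $\langle\cdot,\cdot\rangle$. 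The analogous enumeration on $\Nc_n^{\mathrm{std}_-}$ gives $\iota(F_i)$.

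The main obstacle is the bookkeeping for $\pr$. On the universal cover the form $(\cdot,\cdot)$ is the ordinary dual-graph intersection pairing and the monomials multiply straightforwardly, but the quantum correction $q^{\frac12\langle\la,\la\rangle}$ needed to descend to $\Tc_\Lambda$ must be shown to exactly compensate for the difference between the ``planar'' commutation relations in $\Tc_{\tLambda}$ and the ``annular'' relations in $\Tc_{\Lambda}$ induced by the monodromy around the puncture. Controlling this requires checking, for each pair of consecutive cycles appearing in the path enumeration of $S$, that the winding of the second cycle past the first contributes precisely the $q$-factor demanded by the definition~\eqref{formdef} of $\langle\cdot,\cdot\rangle$. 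Once this is verified in one case (e.g.\ $E_1$ for $n=4$), the general case follows from the $180^\circ$ rotation symmetry exchanging $E_i\leftrightarrow F_{n+1-i}$ and $K_i\leftrightarrow K_{n+1-i}'$, and from the uniform structure of the grid that makes the argument for general $i$ a translated copy of the argument for $i=1$.
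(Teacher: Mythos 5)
Your proposal is correct and coincides with what the paper intends: Proposition~\ref{prop-measure} is stated without proof, as a direct combinatorial reformulation of the explicit formulas $\iota(E_i)=S_{[\cdot,\cdot]}$, $\iota(K_i)=P_{[\cdot,\cdot]}$ from Theorem~\ref{original-embed}, and your path-by-path matching (unique path pair enclosing the row of faces for the $K$'s, the $2i$ ``peel-off'' paths realizing the telescoping sum $S$ for the $E$'s, plus the $q$-power bookkeeping for $\pr$) is exactly that verification. Only a cosmetic caveat: the enclosed faces for $\iota(K_i)$ are those along the \emph{lower} half of the $i$-th rhombus (e.g.\ $1,2,3$ but not $28$ for $i=1$), not the full rhombus boundary as your wording suggests.
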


\begin{figure}[h]
\begin{tikzpicture}[every node/.style={inner sep=0, minimum size=0.2cm, draw, circle,thick}, x=0.35cm, y=0.35cm]

\node[draw=none, blue] at (0,2) {\scriptsize 25};
\node[draw=none, blue] at (0,4) {\scriptsize 26};
\node[draw=none, blue] at (0,6) {\scriptsize 27};
\node[draw=none, blue] at (0,8) {\scriptsize 28};

\node[draw=none, blue] at (4,0) {\scriptsize 16};
\node[draw=none, blue] at (8,0) {\scriptsize 9};
\node[draw=none, blue] at (12,0) {\scriptsize 4};
\node[draw=none, blue] at (16,0) {\scriptsize 1};
\node[draw=none, blue] at (6,2) {\scriptsize 17};
\node[draw=none, blue] at (10,2) {\scriptsize 10};
\node[draw=none, blue] at (14,2) {\scriptsize 5};
\node[draw=none, blue] at (8,4) {\scriptsize 18};
\node[draw=none, blue] at (12,4) {\scriptsize 11};
\node[draw=none, blue] at (10,6) {\scriptsize 19};

\node[draw=none, blue] at (20,2) {\scriptsize 2};
\node[draw=none, blue] at (20,4) {\scriptsize 6};
\node[draw=none, blue] at (20,6) {\scriptsize 12};
\node[draw=none, blue] at (20,8) {\scriptsize 20};

\node[draw=none, blue] at (24,0) {\scriptsize 3};
\node[draw=none, blue] at (28,0) {\scriptsize 8};
\node[draw=none, blue] at (32,0) {\scriptsize 15};
\node[draw=none, blue] at (36,0) {\scriptsize 24};
\node[draw=none, blue] at (26,2) {\scriptsize 7};
\node[draw=none, blue] at (30,2) {\scriptsize 14};
\node[draw=none, blue] at (34,2) {\scriptsize 23};
\node[draw=none, blue] at (28,4) {\scriptsize 13};
\node[draw=none, blue] at (32,4) {\scriptsize 22};
\node[draw=none, blue] at (30,6) {\scriptsize 21};

\node[draw=none, blue] at (40,2) {\scriptsize 25};
\node[draw=none, blue] at (40,4) {\scriptsize 26};
\node[draw=none, blue] at (40,6) {\scriptsize 27};
\node[draw=none, blue] at (40,8) {\scriptsize 28};

\node[minimum size=0.4cm] (t_5) at (2,-1) {\scriptsize $t_5$};
\node[minimum size=0.4cm] (t_4) at (6,-1) {\scriptsize $t_4$};
\node[minimum size=0.4cm] (t_3) at (10,-1) {\scriptsize $t_3$};
\node[minimum size=0.4cm] (t_2) at (14,-1) {\scriptsize $t_2$};
\node[minimum size=0.4cm] (t_1) at (18,-1) {\scriptsize $t_1$};

\node[minimum size=0.4cm] (h_1) at (22,-1) {\scriptsize $h_1$};
\node[minimum size=0.4cm] (h_2) at (26,-1) {\scriptsize $h_2$};
\node[minimum size=0.4cm] (h_3) at (30,-1) {\scriptsize $h_3$};
\node[minimum size=0.4cm] (h_4) at (34,-1) {\scriptsize $h_4$};
\node[minimum size=0.4cm] (h_5) at (38,-1) {\scriptsize $h_5$};

\foreach \i in {2,6,10,14,18,22,26,30,34,38}
	\node[fill=white] (\i_1) at (\i,1) {};
\foreach \i in {4,8,12,16,24,28,32,36}
	\node[fill=white] (\i_3) at (\i,3) {};
\foreach \i in {6,10,14,26,30,34}
	\node[fill=white] (\i_5) at (\i,5) {};
\foreach \i in {8,12,28,32}
	\node[fill=white] (\i_7) at (\i,7) {};
\foreach \i in {10,30}
	\node[fill=white] (\i_9) at (\i,9) {};

\foreach \i in {4,8,12,16,24,28,32,36}
	\node[fill=black] (\i_1) at (\i,1) {};
\foreach \i in {6,10,14,26,30,34}
	\node[fill=black] (\i_3) at (\i,3) {};
\foreach \i in {8,12,28,32}
	\node[fill=black] (\i_5) at (\i,5) {};
\foreach \i in {10,30}
	\node[fill=black] (\i_7) at (\i,7) {};

\draw[red, thick, ->] (t_5) to (2_1);
\draw[red, thick, ->] (2_1) to (4_1);
\draw[red, thick, ->] (4_1) to (4_3);
\draw[red, thick, ->] (4_3) to (6_3);
\draw[red, thick, ->] (6_3) to (6_5);
\draw[red, thick, ->] (6_5) to (8_5);
\draw[red, thick, ->] (8_5) to (8_7);
\draw[red, thick, ->] (8_7) to (10_7);
\draw[red, thick, ->] (10_7) to (10_9);
\draw[red, thick, ->] (10_9) to (30_9);
\draw[red, thick, ->] (30_9) to (30_7);
\draw[red, thick, ->] (30_7) to (32_7);
\draw[red, thick, ->] (32_7) to (32_5);
\draw[red, thick, ->] (32_5) to (34_5);
\draw[red, thick, ->] (34_5) to (34_3);
\draw[red, thick, ->] (34_3) to (36_3);
\draw[red, thick, ->] (36_3) to (36_1);
\draw[red, thick, ->] (36_1) to (38_1);
\draw[red, thick, ->] (38_1) to (h_5);

\draw[red, thick, ->] (t_4) to (6_1);
\draw[red, thick, ->] (6_1) to (8_1);
\draw[red, thick, ->] (8_1) to (8_3);
\draw[red, thick, ->] (8_3) to (10_3);
\draw[red, thick, ->] (10_3) to (10_5);
\draw[red, thick, ->] (10_5) to (12_5);
\draw[red, thick, ->] (12_5) to (12_7);
\draw[red, thick, ->] (12_7) to (28_7);
\draw[red, thick, ->] (28_7) to (28_5);
\draw[red, thick, ->] (28_5) to (30_5);
\draw[red, thick, ->] (30_5) to (30_3);
\draw[red, thick, ->] (30_3) to (32_3);
\draw[red, thick, ->] (32_3) to (32_1);
\draw[red, thick, ->] (32_1) to (34_1);
\draw[red, thick, ->] (34_1) to (h_4);

\draw[red, thick, ->] (t_3) to (10_1);
\draw[red, thick, ->] (10_1) to (12_1);
\draw[red, thick, ->] (12_1) to (12_3);
\draw[red, thick, ->] (12_3) to (14_3);
\draw[red, thick, ->] (14_3) to (14_5);
\draw[red, thick, ->] (14_5) to (26_5);
\draw[red, thick, ->] (26_5) to (26_3);
\draw[red, thick, ->] (26_3) to (28_3);
\draw[red, thick, ->] (28_3) to (28_1);
\draw[red, thick, ->] (28_1) to (30_1);
\draw[red, thick, ->] (30_1) to (h_3);

\draw[red, thick, ->] (t_2) to (14_1);
\draw[red, thick, ->] (14_1) to (16_1);
\draw[red, thick, ->] (16_1) to (16_3);
\draw[red, thick, ->] (16_3) to (24_3);
\draw[red, thick, ->] (24_3) to (24_1);
\draw[red, thick, ->] (24_1) to (26_1);
\draw[red, thick, ->] (26_1) to (h_2);

\draw[red, thick, ->] (t_1) to (18_1);
\draw[red, thick, ->] (18_1) to (22_1);
\draw[red, thick, ->] (22_1) to (h_1);

\draw[OliveGreen, thick, ->] (-1,1) to (2_1);
\draw[OliveGreen, thick, ->] (4_1) to (6_1);
\draw[OliveGreen, thick, ->] (8_1) to (10_1);
\draw[OliveGreen, thick, ->] (12_1) to (14_1);
\draw[OliveGreen, thick, ->] (16_1) to (18_1);

\draw[OliveGreen, thick, ->] (-1,3) to (4_3);
\draw[OliveGreen, thick, ->] (6_3) to (8_3);
\draw[OliveGreen, thick, ->] (10_3) to (12_3);
\draw[OliveGreen, thick, ->] (14_3) to (16_3);

\draw[OliveGreen, thick, ->] (-1,5) to (6_5);
\draw[OliveGreen, thick, ->] (8_5) to (10_5);
\draw[OliveGreen, thick, ->] (12_5) to (14_5);

\draw[OliveGreen, thick, ->] (-1,7) to (8_7);
\draw[OliveGreen, thick, ->] (10_7) to (12_7);

\draw[OliveGreen, thick, ->] (-1,9) to (10_9);

\draw[OliveGreen, thick, ->] (41,1) to (38_1);
\draw[OliveGreen, thick, ->] (36_1) to (34_1);
\draw[OliveGreen, thick, ->] (32_1) to (30_1);
\draw[OliveGreen, thick, ->] (28_1) to (26_1);
\draw[OliveGreen, thick, ->] (24_1) to (22_1);

\draw[OliveGreen, thick, ->] (41,3) to (36_3);
\draw[OliveGreen, thick, ->] (34_3) to (32_3);
\draw[OliveGreen, thick, ->] (30_3) to (28_3);
\draw[OliveGreen, thick, ->] (26_3) to (24_3);

\draw[OliveGreen, thick, ->] (41,5) to (34_5);
\draw[OliveGreen, thick, ->] (32_5) to (30_5);
\draw[OliveGreen, thick, ->] (28_5) to (26_5);

\draw[OliveGreen, thick, ->] (41,7) to (32_7);
\draw[OliveGreen, thick, ->] (30_7) to (28_7);

\draw[OliveGreen, thick, ->] (41,9) to (30_9);

\end{tikzpicture}
\caption{Network $\Nc_4^{\mathrm{std}_+}$.}
\label{fig-std-net+}
\end{figure}

\begin{figure}[h]
\begin{tikzpicture}[every node/.style={inner sep=0, minimum size=0.2cm, draw, circle,thick}, x=0.35cm, y=0.35cm]

\node[draw=none, blue] at (0,2) {\scriptsize 2};
\node[draw=none, blue] at (0,4) {\scriptsize 6};
\node[draw=none, blue] at (0,6) {\scriptsize 12};
\node[draw=none, blue] at (0,8) {\scriptsize 20};

\node[draw=none, blue] at (4,0) {\scriptsize 3};
\node[draw=none, blue] at (8,0) {\scriptsize 8};
\node[draw=none, blue] at (12,0) {\scriptsize 15};
\node[draw=none, blue] at (16,0) {\scriptsize 24};
\node[draw=none, blue] at (6,2) {\scriptsize 7};
\node[draw=none, blue] at (10,2) {\scriptsize 14};
\node[draw=none, blue] at (14,2) {\scriptsize 23};
\node[draw=none, blue] at (8,4) {\scriptsize 13};
\node[draw=none, blue] at (12,4) {\scriptsize 22};
\node[draw=none, blue] at (10,6) {\scriptsize 21};

\node[draw=none, blue] at (20,2) {\scriptsize 25};
\node[draw=none, blue] at (20,4) {\scriptsize 26};
\node[draw=none, blue] at (20,6) {\scriptsize 27};
\node[draw=none, blue] at (20,8) {\scriptsize 28};

\node[draw=none, blue] at (24,0) {\scriptsize 16};
\node[draw=none, blue] at (28,0) {\scriptsize 9};
\node[draw=none, blue] at (32,0) {\scriptsize 4};
\node[draw=none, blue] at (36,0) {\scriptsize 1};
\node[draw=none, blue] at (26,2) {\scriptsize 17};
\node[draw=none, blue] at (30,2) {\scriptsize 10};
\node[draw=none, blue] at (34,2) {\scriptsize 5};
\node[draw=none, blue] at (28,4) {\scriptsize 18};
\node[draw=none, blue] at (32,4) {\scriptsize 11};
\node[draw=none, blue] at (30,6) {\scriptsize 19};

\node[draw=none, blue] at (40,2) {\scriptsize 2};
\node[draw=none, blue] at (40,4) {\scriptsize 6};
\node[draw=none, blue] at (40,6) {\scriptsize 12};
\node[draw=none, blue] at (40,8) {\scriptsize 20};

\node[minimum size=0.4cm] (t_5) at (2,-1) {\scriptsize $t_1$};
\node[minimum size=0.4cm] (t_4) at (6,-1) {\scriptsize $t_2$};
\node[minimum size=0.4cm] (t_3) at (10,-1) {\scriptsize $t_3$};
\node[minimum size=0.4cm] (t_2) at (14,-1) {\scriptsize $t_4$};
\node[minimum size=0.4cm] (t_1) at (18,-1) {\scriptsize $t_5$};

\node[minimum size=0.4cm] (h_1) at (22,-1) {\scriptsize $h_5$};
\node[minimum size=0.4cm] (h_2) at (26,-1) {\scriptsize $h_4$};
\node[minimum size=0.4cm] (h_3) at (30,-1) {\scriptsize $h_3$};
\node[minimum size=0.4cm] (h_4) at (34,-1) {\scriptsize $h_2$};
\node[minimum size=0.4cm] (h_5) at (38,-1) {\scriptsize $h_1$};

\foreach \i in {2,6,10,14,18,22,26,30,34,38}
	\node[fill=white] (\i_1) at (\i,1) {};
\foreach \i in {4,8,12,16,24,28,32,36}
	\node[fill=white] (\i_3) at (\i,3) {};
\foreach \i in {6,10,14,26,30,34}
	\node[fill=white] (\i_5) at (\i,5) {};
\foreach \i in {8,12,28,32}
	\node[fill=white] (\i_7) at (\i,7) {};
\foreach \i in {10,30}
	\node[fill=white] (\i_9) at (\i,9) {};

\foreach \i in {4,8,12,16,24,28,32,36}
	\node[fill=black] (\i_1) at (\i,1) {};
\foreach \i in {6,10,14,26,30,34}
	\node[fill=black] (\i_3) at (\i,3) {};
\foreach \i in {8,12,28,32}
	\node[fill=black] (\i_5) at (\i,5) {};
\foreach \i in {10,30}
	\node[fill=black] (\i_7) at (\i,7) {};

\draw[OliveGreen, thick, ->] (t_5) to (2_1);
\draw[OliveGreen, thick, ->] (2_1) to (4_1);
\draw[OliveGreen, thick, ->] (4_1) to (4_3);
\draw[OliveGreen, thick, ->] (4_3) to (6_3);
\draw[OliveGreen, thick, ->] (6_3) to (6_5);
\draw[OliveGreen, thick, ->] (6_5) to (8_5);
\draw[OliveGreen, thick, ->] (8_5) to (8_7);
\draw[OliveGreen, thick, ->] (8_7) to (10_7);
\draw[OliveGreen, thick, ->] (10_7) to (10_9);
\draw[OliveGreen, thick, ->] (10_9) to (30_9);
\draw[OliveGreen, thick, ->] (30_9) to (30_7);
\draw[OliveGreen, thick, ->] (30_7) to (32_7);
\draw[OliveGreen, thick, ->] (32_7) to (32_5);
\draw[OliveGreen, thick, ->] (32_5) to (34_5);
\draw[OliveGreen, thick, ->] (34_5) to (34_3);
\draw[OliveGreen, thick, ->] (34_3) to (36_3);
\draw[OliveGreen, thick, ->] (36_3) to (36_1);
\draw[OliveGreen, thick, ->] (36_1) to (38_1);
\draw[OliveGreen, thick, ->] (38_1) to (h_5);

\draw[OliveGreen, thick, ->] (t_4) to (6_1);
\draw[OliveGreen, thick, ->] (6_1) to (8_1);
\draw[OliveGreen, thick, ->] (8_1) to (8_3);
\draw[OliveGreen, thick, ->] (8_3) to (10_3);
\draw[OliveGreen, thick, ->] (10_3) to (10_5);
\draw[OliveGreen, thick, ->] (10_5) to (12_5);
\draw[OliveGreen, thick, ->] (12_5) to (12_7);
\draw[OliveGreen, thick, ->] (12_7) to (28_7);
\draw[OliveGreen, thick, ->] (28_7) to (28_5);
\draw[OliveGreen, thick, ->] (28_5) to (30_5);
\draw[OliveGreen, thick, ->] (30_5) to (30_3);
\draw[OliveGreen, thick, ->] (30_3) to (32_3);
\draw[OliveGreen, thick, ->] (32_3) to (32_1);
\draw[OliveGreen, thick, ->] (32_1) to (34_1);
\draw[OliveGreen, thick, ->] (34_1) to (h_4);

\draw[OliveGreen, thick, ->] (t_3) to (10_1);
\draw[OliveGreen, thick, ->] (10_1) to (12_1);
\draw[OliveGreen, thick, ->] (12_1) to (12_3);
\draw[OliveGreen, thick, ->] (12_3) to (14_3);
\draw[OliveGreen, thick, ->] (14_3) to (14_5);
\draw[OliveGreen, thick, ->] (14_5) to (26_5);
\draw[OliveGreen, thick, ->] (26_5) to (26_3);
\draw[OliveGreen, thick, ->] (26_3) to (28_3);
\draw[OliveGreen, thick, ->] (28_3) to (28_1);
\draw[OliveGreen, thick, ->] (28_1) to (30_1);
\draw[OliveGreen, thick, ->] (30_1) to (h_3);

\draw[OliveGreen, thick, ->] (t_2) to (14_1);
\draw[OliveGreen, thick, ->] (14_1) to (16_1);
\draw[OliveGreen, thick, ->] (16_1) to (16_3);
\draw[OliveGreen, thick, ->] (16_3) to (24_3);
\draw[OliveGreen, thick, ->] (24_3) to (24_1);
\draw[OliveGreen, thick, ->] (24_1) to (26_1);
\draw[OliveGreen, thick, ->] (26_1) to (h_2);

\draw[OliveGreen, thick, ->] (t_1) to (18_1);
\draw[OliveGreen, thick, ->] (18_1) to (22_1);
\draw[OliveGreen, thick, ->] (22_1) to (h_1);

\draw[BurntOrange, thick, ->] (-1,1) to (2_1);
\draw[BurntOrange, thick, ->] (4_1) to (6_1);
\draw[BurntOrange, thick, ->] (8_1) to (10_1);
\draw[BurntOrange, thick, ->] (12_1) to (14_1);
\draw[BurntOrange, thick, ->] (16_1) to (18_1);

\draw[BurntOrange, thick, ->] (-1,3) to (4_3);
\draw[BurntOrange, thick, ->] (6_3) to (8_3);
\draw[BurntOrange, thick, ->] (10_3) to (12_3);
\draw[BurntOrange, thick, ->] (14_3) to (16_3);

\draw[BurntOrange, thick, ->] (-1,5) to (6_5);
\draw[BurntOrange, thick, ->] (8_5) to (10_5);
\draw[BurntOrange, thick, ->] (12_5) to (14_5);

\draw[BurntOrange, thick, ->] (-1,7) to (8_7);
\draw[BurntOrange, thick, ->] (10_7) to (12_7);

\draw[BurntOrange, thick, ->] (-1,9) to (10_9);

\draw[BurntOrange, thick, ->] (41,1) to (38_1);
\draw[BurntOrange, thick, ->] (36_1) to (34_1);
\draw[BurntOrange, thick, ->] (32_1) to (30_1);
\draw[BurntOrange, thick, ->] (28_1) to (26_1);
\draw[BurntOrange, thick, ->] (24_1) to (22_1);

\draw[BurntOrange, thick, ->] (41,3) to (36_3);
\draw[BurntOrange, thick, ->] (34_3) to (32_3);
\draw[BurntOrange, thick, ->] (30_3) to (28_3);
\draw[BurntOrange, thick, ->] (26_3) to (24_3);

\draw[BurntOrange, thick, ->] (41,5) to (34_5);
\draw[BurntOrange, thick, ->] (32_5) to (30_5);
\draw[BurntOrange, thick, ->] (28_5) to (26_5);

\draw[BurntOrange, thick, ->] (41,7) to (32_7);
\draw[BurntOrange, thick, ->] (30_7) to (28_7);

\draw[BurntOrange, thick, ->] (41,9) to (30_9);

\end{tikzpicture}
\caption{Network $\Nc_4^{\mathrm{std}_-}$.}
\label{fig-std-net-}
\end{figure}

\begin{remark}
\label{single-casimir-rmk}
The Casimir elements $\Omega_1,\ldots, \Omega_n$ defined in~\eqref{single-center-gens} that determine the central character of the positive representation $\mathcal{P}_\lambda$ also admit a simple interpretation in terms of the network $\mathcal{N}^{\mathrm{std}}_n$ shown in Figure~\ref{fig-std-net}. Namely, we have
\beq
\label{cas-single-std}
\Omega_j = \pr\hr{Z_{\Nc^{\mathrm{std}}_n}(t_{j+1},h_{j+1};t_j,h_j)}.
\eeq
Thus, the Casimirs $\Omega_j$ can be regarded as computing the monodromies around the rows of the network $\Nc^{\mathrm{std}}_n$. 
\end{remark}

\begin{figure}[h]
\begin{tikzpicture}[every node/.style={inner sep=0, minimum size=0.2cm, draw, circle,thick}, x=0.35cm, y=0.35cm]

\node[draw=none, blue] at (4,0) {\scriptsize 16};
\node[draw=none, blue] at (8,0) {\scriptsize 9};
\node[draw=none, blue] at (12,0) {\scriptsize 4};
\node[draw=none, blue] at (16,0) {\scriptsize 1};
\node[draw=none, blue] at (6,2) {\scriptsize 17};
\node[draw=none, blue] at (10,2) {\scriptsize 10};
\node[draw=none, blue] at (14,2) {\scriptsize 5};
\node[draw=none, blue] at (8,4) {\scriptsize 18};
\node[draw=none, blue] at (12,4) {\scriptsize 11};
\node[draw=none, blue] at (10,6) {\scriptsize 19};

\node[draw=none, blue] at (18,2) {\scriptsize 2};
\node[draw=none, blue] at (16,4) {\scriptsize 6};
\node[draw=none, blue] at (14,6) {\scriptsize 12};
\node[draw=none, blue] at (12,8) {\scriptsize 20};

\node[draw=none, blue] at (24,0) {\scriptsize 3};
\node[draw=none, blue] at (28,0) {\scriptsize 8};
\node[draw=none, blue] at (32,0) {\scriptsize 15};
\node[draw=none, blue] at (36,0) {\scriptsize 24};
\node[draw=none, blue] at (26,2) {\scriptsize 7};
\node[draw=none, blue] at (30,2) {\scriptsize 14};
\node[draw=none, blue] at (34,2) {\scriptsize 23};
\node[draw=none, blue] at (28,4) {\scriptsize 13};
\node[draw=none, blue] at (32,4) {\scriptsize 22};
\node[draw=none, blue] at (30,6) {\scriptsize 21};

\node[draw=none, blue] at (38,2) {\scriptsize 25};
\node[draw=none, blue] at (36,4) {\scriptsize 26};
\node[draw=none, blue] at (34,6) {\scriptsize 27};
\node[draw=none, blue] at (32,8) {\scriptsize 28};

\node[minimum size=0.4cm] (t_1) at (41,1) {\scriptsize $t_1$};
\node[minimum size=0.4cm] (t_2) at (41,3) {\scriptsize $t_2$};
\node[minimum size=0.4cm] (t_3) at (41,5) {\scriptsize $t_3$};
\node[minimum size=0.4cm] (t_4) at (41,7) {\scriptsize $t_4$};
\node[minimum size=0.4cm] (t_5) at (41,9) {\scriptsize $t_5$};

\node[minimum size=0.4cm] (h_1) at (-1,1) {\scriptsize $h_1$};
\node[minimum size=0.4cm] (h_2) at (-1,3) {\scriptsize $h_2$};
\node[minimum size=0.4cm] (h_3) at (-1,5) {\scriptsize $h_3$};
\node[minimum size=0.4cm] (h_4) at (-1,7) {\scriptsize $h_4$};
\node[minimum size=0.4cm] (h_5) at (-1,9) {\scriptsize $h_5$};

\foreach \i in {2,6,10,14,18,22,26,30,34,38}
	\node[fill=white] (\i_1) at (\i,1) {};
\foreach \i in {4,8,12,16,24,28,32,36}
	\node[fill=white] (\i_3) at (\i,3) {};
\foreach \i in {6,10,14,26,30,34}
	\node[fill=white] (\i_5) at (\i,5) {};
\foreach \i in {8,12,28,32}
	\node[fill=white] (\i_7) at (\i,7) {};
\foreach \i in {10,30}
	\node[fill=white] (\i_9) at (\i,9) {};

\foreach \i in {4,8,12,16,24,28,32,36}
	\node[fill=black] (\i_1) at (\i,1) {};
\foreach \i in {6,10,14,26,30,34}
	\node[fill=black] (\i_3) at (\i,3) {};
\foreach \i in {8,12,28,32}
	\node[fill=black] (\i_5) at (\i,5) {};
\foreach \i in {10,30}
	\node[fill=black] (\i_7) at (\i,7) {};

\draw[BurntOrange, thick, <-] (h_5) to (10_9);
\draw[BurntOrange, thick, <-] (10_9) to (30_9);
\draw[BurntOrange, thick, <-] (30_9) to (t_5);

\draw[BurntOrange, thick, <-] (h_4) to (8_7);
\draw[BurntOrange, thick, <-] (8_7) to (10_7);
\draw[BurntOrange, thick, <-] (10_7) to (12_7);
\draw[BurntOrange, thick, <-] (12_7) to (28_7);
\draw[BurntOrange, thick, <-] (28_7) to (30_7);
\draw[BurntOrange, thick, <-] (30_7) to (32_7);
\draw[BurntOrange, thick, <-] (32_7) to (t_4);

\draw[BurntOrange, thick, <-] (h_3) to (6_5);
\draw[BurntOrange, thick, <-] (6_5) to (8_5);
\draw[BurntOrange, thick, <-] (8_5) to (10_5);
\draw[BurntOrange, thick, <-] (10_5) to (12_5);
\draw[BurntOrange, thick, <-] (12_5) to (14_5);
\draw[BurntOrange, thick, <-] (14_5) to (26_5);
\draw[BurntOrange, thick, <-] (26_5) to (28_5);
\draw[BurntOrange, thick, <-] (28_5) to (30_5);
\draw[BurntOrange, thick, <-] (30_5) to (32_5);
\draw[BurntOrange, thick, <-] (32_5) to (34_5);
\draw[BurntOrange, thick, <-] (34_5) to (t_3);

\draw[BurntOrange, thick, <-] (h_2) to (4_3);
\draw[BurntOrange, thick, <-] (4_3) to (6_3);
\draw[BurntOrange, thick, <-] (6_3) to (8_3);
\draw[BurntOrange, thick, <-] (8_3) to (10_3);
\draw[BurntOrange, thick, <-] (10_3) to (12_3);
\draw[BurntOrange, thick, <-] (12_3) to (14_3);
\draw[BurntOrange, thick, <-] (14_3) to (16_3);
\draw[BurntOrange, thick, <-] (16_3) to (24_3);
\draw[BurntOrange, thick, <-] (24_3) to (26_3);
\draw[BurntOrange, thick, <-] (26_3) to (28_3);
\draw[BurntOrange, thick, <-] (28_3) to (30_3);
\draw[BurntOrange, thick, <-] (30_3) to (32_3);
\draw[BurntOrange, thick, <-] (32_3) to (34_3);
\draw[BurntOrange, thick, <-] (34_3) to (36_3);
\draw[BurntOrange, thick, <-] (36_3) to (t_2);

\draw[BurntOrange, thick, <-] (h_1) to (2_1);
\draw[BurntOrange, thick, <-] (2_1) to (4_1);
\draw[BurntOrange, thick, <-] (4_1) to (6_1);
\draw[BurntOrange, thick, <-] (6_1) to (8_1);
\draw[BurntOrange, thick, <-] (8_1) to (10_1);
\draw[BurntOrange, thick, <-] (10_1) to (12_1);
\draw[BurntOrange, thick, <-] (12_1) to (14_1);
\draw[BurntOrange, thick, <-] (14_1) to (16_1);
\draw[BurntOrange, thick, <-] (16_1) to (18_1);
\draw[BurntOrange, thick, <-] (18_1) to (22_1);
\draw[BurntOrange, thick, <-] (22_1) to (24_1);
\draw[BurntOrange, thick, <-] (24_1) to (26_1);
\draw[BurntOrange, thick, <-] (26_1) to (28_1);
\draw[BurntOrange, thick, <-] (28_1) to (30_1);
\draw[BurntOrange, thick, <-] (30_1) to (32_1);
\draw[BurntOrange, thick, <-] (32_1) to (34_1);
\draw[BurntOrange, thick, <-] (34_1) to (36_1);
\draw[BurntOrange, thick, <-] (36_1) to (38_1);
\draw[BurntOrange, thick, <-] (38_1) to (t_1);

\draw[red, thick, ->] (2,-1) to (2_1);
\draw[red, thick, ->] (4_1) to (4_3);
\draw[red, thick, ->] (6_3) to (6_5);
\draw[red, thick, ->] (8_5) to (8_7);
\draw[red, thick, ->] (10_7) to (10_9);

\draw[red, thick, ->] (6,-1) to (6_1);
\draw[red, thick, ->] (8_1) to (8_3);
\draw[red, thick, ->] (10_3) to (10_5);
\draw[red, thick, ->] (12_5) to (12_7);

\draw[red, thick, ->] (10,-1) to (10_1);
\draw[red, thick, ->] (12_1) to (12_3);
\draw[red, thick, ->] (14_3) to (14_5);

\draw[red, thick, ->] (14,-1) to (14_1);
\draw[red, thick, ->] (16_1) to (16_3);

\draw[red, thick, ->] (18,-1) to (18_1);

\draw[red, thick, ->] (22,-1) to (22_1);
\draw[red, thick, ->] (24_1) to (24_3);
\draw[red, thick, ->] (26_3) to (26_5);
\draw[red, thick, ->] (28_5) to (28_7);
\draw[red, thick, ->] (30_7) to (30_9);

\draw[red, thick, ->] (26,-1) to (26_1);
\draw[red, thick, ->] (28_1) to (28_3);
\draw[red, thick, ->] (30_3) to (30_5);
\draw[red, thick, ->] (32_5) to (32_7);

\draw[red, thick, ->] (30,-1) to (30_1);
\draw[red, thick, ->] (32_1) to (32_3);
\draw[red, thick, ->] (34_3) to (34_5);

\draw[red, thick, ->] (34,-1) to (34_1);
\draw[red, thick, ->] (36_1) to (36_3);

\draw[red, thick, ->] (38,-1) to (38_1);

\end{tikzpicture}
\caption{Network $\Nc_4^{\mathrm{std}}$.}
\label{fig-std-net}
\end{figure}

\subsection{Embedding $\iota$ and flips of triangulation}
\label{single-copy-muts}

We now explain how to rewrite the embedding $\iota$ from Theorem~\ref{original-embed} with respect to different quantum cluster seeds. Let us start by explaining how to produce a seed $\Theta_n^{\mathrm{sf}}$ corresponding to the self-folded triangulation of a punctured $D_{2,1}$ with 2 marked points shown on the right of Figure~\ref{triangulations}.

This triangulation is obtained from the standard one by performing a single flip, which is realized by a sequence of $\binom{n+2}{3}$ cluster mutations as explained in Section~\ref{locsys-intro}. Since some of these mutations occur at 2-valent vertices, some care must be taken in order to preserve the dictionary with the network formulation. Namely, for each of the $\binom{n+2}{3}$ vertices of a quiver~$\Qc$ at which we mutate, we perform mutations at each of the infinitely many 4-valent vertices in its covering fiber in the quiver $\wdt \Qc$. The result of this procedure is a seed $\wdt\Theta^{\mathrm{sf}}_n$ whose quiver is dual to the bi-colored graph of the networks $\Nc_n^{\mathrm{sf}_\pm}$, shown in Figures~\ref{net-sf+} and~\ref{net-sf-} for $n=4$. Finally, we arrive at the seed $\Theta^{\mathrm{sf}}_n=\wdt\Theta^{\mathrm{sf}}_n/\Z$ by taking the quotient of $\wdt\Theta^{\mathrm{sf}}_n$ by the action of the deck group. The quiver $\Qc_4^{\mathrm{sf}}$ corresponding to $\Theta^{\mathrm{sf}}_4$ is shown in Figure~\ref{D4-sf}. 

\begin{example} Let us spell out this procedure in our example when $n=4$. In this case, one mutates at each vertex in the fiber over the following 20 vertices of the quiver $\Qc_4^{\mathrm{std}}$:
$$
2, \, 6, \, 12, \, 20; \quad
5, \, 11, \, 19, \, 7, \, 13, \, 21; \quad
10, \, 18, \, 6, \, 12, \, 14, \, 22; \quad
17, \, 11, \, 13, \, 23.
$$
As mentioned in Section~\ref{locsys-intro}, these vertices are divided into $n=4$ blocks in such a way that the order of mutations within each block does not matter. 
\end{example}

\begin{figure}[h]
\begin{tikzpicture}[every node/.style={inner sep=0, minimum size=0.4cm, thick, draw, circle}, x=0.75cm, y=0.45cm]

\node (1) [rectangle] at (-1,0) {\scriptsize 1};
\node (2) at (0,2) {\scriptsize 2};
\node (3) [rectangle] at (1,0) {\scriptsize 3};
\node (4) [rectangle] at (-2,2) {\scriptsize 4};
\node (5) at (-1,4) {\scriptsize 5};
\node (6) at (0,6) {\scriptsize 6};
\node (7) at (1,4) {\scriptsize 7};
\node (8) [rectangle] at (2,2) {\scriptsize 8};
\node (9) [rectangle] at (-3,4) {\scriptsize 9};
\node (10) at (-2,6) {\scriptsize 10};
\node (11) at (-1,8) {\scriptsize 11};
\node (12) at (0,10) {\scriptsize 12};
\node (13) at (1,8) {\scriptsize 13};
\node (14) at (2,6) {\scriptsize 14};
\node (15) [rectangle] at (3,4) {\scriptsize 15};
\node (16) [rectangle] at (-4,6) {\scriptsize 16};
\node (17) at (-3,8) {\scriptsize 17};
\node (18) at (-2,10) {\scriptsize 18};
\node (19) at (-1,12) {\scriptsize 19};
\node (20) at (0,14) {\scriptsize 20};
\node (21) at (1,12) {\scriptsize 21};
\node (22) at (2,10) {\scriptsize 22};
\node (23) at (3,8) {\scriptsize 23};
\node (24) [rectangle] at (4,6) {\scriptsize 24};
\node (25) at (-4,10) {\scriptsize 25};
\node (26) at (-3,12) {\scriptsize 26};
\node (27) at (-2,14) {\scriptsize 27};
\node (28) at (-1,16) {\scriptsize 28};

\draw[BrickRed, dashed] (-4,8) to (4,8);

\draw [->,thick] (1) -- (3);

\draw [->,thick] (4) -- (2);
\draw [->,thick] (2) -- (8);

\draw [->,thick] (9) -- (5);
\draw [->,thick] (5) -- (7);
\draw [->,thick] (7) -- (15);

\draw [->,thick] (16) -- (10);
\draw [->,thick] (10) -- (6);
\draw [->,thick] (6) -- (14);
\draw [->,thick] (14) -- (24);

\draw [->,thick] (3) -- (2);
\draw [->,thick] (2) -- (5);
\draw [->,thick] (5) -- (10);
\draw [->,thick] (10) -- (17);
\draw [->,thick] (17) -- (18);
\draw [->,thick] (18) -- (19);

\draw [->,thick] (19) -- (20);
\draw [->,thick] (20) -- (21);

%\draw [->,thick] (19) to [bend left = 15] (28);
%\draw [->,thick] (28) to [bend left = 15] (21);

\draw [->,thick] (21) -- (22);
\draw [->,thick] (22) -- (23);
\draw [->,thick] (23) -- (14);
\draw [->,thick] (14) -- (7);
\draw [->,thick] (7) -- (2);
\draw [->,thick] (2) -- (1);

\draw [->,thick] (8) -- (7);
\draw [->,thick] (7) -- (6);
\draw [->,thick] (6) -- (11);
\draw [->,thick] (11) -- (12);
\draw [->,thick] (12) -- (21);
\draw [->,thick] (21) to [out = 145, in = -25] (27);
\draw [->,thick] (27) to (19);
\draw [->,thick] (19) -- (12);
\draw [->,thick] (12) -- (13);
\draw [->,thick] (13) -- (6);
\draw [->,thick] (6) -- (5);
\draw [->,thick] (5) -- (4);

\draw [->,thick] (15) -- (14);
\draw [->,thick] (14) -- (13);
\draw [->,thick] (13) -- (22);
\draw [->,thick] (22) to [out = 150, in =-20] (26);
\draw [->,thick] (26) to (18);
\draw [->,thick] (18) -- (11);
\draw [->,thick] (11) -- (10);
\draw [->,thick] (10) -- (9);

\draw [->,thick] (24) -- (23);
\draw [->,thick] (23) to [out = 155, in = -15] (25);
\draw [->,thick] (25) to (17);
\draw [->,thick] (17) -- (16);

\draw [->,thick] (21) -- (19);
\draw [->,thick] (22) -- (12);
\draw [->,thick] (12) -- (18);
\draw [->,thick] (25) to [bend left = 15] (22);
\draw [->,thick] (26) to [bend left = 20] (21);
\draw [->,thick] (19) to (26);
\draw [->,thick] (18) to (25);

\draw [->,thick,dashed] (24) -- (15);
\draw [->,thick,dashed] (15) -- (8);
\draw [->,thick,dashed] (8) -- (3);
\draw [->,thick,dashed] (1) -- (4);
\draw [->,thick,dashed] (4) -- (9);
\draw [->,thick,dashed] (9) -- (16);

\end{tikzpicture}
\caption{Quiver $\Qc_4^{\mathrm{sf}}$.}
\label{D4-sf}
\end{figure}

Now, as explained in Section~\ref{h-dilog-section}, the sequence of mutations passing from the seed $\Theta_n^{\mathrm{std}}$ to $\Theta_n^{\mathrm{sf}}$ yields a sequence of unitary operators on the positive representation $\mathcal{P}_\lambda$, each given by a non-compact quantum dilogarithm of a Heisenberg algebra generator. Let us denote by $\Phi^{\mathrm{sf}}$ the unitary operator on $\mathcal{P}_\lambda$ obtained as their composite. Then we have

\begin{cor}
\label{cor-folded}
Let
$$
M_n^{\mathrm{sf}_\pm}(i,j;k,l) = \mathrm{pr}\hr{Z_{\mathcal{N}^{\mathrm{sf}_\pm}_n}(t_i,h_j;t_k,h_l)}
$$
be the boundary measurements in the networks $\Nc_n^{\mathrm{sf}_\pm}$. Then as operators on $\mathcal{P}_\lambda$, we have
\begin{align*}
&\Ad_{\Phi^{\mathrm{sf}}}\cdot\iota(K_i) = {M_n^{\mathrm{sf}_+}(i+1,i+1;i,i)}, &
&\Ad_{\Phi^{\mathrm{sf}}}\cdot\iota(E_i) = {M_n^{\mathrm{sf}_+}(i+1,i;i,i)}, \\
&\Ad_{\Phi^{\mathrm{sf}}}\cdot\iota(K'_i) = {M_n^{\mathrm{sf}_-}(i,i;i+1,i+1)}, &
&\Ad_{\Phi^{\mathrm{sf}}}\cdot\iota(F_i) = {M_n^{\mathrm{sf}_-}(i,i+1;i+1,i+1)}.
\end{align*}
\end{cor}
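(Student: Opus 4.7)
The plan is to combine the network expression of the initial embedding $\iota$ (Proposition~\ref{prop-measure}), the invariance of weighted path counts under network mutations (Proposition~\ref{measure-invariance}), and the descent of mutations from the universal cover to the punctured disk (Proposition~\ref{mut-descent}). First, I would unpack the construction of $\Phi^{\mathrm{sf}}$: it is defined as a composition of non-compact dilogarithm operators $\Phi^\hbar(x_k)$, one per mutation in the sequence passing from $\Theta_n^{\mathrm{std}}$ to $\Theta_n^{\mathrm{sf}}$, and by the content of Section~\ref{h-dilog-section} conjugation by $\Phi^{\mathrm{sf}}$ acts on operators in $\mathcal{P}_\lambda$ exactly as the composite cluster mutation automorphism $\mu^{\mathrm{sf}}$ on $\Dstd$. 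Hence it suffices to prove the equality
$$
\mu^{\mathrm{sf}}\bigl(M_n^{\mathrm{std}_\pm}(i,j;k,l)\bigr) \;=\; M_n^{\mathrm{sf}_\pm}(i,j;k,l)
$$
inside $\Dstd$ for the relevant boundary indices; given Proposition~\ref{prop-measure}, the corollary follows.

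Next, I would prove this equality on the universal cover before projecting. The construction of $\Theta_n^{\mathrm{sf}}$ given in Section~\ref{single-copy-muts} prescribes that each mutation at a vertex $k\in V(\Qc_n^{\mathrm{std}})$ is replaced by the infinite sequence $\tilde\mu_k$ of mutations at all vertices in the fiber $\pi^{-1}(k)\subset V(\wdt{\Qc}_n^{\mathrm{std}})$. By design every such lifted vertex is $4$-valent with alternating incoming and outgoing arrows, so Proposition~\ref{measure-invariance} applies to each individual mutation on the universal cover. Composing these, I obtain
$$
\tilde\mu^{\mathrm{sf}}\bigl(Z_{\Nc_n^{\mathrm{std}_\pm}}(t_i,h_j;t_k,h_l)\bigr) \;=\; Z_{\wdt\Nc_n^{\mathrm{sf}_\pm}}(t_i,h_j;t_k,h_l),
$$
where $\wdt\Nc_n^{\mathrm{sf}_\pm}$ denotes the lifted network whose underlying bi-colored graph is exactly the universal cover of $\Nc_n^{\mathrm{sf}_\pm}$. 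Finally, Proposition~\ref{mut-descent} allows me to intertwine with the projection: applying $\pr$ to both sides yields
$$
\mu^{\mathrm{sf}}\bigl(M_n^{\mathrm{std}_\pm}(i,j;k,l)\bigr) \;=\; \pr\bigl(Z_{\wdt\Nc_n^{\mathrm{sf}_\pm}}(t_i,h_j;t_k,h_l)\bigr) \;=\; M_n^{\mathrm{sf}_\pm}(i,j;k,l),
$$
which combined with the explicit formulas of Proposition~\ref{prop-measure} for the four Chevalley generators gives the four identities of the corollary.

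The main obstacle is the bookkeeping in step two: one must verify that the sequence of lifted mutations $\tilde\mu^{\mathrm{sf}}$, when interpreted via the network mutation rule of Figure~\ref{net-mut}, transforms the underlying bi-colored graph of $\Nc_n^{\mathrm{std}_\pm}$ into that of $\Nc_n^{\mathrm{sf}_\pm}$. This reduces to a local check: the flip of an ideal $n$-triangulated quadrilateral is realized on the bipartite graph side by the standard ``square moves'' of Postnikov, arranged in the $\binom{n+2}{3}$ pattern described in Section~\ref{locsys-intro}, and one must match this with the sequence encoded by $\tilde\mu^{\mathrm{sf}}$ stratified into $n$ commuting blocks as in the $n=4$ example. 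Once the terminal edges at the chosen boundary side are tracked through the mutation sequence, one sees that the sources $t_i$ and sinks $h_j$ relabel according to Figures~\ref{net-sf+}--\ref{net-sf-}, completing the identification. The remaining verifications (that all the mutated vertices have the valency structure required by Proposition~\ref{measure-invariance} on the cover, and that the hypothesis $(e_k^n,e_k^m)=0$ of Proposition~\ref{mut-descent} holds at each step) are routine consequences of the planarity and bipartiteness of the networks under consideration.
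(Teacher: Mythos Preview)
Your proposal is correct and follows essentially the same approach the paper has in mind: the corollary is stated in the paper without a separate proof, as a direct consequence of Proposition~\ref{prop-measure} (network description of $\iota$ in the standard chart), Proposition~\ref{measure-invariance} (mutation-invariance of boundary measurements on the cover), and Proposition~\ref{mut-descent} (descent via $\pr$), combined with the identification of $\Ad_{\Phi^{\mathrm{sf}}}$ with the mutation automorphism from Section~\ref{h-dilog-section}. Your outline tracks these ingredients in the right order, and the ``bookkeeping obstacle'' you flag is exactly the point the paper addresses just before the corollary when it notes that the lifted vertices on the universal cover are $4$-valent.
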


\begin{figure}[h]
\begin{tikzpicture}[every node/.style={inner sep=0, minimum size=0.2cm, draw, circle, thick}, x=0.35cm, y=0.35cm]

\foreach \i in {1,5,9,13,17,23,27,31,35,39}
{
	\node[fill=white] (\i_1) at (\i,1) {};
	\node[fill=white] (\i_-1) at (\i,-1) {};
}
\foreach \i in {3,7,11,15,25,29,33,37}
{
	\node[fill=white] (\i_3) at (\i,3) {};
	\node[fill=white] (\i_-3) at (\i,-3) {};
	\node[fill=black] (\i_1) at (\i,1) {};
	\node[fill=black] (\i_-1) at (\i,-1) {};
}
\foreach \i in {5,9,13,27,31,35}
{
	\node[fill=white] (\i_5) at (\i,5) {};
	\node[fill=white] (\i_-5) at (\i,-5) {};
	\node[fill=black] (\i_3) at (\i,3) {};
	\node[fill=black] (\i_-3) at (\i,-3) {};
}

\foreach \i in {7,11,29,33}
{
	\node[fill=white] (\i_7) at (\i,7) {};
	\node[fill=white] (\i_-7) at (\i,-7) {};
	\node[fill=black] (\i_5) at (\i,5) {};
	\node[fill=black] (\i_-5) at (\i,-5) {};
}

\foreach \i in {9,31}
{
	\node[fill=white] (\i_9) at (\i,9) {};
	\node[fill=white] (\i_-9) at (\i,-9) {};
	\node[fill=black] (\i_7) at (\i,7) {};
	\node[fill=black] (\i_-7) at (\i,-7) {};
}

\node[minimum size=0.4cm] (t_5) at (10.5,-10.5) {\scriptsize $t_5$};
\node[minimum size=0.4cm] (t_4) at (12.5,-8.5) {\scriptsize $t_4$};
\node[minimum size=0.4cm] (t_3) at (14.5,-6.5) {\scriptsize $t_3$};
\node[minimum size=0.4cm] (t_2) at (16.5,-4.5) {\scriptsize $t_2$};
\node[minimum size=0.4cm] (t_1) at (18.5,-2.5) {\scriptsize $t_1$};

\node[minimum size=0.4cm] (h_1) at (-0.5,-2.5) {\scriptsize $h_1$};
\node[minimum size=0.4cm] (h_2) at (1.5,-4.5) {\scriptsize $h_2$};
\node[minimum size=0.4cm] (h_3) at (3.5,-6.5) {\scriptsize $h_3$};
\node[minimum size=0.4cm] (h_4) at (5.5,-8.5) {\scriptsize $h_4$};
\node[minimum size=0.4cm] (h_5) at (7.5,-10.5) {\scriptsize $h_5$};

\node[draw=none, blue] at (0,2) {\scriptsize 25};
\node[draw=none, blue] at (0,4) {\scriptsize 26};
\node[draw=none, blue] at (0,6) {\scriptsize 27};
\node[draw=none, blue] at (0,8) {\scriptsize 28};

\node[draw=none, blue] at (8,-8) {\scriptsize 1};
\node[draw=none, blue] at (10,-8) {\scriptsize 3};
\node[draw=none, blue] at (6,-6) {\scriptsize 4};
\node[draw=none, blue] at (9,-6) {\scriptsize 2};
\node[draw=none, blue] at (12,-6) {\scriptsize 8};
\node[draw=none, blue] at (4,-4) {\scriptsize 9};
\node[draw=none, blue] at (7,-4) {\scriptsize 5};
\node[draw=none, blue] at (11,-4) {\scriptsize 7};
\node[draw=none, blue] at (14,-4) {\scriptsize 15};
\node[draw=none, blue] at (2,-2) {\scriptsize 16};
\node[draw=none, blue] at (5,-2) {\scriptsize 10};
\node[draw=none, blue] at (9,-2) {\scriptsize 6};
\node[draw=none, blue] at (13,-2) {\scriptsize 14};
\node[draw=none, blue] at (16,-2) {\scriptsize 24};
\node[draw=none, blue] at (3,0) {\scriptsize 17};
\node[draw=none, blue] at (7,0) {\scriptsize 11};
\node[draw=none, blue] at (11,0) {\scriptsize 13};
\node[draw=none, blue] at (15,0) {\scriptsize 23};
\node[draw=none, blue] at (5,2) {\scriptsize 18};
\node[draw=none, blue] at (9,2) {\scriptsize 12};
\node[draw=none, blue] at (13,2) {\scriptsize 22};
\node[draw=none, blue] at (7,4) {\scriptsize 19};
\node[draw=none, blue] at (11,4) {\scriptsize 21};
\node[draw=none, blue] at (9,6) {\scriptsize 20};

\node[draw=none, blue] at (20,2) {\scriptsize 25};
\node[draw=none, blue] at (20,4) {\scriptsize 26};
\node[draw=none, blue] at (20,6) {\scriptsize 27};
\node[draw=none, blue] at (20,8) {\scriptsize 28};

\node[draw=none, blue] at (30,-8) {\scriptsize 1};
\node[draw=none, blue] at (32,-8) {\scriptsize 3};
\node[draw=none, blue] at (28,-6) {\scriptsize 4};
\node[draw=none, blue] at (31,-6) {\scriptsize 2};
\node[draw=none, blue] at (34,-6) {\scriptsize 8};
\node[draw=none, blue] at (26,-4) {\scriptsize 9};
\node[draw=none, blue] at (29,-4) {\scriptsize 5};
\node[draw=none, blue] at (33,-4) {\scriptsize 7};
\node[draw=none, blue] at (36,-4) {\scriptsize 15};
\node[draw=none, blue] at (24,-2) {\scriptsize 16};
\node[draw=none, blue] at (27,-2) {\scriptsize 10};
\node[draw=none, blue] at (31,-2) {\scriptsize 6};
\node[draw=none, blue] at (35,-2) {\scriptsize 14};
\node[draw=none, blue] at (38,-2) {\scriptsize 24};
\node[draw=none, blue] at (25,0) {\scriptsize 17};
\node[draw=none, blue] at (29,0) {\scriptsize 11};
\node[draw=none, blue] at (33,0) {\scriptsize 13};
\node[draw=none, blue] at (37,0) {\scriptsize 23};
\node[draw=none, blue] at (27,2) {\scriptsize 18};
\node[draw=none, blue] at (31,2) {\scriptsize 12};
\node[draw=none, blue] at (35,2) {\scriptsize 22};
\node[draw=none, blue] at (29,4) {\scriptsize 19};
\node[draw=none, blue] at (33,4) {\scriptsize 21};
\node[draw=none, blue] at (31,6) {\scriptsize 20};

\node[draw=none, blue] at (40,2) {\scriptsize 25};
\node[draw=none, blue] at (40,4) {\scriptsize 26};
\node[draw=none, blue] at (40,6) {\scriptsize 27};
\node[draw=none, blue] at (40,8) {\scriptsize 28};

\draw[BrickRed, dashed] (-0.5,0) to (40.5,0);

\draw[BurntOrange, thick, <-] (t_1) to (17_-1);
\draw[red, thick, <-] (17_-1) to (17_1);
\draw[OliveGreen, thick, <-] (17_1) to (23_1);
\draw[BurntOrange, thick, <-] (23_1) to (23_-1);
\draw[red, thick, <-] (23_-1) to (21.5,-2.5);

\draw[BurntOrange, thick, <-] (t_2) to (15_-3);
\draw[red, thick, <-] (15_-3) to (15_-1);
\draw[BurntOrange, thick, <-] (15_-1) to (13_-1);
\draw[red, thick, <-] (13_-1) to (13_1);
\draw[OliveGreen, thick, <-] (13_1) to (15_1);
\draw[red, thick, <-] (15_1) to (15_3);
\draw[OliveGreen, thick, <-] (15_3) to (25_3);
\draw[BurntOrange, thick, <-] (25_3) to (25_1);
\draw[OliveGreen, thick, <-] (25_1) to (27_1);
\draw[BurntOrange, thick, <-] (27_1) to (27_-1);
\draw[red, thick, <-] (27_-1) to (25_-1);
\draw[BurntOrange, thick, <-] (25_-1) to (25_-3);
\draw[red, thick, <-] (25_-3) to (23.5,-4.5);

\draw[BurntOrange, thick, <-] (t_3) to (13_-5);
\draw[red, thick, <-] (13_-5) to (13_-3);
\draw[BurntOrange, thick, <-] (13_-3) to (11_-3);
\draw[red, thick, <-] (11_-3) to (11_-1);
\draw[BurntOrange, thick, <-] (11_-1) to (9_-1);
\draw[red, thick, <-] (9_-1) to (9_1);
\draw[OliveGreen, thick, <-] (9_1) to (11_1);
\draw[red, thick, <-] (11_1) to (11_3);
\draw[OliveGreen, thick, <-] (11_3) to (13_3);
\draw[red, thick, <-] (13_3) to (13_5);
\draw[OliveGreen, thick, <-] (13_5) to (27_5);
\draw[BurntOrange, thick, <-] (27_5) to (27_3);
\draw[OliveGreen, thick, <-] (27_3) to (29_3);
\draw[BurntOrange, thick, <-] (29_3) to (29_1);
\draw[OliveGreen, thick, <-] (29_1) to (31_1);
\draw[BurntOrange, thick, <-] (31_1) to (31_-1);
\draw[red, thick, <-] (31_-1) to (29_-1);
\draw[BurntOrange, thick, <-] (29_-1) to (29_-3);
\draw[red, thick, <-] (29_-3) to (27_-3);
\draw[BurntOrange, thick, <-] (27_-3) to (27_-5);
\draw[red, thick, <-] (27_-5) to (25.5,-6.5);

\draw[BurntOrange, thick, <-] (t_4) to (11_-7);
\draw[red, thick, <-] (11_-7) to (11_-5);
\draw[BurntOrange, thick, <-] (11_-5) to (9_-5);
\draw[red, thick, <-] (9_-5) to (9_-3);
\draw[BurntOrange, thick, <-] (9_-3) to (7_-3);
\draw[red, thick, <-] (7_-3) to (7_-1);
\draw[BurntOrange, thick, <-] (7_-1) to (5_-1);
\draw[red, thick, <-] (5_-1) to (5_1);
\draw[OliveGreen, thick, <-] (5_1) to (7_1);
\draw[red, thick, <-] (7_1) to (7_3);
\draw[OliveGreen, thick, <-] (7_3) to (9_3);
\draw[red, thick, <-] (9_3) to (9_5);
\draw[OliveGreen, thick, <-] (9_5) to (11_5);
\draw[red, thick, <-] (11_5) to (11_7);
\draw[OliveGreen, thick, <-] (11_7) to (29_7);
\draw[BurntOrange, thick, <-] (29_7) to (29_5);
\draw[OliveGreen, thick, <-] (29_5) to (31_5);
\draw[BurntOrange, thick, <-] (31_5) to (31_3);
\draw[OliveGreen, thick, <-] (31_3) to (33_3);
\draw[BurntOrange, thick, <-] (33_3) to (33_1);
\draw[OliveGreen, thick, <-] (33_1) to (35_1);
\draw[BurntOrange, thick, <-] (35_1) to (35_-1);
\draw[red, thick, <-] (35_-1) to (33_-1);
\draw[BurntOrange, thick, <-] (33_-1) to (33_-3);
\draw[red, thick, <-] (33_-3) to (31_-3);
\draw[BurntOrange, thick, <-] (31_-3) to (31_-5);
\draw[red, thick, <-] (31_-5) to (29_-5);
\draw[BurntOrange, thick, <-] (29_-5) to (29_-7);
\draw[red, thick, <-] (29_-7) to (27.5,-8.5);

\draw[BurntOrange, thick, <-] (t_5) to (9_-9);
\draw[red, thick, <-] (9_-9) to (9_-7);
\draw[BurntOrange, thick, <-] (9_-7) to (7_-7);
\draw[red, thick, <-] (7_-7) to (7_-5);
\draw[BurntOrange, thick, <-] (7_-5) to (5_-5);
\draw[red, thick, <-] (5_-5) to (5_-3);
\draw[BurntOrange, thick, <-] (5_-3) to (3_-3);
\draw[red, thick, <-] (3_-3) to (3_-1);
\draw[BurntOrange, thick, <-] (3_-1) to (1_-1);
\draw[red, thick, <-] (1_-1) to (1_1);
\draw[OliveGreen, thick, <-] (1_1) to (3_1);
\draw[red, thick, <-] (3_1) to (3_3);
\draw[OliveGreen, thick, <-] (3_3) to (5_3);
\draw[red, thick, <-] (5_3) to (5_5);
\draw[OliveGreen, thick, <-] (5_5) to (7_5);
\draw[red, thick, <-] (7_5) to (7_7);
\draw[OliveGreen, thick, <-] (7_7) to (9_7);
\draw[red, thick, <-] (9_7) to (9_9);
\draw[OliveGreen, thick, <-] (9_9) to (31_9);
\draw[BurntOrange, thick, <-] (31_9) to (31_7);
\draw[OliveGreen, thick, <-] (31_7) to (33_7);
\draw[BurntOrange, thick, <-] (33_7) to (33_5);
\draw[OliveGreen, thick, <-] (33_5) to (35_5);
\draw[BurntOrange, thick, <-] (35_5) to (35_3);
\draw[OliveGreen, thick, <-] (35_3) to (37_3);
\draw[BurntOrange, thick, <-] (37_3) to (37_1);
\draw[OliveGreen, thick, <-] (37_1) to (39_1);
\draw[BurntOrange, thick, <-] (39_1) to (39_-1);
\draw[red, thick, <-] (39_-1) to (37_-1);
\draw[BurntOrange, thick, <-] (37_-1) to (37_-3);
\draw[red, thick, <-] (37_-3) to (35_-3);
\draw[BurntOrange, thick, <-] (35_-3) to (35_-5);
\draw[red, thick, <-] (35_-5) to (33_-5);
\draw[BurntOrange, thick, <-] (33_-5) to (33_-7);
\draw[red, thick, <-] (33_-7) to (31_-7);
\draw[BurntOrange, thick, <-] (31_-7) to (31_-9);
\draw[red, thick, <-] (31_-9) to (29.5,-10.5);

\draw[BurntOrange, thick, ->] (h_1) to (1_-1);
\draw[BurntOrange, thick, ->] (h_2) to (3_-3);
\draw[BurntOrange, thick, ->] (h_3) to (5_-5);
\draw[BurntOrange, thick, ->] (h_4) to (7_-7);
\draw[BurntOrange, thick, ->] (h_5) to (9_-9);

\draw[red, thick, ->] (-0.5,1) to (1_1);
\draw[red, thick, ->] (-0.5,3) to (3_3);
\draw[red, thick, ->] (-0.5,5) to (5_5);
\draw[red, thick, ->] (-0.5,7) to (7_7);
\draw[red, thick, ->] (-0.5,9) to (9_9);

\draw[BurntOrange, thick, ->] (32.5,-10.5) to (31_-9);
\draw[BurntOrange, thick, ->] (34.5,-8.5) to (33_-7);
\draw[BurntOrange, thick, ->] (36.5,-6.5) to (35_-5);
\draw[BurntOrange, thick, ->] (38.5,-4.5) to (37_-3);
\draw[BurntOrange, thick, ->] (40.5,-2.5) to (39_-1);

\draw[OliveGreen, thick, ->] (40.5,1) to (39_1);
\draw[OliveGreen, thick, ->] (40.5,3) to (37_3);
\draw[OliveGreen, thick, ->] (40.5,5) to (35_5);
\draw[OliveGreen, thick, ->] (40.5,7) to (33_7);
\draw[OliveGreen, thick, ->] (40.5,9) to (31_9);

\draw[red, thick, ->] (9_7) to (11_7);
\draw[OliveGreen, thick, ->] (31_7) to (29_7);

\draw[BurntOrange, thick, ->] (9_-7) to (11_-7);
\draw[BurntOrange, thick, ->] (31_-7) to (29_-7);

\draw[red, thick, ->] (7_5) to (9_5);
\draw[red, thick, ->] (11_5) to (13_5);
\draw[OliveGreen, thick, ->] (33_5) to (31_5);
\draw[OliveGreen, thick, ->] (29_5) to (27_5);

\draw[BurntOrange, thick, ->] (7_-5) to (9_-5);
\draw[BurntOrange, thick, ->] (11_-5) to (13_-5);
\draw[BurntOrange, thick, ->] (33_-5) to (31_-5);
\draw[BurntOrange, thick, ->] (29_-5) to (27_-5);

\draw[red, thick, ->] (5_3) to (7_3);
\draw[red, thick, ->] (9_3) to (11_3);
\draw[red, thick, ->] (13_3) to (15_3);
\draw[OliveGreen, thick, ->] (35_3) to (33_3);
\draw[OliveGreen, thick, ->] (31_3) to (29_3);
\draw[OliveGreen, thick, ->] (27_3) to (25_3);

\draw[BurntOrange, thick, ->] (5_-3) to (7_-3);
\draw[BurntOrange, thick, ->] (9_-3) to (11_-3);
\draw[BurntOrange, thick, ->] (13_-3) to (15_-3);
\draw[BurntOrange, thick, ->] (35_-3) to (33_-3);
\draw[BurntOrange, thick, ->] (31_-3) to (29_-3);
\draw[BurntOrange, thick, ->] (27_-3) to (25_-3);

\draw[red, thick, ->] (3_1) to (5_1);
\draw[red, thick, ->] (7_1) to (9_1);
\draw[red, thick, ->] (11_1) to (13_1);
\draw[red, thick, ->] (15_1) to (17_1);
\draw[OliveGreen, thick, ->] (37_1) to (35_1);
\draw[OliveGreen, thick, ->] (33_1) to (31_1);
\draw[OliveGreen, thick, ->] (29_1) to (27_1);
\draw[OliveGreen, thick, ->] (25_1) to (23_1);

\draw[BurntOrange, thick, ->] (3_-1) to (5_-1);
\draw[BurntOrange, thick, ->] (7_-1) to (9_-1);
\draw[BurntOrange, thick, ->] (11_-1) to (13_-1);
\draw[BurntOrange, thick, ->] (15_-1) to (17_-1);
\draw[BurntOrange, thick, ->] (37_-1) to (35_-1);
\draw[BurntOrange, thick, ->] (33_-1) to (31_-1);
\draw[BurntOrange, thick, ->] (29_-1) to (27_-1);
\draw[BurntOrange, thick, ->] (25_-1) to (23_-1);

\end{tikzpicture}
\caption{Network $\Nc_4^{\mathrm{sf}_+}$.}
\label{net-sf+}
\end{figure}

\begin{figure}[h]
\begin{tikzpicture}[every node/.style={inner sep=0, minimum size=0.2cm, draw, circle, thick}, x=0.35cm, y=0.35cm]

\foreach \i in {1,5,9,13,17,23,27,31,35,39}
{
	\node[fill=white] (\i_1) at (\i,1) {};
	\node[fill=white] (\i_-1) at (\i,-1) {};
}
\foreach \i in {3,7,11,15,25,29,33,37}
{
	\node[fill=white] (\i_3) at (\i,3) {};
	\node[fill=white] (\i_-3) at (\i,-3) {};
	\node[fill=black] (\i_1) at (\i,1) {};
	\node[fill=black] (\i_-1) at (\i,-1) {};
}
\foreach \i in {5,9,13,27,31,35}
{
	\node[fill=white] (\i_5) at (\i,5) {};
	\node[fill=white] (\i_-5) at (\i,-5) {};
	\node[fill=black] (\i_3) at (\i,3) {};
	\node[fill=black] (\i_-3) at (\i,-3) {};
}

\foreach \i in {7,11,29,33}
{
	\node[fill=white] (\i_7) at (\i,7) {};
	\node[fill=white] (\i_-7) at (\i,-7) {};
	\node[fill=black] (\i_5) at (\i,5) {};
	\node[fill=black] (\i_-5) at (\i,-5) {};
}

\foreach \i in {9,31}
{
	\node[fill=white] (\i_9) at (\i,9) {};
	\node[fill=white] (\i_-9) at (\i,-9) {};
	\node[fill=black] (\i_7) at (\i,7) {};
	\node[fill=black] (\i_-7) at (\i,-7) {};
}

\node[minimum size=0.4cm] (t_5) at (10.5,-10.5) {\scriptsize $t_1$};
\node[minimum size=0.4cm] (t_4) at (12.5,-8.5) {\scriptsize $t_2$};
\node[minimum size=0.4cm] (t_3) at (14.5,-6.5) {\scriptsize $t_3$};
\node[minimum size=0.4cm] (t_2) at (16.5,-4.5) {\scriptsize $t_4$};
\node[minimum size=0.4cm] (t_1) at (18.5,-2.5) {\scriptsize $t_5$};

\node[minimum size=0.4cm] (h_1) at (21.5,-2.5) {\scriptsize $h_5$};
\node[minimum size=0.4cm] (h_2) at (23.5,-4.5) {\scriptsize $h_4$};
\node[minimum size=0.4cm] (h_3) at (25.5,-6.5) {\scriptsize $h_3$};
\node[minimum size=0.4cm] (h_4) at (27.5,-8.5) {\scriptsize $h_2$};
\node[minimum size=0.4cm] (h_5) at (29.5,-10.5) {\scriptsize $h_1$};

\draw[BrickRed, dashed] (-0.5,0) to (40.5,0);

\node[draw=none, blue] at (0,2) {\scriptsize 25};
\node[draw=none, blue] at (0,4) {\scriptsize 26};
\node[draw=none, blue] at (0,6) {\scriptsize 27};
\node[draw=none, blue] at (0,8) {\scriptsize 28};

\node[draw=none, blue] at (8,-8) {\scriptsize 1};
\node[draw=none, blue] at (10,-8) {\scriptsize 3};
\node[draw=none, blue] at (6,-6) {\scriptsize 4};
\node[draw=none, blue] at (9,-6) {\scriptsize 2};
\node[draw=none, blue] at (12,-6) {\scriptsize 8};
\node[draw=none, blue] at (4,-4) {\scriptsize 9};
\node[draw=none, blue] at (7,-4) {\scriptsize 5};
\node[draw=none, blue] at (11,-4) {\scriptsize 7};
\node[draw=none, blue] at (14,-4) {\scriptsize 15};
\node[draw=none, blue] at (2,-2) {\scriptsize 16};
\node[draw=none, blue] at (5,-2) {\scriptsize 10};
\node[draw=none, blue] at (9,-2) {\scriptsize 6};
\node[draw=none, blue] at (13,-2) {\scriptsize 14};
\node[draw=none, blue] at (16,-2) {\scriptsize 24};
\node[draw=none, blue] at (3,0) {\scriptsize 17};
\node[draw=none, blue] at (7,0) {\scriptsize 11};
\node[draw=none, blue] at (11,0) {\scriptsize 13};
\node[draw=none, blue] at (15,0) {\scriptsize 23};
\node[draw=none, blue] at (5,2) {\scriptsize 18};
\node[draw=none, blue] at (9,2) {\scriptsize 12};
\node[draw=none, blue] at (13,2) {\scriptsize 22};
\node[draw=none, blue] at (7,4) {\scriptsize 19};
\node[draw=none, blue] at (11,4) {\scriptsize 21};
\node[draw=none, blue] at (9,6) {\scriptsize 20};

\node[draw=none, blue] at (20,2) {\scriptsize 25};
\node[draw=none, blue] at (20,4) {\scriptsize 26};
\node[draw=none, blue] at (20,6) {\scriptsize 27};
\node[draw=none, blue] at (20,8) {\scriptsize 28};

\node[draw=none, blue] at (30,-8) {\scriptsize 1};
\node[draw=none, blue] at (32,-8) {\scriptsize 3};
\node[draw=none, blue] at (28,-6) {\scriptsize 4};
\node[draw=none, blue] at (31,-6) {\scriptsize 2};
\node[draw=none, blue] at (34,-6) {\scriptsize 8};
\node[draw=none, blue] at (26,-4) {\scriptsize 9};
\node[draw=none, blue] at (29,-4) {\scriptsize 5};
\node[draw=none, blue] at (33,-4) {\scriptsize 7};
\node[draw=none, blue] at (36,-4) {\scriptsize 15};
\node[draw=none, blue] at (24,-2) {\scriptsize 16};
\node[draw=none, blue] at (27,-2) {\scriptsize 10};
\node[draw=none, blue] at (31,-2) {\scriptsize 6};
\node[draw=none, blue] at (35,-2) {\scriptsize 14};
\node[draw=none, blue] at (38,-2) {\scriptsize 24};
\node[draw=none, blue] at (25,0) {\scriptsize 17};
\node[draw=none, blue] at (29,0) {\scriptsize 11};
\node[draw=none, blue] at (33,0) {\scriptsize 13};
\node[draw=none, blue] at (37,0) {\scriptsize 23};
\node[draw=none, blue] at (27,2) {\scriptsize 18};
\node[draw=none, blue] at (31,2) {\scriptsize 12};
\node[draw=none, blue] at (35,2) {\scriptsize 22};
\node[draw=none, blue] at (29,4) {\scriptsize 19};
\node[draw=none, blue] at (33,4) {\scriptsize 21};
\node[draw=none, blue] at (31,6) {\scriptsize 20};

\node[draw=none, blue] at (40,2) {\scriptsize 25};
\node[draw=none, blue] at (40,4) {\scriptsize 26};
\node[draw=none, blue] at (40,6) {\scriptsize 27};
\node[draw=none, blue] at (40,8) {\scriptsize 28};

\draw[OliveGreen, thick, ->] (t_1) to (17_-1);
\draw[OliveGreen, thick, ->] (17_-1) to (17_1);
\draw[OliveGreen, thick, ->] (17_1) to (23_1);
\draw[OliveGreen, thick, ->] (23_1) to (23_-1);
\draw[OliveGreen, thick, ->] (23_-1) to (h_1);

\draw[OliveGreen, thick, ->] (t_2) to (15_-3);
\draw[OliveGreen, thick, ->] (15_-3) to (15_-1);
\draw[OliveGreen, thick, ->] (15_-1) to (13_-1);
\draw[OliveGreen, thick, ->] (13_-1) to (13_1);
\draw[OliveGreen, thick, ->] (13_1) to (15_1);
\draw[OliveGreen, thick, ->] (15_1) to (15_3);
\draw[OliveGreen, thick, ->] (15_3) to (25_3);
\draw[OliveGreen, thick, ->] (25_3) to (25_1);
\draw[OliveGreen, thick, ->] (25_1) to (27_1);
\draw[OliveGreen, thick, ->] (27_1) to (27_-1);
\draw[OliveGreen, thick, ->] (27_-1) to (25_-1);
\draw[OliveGreen, thick, ->] (25_-1) to (25_-3);
\draw[OliveGreen, thick, ->] (25_-3) to (h_2);

\draw[OliveGreen, thick, ->] (t_3) to (13_-5);
\draw[OliveGreen, thick, ->] (13_-5) to (13_-3);
\draw[OliveGreen, thick, ->] (13_-3) to (11_-3);
\draw[OliveGreen, thick, ->] (11_-3) to (11_-1);
\draw[OliveGreen, thick, ->] (11_-1) to (9_-1);
\draw[OliveGreen, thick, ->] (9_-1) to (9_1);
\draw[OliveGreen, thick, ->] (9_1) to (11_1);
\draw[OliveGreen, thick, ->] (11_1) to (11_3);
\draw[OliveGreen, thick, ->] (11_3) to (13_3);
\draw[OliveGreen, thick, ->] (13_3) to (13_5);
\draw[OliveGreen, thick, ->] (13_5) to (27_5);
\draw[OliveGreen, thick, ->] (27_5) to (27_3);
\draw[OliveGreen, thick, ->] (27_3) to (29_3);
\draw[OliveGreen, thick, ->] (29_3) to (29_1);
\draw[OliveGreen, thick, ->] (29_1) to (31_1);
\draw[OliveGreen, thick, ->] (31_1) to (31_-1);
\draw[OliveGreen, thick, ->] (31_-1) to (29_-1);
\draw[OliveGreen, thick, ->] (29_-1) to (29_-3);
\draw[OliveGreen, thick, ->] (29_-3) to (27_-3);
\draw[OliveGreen, thick, ->] (27_-3) to (27_-5);
\draw[OliveGreen, thick, ->] (27_-5) to (h_3);

\draw[OliveGreen, thick, ->] (t_4) to (11_-7);
\draw[OliveGreen, thick, ->] (11_-7) to (11_-5);
\draw[OliveGreen, thick, ->] (11_-5) to (9_-5);
\draw[OliveGreen, thick, ->] (9_-5) to (9_-3);
\draw[OliveGreen, thick, ->] (9_-3) to (7_-3);
\draw[OliveGreen, thick, ->] (7_-3) to (7_-1);
\draw[OliveGreen, thick, ->] (7_-1) to (5_-1);
\draw[OliveGreen, thick, ->] (5_-1) to (5_1);
\draw[OliveGreen, thick, ->] (5_1) to (7_1);
\draw[OliveGreen, thick, ->] (7_1) to (7_3);
\draw[OliveGreen, thick, ->] (7_3) to (9_3);
\draw[OliveGreen, thick, ->] (9_3) to (9_5);
\draw[OliveGreen, thick, ->] (9_5) to (11_5);
\draw[OliveGreen, thick, ->] (11_5) to (11_7);
\draw[OliveGreen, thick, ->] (11_7) to (29_7);
\draw[OliveGreen, thick, ->] (29_7) to (29_5);
\draw[OliveGreen, thick, ->] (29_5) to (31_5);
\draw[OliveGreen, thick, ->] (31_5) to (31_3);
\draw[OliveGreen, thick, ->] (31_3) to (33_3);
\draw[OliveGreen, thick, ->] (33_3) to (33_1);
\draw[OliveGreen, thick, ->] (33_1) to (35_1);
\draw[OliveGreen, thick, ->] (35_1) to (35_-1);
\draw[OliveGreen, thick, ->] (35_-1) to (33_-1);
\draw[OliveGreen, thick, ->] (33_-1) to (33_-3);
\draw[OliveGreen, thick, ->] (33_-3) to (31_-3);
\draw[OliveGreen, thick, ->] (31_-3) to (31_-5);
\draw[OliveGreen, thick, ->] (31_-5) to (29_-5);
\draw[OliveGreen, thick, ->] (29_-5) to (29_-7);
\draw[OliveGreen, thick, ->] (29_-7) to (h_4);

\draw[OliveGreen, thick, ->] (t_5) to (9_-9);
\draw[OliveGreen, thick, ->] (9_-9) to (9_-7);
\draw[OliveGreen, thick, ->] (9_-7) to (7_-7);
\draw[OliveGreen, thick, ->] (7_-7) to (7_-5);
\draw[OliveGreen, thick, ->] (7_-5) to (5_-5);
\draw[OliveGreen, thick, ->] (5_-5) to (5_-3);
\draw[OliveGreen, thick, ->] (5_-3) to (3_-3);
\draw[OliveGreen, thick, ->] (3_-3) to (3_-1);
\draw[OliveGreen, thick, ->] (3_-1) to (1_-1);
\draw[OliveGreen, thick, ->] (1_-1) to (1_1);
\draw[OliveGreen, thick, ->] (1_1) to (3_1);
\draw[OliveGreen, thick, ->] (3_1) to (3_3);
\draw[OliveGreen, thick, ->] (3_3) to (5_3);
\draw[OliveGreen, thick, ->] (5_3) to (5_5);
\draw[OliveGreen, thick, ->] (5_5) to (7_5);
\draw[OliveGreen, thick, ->] (7_5) to (7_7);
\draw[OliveGreen, thick, ->] (7_7) to (9_7);
\draw[OliveGreen, thick, ->] (9_7) to (9_9);
\draw[OliveGreen, thick, ->] (9_9) to (31_9);
\draw[OliveGreen, thick, ->] (31_9) to (31_7);
\draw[OliveGreen, thick, ->] (31_7) to (33_7);
\draw[OliveGreen, thick, ->] (33_7) to (33_5);
\draw[OliveGreen, thick, ->] (33_5) to (35_5);
\draw[OliveGreen, thick, ->] (35_5) to (35_3);
\draw[OliveGreen, thick, ->] (35_3) to (37_3);
\draw[OliveGreen, thick, ->] (37_3) to (37_1);
\draw[OliveGreen, thick, ->] (37_1) to (39_1);
\draw[OliveGreen, thick, ->] (39_1) to (39_-1);
\draw[OliveGreen, thick, ->] (39_-1) to (37_-1);
\draw[OliveGreen, thick, ->] (37_-1) to (37_-3);
\draw[OliveGreen, thick, ->] (37_-3) to (35_-3);
\draw[OliveGreen, thick, ->] (35_-3) to (35_-5);
\draw[OliveGreen, thick, ->] (35_-5) to (33_-5);
\draw[OliveGreen, thick, ->] (33_-5) to (33_-7);
\draw[OliveGreen, thick, ->] (33_-7) to (31_-7);
\draw[OliveGreen, thick, ->] (31_-7) to (31_-9);
\draw[OliveGreen, thick, ->] (31_-9) to (h_5);

\draw[BurntOrange, thick, ->] (7.5,-10.5) to (9_-9);
\draw[BurntOrange, thick, ->] (5.5,-8.5) to (7_-7);
\draw[BurntOrange, thick, ->] (3.5,-6.5) to (5_-5);
\draw[BurntOrange, thick, ->] (1.5,-4.5) to (3_-3);
\draw[BurntOrange, thick, ->] (-0.5,-2.5) to (1_-1);

\draw[BurntOrange, thick, ->] (-0.5,1) to (1_1);
\draw[BurntOrange, thick, ->] (-0.5,3) to (3_3);
\draw[BurntOrange, thick, ->] (-0.5,5) to (5_5);
\draw[BurntOrange, thick, ->] (-0.5,7) to (7_7);
\draw[BurntOrange, thick, ->] (-0.5,9) to (9_9);

\draw[BurntOrange, thick, ->] (32.5,-10.5) to (31_-9);
\draw[BurntOrange, thick, ->] (34.5,-8.5) to (33_-7);
\draw[BurntOrange, thick, ->] (36.5,-6.5) to (35_-5);
\draw[BurntOrange, thick, ->] (38.5,-4.5) to (37_-3);
\draw[BurntOrange, thick, ->] (40.5,-2.5) to (39_-1);

\draw[BurntOrange, thick, ->] (40.5,1) to (39_1);
\draw[BurntOrange, thick, ->] (40.5,3) to (37_3);
\draw[BurntOrange, thick, ->] (40.5,5) to (35_5);
\draw[BurntOrange, thick, ->] (40.5,7) to (33_7);
\draw[BurntOrange, thick, ->] (40.5,9) to (31_9);

\foreach \i in {7,-7}
{
\draw[BurntOrange, thick, ->] (9_\i) to (11_\i);
\draw[BurntOrange, thick, ->] (31_\i) to (29_\i);
}

\foreach \i in {5,-5}
{
\draw[BurntOrange, thick, ->] (7_\i) to (9_\i);
\draw[BurntOrange, thick, ->] (11_\i) to (13_\i);
\draw[BurntOrange, thick, ->] (33_\i) to (31_\i);
\draw[BurntOrange, thick, ->] (29_\i) to (27_\i);
}

\foreach \i in {3,-3}
{
\draw[BurntOrange, thick, ->] (5_\i) to (7_\i);
\draw[BurntOrange, thick, ->] (9_\i) to (11_\i);
\draw[BurntOrange, thick, ->] (13_\i) to (15_\i);
\draw[BurntOrange, thick, ->] (35_\i) to (33_\i);
\draw[BurntOrange, thick, ->] (31_\i) to (29_\i);
\draw[BurntOrange, thick, ->] (27_\i) to (25_\i);
}

\foreach \i in {1,-1}
{
\draw[BurntOrange, thick, ->] (3_\i) to (5_\i);
\draw[BurntOrange, thick, ->] (7_\i) to (9_\i);
\draw[BurntOrange, thick, ->] (11_\i) to (13_\i);
\draw[BurntOrange, thick, ->] (15_\i) to (17_\i);
\draw[BurntOrange, thick, ->] (37_\i) to (35_\i);
\draw[BurntOrange, thick, ->] (33_\i) to (31_\i);
\draw[BurntOrange, thick, ->] (29_\i) to (27_\i);
\draw[BurntOrange, thick, ->] (25_\i) to (23_\i);
}

\end{tikzpicture}
\caption{Network $\Nc_4^{\mathrm{sf}_-}$.}
\label{net-sf-}
\end{figure}

\subsection{Embedding $\tau$ and flips of triangulation}
\label{subsec-full-toda}

Our goal now is to show how the diagonal embedding $\tau \colon \Dgt_n \to \Dsq$, defined in~\eqref{tau}, can be rewritten in a cluster chart corresponding to a different triangulation of the twice punctured disk $D_{2,2}$. The seed $\Theta_n^{\mathrm{sq}}$ arises from the triangulation shown in the top left corner of Figure~\ref{top-muts}. The quiver $\Qfull$ for the seed $\Theta_n^{\mathrm{full}}$ corresponding to the triangulation in the bottom left corner of the same figure is shown in Figure~\ref{Q4-full}. As explained in Figure~\ref{Q4-full}, in order to obtain the seed $\Theta_n^{\mathrm{full}}$ from the initial seed~$\Theta_n^{\mathrm{sq}}$, one performs~6 flips of triangulation, which are realized by a total of $n(n+1)(n+2)$ cluster mutations.

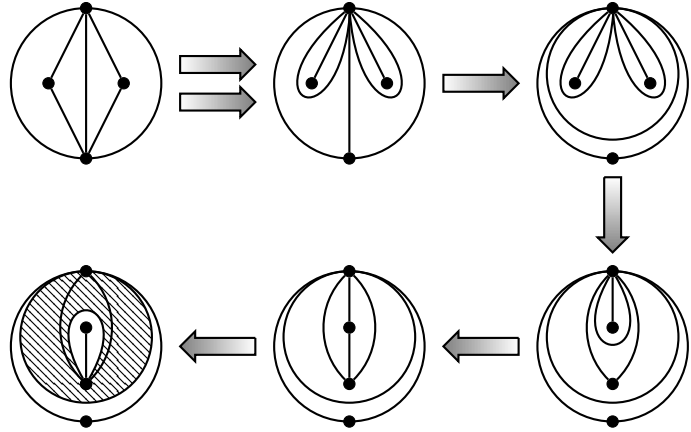
\begin{figure}[h]
\begin{tikzpicture}[every node/.style={inner sep=0, minimum size=0.15cm, circle, draw, fill=black}, x=0.25cm,y=0.25cm]

\node[shift={(0,14)}] (1) at (2,0) {};
\node[shift={(0,14)}] (2) at (0,4) {};
\node[shift={(0,14)}] (3) at (-2,0) {};
\node[shift={(0,14)}] (4) at (0,-4) {};

\draw[thick,shift={(0,14)}] (0,0) circle (4);
\draw[thick] (1) -- (2) -- (3) -- (4) -- (1);
\draw[thick] (2) --(4);

\node[shift={(14,14)}] (1) at (2,0) {};
\node[shift={(14,14)}] (2) at (0,4) {};
\node[shift={(14,14)}] (3) at (-2,0) {};
\node[shift={(14,14)}] (4) at (0,-4) {};

\draw[thick,shift={(14,14)}] (0,0) circle (4);
\draw[thick] (2) -- (4);
\draw[thick] (1) -- (2) -- (3);
\draw[thick] (2) to [out=-40,in=-85,min distance = 1.8cm] (2) to [out=-95,in=-140,min distance = 1.8cm] (2);

\node[shift={(28,14)}] (1) at (2,0) {};
\node[shift={(28,14)}] (2) at (0,4) {};
\node[shift={(28,14)}] (3) at (-2,0) {};
\node[shift={(28,14)}] (4) at (0,-4) {};

\draw[thick,shift={(28,14)}] (0,0) circle (4);
\draw[thick,shift={(28,14)}] (0,0.5) circle (3.5);
\draw[thick] (1) -- (2) -- (3);
\draw[thick] (2) to [out=-40,in=-85,min distance = 1.8cm] (2) to [out=-95,in=-140,min distance = 1.8cm] (2);

\node[shift={(28,0)}] (1) at (0,4) {};
\node[shift={(28,0)}] (2) at (0,1) {};
\node[shift={(28,0)}] (3) at (0,-2) {};
\node[shift={(28,0)}] (4) at (0,-4) {};

\draw[thick,shift={(28,0)}] (0,0) circle (4);
\draw[thick,shift={(28,0)}] (0,0.5) circle (3.5);
\draw[thick] (1) -- (2);
\draw[thick] (1) to [bend left = 45] (3);
\draw[thick] (1) to [bend right = 45] (3);
\draw[thick] (1) to [out=-60,in=-120,min distance = 1.4cm] (1);

\node[shift={(14,0)}] (1) at (0,4) {};
\node[shift={(14,0)}] (2) at (0,1) {};
\node[shift={(14,0)}] (3) at (0,-2) {};
\node[shift={(14,0)}] (4) at (0,-4) {};

\draw[thick,shift={(14,0)}] (0,0) circle (4);
\draw[thick,shift={{(14,0)}}] (0,0.5) circle (3.5);
\draw[thick] (1) -- (2) -- (3);
\draw[thick] (1) to [bend left = 45] (3);
\draw[thick] (1) to [bend right = 45] (3);

\node (1) at (0,4) {};
\node (2) at (0,1) {};
\node (3) at (0,-2) {};
\node (4) at (0,-4) {};

\draw[thick] (0,0) circle (4);
\draw[thick, pattern = north west lines] (0,0.5) circle (3.5);
\draw[thick, fill=white] (3) to [out=60,in=120,min distance = 1.4cm] (3);

\node (2) at (0,1) {};
\node (3) at (0,-2) {};

\draw[thick] (2) -- (3);
\draw[thick] (1) to [bend left = 45] (3);
\draw[thick] (1) to [bend right = 45] (3);

\draw[thick, shift={(7,15)}, shade, shading angle=-90] (-2,-0.45) to (-2,0.45) to (1.2,0.45) to (1.2,0.8) to (2,0) to (1.2,-0.8) to (1.2,-0.45) to (-2,-0.45) -- cycle;

\draw[thick, shift={(7,13)}, shade, shading angle=-90] (-2,-0.45) to (-2,0.45) to (1.2,0.45) to (1.2,0.8) to (2,0) to (1.2,-0.8) to (1.2,-0.45) to (-2,-0.45) -- cycle;

\draw[thick, shift={(21,14)}, shade, shading angle=-90] (-2,-0.45) to (-2,0.45) to (1.2,0.45) to (1.2,0.8) to (2,0) to (1.2,-0.8) to (1.2,-0.45) to (-2,-0.45) -- cycle;

\draw[thick, shift={(28,7)}, rotate=-90, shade, shading angle=180] (-2,-0.45) to (-2,0.45) to (1.2,0.45) to (1.2,0.8) to (2,0) to (1.2,-0.8) to (1.2,-0.45) to (-2,-0.45);

\draw[thick, shift={(21,0)}, rotate=180, shade, shading angle=90] (-2,-0.45) to (-2,0.45) to (1.2,0.45) to (1.2,0.8) to (2,0) to (1.2,-0.8) to (1.2,-0.45) to (-2,-0.45);

\draw[thick, shift={(7,0)}, rotate=180, shade, shading angle=90] (-2,-0.45) to (-2,0.45) to (1.2,0.45) to (1.2,0.8) to (2,0) to (1.2,-0.8) to (1.2,-0.45) to (-2,-0.45);

\end{tikzpicture}
\caption{Sequence of flips for a double quiver.}
\label{top-muts}
\end{figure}

\begin{example}
We spell out this sequence of mutations explicitly for $\Qc_4^{\mathrm{sq}}$. The first two flips, that lead to the middle triangulation in the top row of Figure~\ref{top-muts}, are as follows:
\begin{align*}
&2, \, 8, \, 18, \, 32; & &7, \, 17, \, 31, \, 9, \, 19, \, 33; & &16, \, 30, \, 8, \, 18, \, 20, \, 34; & &29, \, 17, \, 19, \, 35; \\
&4, \, 12, \, 24, \, 40; & &11, \, 23, \, 39, \, 13, \, 25, \, 41; & &22, \, 38, \, 12, \, 24, \, 26, \, 42; & &37, \, 23, \, 25, \, 43.
\end{align*}
The next flip is given by the sequence
\begin{align*}
&3, \, 10, \, 21, \, 36; & &2, \, 9, \, 20, \, 4, \, 11, \, 22; & &7, \, 8, \, 10, \, 21, \, 13, \, 12; & &16, \, 9, \, 11, \, 26.
\end{align*}
Now, we have to choose one of the two self-folded triangles and flip its arc. Choosing the right one, we get the mutation sequence
\begin{align*}
&37, \, 23, \, 25, \, 43; & &36, \, 22, \, 12, \, 38, \, 24, \, 42; & &20, \, 21, \, 23, \, 25, \, 39, \, 41; & &8, \, 22, \, 24, \, 40.
\end{align*}
Next, we flip the arc of the only self-folded triangle:
\begin{align*}
&29, \, 17, \, 19, \, 35; & &20, \, 36, \, 37, \, 30, \, 18, \, 34; & &23, \, 38, \, 17, \, 19, \, 31, \, 33; & &39, \, 36, \, 18, \, 32.
\end{align*}
Then, the final flip is realized by mutation at the vertices
\begin{align*}
&49, \, 50, \, 51, \, 52; & &29, \, 30, \, 31, \, 35, \, 34, \, 33; & &20, \, 17, \, 50, \, 51, \, 37, \, 19; & &23, \, 30, \, 34, \, 38.
\end{align*}
\end{example}

The quiver $\Qfull$ is divided into three parts: the top part corresponds to the innermost self-folded triangle in the bottom left disk in Figure~\ref{top-muts}; the middle one to the shaded annulus; and finally the bottom one to the outer triangle. Now, we observe that the image $\tau(\Dgt_n)$ depends only on cluster variables in the bottom two parts. This can be verified by inspecting the corresponding networks. We remind the reader that so far we have only dealt with mutations for quivers on a once-punctured disk, while here we have two punctures. To get around this problem it suffices to observe that since flips of triangulation are local operations, we may regard any such flip as taking place on an appropriate once punctured disk.

Now, we can regard the quiver obtained from the bottom two parts of $\Qfull$ as a quiver on an annulus; Figures~\ref{net-full+} and~\ref{net-full-} show two networks on the universal cover of an annulus whose dual quivers project to $\Qfull$. Let us denote by $\Phi^{\mathrm{top}}$ the unitary operator on $\mathcal{P}_\lambda\otimes\mathcal{P}_\mu$ obtained as the composite of the $n(n+1)(n+2)$ quantum mutations taking $\Theta_n^{\mathrm{sq}}$ to $\Theta_n^{\mathrm{full}}$. Then we have

\begin{cor}
\label{cor-full}
Let
$$
M_n^{\mathrm{full}_\pm}(i,j;k,l) = \mathrm{pr}\hr{Z_{\Nc_n^{\mathrm{full}_\pm}}(t_i,h_j;t_k,h_l)}
$$
be the boundary measurements in $\Nc_n^{\mathrm{full}_\pm}$. Then as operators on $\mathcal{P}_\lambda\otimes\mathcal{P}_\mu$, we have
\begin{align*}
&\Ad_{\Phi^{\mathrm{top}}}\cdot\iota(K_i) = {M_n^{\mathrm{full}_+}(i+1,i+1;i,i)}, &
&\Ad_{\Phi^{\mathrm{top}}}\cdot\iota(E_i) = {M_n^{\mathrm{full}_+}(i+1,i;i,i)}, \\
&\Ad_{\Phi^{\mathrm{top}}}\cdot\iota(K'_i) = {M_n^{\mathrm{full}_-}(i,i;i+1,i+1)}, &
&\Ad_{\Phi^{\mathrm{top}}}\cdot\iota(F_i) = {M_n^{\mathrm{full}_-}(i,i+1;i+1,i+1)}.
\end{align*}
\end{cor}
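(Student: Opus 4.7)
The plan is to adapt the template used to prove Corollary~\ref{cor-folded} in the single-copy case, now working with the diagonal embedding $\tau=(\iota\otimes\iota)\Delta$ and with networks on an annulus instead of a once-punctured disk. The starting point is to express $\tau(E_i), \tau(F_i), \tau(K_i), \tau(K'_i)$ as boundary measurements in networks $\mathcal{N}_n^{\mathrm{sq}_\pm}$ obtained by amalgamating two copies of $\mathcal{N}_n^{\mathrm{std}_\pm}$ along two sides, in the same way that $\Qsq$ is obtained from two copies of $\Qstd$. This identity follows from Proposition~\ref{prop-measure} applied to each tensor factor, together with the coproduct formulas $\Delta(E_i)=E_i\otimes 1+K_i\otimes E_i$ and the analogous ones for $F_i,K_i,K'_i$: under concatenation of networks, directed paths through the amalgamated network decompose as ordered pairs of paths through the two halves meeting along the gluing edges, and the weighted count over such pairs precisely matches the coproduct.

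Next I would track how the boundary measurements transform under the unitary $\Phi^{\mathrm{top}}$, which by definition is the composite of $n(n+1)(n+2)$ elementary quantum mutations grouped into six blocks, one for each of the six flips in Figure~\ref{top-muts}. Many of the mutations in this composite occur at 2-valent vertices of the quiver on the annulus, so to apply the network formalism of Section~\ref{sec-net} directly one must lift to the universal cover of the annulus, where every vertex in each mutation block becomes 4-valent with alternating in/out arrows and hence matches the local pattern of Figure~\ref{net-mut}. On the cover, Proposition~\ref{measure-invariance} shows that each elementary mutation transforms boundary measurements covariantly; Proposition~\ref{mut-descent} then lets one descend these identities back to the annulus via the projection $\pr$. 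Composing these covariance statements over all $n(n+1)(n+2)$ mutations shows that $\Ad_{\Phi^{\mathrm{top}}}$ applied to the initial boundary measurements on $\mathcal{N}_n^{\mathrm{sq}_\pm}$ equals the corresponding boundary measurements on whatever network is obtained from $\mathcal{N}_n^{\mathrm{sq}_\pm}$ by the sequence of network moves.

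The remaining task, and the principal obstacle, is the combinatorial verification that this sequence of $n(n+1)(n+2)$ network moves carries $\mathcal{N}_n^{\mathrm{sq}_\pm}$ precisely to $\mathcal{N}_n^{\mathrm{full}_\pm}$ as drawn in Figures~\ref{net-full+}--\ref{net-full-}. Since each of the six flips is a local operation on the triangulated annulus, one can handle the six blocks one at a time by restricting to a once-punctured sub-disk and using the analysis of Section~\ref{single-copy-muts}, thereby reducing the check to the six flip patterns already familiar from the single-copy setting. The main subtlety will be a careful bookkeeping: verifying that the ordering of mutations within each block is consistent with the 4-valent lifts on the cover, and choosing the fundamental domain for $\pr$ coherently throughout the mutation sequence so that the projected network at every intermediate stage has the expected form. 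Once this bookkeeping is in place, combining the starting identity with the composed covariance yields the stated formulas for $\Ad_{\Phi^{\mathrm{top}}}\cdot\tau$ on each Chevalley generator.
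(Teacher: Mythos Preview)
Your proposal is correct and follows essentially the same approach that the paper takes implicitly: the paper does not give a formal proof of this corollary, but the surrounding discussion invokes exactly the ingredients you identify---the boundary-measurement description of $\tau$ from Section~\ref{diag-embed-std}, Propositions~\ref{measure-invariance} and~\ref{mut-descent} to transport these under mutation, and the key observation (stated just above the corollary) that since flips are local operations, each of the six flips may be regarded as taking place on a once-punctured disk, thereby reducing to the setting already treated in Section~\ref{single-copy-muts}. Note also that the $\iota$ in the displayed statement is evidently a typo for $\tau$, as you correctly use throughout your argument and as is confirmed by the later Corollary~\ref{cox-embed}.
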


\vspace{-.1cm}
\begin{figure}[h]
\begin{tikzpicture}[every node/.style={inner sep=0, minimum size=0.2cm, draw, circle, thick}, x=0.35cm, y=0.35cm]

\foreach \i in {1,5,9,13,17,23,27,31,35,39}
{
	\node[fill=white] (\i_1) at (\i,1) {};
	\node[fill=white] (\i_-1) at (\i,-1) {};
}
\foreach \i in {3,7,11,15,25,29,33,37}
{
	\node[fill=white] (\i_3) at (\i,3) {};
	\node[fill=white] (\i_-3) at (\i,-3) {};
	\node[fill=black] (\i_1) at (\i,1) {};
	\node[fill=black] (\i_-1) at (\i,-1) {};
}
\foreach \i in {5,9,13,27,31,35}
{
	\node[fill=white] (\i_5) at (\i,5) {};
	\node[fill=white] (\i_-5) at (\i,-5) {};
	\node[fill=black] (\i_3) at (\i,3) {};
	\node[fill=black] (\i_-3) at (\i,-3) {};
}

\foreach \i in {7,11,29,33}
{
	\node[fill=white] (\i_7) at (\i,7) {};
	\node[fill=white] (\i_-7) at (\i,-7) {};
	\node[fill=black] (\i_5) at (\i,5) {};
	\node[fill=black] (\i_-5) at (\i,-5) {};
}

\foreach \i in {9,31}
{
	\node[fill=white] (\i_9) at (\i,9) {};
	\node[fill=white] (\i_-9) at (\i,-9) {};
	\node[fill=black] (\i_7) at (\i,7) {};
	\node[fill=black] (\i_-7) at (\i,-7) {};
}

\foreach \i in {7,29}
{
	\node[fill=white] (\i_9) at (\i,9) {};
}

\foreach \i in {5,27}
{
	\node[fill=black] (\i_9) at (\i,9) {};
	\node[fill=white] (\i_7) at (\i,7) {};
}

\foreach \i in {3,25}
{
	\node[fill=white] (\i_9) at (\i,9) {};
	\node[fill=black] (\i_7) at (\i,7) {};
	\node[fill=white] (\i_5) at (\i,5) {};
}

\foreach \i in {1,23}
{
	\node[fill=black] (\i_9) at (\i,9) {};
	\node[fill=white] (\i_7) at (\i,7) {};
	\node[fill=black] (\i_5) at (\i,5) {};
	\node[fill=white] (\i_3) at (\i,3) {};
}

	\node[fill=white] (21_9) at (21,9) {};
	\node[fill=black] (21_7) at (21,7) {};
	\node[fill=white] (21_5) at (21,5) {};
	\node[fill=black] (21_3) at (21,3) {};
	\node[fill=white] (21_1) at (21,1) {};

	\node[fill=black] (19_9) at (19,9) {};
	\node[fill=white] (19_7) at (19,7) {};
	\node[fill=black] (19_5) at (19,5) {};
	\node[fill=white] (19_3) at (19,3) {};

\foreach \i in {17,39}
{
	\node[fill=white] (\i_9) at (\i,9) {};
	\node[fill=black] (\i_7) at (\i,7) {};
	\node[fill=white] (\i_5) at (\i,5) {};
}

\foreach \i in {15,37}
{
	\node[fill=black] (\i_9) at (\i,9) {};
	\node[fill=white] (\i_7) at (\i,7) {};
}

\foreach \i in {13,35}
	\node[fill=white] (\i_9) at (\i,9) {};

\node[minimum size=0.4cm] (t_5) at (10.5,-10.5) {\scriptsize $t_5$};
\node[minimum size=0.4cm] (t_4) at (12.5,-8.5) {\scriptsize $t_4$};
\node[minimum size=0.4cm] (t_3) at (14.5,-6.5) {\scriptsize $t_3$};
\node[minimum size=0.4cm] (t_2) at (16.5,-4.5) {\scriptsize $t_2$};
\node[minimum size=0.4cm] (t_1) at (18.5,-2.5) {\scriptsize $t_1$};

\node[minimum size=0.4cm] (h_1) at (-0.5,-2.5) {\scriptsize $h_1$};
\node[minimum size=0.4cm] (h_2) at (1.5,-4.5) {\scriptsize $h_2$};
\node[minimum size=0.4cm] (h_3) at (3.5,-6.5) {\scriptsize $h_3$};
\node[minimum size=0.4cm] (h_4) at (5.5,-8.5) {\scriptsize $h_4$};
\node[minimum size=0.4cm] (h_5) at (7.5,-10.5) {\scriptsize $h_5$};

\node[draw=none, blue] at (1,2) {\scriptsize 8};
\node[draw=none, blue] at (5,2) {\scriptsize 21};
\node[draw=none, blue] at (9,2) {\scriptsize 12};
\node[draw=none, blue] at (13,2) {\scriptsize 43};
\node[draw=none, blue] at (17,2) {\scriptsize 45};
\node[draw=none, blue] at (3,4) {\scriptsize 22};
\node[draw=none, blue] at (7,4) {\scriptsize 25};
\node[draw=none, blue] at (11,4) {\scriptsize 42};
\node[draw=none, blue] at (15,4) {\scriptsize 46};
\node[draw=none, blue] at (1,6) {\scriptsize 29};
\node[draw=none, blue] at (5,6) {\scriptsize 24};
\node[draw=none, blue] at (9,6) {\scriptsize 41};
\node[draw=none, blue] at (13,6) {\scriptsize 47};
\node[draw=none, blue] at (3,8) {\scriptsize 20};
\node[draw=none, blue] at (7,8) {\scriptsize 40};
\node[draw=none, blue] at (11,8) {\scriptsize 48};
\node[draw=none, blue] at (1,10) {\scriptsize 30};
\node[draw=none, blue] at (5,10) {\scriptsize 23};

\node[draw=none, blue] at (8,-8) {\scriptsize 1};
\node[draw=none, blue] at (10,-8) {\scriptsize 5};
\node[draw=none, blue] at (6,-6) {\scriptsize 6};
\node[draw=none, blue] at (9,-6) {\scriptsize 3};
\node[draw=none, blue] at (12,-6) {\scriptsize 14};
\node[draw=none, blue] at (4,-4) {\scriptsize 15};
\node[draw=none, blue] at (7,-4) {\scriptsize 2};
\node[draw=none, blue] at (11,-4) {\scriptsize 4};
\node[draw=none, blue] at (14,-4) {\scriptsize 27};
\node[draw=none, blue] at (2,-2) {\scriptsize 28};
\node[draw=none, blue] at (5,-2) {\scriptsize 7};
\node[draw=none, blue] at (9,-2) {\scriptsize 10};
\node[draw=none, blue] at (13,-2) {\scriptsize 13};
\node[draw=none, blue] at (16,-2) {\scriptsize 44};
\node[draw=none, blue] at (3,0) {\scriptsize 16};
\node[draw=none, blue] at (7,0) {\scriptsize 9};
\node[draw=none, blue] at (11,0) {\scriptsize 11};
\node[draw=none, blue] at (15,0) {\scriptsize 26};

\node[draw=none, blue] at (23,2) {\scriptsize 8};
\node[draw=none, blue] at (27,2) {\scriptsize 21};
\node[draw=none, blue] at (31,2) {\scriptsize 12};
\node[draw=none, blue] at (35,2) {\scriptsize 43};
\node[draw=none, blue] at (39,2) {\scriptsize 45};
\node[draw=none, blue] at (21,4) {\scriptsize 49};
\node[draw=none, blue] at (25,4) {\scriptsize 22};
\node[draw=none, blue] at (29,4) {\scriptsize 25};
\node[draw=none, blue] at (33,4) {\scriptsize 42};
\node[draw=none, blue] at (37,4) {\scriptsize 46};
\node[draw=none, blue] at (19,6) {\scriptsize 35};
\node[draw=none, blue] at (23,6) {\scriptsize 29};
\node[draw=none, blue] at (27,6) {\scriptsize 24};
\node[draw=none, blue] at (31,6) {\scriptsize 41};
\node[draw=none, blue] at (35,6) {\scriptsize 47};
\node[draw=none, blue] at (17,8) {\scriptsize 37};
\node[draw=none, blue] at (21,8) {\scriptsize 50};
\node[draw=none, blue] at (25,8) {\scriptsize 20};
\node[draw=none, blue] at (29,8) {\scriptsize 40};
\node[draw=none, blue] at (33,8) {\scriptsize 48};
\node[draw=none, blue] at (15,10) {\scriptsize 38};
\node[draw=none, blue] at (19,10) {\scriptsize 34};
\node[draw=none, blue] at (23,10) {\scriptsize 30};
\node[draw=none, blue] at (27,10) {\scriptsize 23};

\node[draw=none, blue] at (30,-8) {\scriptsize 1};
\node[draw=none, blue] at (32,-8) {\scriptsize 5};
\node[draw=none, blue] at (28,-6) {\scriptsize 6};
\node[draw=none, blue] at (31,-6) {\scriptsize 3};
\node[draw=none, blue] at (34,-6) {\scriptsize 14};
\node[draw=none, blue] at (26,-4) {\scriptsize 15};
\node[draw=none, blue] at (29,-4) {\scriptsize 2};
\node[draw=none, blue] at (33,-4) {\scriptsize 4};
\node[draw=none, blue] at (36,-4) {\scriptsize 27};
\node[draw=none, blue] at (24,-2) {\scriptsize 28};
\node[draw=none, blue] at (27,-2) {\scriptsize 7};
\node[draw=none, blue] at (31,-2) {\scriptsize 10};
\node[draw=none, blue] at (35,-2) {\scriptsize 13};
\node[draw=none, blue] at (38,-2) {\scriptsize 44};
\node[draw=none, blue] at (25,0) {\scriptsize 16};
\node[draw=none, blue] at (29,0) {\scriptsize 9};
\node[draw=none, blue] at (33,0) {\scriptsize 11};
\node[draw=none, blue] at (37,0) {\scriptsize 26};

\node[draw=none, blue] at (39,8) {\scriptsize 37};
\node[draw=none, blue] at (37,10) {\scriptsize 38};

\draw[BrickRed, dashed] (-0.5,10) to (40.5,10);
\draw[BrickRed, dashed] (-0.5,0) to (40.5,0);

\draw[BurntOrange, thick, ->] (h_1) to (1_-1);
\draw[BurntOrange, thick, ->] (1_-1) to (3_-1);
\draw[BurntOrange, thick, ->] (3_-1) to (5_-1);
\draw[BurntOrange, thick, ->] (5_-1) to (7_-1);
\draw[BurntOrange, thick, ->] (7_-1) to (9_-1);
\draw[BurntOrange, thick, ->] (9_-1) to (11_-1);
\draw[BurntOrange, thick, ->] (11_-1) to (13_-1);
\draw[BurntOrange, thick, ->] (13_-1) to (15_-1);
\draw[BurntOrange, thick, ->] (15_-1) to (17_-1);
\draw[BurntOrange, thick, ->] (17_-1) to (t_1);

\draw[BurntOrange, thick, ->] (h_2) to (3_-3);
\draw[BurntOrange, thick, ->] (3_-3) to (5_-3);
\draw[BurntOrange, thick, ->] (5_-3) to (7_-3);
\draw[BurntOrange, thick, ->] (7_-3) to (9_-3);
\draw[BurntOrange, thick, ->] (9_-3) to (11_-3);
\draw[BurntOrange, thick, ->] (11_-3) to (13_-3);
\draw[BurntOrange, thick, ->] (13_-3) to (15_-3);
\draw[BurntOrange, thick, ->] (15_-3) to (t_2);

\draw[BurntOrange, thick, ->] (h_3) to (5_-5);
\draw[BurntOrange, thick, ->] (5_-5) to (7_-5);
\draw[BurntOrange, thick, ->] (7_-5) to (9_-5);
\draw[BurntOrange, thick, ->] (9_-5) to (11_-5);
\draw[BurntOrange, thick, ->] (11_-5) to (13_-5);
\draw[BurntOrange, thick, ->] (13_-5) to (t_3);

\draw[BurntOrange, thick, ->] (h_4) to (7_-7);
\draw[BurntOrange, thick, ->] (7_-7) to (9_-7);
\draw[BurntOrange, thick, ->] (9_-7) to (11_-7);
\draw[BurntOrange, thick, ->] (11_-7) to (t_4);

\draw[BurntOrange, thick, ->] (h_5) to (9_-9);
\draw[BurntOrange, thick, ->] (9_-9) to (t_5);

\draw[red, thick, ->] (9_-7) to (9_-9);
\draw[red, thick, ->] (7_-5) to (7_-7);
\draw[red, thick, ->] (11_-5) to (11_-7);
\draw[red, thick, ->] (5_-3) to (5_-5);
\draw[red, thick, ->] (9_-3) to (9_-5);
\draw[red, thick, ->] (13_-3) to (13_-5);
\draw[red, thick, ->] (3_-1) to (3_-3);
\draw[red, thick, ->] (7_-1) to (7_-3);
\draw[red, thick, ->] (11_-1) to (11_-3);
\draw[red, thick, ->] (15_-1) to (15_-3);

\draw[red, thick, ->] (-0.5,1) to (1_1);
\draw[red, thick, ->] (1_1) to (1_-1);

\draw[red, thick, ->] (-0.5,3) to (1_3);
\draw[red, thick, ->] (1_3) to (3_3);
\draw[red, thick, ->] (3_3) to (3_1);
\draw[red, thick, ->] (3_1) to (5_1);
\draw[red, thick, ->] (5_1) to (5_-1);

\draw[red, thick, ->] (-0.5,5) to (1_5);
\draw[red, thick, ->] (1_5) to (3_5);
\draw[red, thick, ->] (3_5) to (5_5);
\draw[red, thick, ->] (5_5) to (5_3);
\draw[red, thick, ->] (5_3) to (7_3);
\draw[red, thick, ->] (7_3) to (7_1);
\draw[red, thick, ->] (7_1) to (9_1);
\draw[red, thick, ->] (9_1) to (9_-1);

\draw[red, thick, ->] (-0.5,7) to (1_7);
\draw[red, thick, ->] (1_7) to (3_7);
\draw[red, thick, ->] (3_7) to (5_7);
\draw[red, thick, ->] (5_7) to (7_7);
\draw[red, thick, ->] (7_7) to (7_5);
\draw[red, thick, ->] (7_5) to (9_5);
\draw[red, thick, ->] (9_5) to (9_3);
\draw[red, thick, ->] (9_3) to (11_3);
\draw[red, thick, ->] (11_3) to (11_1);
\draw[red, thick, ->] (11_1) to (13_1);
\draw[red, thick, ->] (13_1) to (13_-1);

\draw[red, thick, ->] (-0.5,9) to (1_9);
\draw[red, thick, ->] (1_9) to (3_9);
\draw[red, thick, ->] (3_9) to (5_9);
\draw[red, thick, ->] (5_9) to (7_9);
\draw[red, thick, ->] (7_9) to (9_9);
\draw[red, thick, ->] (9_9) to (9_7);
\draw[red, thick, ->] (9_7) to (11_7);
\draw[red, thick, ->] (11_7) to (11_5);
\draw[red, thick, ->] (11_5) to (13_5);
\draw[red, thick, ->] (13_5) to (13_3);
\draw[red, thick, ->] (13_3) to (15_3);
\draw[red, thick, ->] (15_3) to (15_1);
\draw[red, thick, ->] (15_1) to (17_1);
\draw[red, thick, ->] (17_1) to (17_-1);

\draw[OliveGreen, thick, ->] (1_5) to (1_3);
\draw[OliveGreen, thick, ->] (3_7) to (3_5);
\draw[OliveGreen, thick, ->] (1_9) to (1_7);
\draw[OliveGreen, thick, ->] (5_9) to (5_7);
\draw[OliveGreen, thick, ->] (3,10.5) to (3_9);
\draw[OliveGreen, thick, ->] (7,10.5) to (7_9);

\draw[OliveGreen, thick, ->] (3_1) to (1_1);
\draw[OliveGreen, thick, ->] (7_1) to (5_1);
\draw[OliveGreen, thick, ->] (11_1) to (9_1);
\draw[OliveGreen, thick, ->] (15_1) to (13_1);
\draw[OliveGreen, thick, ->] (5_3) to (3_3);
\draw[OliveGreen, thick, ->] (9_3) to (7_3);
\draw[OliveGreen, thick, ->] (13_3) to (11_3);
\draw[OliveGreen, thick, ->] (7_5) to (5_5);
\draw[OliveGreen, thick, ->] (11_5) to (9_5);
\draw[OliveGreen, thick, ->] (9_7) to (7_7);

\draw[OliveGreen, thick, ->] (13,10.5) to (13_9);
\draw[OliveGreen, thick, ->] (13_9) to (9_9);

\draw[OliveGreen, thick, ->] (17,10.5) to (17_9);
\draw[OliveGreen, thick, ->] (17_9) to (15_9);
\draw[OliveGreen, thick, ->] (15_9) to (15_7);
\draw[OliveGreen, thick, ->] (15_7) to (11_7);

\draw[OliveGreen, thick, ->] (21,10.5) to (21_9);
\draw[OliveGreen, thick, ->] (21_9) to (19_9);
\draw[OliveGreen, thick, ->] (19_9) to (19_7);
\draw[OliveGreen, thick, ->] (19_7) to (17_7);
\draw[OliveGreen, thick, ->] (17_7) to (17_5);
\draw[OliveGreen, thick, ->] (17_5) to (13_5);

\draw[OliveGreen, thick, ->] (25,10.5) to (25_9);
\draw[OliveGreen, thick, ->] (25_9) to (23_9);
\draw[OliveGreen, thick, ->] (23_9) to (23_7);
\draw[OliveGreen, thick, ->] (23_7) to (21_7);
\draw[OliveGreen, thick, ->] (21_7) to (21_5);
\draw[OliveGreen, thick, ->] (21_5) to (19_5);
\draw[OliveGreen, thick, ->] (19_5) to (19_3);
\draw[OliveGreen, thick, ->] (19_3) to (15_3);

\draw[OliveGreen, thick, ->] (29,10.5) to (29_9);
\draw[OliveGreen, thick, ->] (29_9) to (27_9);
\draw[OliveGreen, thick, ->] (27_9) to (27_7);
\draw[OliveGreen, thick, ->] (27_7) to (25_7);
\draw[OliveGreen, thick, ->] (25_7) to (25_5);
\draw[OliveGreen, thick, ->] (25_5) to (23_5);
\draw[OliveGreen, thick, ->] (23_5) to (23_3);
\draw[OliveGreen, thick, ->] (23_3) to (21_3);
\draw[OliveGreen, thick, ->] (21_3) to (21_1);
\draw[OliveGreen, thick, ->] (21_1) to (17_1);

\draw[BurntOrange, thick, ->] (15_9) to (13_9);
\draw[BurntOrange, thick, ->] (19_9) to (17_9);
\draw[BurntOrange, thick, ->] (23_9) to (21_9);
\draw[BurntOrange, thick, ->] (27_9) to (25_9);
\draw[BurntOrange, thick, ->] (17_7) to (15_7);
\draw[BurntOrange, thick, ->] (21_7) to (19_7);
\draw[BurntOrange, thick, ->] (25_7) to (23_7);
\draw[BurntOrange, thick, ->] (19_5) to (17_5);
\draw[BurntOrange, thick, ->] (23_5) to (21_5);
\draw[BurntOrange, thick, ->] (21_3) to (19_3);

\draw[BurntOrange, thick, ->] (40.5,1) to (39_1);
\draw[BurntOrange, thick, ->] (39_1) to (37_1);
\draw[BurntOrange, thick, ->] (37_1) to (35_1);
\draw[BurntOrange, thick, ->] (35_1) to (33_1);
\draw[BurntOrange, thick, ->] (33_1) to (31_1);
\draw[BurntOrange, thick, ->] (31_1) to (29_1);
\draw[BurntOrange, thick, ->] (29_1) to (27_1);
\draw[BurntOrange, thick, ->] (27_1) to (25_1);
\draw[BurntOrange, thick, ->] (25_1) to (23_1);
\draw[BurntOrange, thick, ->] (23_1) to (21_1);

\draw[BurntOrange, thick, ->] (40.5,3) to (37_3);
\draw[BurntOrange, thick, ->] (37_3) to (35_3);
\draw[BurntOrange, thick, ->] (35_3) to (33_3);
\draw[BurntOrange, thick, ->] (33_3) to (31_3);
\draw[BurntOrange, thick, ->] (31_3) to (29_3);
\draw[BurntOrange, thick, ->] (29_3) to (27_3);
\draw[BurntOrange, thick, ->] (27_3) to (25_3);
\draw[BurntOrange, thick, ->] (25_3) to (23_3);

\draw[BurntOrange, thick, ->] (40.5,7) to (39_7);
\draw[BurntOrange, thick, ->] (39_7) to (39_5);
\draw[BurntOrange, thick, ->] (39_5) to (35_5);
\draw[BurntOrange, thick, ->] (35_5) to (33_5);
\draw[BurntOrange, thick, ->] (33_5) to (31_5);
\draw[BurntOrange, thick, ->] (31_5) to (29_5);
\draw[BurntOrange, thick, ->] (29_5) to (27_5);
\draw[BurntOrange, thick, ->] (27_5) to (25_5);

\draw[BurntOrange, thick, ->] (39,10.5) to (39_9);
\draw[BurntOrange, thick, ->] (39_9) to (37_9);
\draw[BurntOrange, thick, ->] (37_9) to (37_7);
\draw[BurntOrange, thick, ->] (37_7) to (33_7);
\draw[BurntOrange, thick, ->] (33_7) to (31_7);
\draw[BurntOrange, thick, ->] (31_7) to (29_7);
\draw[BurntOrange, thick, ->] (29_7) to (27_7);
\draw[BurntOrange, thick, ->] (27_7) to (25_7);

\draw[BurntOrange, thick, ->] (35,10.5) to (35_9);
\draw[BurntOrange, thick, ->] (35_9) to (31_9);
\draw[BurntOrange, thick, ->] (31_9) to (29_9);

\draw[red, thick, ->] (40.5,9) to (39_9);
\draw[red, thick, ->] (37_9) to (35_9);
\draw[red, thick, ->] (39_7) to (37_7);
\draw[red, thick, ->] (40.5,5) to (39_5);

\draw[red, thick, <-] (31_9) to (31_7);
\draw[red, thick, <-] (29_7) to (29_5);
\draw[red, thick, <-] (33_7) to (33_5);
\draw[red, thick, <-] (27_5) to (27_3);
\draw[red, thick, <-] (31_5) to (31_3);
\draw[red, thick, <-] (35_5) to (35_3);
\draw[red, thick, <-] (25_3) to (25_1);
\draw[red, thick, <-] (29_3) to (29_1);
\draw[red, thick, <-] (33_3) to (33_1);
\draw[red, thick, <-] (37_3) to (37_1);

\draw[red, thick, ->] (32.5,-10.5) to (31_-9);
\draw[red, thick, ->] (31_-9) to (31_-7);
\draw[red, thick, ->] (31_-7) to (29_-7);
\draw[red, thick, ->] (29_-7) to (29_-5);
\draw[red, thick, ->] (29_-5) to (27_-5);
\draw[red, thick, ->] (27_-5) to (27_-3);
\draw[red, thick, ->] (27_-3) to (25_-3);
\draw[red, thick, ->] (25_-3) to (25_-1);
\draw[red, thick, ->] (25_-1) to (23_-1);
\draw[red, thick, ->] (23_-1) to (23_1);

\draw[red, thick, ->] (34.5,-8.5) to (33_-7);
\draw[red, thick, ->] (33_-7) to (33_-5);
\draw[red, thick, ->] (33_-5) to (31_-5);
\draw[red, thick, ->] (31_-5) to (31_-3);
\draw[red, thick, ->] (31_-3) to (29_-3);
\draw[red, thick, ->] (29_-3) to (29_-1);
\draw[red, thick, ->] (29_-1) to (27_-1);
\draw[red, thick, ->] (27_-1) to (27_1);

\draw[red, thick, ->] (36.5,-6.5) to (35_-5);
\draw[red, thick, ->] (35_-5) to (35_-3);
\draw[red, thick, ->] (35_-3) to (33_-3);
\draw[red, thick, ->] (33_-3) to (33_-1);
\draw[red, thick, ->] (33_-1) to (31_-1);
\draw[red, thick, ->] (31_-1) to (31_1);

\draw[red, thick, ->] (38.5,-4.5) to (37_-3);
\draw[red, thick, ->] (37_-3) to (37_-1);
\draw[red, thick, ->] (37_-1) to (35_-1);
\draw[red, thick, ->] (35_-1) to (35_1);

\draw[red, thick, ->] (40.5,-2.5) to (39_-1);
\draw[red, thick, ->] (39_-1) to (39_1);

\draw[OliveGreen, thick, ->] (21.5,-2.5) to (23_-1);
\draw[OliveGreen, thick, ->] (25_-1) to (27_-1);
\draw[OliveGreen, thick, ->] (29_-1) to (31_-1);
\draw[OliveGreen, thick, ->] (33_-1) to (35_-1);
\draw[OliveGreen, thick, ->] (37_-1) to (39_-1);
\draw[OliveGreen, thick, ->] (23.5,-4.5) to (25_-3);
\draw[OliveGreen, thick, ->] (27_-3) to (29_-3);
\draw[OliveGreen, thick, ->] (31_-3) to (33_-3);
\draw[OliveGreen, thick, ->] (35_-3) to (37_-3);
\draw[OliveGreen, thick, ->] (25.5,-6.5) to (27_-5);
\draw[OliveGreen, thick, ->] (29_-5) to (31_-5);
\draw[OliveGreen, thick, ->] (33_-5) to (35_-5);
\draw[OliveGreen, thick, ->] (27.5,-8.5) to (29_-7);
\draw[OliveGreen, thick, ->] (31_-7) to (33_-7);
\draw[OliveGreen, thick, ->] (29.5,-10.5) to (31_-9);

\end{tikzpicture}
\caption{Network $\Nc_4^{\mathrm{full}_+}$.}
\label{net-full+}
\end{figure}

\vspace{-.1cm}
\begin{figure}[h]
\begin{tikzpicture}[every node/.style={inner sep=0, minimum size=0.2cm, draw, circle, thick}, x=0.35cm, y=0.35cm]

\foreach \i in {1,5,9,13,17,23,27,31,35,39}
{
	\node[fill=white] (\i_1) at (\i,1) {};
	\node[fill=white] (\i_-1) at (\i,-1) {};
}
\foreach \i in {3,7,11,15,25,29,33,37}
{
	\node[fill=white] (\i_3) at (\i,3) {};
	\node[fill=white] (\i_-3) at (\i,-3) {};
	\node[fill=black] (\i_1) at (\i,1) {};
	\node[fill=black] (\i_-1) at (\i,-1) {};
}
\foreach \i in {5,9,13,27,31,35}
{
	\node[fill=white] (\i_5) at (\i,5) {};
	\node[fill=white] (\i_-5) at (\i,-5) {};
	\node[fill=black] (\i_3) at (\i,3) {};
	\node[fill=black] (\i_-3) at (\i,-3) {};
}

\foreach \i in {7,11,29,33}
{
	\node[fill=white] (\i_7) at (\i,7) {};
	\node[fill=white] (\i_-7) at (\i,-7) {};
	\node[fill=black] (\i_5) at (\i,5) {};
	\node[fill=black] (\i_-5) at (\i,-5) {};
}

\foreach \i in {9,31}
{
	\node[fill=white] (\i_9) at (\i,9) {};
	\node[fill=white] (\i_-9) at (\i,-9) {};
	\node[fill=black] (\i_7) at (\i,7) {};
	\node[fill=black] (\i_-7) at (\i,-7) {};
}

\foreach \i in {7,29}
{
	\node[fill=white] (\i_9) at (\i,9) {};
}

\foreach \i in {5,27}
{
	\node[fill=black] (\i_9) at (\i,9) {};
	\node[fill=white] (\i_7) at (\i,7) {};
}

\foreach \i in {3,25}
{
	\node[fill=white] (\i_9) at (\i,9) {};
	\node[fill=black] (\i_7) at (\i,7) {};
	\node[fill=white] (\i_5) at (\i,5) {};
}

\foreach \i in {1,23}
{
	\node[fill=black] (\i_9) at (\i,9) {};
	\node[fill=white] (\i_7) at (\i,7) {};
	\node[fill=black] (\i_5) at (\i,5) {};
	\node[fill=white] (\i_3) at (\i,3) {};
}

	\node[fill=white] (21_9) at (21,9) {};
	\node[fill=black] (21_7) at (21,7) {};
	\node[fill=white] (21_5) at (21,5) {};
	\node[fill=black] (21_3) at (21,3) {};
	\node[fill=white] (21_1) at (21,1) {};

	\node[fill=black] (19_9) at (19,9) {};
	\node[fill=white] (19_7) at (19,7) {};
	\node[fill=black] (19_5) at (19,5) {};
	\node[fill=white] (19_3) at (19,3) {};

\foreach \i in {17,39}
{
	\node[fill=white] (\i_9) at (\i,9) {};
	\node[fill=black] (\i_7) at (\i,7) {};
	\node[fill=white] (\i_5) at (\i,5) {};
}

\foreach \i in {15,37}
{
	\node[fill=black] (\i_9) at (\i,9) {};
	\node[fill=white] (\i_7) at (\i,7) {};
}

\foreach \i in {13,35}
	\node[fill=white] (\i_9) at (\i,9) {};

\node[minimum size=0.4cm] (h_1) at (10.5,-10.5) {\scriptsize $h_1$};
\node[minimum size=0.4cm] (h_2) at (12.5,-8.5) {\scriptsize $h_2$};
\node[minimum size=0.4cm] (h_3) at (14.5,-6.5) {\scriptsize $h_3$};
\node[minimum size=0.4cm] (h_4) at (16.5,-4.5) {\scriptsize $h_4$};
\node[minimum size=0.4cm] (h_5) at (18.5,-2.5) {\scriptsize $h_5$};

\node[minimum size=0.4cm] (t_1) at (29.5,-10.5) {\scriptsize $t_1$};
\node[minimum size=0.4cm] (t_2) at (27.5,-8.5) {\scriptsize $t_2$};
\node[minimum size=0.4cm] (t_3) at (25.5,-6.5) {\scriptsize $t_3$};
\node[minimum size=0.4cm] (t_4) at (23.5,-4.5) {\scriptsize $t_4$};
\node[minimum size=0.4cm] (t_5) at (21.5,-2.5) {\scriptsize $t_5$};

\node[draw=none, blue] at (1,2) {\scriptsize 8};
\node[draw=none, blue] at (5,2) {\scriptsize 21};
\node[draw=none, blue] at (9,2) {\scriptsize 12};
\node[draw=none, blue] at (13,2) {\scriptsize 43};
\node[draw=none, blue] at (17,2) {\scriptsize 45};
\node[draw=none, blue] at (3,4) {\scriptsize 22};
\node[draw=none, blue] at (7,4) {\scriptsize 25};
\node[draw=none, blue] at (11,4) {\scriptsize 42};
\node[draw=none, blue] at (15,4) {\scriptsize 46};
\node[draw=none, blue] at (1,6) {\scriptsize 29};
\node[draw=none, blue] at (5,6) {\scriptsize 24};
\node[draw=none, blue] at (9,6) {\scriptsize 41};
\node[draw=none, blue] at (13,6) {\scriptsize 47};
\node[draw=none, blue] at (3,8) {\scriptsize 20};
\node[draw=none, blue] at (7,8) {\scriptsize 40};
\node[draw=none, blue] at (11,8) {\scriptsize 48};
\node[draw=none, blue] at (1,10) {\scriptsize 30};
\node[draw=none, blue] at (5,10) {\scriptsize 23};

\node[draw=none, blue] at (8,-8) {\scriptsize 1};
\node[draw=none, blue] at (10,-8) {\scriptsize 5};
\node[draw=none, blue] at (6,-6) {\scriptsize 6};
\node[draw=none, blue] at (9,-6) {\scriptsize 3};
\node[draw=none, blue] at (12,-6) {\scriptsize 14};
\node[draw=none, blue] at (4,-4) {\scriptsize 15};
\node[draw=none, blue] at (7,-4) {\scriptsize 2};
\node[draw=none, blue] at (11,-4) {\scriptsize 4};
\node[draw=none, blue] at (14,-4) {\scriptsize 27};
\node[draw=none, blue] at (2,-2) {\scriptsize 28};
\node[draw=none, blue] at (5,-2) {\scriptsize 7};
\node[draw=none, blue] at (9,-2) {\scriptsize 10};
\node[draw=none, blue] at (13,-2) {\scriptsize 13};
\node[draw=none, blue] at (16,-2) {\scriptsize 44};
\node[draw=none, blue] at (3,0) {\scriptsize 16};
\node[draw=none, blue] at (7,0) {\scriptsize 9};
\node[draw=none, blue] at (11,0) {\scriptsize 11};
\node[draw=none, blue] at (15,0) {\scriptsize 26};

\node[draw=none, blue] at (23,2) {\scriptsize 8};
\node[draw=none, blue] at (27,2) {\scriptsize 21};
\node[draw=none, blue] at (31,2) {\scriptsize 12};
\node[draw=none, blue] at (35,2) {\scriptsize 43};
\node[draw=none, blue] at (39,2) {\scriptsize 45};
\node[draw=none, blue] at (21,4) {\scriptsize 49};
\node[draw=none, blue] at (25,4) {\scriptsize 22};
\node[draw=none, blue] at (29,4) {\scriptsize 25};
\node[draw=none, blue] at (33,4) {\scriptsize 42};
\node[draw=none, blue] at (37,4) {\scriptsize 46};
\node[draw=none, blue] at (19,6) {\scriptsize 35};
\node[draw=none, blue] at (23,6) {\scriptsize 29};
\node[draw=none, blue] at (27,6) {\scriptsize 24};
\node[draw=none, blue] at (31,6) {\scriptsize 41};
\node[draw=none, blue] at (35,6) {\scriptsize 47};
\node[draw=none, blue] at (17,8) {\scriptsize 37};
\node[draw=none, blue] at (21,8) {\scriptsize 50};
\node[draw=none, blue] at (25,8) {\scriptsize 20};
\node[draw=none, blue] at (29,8) {\scriptsize 40};
\node[draw=none, blue] at (33,8) {\scriptsize 48};
\node[draw=none, blue] at (15,10) {\scriptsize 38};
\node[draw=none, blue] at (19,10) {\scriptsize 34};
\node[draw=none, blue] at (23,10) {\scriptsize 30};
\node[draw=none, blue] at (27,10) {\scriptsize 23};

\node[draw=none, blue] at (30,-8) {\scriptsize 1};
\node[draw=none, blue] at (32,-8) {\scriptsize 5};
\node[draw=none, blue] at (28,-6) {\scriptsize 6};
\node[draw=none, blue] at (31,-6) {\scriptsize 3};
\node[draw=none, blue] at (34,-6) {\scriptsize 14};
\node[draw=none, blue] at (26,-4) {\scriptsize 15};
\node[draw=none, blue] at (29,-4) {\scriptsize 2};
\node[draw=none, blue] at (33,-4) {\scriptsize 4};
\node[draw=none, blue] at (36,-4) {\scriptsize 27};
\node[draw=none, blue] at (24,-2) {\scriptsize 28};
\node[draw=none, blue] at (27,-2) {\scriptsize 7};
\node[draw=none, blue] at (31,-2) {\scriptsize 10};
\node[draw=none, blue] at (35,-2) {\scriptsize 13};
\node[draw=none, blue] at (38,-2) {\scriptsize 44};
\node[draw=none, blue] at (25,0) {\scriptsize 16};
\node[draw=none, blue] at (29,0) {\scriptsize 9};
\node[draw=none, blue] at (33,0) {\scriptsize 11};
\node[draw=none, blue] at (37,0) {\scriptsize 26};

\node[draw=none, blue] at (39,8) {\scriptsize 37};
\node[draw=none, blue] at (37,10) {\scriptsize 38};

\draw[BrickRed, dashed] (-0.5,10) to (40.5,10);
\draw[BrickRed, dashed] (-0.5,0) to (40.5,0);

\draw[OliveGreen, thick, ->] (h_1) to (9_-9);
\draw[OliveGreen, thick, ->] (9_-9) to (9_-7);
\draw[OliveGreen, thick, ->] (9_-7) to (7_-7);
\draw[OliveGreen, thick, ->] (7_-7) to (7_-5);
\draw[OliveGreen, thick, ->] (7_-5) to (5_-5);
\draw[OliveGreen, thick, ->] (5_-5) to (5_-3);
\draw[OliveGreen, thick, ->] (5_-3) to (3_-3);
\draw[OliveGreen, thick, ->] (3_-3) to (3_-1);
\draw[OliveGreen, thick, ->] (3_-1) to (1_-1);
\draw[OliveGreen, thick, ->] (1_-1) to (1_1);
\draw[OliveGreen, thick, ->] (1_1) to (3_1);
\draw[OliveGreen, thick, ->] (3_1) to (3_3);
\draw[OliveGreen, thick, ->] (3_3) to (5_3);
\draw[OliveGreen, thick, ->] (5_3) to (5_5);
\draw[OliveGreen, thick, ->] (5_5) to (7_5);
\draw[OliveGreen, thick, ->] (7_5) to (7_7);
\draw[OliveGreen, thick, ->] (7_7) to (9_7);
\draw[OliveGreen, thick, ->] (9_7) to (9_9);
\draw[OliveGreen, thick, ->] (9_9) to (13_9);
\draw[OliveGreen, thick, ->] (13_9) to (15_9);
\draw[OliveGreen, thick, ->] (15_9) to (17_9);
\draw[OliveGreen, thick, ->] (17_9) to (19_9);
\draw[OliveGreen, thick, ->] (19_9) to (21_9);
\draw[OliveGreen, thick, ->] (21_9) to (23_9);
\draw[OliveGreen, thick, ->] (23_9) to (25_9);
\draw[OliveGreen, thick, ->] (25_9) to (27_9);
\draw[OliveGreen, thick, ->] (27_9) to (29_9);
\draw[OliveGreen, thick, ->] (29_9) to (31_9);
\draw[OliveGreen, thick, ->] (31_9) to (31_7);
\draw[OliveGreen, thick, ->] (31_7) to (33_7);
\draw[OliveGreen, thick, ->] (33_7) to (33_5);
\draw[OliveGreen, thick, ->] (33_5) to (35_5);
\draw[OliveGreen, thick, ->] (35_5) to (35_3);
\draw[OliveGreen, thick, ->] (35_3) to (37_3);
\draw[OliveGreen, thick, ->] (37_3) to (37_1);
\draw[OliveGreen, thick, ->] (37_1) to (39_1);
\draw[OliveGreen, thick, ->] (39_1) to (39_-1);
\draw[OliveGreen, thick, ->] (39_-1) to (37_-1);
\draw[OliveGreen, thick, ->] (37_-1) to (37_-3);
\draw[OliveGreen, thick, ->] (37_-3) to (35_-3);
\draw[OliveGreen, thick, ->] (35_-3) to (35_-5);
\draw[OliveGreen, thick, ->] (35_-5) to (33_-5);
\draw[OliveGreen, thick, ->] (33_-5) to (33_-7);
\draw[OliveGreen, thick, ->] (33_-7) to (31_-7);
\draw[OliveGreen, thick, ->] (31_-7) to (31_-9);
\draw[OliveGreen, thick, ->] (31_-9) to (t_1);

\draw[OliveGreen, thick, ->] (h_2) to (11_-7);
\draw[OliveGreen, thick, ->] (11_-7) to (11_-5);
\draw[OliveGreen, thick, ->] (11_-5) to (9_-5);
\draw[OliveGreen, thick, ->] (9_-5) to (9_-3);
\draw[OliveGreen, thick, ->] (9_-3) to (7_-3);
\draw[OliveGreen, thick, ->] (7_-3) to (7_-1);
\draw[OliveGreen, thick, ->] (7_-1) to (5_-1);
\draw[OliveGreen, thick, ->] (5_-1) to (5_1);
\draw[OliveGreen, thick, ->] (5_1) to (7_1);
\draw[OliveGreen, thick, ->] (7_1) to (7_3);
\draw[OliveGreen, thick, ->] (7_3) to (9_3);
\draw[OliveGreen, thick, ->] (9_3) to (9_5);
\draw[OliveGreen, thick, ->] (9_5) to (11_5);
\draw[OliveGreen, thick, ->] (11_5) to (11_7);
\draw[OliveGreen, thick, ->] (11_7) to (15_7);
\draw[OliveGreen, thick, ->] (15_7) to (17_7);
\draw[OliveGreen, thick, ->] (17_7) to (19_7);
\draw[OliveGreen, thick, ->] (19_7) to (21_7);
\draw[OliveGreen, thick, ->] (21_7) to (23_7);
\draw[OliveGreen, thick, ->] (23_7) to (25_7);
\draw[OliveGreen, thick, ->] (25_7) to (27_7);
\draw[OliveGreen, thick, ->] (27_7) to (29_7);
\draw[OliveGreen, thick, ->] (29_7) to (29_5);
\draw[OliveGreen, thick, ->] (29_5) to (31_5);
\draw[OliveGreen, thick, ->] (31_5) to (31_3);
\draw[OliveGreen, thick, ->] (31_3) to (33_3);
\draw[OliveGreen, thick, ->] (33_3) to (33_1);
\draw[OliveGreen, thick, ->] (33_1) to (35_1);
\draw[OliveGreen, thick, ->] (35_1) to (35_-1);
\draw[OliveGreen, thick, ->] (35_-1) to (33_-1);
\draw[OliveGreen, thick, ->] (33_-1) to (33_-3);
\draw[OliveGreen, thick, ->] (33_-3) to (31_-3);
\draw[OliveGreen, thick, ->] (31_-3) to (31_-5);
\draw[OliveGreen, thick, ->] (31_-5) to (29_-5);
\draw[OliveGreen, thick, ->] (29_-5) to (29_-7);
\draw[OliveGreen, thick, ->] (29_-7) to (t_2);

\draw[OliveGreen, thick, ->] (h_3) to (13_-5);
\draw[OliveGreen, thick, ->] (13_-5) to (13_-3);
\draw[OliveGreen, thick, ->] (13_-3) to (11_-3);
\draw[OliveGreen, thick, ->] (11_-3) to (11_-1);
\draw[OliveGreen, thick, ->] (11_-1) to (9_-1);
\draw[OliveGreen, thick, ->] (9_-1) to (9_1);
\draw[OliveGreen, thick, ->] (9_1) to (11_1);
\draw[OliveGreen, thick, ->] (11_1) to (11_3);
\draw[OliveGreen, thick, ->] (11_3) to (13_3);
\draw[OliveGreen, thick, ->] (13_3) to (13_5);
\draw[OliveGreen, thick, ->] (13_5) to (17_5);
\draw[OliveGreen, thick, ->] (17_5) to (19_5);
\draw[OliveGreen, thick, ->] (19_5) to (21_5);
\draw[OliveGreen, thick, ->] (21_5) to (23_5);
\draw[OliveGreen, thick, ->] (23_5) to (25_5);
\draw[OliveGreen, thick, ->] (25_5) to (27_5);
\draw[OliveGreen, thick, ->] (27_5) to (27_3);
\draw[OliveGreen, thick, ->] (27_3) to (29_3);
\draw[OliveGreen, thick, ->] (29_3) to (29_1);
\draw[OliveGreen, thick, ->] (29_1) to (31_1);
\draw[OliveGreen, thick, ->] (31_1) to (31_-1);
\draw[OliveGreen, thick, ->] (31_-1) to (29_-1);
\draw[OliveGreen, thick, ->] (29_-1) to (29_-3);
\draw[OliveGreen, thick, ->] (29_-3) to (27_-3);
\draw[OliveGreen, thick, ->] (27_-3) to (27_-5);
\draw[OliveGreen, thick, ->] (27_-5) to (t_3);

\draw[OliveGreen, thick, ->] (h_4) to (15_-3);
\draw[OliveGreen, thick, ->] (15_-3) to (15_-1);
\draw[OliveGreen, thick, ->] (15_-1) to (13_-1);
\draw[OliveGreen, thick, ->] (13_-1) to (13_1);
\draw[OliveGreen, thick, ->] (13_1) to (15_1);
\draw[OliveGreen, thick, ->] (15_1) to (15_3);
\draw[OliveGreen, thick, ->] (15_3) to (19_3);
\draw[OliveGreen, thick, ->] (19_3) to (21_3);
\draw[OliveGreen, thick, ->] (21_3) to (23_3);
\draw[OliveGreen, thick, ->] (23_3) to (25_3);
\draw[OliveGreen, thick, ->] (25_3) to (25_1);
\draw[OliveGreen, thick, ->] (25_1) to (27_1);
\draw[OliveGreen, thick, ->] (27_1) to (27_-1);
\draw[OliveGreen, thick, ->] (27_-1) to (25_-1);
\draw[OliveGreen, thick, ->] (25_-1) to (25_-3);
\draw[OliveGreen, thick, ->] (25_-3) to (t_4);

\draw[OliveGreen, thick, ->] (h_5) to (17_-1);
\draw[OliveGreen, thick, ->] (17_-1) to (17_1);
\draw[OliveGreen, thick, ->] (17_1) to (21_1);
\draw[OliveGreen, thick, ->] (21_1) to (23_1);
\draw[OliveGreen, thick, ->] (23_1) to (23_-1);
\draw[OliveGreen, thick, ->] (23_-1) to (t_5);

\draw[BurntOrange, thick, ->] (-0.5,-2.5) to (1_-1);
\draw[BurntOrange, thick, ->] (3_-1) to (5_-1);
\draw[BurntOrange, thick, ->] (7_-1) to (9_-1);
\draw[BurntOrange, thick, ->] (11_-1) to (13_-1);
\draw[BurntOrange, thick, ->] (15_-1) to (17_-1);
\draw[BurntOrange, thick, ->] (1.5,-4.5) to (3_-3);
\draw[BurntOrange, thick, ->] (5_-3) to (7_-3);
\draw[BurntOrange, thick, ->] (9_-3) to (11_-3);
\draw[BurntOrange, thick, ->] (13_-3) to (15_-3);
\draw[BurntOrange, thick, ->] (3.5,-6.5) to (5_-5);
\draw[BurntOrange, thick, ->] (7_-5) to (9_-5);
\draw[BurntOrange, thick, ->] (11_-5) to (13_-5);
\draw[BurntOrange, thick, ->] (5.5,-8.5) to (7_-7);
\draw[BurntOrange, thick, ->] (9_-7) to (11_-7);
\draw[BurntOrange, thick, ->] (7.5,-10.5) to (9_-9);

\draw[BurntOrange, thick, ->] (-0.5,1) to (1_1);

\draw[BurntOrange, thick, ->] (-0.5,3) to (1_3);
\draw[BurntOrange, thick, ->] (1_3) to (3_3);
\draw[BurntOrange, thick, ->] (3_1) to (5_1);

\draw[BurntOrange, thick, ->] (-0.5,5) to (1_5);
\draw[BurntOrange, thick, ->] (1_5) to (3_5);
\draw[BurntOrange, thick, ->] (3_5) to (5_5);
\draw[BurntOrange, thick, ->] (5_3) to (7_3);
\draw[BurntOrange, thick, ->] (7_1) to (9_1);

\draw[BurntOrange, thick, ->] (-0.5,7) to (1_7);
\draw[BurntOrange, thick, ->] (1_7) to (3_7);
\draw[BurntOrange, thick, ->] (3_7) to (5_7);
\draw[BurntOrange, thick, ->] (5_7) to (7_7);
\draw[BurntOrange, thick, ->] (7_5) to (9_5);
\draw[BurntOrange, thick, ->] (9_3) to (11_3);
\draw[BurntOrange, thick, ->] (11_1) to (13_1);

\draw[BurntOrange, thick, ->] (-0.5,9) to (1_9);
\draw[BurntOrange, thick, ->] (1_9) to (3_9);
\draw[BurntOrange, thick, ->] (3_9) to (5_9);
\draw[BurntOrange, thick, ->] (5_9) to (7_9);
\draw[BurntOrange, thick, ->] (7_9) to (9_9);
\draw[BurntOrange, thick, ->] (9_7) to (11_7);
\draw[BurntOrange, thick, ->] (11_5) to (13_5);
\draw[BurntOrange, thick, ->] (13_3) to (15_3);
\draw[BurntOrange, thick, ->] (15_1) to (17_1);

\draw[red, thick, ->] (1_5) to (1_3);
\draw[red, thick, ->] (3_7) to (3_5);
\draw[red, thick, ->] (1_9) to (1_7);
\draw[red, thick, ->] (5_9) to (5_7);
\draw[red, thick, ->] (3,10.5) to (3_9);
\draw[red, thick, ->] (7,10.5) to (7_9);

\draw[BurntOrange, thick, ->] (13,10.5) to (13_9);

\draw[BurntOrange, thick, ->] (17,10.5) to (17_9);
\draw[BurntOrange, thick, ->] (15_9) to (15_7);

\draw[BurntOrange, thick, ->] (21,10.5) to (21_9);
\draw[BurntOrange, thick, ->] (19_9) to (19_7);
\draw[BurntOrange, thick, ->] (17_7) to (17_5);

\draw[BurntOrange, thick, ->] (25,10.5) to (25_9);
\draw[BurntOrange, thick, ->] (23_9) to (23_7);
\draw[BurntOrange, thick, ->] (21_7) to (21_5);
\draw[BurntOrange, thick, ->] (19_5) to (19_3);

\draw[BurntOrange, thick, ->] (29,10.5) to (29_9);
\draw[BurntOrange, thick, ->] (27_9) to (27_7);
\draw[BurntOrange, thick, ->] (25_7) to (25_5);
\draw[BurntOrange, thick, ->] (23_5) to (23_3);
\draw[BurntOrange, thick, ->] (21_3) to (21_1);

\draw[BurntOrange, thick, ->] (40.5,1) to (39_1);
\draw[BurntOrange, thick, ->] (37_1) to (35_1);
\draw[BurntOrange, thick, ->] (33_1) to (31_1);
\draw[BurntOrange, thick, ->] (29_1) to (27_1);
\draw[BurntOrange, thick, ->] (25_1) to (23_1);

\draw[BurntOrange, thick, ->] (40.5,3) to (37_3);
\draw[BurntOrange, thick, ->] (35_3) to (33_3);
\draw[BurntOrange, thick, ->] (31_3) to (29_3);
\draw[BurntOrange, thick, ->] (27_3) to (25_3);

\draw[BurntOrange, thick, ->] (40.5,7) to (39_7);
\draw[BurntOrange, thick, ->] (39_7) to (39_5);
\draw[BurntOrange, thick, ->] (39_5) to (35_5);
\draw[BurntOrange, thick, ->] (33_5) to (31_5);
\draw[BurntOrange, thick, ->] (29_5) to (27_5);

\draw[BurntOrange, thick, ->] (39,10.5) to (39_9);
\draw[BurntOrange, thick, ->] (39_9) to (37_9);
\draw[BurntOrange, thick, ->] (37_9) to (37_7);
\draw[BurntOrange, thick, ->] (37_7) to (33_7);
\draw[BurntOrange, thick, ->] (31_7) to (29_7);

\draw[BurntOrange, thick, ->] (35,10.5) to (35_9);
\draw[BurntOrange, thick, ->] (35_9) to (31_9);

\draw[red, thick, ->] (40.5,9) to (39_9);
\draw[red, thick, ->] (37_9) to (35_9);
\draw[red, thick, ->] (39_7) to (37_7);
\draw[red, thick, ->] (40.5,5) to (39_5);

\draw[BurntOrange, thick, ->] (32.5,-10.5) to (31_-9);
\draw[BurntOrange, thick, ->] (31_-7) to (29_-7);
\draw[BurntOrange, thick, ->] (29_-5) to (27_-5);
\draw[BurntOrange, thick, ->] (27_-3) to (25_-3);
\draw[BurntOrange, thick, ->] (25_-1) to (23_-1);

\draw[BurntOrange, thick, ->] (34.5,-8.5) to (33_-7);
\draw[BurntOrange, thick, ->] (33_-5) to (31_-5);
\draw[BurntOrange, thick, ->] (31_-3) to (29_-3);
\draw[BurntOrange, thick, ->] (29_-1) to (27_-1);

\draw[BurntOrange, thick, ->] (36.5,-6.5) to (35_-5);
\draw[BurntOrange, thick, ->] (35_-3) to (33_-3);
\draw[BurntOrange, thick, ->] (33_-1) to (31_-1);

\draw[BurntOrange, thick, ->] (38.5,-4.5) to (37_-3);
\draw[BurntOrange, thick, ->] (37_-1) to (35_-1);

\draw[BurntOrange, thick, ->] (40.5,-2.5) to (39_-1);

\end{tikzpicture}
\caption{Network $\Nc_4^{\mathrm{full}_-}$.}
\label{net-full-}
\end{figure}

\begin{figure}[h]
\begin{tikzpicture}[every node/.style={inner sep=0, minimum size=0.4cm, thick, draw, circle}, x=0.75cm, y=0.45cm]

\node [rectangle] (1) at (-1,0) {\scriptsize 1};
\node (3) at (0,2) {\scriptsize 3};
\node [rectangle] (5) at (1,0) {\scriptsize 5};
\node [rectangle] (6) at (-2,2) {\scriptsize 6};
\node (2) at (-1,4) {\scriptsize 2};
\node (10) at (0,6) {\scriptsize 10};
\node (4) at (1,4) {\scriptsize 4};
\node [rectangle] (14) at (2,2) {\scriptsize 14};
\node [rectangle] (15) at (-3,4) {\scriptsize 15};
\node (7) at (-2,6) {\scriptsize 7};
\node (9) at (-1,8) {\scriptsize 9};
\node (11) at (1,8) {\scriptsize 11};
\node (13) at (2,6) {\scriptsize 13};
\node [rectangle] (27) at (3,4) {\scriptsize 27};
\node [rectangle] (28) at (-4,6) {\scriptsize 28};
\node (16) at (-3,8) {\scriptsize 16};
\node (26) at (3,8) {\scriptsize 26};
\node [rectangle] (44) at (4,6) {\scriptsize 44};

\node (8) at (-4,10) {\scriptsize 8};
\node (21) at (-2,10) {\scriptsize 21};
\node (12) at (0,10) {\scriptsize 12};
\node (43) at (2,10) {\scriptsize 43};
\node (45) at (4,10) {\scriptsize 45};

\node (49) at (-5,12) {\scriptsize 49};
\node (22) at (-3,12) {\scriptsize 22};
\node (25) at (-1,12) {\scriptsize 25};
\node (42) at (1,12) {\scriptsize 42};
\node (46) at (3,12) {\scriptsize 46};

\node (35) at (-6,14) {\scriptsize 35};
\node (29) at (-4,14) {\scriptsize 29};
\node (24) at (-2,14) {\scriptsize 24};
\node (41) at (0,14) {\scriptsize 41};
\node (47) at (2,14) {\scriptsize 47};

\node (37) at (-7,16) {\scriptsize 37};
\node (50) at (-5,16) {\scriptsize 50};
\node (20) at (-3,16) {\scriptsize 20};
\node (40) at (-1,16) {\scriptsize 40};
\node (48) at (1,16) {\scriptsize 48};

\node (38) at (-8,18) {\scriptsize 38};
\node (34) at (-6,18) {\scriptsize 34};
\node (30) at (-4,18) {\scriptsize 30};
\node (23) at (-2,18) {\scriptsize 23};

\node (51) at (-5,20) {\scriptsize 51};
\node (19) at (-7,20) {\scriptsize 19};
\node (33) at (-6,22) {\scriptsize 33};
\node (32) at (-5,24) {\scriptsize 32};
\node (31) at (-4,22) {\scriptsize 31};
\node (17) at (-3,20) {\scriptsize 17};
\node (39) at (-9,20) {\scriptsize 39};
\node (36) at (-8,22) {\scriptsize 36};
\node (18) at (-7,24) {\scriptsize 18};
\node (52) at (-6,26) {\scriptsize 52};

\draw [->,thick] (19) -- (33);

\draw [->,thick] (33) -- (32);
\draw [->,thick] (32) -- (31);

%\draw [->,thick] (33) to [bend left = 15] (52);
%\draw [->,thick] (52) to [bend left = 15] (31);
\draw [->,thick] (31) -- (17);

\draw [->,thick] (51) -- (31);
\draw [->,thick] (31) to [out = 140, in = -30] (18);
\draw [->,thick] (18) to (33);
\draw [->,thick] (33) -- (51);

\draw [->,thick] (17) to [out = 150, in =-20] (36);
\draw [->,thick] (36) to (19);

\draw [->,thick] (31) -- (33);
\draw [->,thick] (17) -- (51);
\draw [->,thick] (51) -- (19);
\draw [->,thick] (39) to [bend left = 15] (17);
\draw [->,thick] (36) to [bend left = 20] (31);
\draw [->,thick] (33) to (36);
\draw [->,thick] (19) to (39);

\draw[BrickRed, dashed] (-9,18) to (-1,18);

\draw[->, thick] (23) to [out=160, in=-20] (39);
\draw[->, thick] (39) to (38);
\draw[->, thick] (38) to (19);
\draw[->, thick] (19) to (34);
\draw[->, thick] (34) to (51);
\draw[->, thick] (51) to (30);
\draw[->, thick] (30) to (17);
\draw[->, thick] (17) to (23);

\draw[->, thick] (40) to (23);
\draw[->, thick] (23) to (20);
\draw[->, thick] (20) to (30);
\draw[->, thick] (30) to (50);
\draw[->, thick] (50) to (34);
\draw[->, thick] (34) to (37);
\draw[->, thick] (37) to (38);
\draw[->, thick] (38) to [out=-15, in=160] (48);

\draw [<-, thick] (8) to (21);
\draw [<-, thick] (21) to (12);
\draw [<-, thick] (12) to (43);
\draw [<-, thick] (43) to (45);
\draw [->, thick] (45) to [bend right = 11] (8);
\draw [<-, thick] (22) to (25);
\draw [<-, thick] (25) to (42);
\draw [<-, thick] (42) to (46);
\draw [->, thick] (46) to [bend right = 11] (49);
\draw [->, thick] (49) to (22);
\draw [<-, thick] (24) to (41);
\draw [<-, thick] (41) to (47);
\draw [->, thick] (47) to [bend right = 11] (35);
\draw [->, thick] (35) to (29);
\draw [->, thick] (29) to (24);
\draw [<-, thick] (40) to (48);
\draw [->, thick] (48) to [bend right = 11] (37);
\draw [->, thick] (37) to (50);
\draw [->, thick] (50) to (20);
\draw [->, thick] (20) to (40);

\draw [->, thick] (21) to (25);
\draw [->, thick] (25) to (41);
\draw [->, thick] (41) to (48);
\draw [->, thick] (12) to (42);
\draw [->, thick] (42) to (47);
\draw [->, thick] (37) to [out=-15, in=160] (47);
\draw [->, thick] (43) to (46);
\draw [->, thick] (35) to [out=-15, in=160] (46);
\draw [<-, thick] (35) to (50);
\draw [->, thick] (49) to [out=-15, in=160] (45);
\draw [<-, thick] (49) to (29);
\draw [<-, thick] (29) to (20);

\draw [->, thick] (22) to (21);
\draw [->, thick] (24) to (25);
\draw [->, thick] (25) to (12);
\draw [->, thick] (40) to (41);
\draw [->, thick] (41) to (42);
\draw [->, thick] (42) to (43);
\draw [->, thick] (49) to (35);
\draw [->, thick] (35) to (37);
\draw [->, thick] (29) to (50);

\draw [->, thick] (8) to (49);
\draw [->, thick] (22) to (29);
\draw [->, thick] (24) to (20);

\draw [->, thick] (8) to (16);
\draw [->, thick] (16) to (21);
\draw [->, thick] (21) to (9);
\draw [->, thick] (9) to (12);
\draw [->, thick] (12) to (11);
\draw [->, thick] (11) to (43);
\draw [->, thick] (43) to (26);
\draw [->, thick] (26) to (45);

\draw[BrickRed, dashed] (-4,8) to (4,8);

\draw [->,thick] (1) -- (5);

\draw [->,thick] (6) -- (3);
\draw [->,thick] (3) -- (14);

\draw [->,thick] (15) -- (2);
\draw [->,thick] (2) -- (4);
\draw [->,thick] (4) -- (27);

\draw [->,thick] (28) -- (7);
\draw [->,thick] (7) -- (10);
\draw [->,thick] (10) -- (13);
\draw [->,thick] (13) -- (44);

\draw [->,thick] (5) -- (3);
\draw [->,thick] (3) -- (2);
\draw [->,thick] (2) -- (7);
\draw [->,thick] (7) -- (16);

\draw [->,thick] (26) -- (13);
\draw [->,thick] (13) -- (4);
\draw [->,thick] (4) -- (3);
\draw [->,thick] (3) -- (1);

\draw [->,thick] (14) -- (4);
\draw [->,thick] (4) -- (10);
\draw [->,thick] (10) -- (9);
\draw [->,thick] (11) -- (10);
\draw [->,thick] (10) -- (2);
\draw [->,thick] (2) -- (6);

\draw [->,thick] (27) -- (13);
\draw [->,thick] (13) -- (11);
\draw [->,thick] (9) -- (7);
\draw [->,thick] (7) -- (15);

\draw [->,thick] (44) -- (26);
\draw [->,thick] (16) -- (28);

\draw [->,thick,dashed] (44) -- (27);
\draw [->,thick,dashed] (27) -- (14);
\draw [->,thick,dashed] (14) -- (5);
\draw [->,thick,dashed] (1) -- (6);
\draw [->,thick,dashed] (6) -- (15);
\draw [->,thick,dashed] (15) -- (28);

\end{tikzpicture}
\caption{Quiver $\Qc_4^{\mathrm{full}}$.}
\label{Q4-full}
\end{figure}

\section{Combinatorics and Poisson geometry of quiver mutations}
\label{sec-muts}

\subsection{Factorization coordinates on double Bruhat cells}
\label{subsec-double-Bruhat}

Let $G$ be a semi-simple complex Lie group endowed with the standard Poisson-Lie structure. For each element $w \in W$ of the Weyl group, denote by $\dot w$ its representative in the coset $N_G(H)/H$, where $N_G(H)$ is the normalizer of the torus $H$ in $G$. Then the double Bruhat cells
$$
G^{u,v} = B_+ \dot u B_+ \cap B_- \dot v B_-
$$
are unions of symplectic leaves of $G$ fibered over a certain torus, see~\cite{HKKR00}. It was shown in~\cite{FZ99, BFZ05} that a double Bruhat cell in a simply-connected Lie group admits the structure of a cluster variety. In~\cite{FG06b}, a closely related construction was carried out for groups of adjoint type. We shall now recall some results from the latter reference. For the rest of this section we fix $G = PGL_{n+1}$.

Let us consider a pair of alphabets $\mathfrak A_\pm = \hc{\pm 1, \dots, \pm n}$. We introduce the convenient notation
\beq
\label{bars}
\overline j =
\begin{cases}
j - (n + 1) & \text{if} \;\; j \in \mathfrak A_+, \\
j + (n + 1) & \text{if} \;\; j \in \mathfrak A_-,
\end{cases}
\qquad\text{and}\qquad
\underline j =
\begin{cases}
-j + (n + 1) \quad \text{if} \;\; j \in \mathfrak A_+, \\
-j - (n + 1) \quad \text{if} \;\; j \in \mathfrak A_-.
\end{cases}
\eeq
Note that $\underline{\mathfrak A}_\pm = \mathfrak A_\pm$ and $\overline{\mathfrak A}_\pm = \mathfrak A_\mp$. The Hecke semigroup of type $A_{n}$ is the quotient of the free monoid generated by $\mathfrak A_+ \sqcup \mathfrak A_-$ by the following relations. For any letters $\alpha$ and $\beta$ from the same alphabet, we impose the relations
\beq
\label{ab}
\begin{aligned}
\alpha\beta = \beta\alpha \quad &\text{if} \quad \hm{\alpha - \beta} > 1, \\
\overline\alpha\beta = \beta\overline\alpha \quad &\text{if} \quad \hm{\alpha + \beta} \ne n+1.
\end{aligned}
\eeq
In addition to these, we have
\begin{align}
\label{braid}
&\text{\emph{braid move}} & &\alpha\beta\alpha \longleftrightarrow \beta\alpha\beta & &\hm{\alpha - \beta} = 1 \quad\text{and}\quad \alpha,\beta \in \mathfrak A_\pm, \\
\label{shf}
&\text{\emph{shuffling}}& &\alpha\beta \longleftrightarrow \beta\alpha & &\alpha + \beta = 0 \quad\text{and}\quad \alpha,\beta \in \mathfrak A_\pm, \\
\label{21}
&\text{\emph{2-1 move}}& &\alpha\alpha \longrightarrow \alpha& &\alpha \in \mathfrak A_\pm.
\end{align}
The \emph{length} $l(\i_\pm)$ of a word $\i_\pm$ in the alphabet $\mathfrak A_\pm$ is the number of its letters. We say that a word is irreducible if its length cannot be reduced by the relations~\eqref{ab} and moves~\eqref{braid} -- \eqref{21}.

A double word $\i$ is a word in the alphabet $\mathfrak A = \mathfrak A_+ \sqcup \mathfrak A_-$. Clearly, for each double word $\i$, there exist a pair of words $\i_\pm$ such that $\i$ is a shuffle of the letters of $\i_+$ through those of $\i_-$. We define the length of the double word by $l(\i) = l(\i_+) + l(\i_-)$ and call $\i$ irreducible if both $\i_+$ and $\i_-$ are such.

Let $\i = (i_1, \dots, i_p)$ be a reduced double word, $\i_+ = (j_1, \dots, j_a)$, and $\i_- = (k_1, \dots, k_b)$ be its positive and negative parts. Then following~\cite{FG06b}, one can define a birational mapping
$$
\chi_\i \colon \hr{\C^\times}^{n+p} \longra G^{u,v}
$$
where $u = s_{k_1} \dots s_{k_b}$ and $v = s_{j_1} \dots s_{j_a}$ are reduced decompositions of the elements $u,v \in W$ and $s_i \in W$ is the reflection corresponding to a simple root $\alpha_i$. In order to describe the map $\chi_\i$ explicitly we shall need a few more notations.

Let $\hr{e_i, f_i, h_i}$, $i = 1, \dots, n$ be the Chavalley generators of the Lie algebra $\sl_{n+1}$. We consider a different set of Cartan elements $h^i$, with the property that
$$
[h^i,e_j] = \delta^i_je_j \qquad\text{and}\qquad [h^i,f_j] = -\delta^i_jf_j.
$$
The two sets of Cartan generators are related by $h_i = \sum_j c_{ij} h^j$ where $C = (c_{ij})$ is the Cartan matrix. Set
$$
E_i = \exp(e_i), \qquad F_i = \exp(f_i), \qquad H_i(x) = \exp(\log(x) h^i).
$$
Now, the map $\chi_\i$ can be described as follows: given a point $(x_1, \dots, x_{n+p}) \in \hr{\C^\times}^{n+p}$, we form the product $H_1(x_1) \dots H_n(x_n)$. Next, we read the word $\i$ left to right and multiply the aforementioned product by $E_{i_t}H_{i_t}(x_t)$ if $i_t>0$ and by $F_{i_t}H_{i_t}(x_t)$ if $i_t<0$ as $t$ runs from $n+1$ to $n+p$. We refer to variables $x_j$, $j=1, \dots, n+p$ as \emph{factorization coordinates} on the double Bruhat cell $G^{u,v}$.

\begin{example}
If $n=2$ and $\i = (1,-2,-1,2)$ we get
$$
\chi_\i(x_1,x_2,x_3,x_4,x_5,x_6) = H_1(x_1)H_2(x_2)E_1H_1(x_3)F_2H_2(x_4)F_1H_1(x_5)E_2H_2(x_6)
$$
where
\begin{align*}
& H_1(x) = \begin{pmatrix} x & 0 & 0 \\ 0 & 1 & 0 \\ 0 & 0 & 1 \end{pmatrix} &
& E_1 = \begin{pmatrix} 1 & 1 & 0 \\ 0 & 1 & 0 \\ 0 & 0 & 1 \end{pmatrix} &
& F_1 = \begin{pmatrix} 1 & 0 & 0 \\ 1 & 1 & 0 \\ 0 & 0 & 1 \end{pmatrix} \\
& H_2(x) = \begin{pmatrix} x & 0 & 0 \\ 0 & x & 0 \\ 0 & 0 & 1 \end{pmatrix} &
& E_2 = \begin{pmatrix} 1 & 0 & 0 \\ 0 & 1 & 1 \\ 0 & 0 & 1 \end{pmatrix} &
& F_2 = \begin{pmatrix} 1 & 0 & 0 \\ 0 & 1 & 0 \\ 0 & 1 & 1 \end{pmatrix}
\end{align*}
\end{example}

\begin{remark}
It is clear that one can define maps $\chi_\i$ for arbitrary double words $\i$, not necessarily reduced, in a similar fashion. In general, these maps will have nontrivial fibers. 
\end{remark}

\subsection{Cluster mutations}
\label{subsec-cluster-mut}

For each double word $\i$, the mapping $\chi_\i$ defines a chart on a double Bruhat cell $G^{u,v}$, where elements of the Weyl group $u$ and $v$ are determined by the words $\i_\pm$. Thus, transformations of the double words give rise to the gluing data between the corresponding charts. Following~\cite{FG06b}, we describe each of the relations~\eqref{ab} --~\eqref{21} in terms of generators of the group $G$.

The equalities~\eqref{ab} correspond to formulas
\beq
\begin{aligned}
&[E_i, E_j] = [F_i, F_j]=0 & &\text{if} \quad \hm{i - j} > 1, \\
&[E_i, F_j] = 0 & &\text{if} \quad i \ne j,
\end{aligned}
\eeq
The braid moves are described by
\begin{align}
\label{EEE}
&E_i H_i(x) E_j E_i = H_i(1 + x) H_j(1+x^{-1})^{-1} E_j H_j(x)^{-1} E_i H_i(1+x^{-1})^{-1} E_j H_j(1 + x), \\
\label{FFF}
&F_i H_i(x) F_j F_i = H_i(1+x^{-1})^{-1} H_j(1 + x) F_j H_j(x)^{-1} F_i H_i(1 + x) F_j H_j(1+x^{-1})^{-1}
\end{align}
whenever $\hm{i-j} = 1$. The shufflings of letters $i$ and $-i$ are realized as follows
\begin{align}
\label{EF}
&E_i H_i(x)F_i = H_{i-1}(1+x^{-1})^{-1} H_i(1 + x) F_i H_i(x)^{-1} E_i H_i(1 + x) H_{i+1}(1+x^{-1})^{-1}, \\
\label{FE}
&F_i H_i(x)E_i = H_{i-1}(1 + x) H_i(1+x^{-1})^{-1} E_i H_i(x)^{-1} F_i H_i(1+x^{-1})^{-1} H_{i+1}(1 + x).
\end{align}
Finally, the 2-1 moves correspond to 
\begin{align}
\label{EE}
&E_i H_i(x) E_i = H_i(1 + x) E_i H_i(1+x^{-1})^{-1}, \\
\label{FF}
&F_i H_i(x) F_i = H_i(1+x^{-1})^{-1} F_i H_i(1 + x).
\end{align}

%Now it is only left to interpret the cyclic shift. Note, that all cyclic shifts we have previously used are of the form $(\i,i) \mapsto (i,\i)$. Factorizations of the group element $g$ that arise from the word $(\i,i)$ take form $g = \tilde gE_iH_i(x)$ if $i \in \mathfrak A_+$ and $g = \tilde gF_iH_i(x)$ if $i \in \mathfrak A_-$. Thus we can write
%\begin{align}
%&g = \Ad_{E_iH_i(x)}^{-1} \hr{E_iH_i(x) \tilde g} \quad \text{if} \quad i>0, \\
%&g = \Ad_{F_iH_i(x)}^{-1} \hr{F_iH_i(x) \tilde g} \quad \text{if} \quad i<0.
%\end{align}
%Now, factorizations $E_iH_i(x) \tilde g$ and $F_iH_i(x) \tilde g$ correspond to the Weyl words $(i,\i)$.

\subsection{Poisson brackets and quivers}

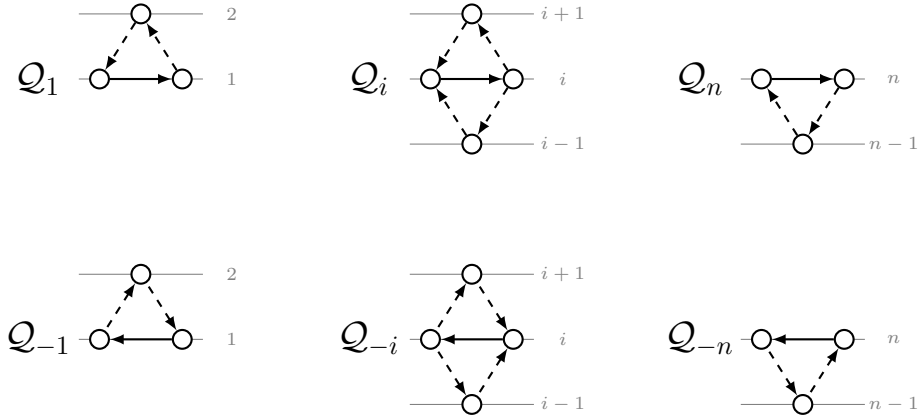
\begin{figure}[b]
\begin{tikzpicture}[every node/.style={inner sep=0, minimum size=0.25cm, circle, draw, thick, fill=white}, x=0.55cm, y=0.866cm]

\node [draw=none] (L1) at (-0.5,2) {\Large $\Qc_1$};

\node [draw=none, text=gray] at (4.2,2) {\tiny 1};
\node [draw=none, text=gray] at (4.2,3) {\tiny 2};

\draw [gray] (0.5,2) -- (3.5,2);
\draw [gray] (0.5,3) -- (3.5,3);

\node (A) at (1,2) {};
\node (C) at (3,2) {};
\node (D) at (2,3) {};

\draw [->, thick] (A) -- (C);
\draw [->, thick, dashed] (C) -- (D);
\draw [->, thick, dashed] (D) -- (A);

\node [draw=none] (L1) at (7.5,2) {\Large $\Qc_i$};

\node [draw=none, text=gray] at (12.2,1) {\tiny $i-1$};
\node [draw=none, text=gray] at (12.2,2) {\tiny $i$};
\node [draw=none, text=gray] at (12.2,3) {\tiny $i+1$};

\draw [gray] (8.5,1) -- (11.5,1);
\draw [gray] (8.5,2) -- (11.5,2);
\draw [gray] (8.5,3) -- (11.5,3);

\node (A) at (9,2) {};
\node (B) at (10,3) {};
\node (C) at (11,2) {};
\node (D) at (10,1) {};

\draw [->, thick] (A) -- (C);
\draw [->, thick, dashed] (C) -- (B);
\draw [->, thick, dashed] (B) -- (A);
\draw [->, thick, dashed] (C) -- (D);
\draw [->, thick, dashed] (D) -- (A);

\node [draw=none] (L1) at (15.5,2) {\Large $\Qc_n$};

\node [draw=none, text=gray] at (20.2,1) {\tiny $n-1$};
\node [draw=none, text=gray] at (20.2,2) {\tiny $n$};

\draw [gray] (16.5,1) -- (19.5,1);
\draw [gray] (16.5,2) -- (19.5,2);

\node (A) at (17,2) {};
\node (C) at (19,2) {};
\node (D) at (18,1) {};

\draw [->, thick] (A) -- (C);
\draw [->, thick, dashed] (C) -- (D);
\draw [->, thick, dashed] (D) -- (A);

\node [draw=none] (L1) at (-0.5,-2) {\Large $\Qc_{-1}$};

\node [draw=none, text=gray] at (4.2,-2) {\tiny 1};
\node [draw=none, text=gray] at (4.2,-1) {\tiny 2};

\draw [gray] (0.5,-2) -- (3.5,-2);
\draw [gray] (0.5,-1) -- (3.5,-1);

\node (A) at (1,-2) {};
\node (B) at (2,-1) {};
\node (C) at (3,-2) {};

\draw [<-, thick] (A) -- (C);
\draw [<-, thick, dashed] (C) -- (B);
\draw [<-, thick, dashed] (B) -- (A);

\node [draw=none] (L1) at (7.5,-2) {\Large $\Qc_{-i}$};

\node [draw=none, text=gray] at (12.2,-3) {\tiny $i-1$};
\node [draw=none, text=gray] at (12.2,-2) {\tiny $i$};
\node [draw=none, text=gray] at (12.2,-1) {\tiny $i+1$};

\draw [gray] (8.5,-3) -- (11.5,-3);
\draw [gray] (8.5,-2) -- (11.5,-2);
\draw [gray] (8.5,-1) -- (11.5,-1);

\node (A) at (9,-2) {};
\node (B) at (10,-1) {};
\node (C) at (11,-2) {};
\node (D) at (10,-3) {};

\draw [<-, thick] (A) -- (C);
\draw [<-, thick, dashed] (C) -- (B);
\draw [<-, thick, dashed] (B) -- (A);
\draw [<-, thick, dashed] (C) -- (D);
\draw [<-, thick, dashed] (D) -- (A);

\node [draw=none] (L1) at (15.5,-2) {\Large $\Qc_{-n}$};

\node [draw=none, text=gray] at (20.2,-3) {\tiny $n-1$};
\node [draw=none, text=gray] at (20.2,-2) {\tiny $n$};

\draw [gray] (16.5,-3) -- (19.5,-3);
\draw [gray] (16.5,-2) -- (19.5,-2);

\node (A) at (17,-2) {};
\node (C) at (19,-2) {};
\node (D) at (18,-3) {};

\draw [<-, thick] (A) -- (C);
\draw [<-, thick, dashed] (C) -- (D);
\draw [<-, thick, dashed] (D) -- (A);

\end{tikzpicture}
\caption{Quivers corresponding to single letters.}
\label{fig-letters}
\end{figure}

The torus $\hr{\C^\times}^{n+p}$ can be equipped with a nontrivial log-canonical Poisson structure in such a way that $\chi_\i$ becomes a morphism of Poisson varieties. Following~\cite{FM16}, we describe the Poisson brackets between the factorization coordinates $\hc{x_1,\ldots, x_{n+p}}$ with the help of quivers. In order to do so, we shall associate a quiver $\Qc_\i$ to any double word $\i$.

We start by drawing $n$ horizontal lines numbered 1 to $n$, and defining elementary quivers $\Qc_i$ and $\Qc_{-i}$ for each letter of the alphabet $\mathfrak A$ as shown in Figure~\ref{fig-letters}. Note, that these quivers have vertices at lines $i-1$, $i$, and $i+1$ if these lines exist. To associate a quiver to a word $\i$, we read it left to right and draw the quiver $\Qc_\alpha$ whenever we encounter letter $\alpha \in \mathfrak A$. Then, we amalgamate (or glue together) adjacent vertices in each of the rows if they belong to different quivers. For example, the full subquiver lying strictly below the dashed line in Figure~\ref{D4-sf} corresponds to the word~$(4,3,2,1,4,3,2,4,3,4)$.

Note that the product $\chi_\i(x_1, \dots, x_{n+p})$ has the following properties:
\begin{itemize}
\item the sequence of $E$'s and $F$'s reproduce the sequence of letters in the word $\i$;
\item every $H$ depends on its own variable;
\item there is at least one $E_i$ or $F_i$ between any pair of $H_i$'s.
\item for any $i \in \mathfrak A$ the number of $H_i$'s is 1 greater than the total number of $E_i$'s and $F_i$'s.
\end{itemize}
By construction, the number of right (respectively, left) arrows in the $i$-th row of the quiver $\Qc_\i$ coincides with the number of $E_i$'s (respectively, $F_i$'s) appearing in the factorization $\chi_\i(x_1, \dots, x_{n+p})$. Similarly, the number of vertices in the $i$-th row coincides with the number of $H_i$'s. Now, if we associate vertices of the quiver $\Qc_\i$ with the factorization parameters on the corresponding double Bruhat cell, the following theorem holds.

\begin{theorem} \cite{FG06b}
Poisson brackets between the factorization parameters are determined by the quiver $\Qc_\i$. Namely, $\hc{x_i,x_j} = \eps_{ij}x_ix_j$ where $\eps$ is the adjacency matrix of the quiver, and each dashed arrow is counted as one half of a solid one.
\end{theorem}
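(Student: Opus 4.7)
The plan is to reduce the theorem to a local verification followed by a multiplicativity argument. First, I would classify pairs of factorization coordinates $(x_i, x_j)$ according to where they sit in the product $H_1(x_1) \cdots H_n(x_n) \cdot \prod_t E_{i_t}^{\pm} H_{i_t}(x_t)$: both in the Cartan prefix, one in the Cartan prefix and one in an elementary factor $E_\alpha H_\alpha$ or $F_\alpha H_\alpha$, or both in (possibly different) elementary factors. The Cartan-Cartan case is immediate: $H \subset G$ is a Poisson subgroup on which the standard Sklyanin bracket vanishes, which matches $\Qc_\i$ having no arrows between the first $n$ vertices.

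Next, I would handle the elementary local cases by direct computation. For each letter $\alpha \in \mathfrak{A}$ I would compute $\{g(u) \otimes g(v)\} = [r, g(u) \otimes g(v)]$ with $g = H_1(x_1)\cdots H_n(x_n) \cdot E_\alpha H_\alpha(x_{n+1})$ (and analogously for $F_\alpha$), read off the bracket of $x_{n+1}$ with each Cartan variable $x_1,\dots,x_n$, and check that the answer reproduces the dashed arrows of $\Qc_{\pm \alpha}$ in Figure~\ref{fig-letters}. The factor of $\tfrac{1}{2}$ on dashed arrows emerges naturally here from the symmetric piece of the classical $r$-matrix pairing two Cartan generators, while the symmetric/antisymmetric weights on the rows $i\pm 1$ reflect the commutators $[h^j, e_i]$ and $[h^j, f_i]$.

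The main step is establishing the compatibility of the claim with concatenation. Suppose a double word decomposes as $\i = \i' \i''$, so that $\chi_\i(x) = \chi_{\i'}(x') \cdot \chi_{\i''}(x'')$, and the quiver amalgamates as $\Qc_\i = \Qc_{\i'} \amalg \Qc_{\i''}$ along the frozen vertices identified row by row. Using the multiplicativity of the Poisson–Lie bracket
$$
\pi_G(g_1 g_2) = (L_{g_1})_* \pi_G(g_2) + (R_{g_2})_* \pi_G(g_1),
$$
the Poisson bracket of any coordinate on the left factor with any coordinate on the right factor reduces to the classical $r$-matrix paired against the product of the matrix entries at the Cartan junction. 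This contribution is supported on the shared row(s) and, after a careful bookkeeping of left/right invariant vector fields, yields exactly the solid arrows the amalgamation procedure inserts between $\Qc_{\i'}$ and $\Qc_{\i''}$. An induction on the length of $\i$ then completes the proof.

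The hard part is the last step: one must show that the weights arising from the $r$-matrix computation across the concatenation junction combine two dashed arrows (one contributed by each of $\Qc_{\i'}$ and $\Qc_{\i''}$ at the identified frozen vertex) into a single solid arrow, and that no spurious arrows appear between non-adjacent rows. This requires the standard identity relating the Sklyanin bracket and the logarithmic derivatives of one-parameter subgroups, and is most cleanly executed in the FRT matrix notation $\{g(u) \stackrel{\otimes}{,} g(v)\} = [r, g(u) \otimes g(v)]$; all other steps are essentially algebraic bookkeeping built on top of this computation.
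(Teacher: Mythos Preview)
The paper does not actually prove this theorem: it is stated with the citation \cite{FG06b} and used as a background result from Fock--Goncharov, with no argument supplied in the present paper. So there is no ``paper's own proof'' to compare against.

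That said, your proposal is the correct strategy and is essentially how the result is established in \cite{FG06b}. The decomposition into (i) a local computation for each elementary block $E_\alpha H_\alpha$ or $F_\alpha H_\alpha$ and (ii) an inductive step based on the multiplicativity identity $\pi_G(g_1g_2) = (L_{g_1})_*\pi_G(g_2) + (R_{g_2})_*\pi_G(g_1)$ is exactly the amalgamation picture of Fock--Goncharov: the elementary quivers $\Qc_{\pm i}$ in Figure~\ref{fig-letters} record the local brackets, and amalgamation along the shared frozen Cartan variables implements the multiplicativity. Your identification of the $\tfrac12$ on dashed arrows with the symmetric Cartan piece of the standard $r$-matrix is also the right mechanism.

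One small caution: in your concatenation step you should be explicit that the only nonzero cross-brackets between the left and right factors come from the pairing of the rightmost Cartan variables of $\chi_{\i'}$ with the leftmost Cartan variables of $\chi_{\i''}$ in each row; this is what forces the amalgamation to glue exactly one frozen vertex per row and is where the ``two dashed arrows combine to a solid one'' bookkeeping lives. If you carry out the FRT computation you sketch, this drops out, but it is worth isolating as a lemma rather than folding it into the induction.
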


\begin{figure}[h]
\begin{tikzpicture}[every node/.style={inner sep=0, minimum size=0.25cm, circle, draw, thick, fill=white}, x=0.5cm, y=0.866cm]

\node [draw=none, text=gray] at (6.2,1) {\tiny $i-1$};
\node [draw=none, text=gray] at (6.2,2) {\tiny $i$};
\node [draw=none, text=gray] at (6.2,3) {\tiny $i+1$};
\node [draw=none, text=gray] at (6.2,4) {\tiny $i+2$};

\draw [gray] (0.5,1) -- (5.5,1);
\draw [gray] (0.5,2) -- (5.5,2);
\draw [gray] (0.5,3) -- (5.5,3);
\draw [gray] (0.5,4) -- (5.5,4);

\node (A) at (3,1) {};
\node (B) at (1,2) {};
\node [fill=BurntOrange] (C) at (3,2) {};
\node (D) at (5,2) {};
\node (E) at (2,3) {};
\node (F) at (4,3) {};
\node (G) at (3,4) {};

\draw [->, thick] (B) -- (C);
\draw [->, thick] (C) -- (E);
\draw [->, thick] (E) -- (F);
\draw [->, thick] (F) -- (C);
\draw [->, thick] (C) -- (D);
\draw [->, thick, dashed] (D) -- (F);
\draw [->, thick, dashed] (F) -- (G);
\draw [->, thick, dashed] (G) -- (E);
\draw [->, thick, dashed] (E) -- (B);
\draw [<-, thick, dashed] (B) -- (A);
\draw [<-, thick, dashed] (A) -- (D);

\draw [<->, ultra thick] (7,2.5) to (9,2.5);

\node [draw=none, text=gray] at (15.2,1) {\tiny $i-1$};
\node [draw=none, text=gray] at (15.2,2) {\tiny $i$};
\node [draw=none, text=gray] at (15.2,3) {\tiny $i+1$};
\node [draw=none, text=gray] at (15.2,4) {\tiny $i+2$};

\draw [gray] (9.5,1) -- (14.5,1);
\draw [gray] (9.5,2) -- (14.5,2);
\draw [gray] (9.5,3) -- (14.5,3);
\draw [gray] (9.5,4) -- (14.5,4);

\node (A) at (12,1) {};
\node (B) at (11,2) {};
\node (C) at (13,2) {};
\node (D) at (10,3) {};
\node [fill=BurntOrange] (E) at (12,3) {};
\node (F) at (14,3) {};
\node (G) at (12,4) {};

\draw [->, thick] (D) -- (E);
\draw [->, thick] (E) -- (B);
\draw [->, thick] (B) -- (C);
\draw [->, thick] (C) -- (E);
\draw [->, thick] (E) -- (F);
\draw [->, thick, dashed] (F) -- (G);
\draw [->, thick, dashed] (G) -- (D);
\draw [<-, thick, dashed] (D) -- (B);
\draw [<-, thick, dashed] (B) -- (A);
\draw [<-, thick, dashed] (A) -- (C);
\draw [<-, thick, dashed] (C) -- (F);

\end{tikzpicture}
\caption{Braid move: $(i,i+1,i) \leftrightarrow (i+1,i,i+1)$.}
\label{fig-braid}
\end{figure}
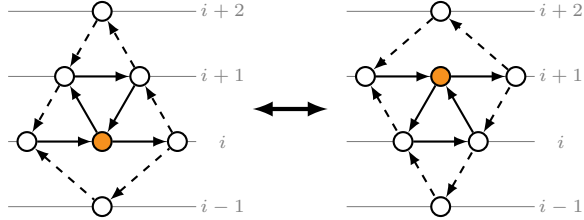

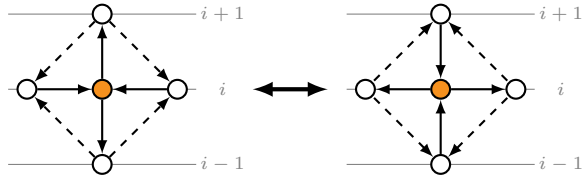
\begin{figure}[b]
\begin{tikzpicture}[every node/.style={inner sep=0, minimum size=0.25cm, circle, draw, thick, fill=white}, x=0.5cm, y=1cm]

\node [draw=none, text=gray] at (6.2,1) {\tiny $i-1$};
\node [draw=none, text=gray] at (6.2,2) {\tiny $i$};
\node [draw=none, text=gray] at (6.2,3) {\tiny $i+1$};

\draw [gray] (0.5,1) -- (5.5,1);
\draw [gray] (0.5,2) -- (5.5,2);
\draw [gray] (0.5,3) -- (5.5,3);

\node (A) at (3,1) {};
\node (B) at (1,2) {};
\node [fill=BurntOrange] (C) at (3,2) {};
\node (D) at (5,2) {};
\node (E) at (3,3) {};

\draw [->, thick] (B) -- (C);
\draw [->, thick] (D) -- (C);
\draw [->, thick] (C) -- (E);
\draw [->, thick] (C) -- (A);
\draw [->, thick, dashed] (A) -- (B);
\draw [->, thick, dashed] (A) -- (D);
\draw [->, thick, dashed] (E) -- (B);
\draw [->, thick, dashed] (E) -- (D);

\draw [<->, ultra thick] (7,2) to (9,2);

\node [draw=none, text=gray] at (15.2,1) {\tiny $i-1$};
\node [draw=none, text=gray] at (15.2,2) {\tiny $i$};
\node [draw=none, text=gray] at (15.2,3) {\tiny $i+1$};

\draw [gray] (9.5,1) -- (14.5,1);
\draw [gray] (9.5,2) -- (14.5,2);
\draw [gray] (9.5,3) -- (14.5,3);

\node (A) at (12,1) {};
\node (B) at (10,2) {};
\node [fill=BurntOrange] (C) at (12,2) {};
\node (D) at (14,2) {};
\node (E) at (12,3) {};

\draw [<-, thick] (B) -- (C);
\draw [<-, thick] (D) -- (C);
\draw [<-, thick] (C) -- (E);
\draw [<-, thick] (C) -- (A);
\draw [<-, thick, dashed] (A) -- (B);
\draw [<-, thick, dashed] (A) -- (D);
\draw [<-, thick, dashed] (E) -- (B);
\draw [<-, thick, dashed] (E) -- (D);

\end{tikzpicture}
\caption{Plus-minus move: $(i,-i) \leftrightarrow (-i,i)$.}
\label{fig-pm}
\end{figure}

Moves~\eqref{braid}, \eqref{shf}, and~\eqref{21} also admit an interpretation in terms of quivers. Namely, each move can be realized via a quiver mutation. Indeed, Figure~\ref{fig-braid} shows a mutation corresponding to the braid move~\eqref{braid} expressed by the formula~\eqref{EEE} with $j=i+1$. Here and in the following two examples we always mutate at an orange vertex. Similarly, Figure~\ref{fig-pm} shows a mutation corresponding to the move~\eqref{shf}, shuffling a letter $i$ through a letter $-i$. In terms of generators of the group $G$, this mutation is described by equation~\eqref{EF}. One can easily check that unless $\alpha + \beta=0$, shuffling of the letters $\alpha$ and $\beta$ give identical quivers. As for the move~\eqref{21}, in what follows we shall only apply it to the letters $\pm1$ or $\pm n$. Figure~\ref{fig-21} shows the cluster mutation for the case $11 \to 1$. Note that the orange vertex in the right quiver in Figure~\ref{fig-21} does not belong to any row. If we now erase that vertex, then the composition of mutation and erasing corresponds to the formula~\eqref{EE} with $i=1$.

\begin{figure}[h]
\begin{tikzpicture}[every node/.style={inner sep=0, minimum size=0.25cm, circle, draw, thick, fill=white}, x=0.5cm, y=0.866cm]

\node [draw=none, text=gray] at (6.2,1) {\tiny $1$};
\node [draw=none, text=gray] at (6.2,2) {\tiny $2$};

\draw [gray] (0.5,1) -- (5.5,1);
\draw [gray] (0.5,2) -- (5.5,2);

\node (A) at (1,1) {};
\node (B) at (3,2) {};
\node [fill=BurntOrange] (C) at (3,1) {};
\node (D) at (5,1) {};

\draw [->, thick] (A) -- (C);
\draw [->, thick] (C) -- (D);
\draw [->, thick, dashed] (D) -- (B);
\draw [->, thick, dashed] (B) -- (A);

\draw [<->, ultra thick] (7,1) to (9,1);

\node [draw=none, text=gray] at (15.2,1) {\tiny $1$};
\node [draw=none, text=gray] at (15.2,2) {\tiny $2$};

\draw [gray] (9.5,1) -- (14.5,1);
\draw [gray] (9.5,2) -- (14.5,2);

\node (A) at (11,1) {};
\node (B) at (12,2) {};
\node [fill=BurntOrange] (C) at (12,0) {};
\node (D) at (13,1) {};

\draw [->, thick] (A) -- (D);
\draw [->, thick] (D) -- (C);
\draw [->, thick] (C) -- (A);
\draw [->, thick, dashed] (D) -- (B);
\draw [->, thick, dashed] (B) -- (A);

\end{tikzpicture}
\caption{2-1 move: $(1,1) \to (1)$.}
\label{fig-21}
\end{figure}

To summarize, each double Bruhat cell $G^{u,v}$ is a cluster Poisson variety, with a cluster chart $\chi_\i$ for any double reduced word $\i$, and cluster mutations correspond to the moves~\eqref{braid} and~\eqref{shf} with $\alpha + \beta=0$.

\subsection{Reduced double Bruhat cells}

Consider the space $G^{u,v}/\Ad H$ of $H$-orbits on the double Bruhat cell $G^{u,v}$ under the conjugation action. We refer to this set as a \emph{reduced double Bruhat cell}. We observe that in each chart $\chi_\i$ on $G^{u,v}$, the ring of $\Ad_H$-invariant functions is generated by the products $x_lx_r$, where $x_l$ is the argument of the left-most $H_i$ and $x_r$ is the argument of the right-most $H_i$, together with the factorization coordinates $x_t$ arising as arguments of $H_i$'s which are neither the left-most nor the right-most. Thus the reduced double Bruhat cell $G^{u,v}/\Ad H$ is also a cluster variety, and its cluster structure is an amalgamation of that of $G^{u,v}$. We denote by $\overline\chi_\i$ the cluster chart on the reduced double Bruhat cell $G^{u,v}/\Ad H$ obtained by amalgamating the cluster chart $\chi_\i$ on $G^{u,v}$.

The quiver $\overline\Qc_\i$ that defines the Poisson brackets between the factorization coordinates on the chart $\overline\chi_\i$ is obtained from the quiver $\Qc_\i$ by amalgamating its left-most and right-most vertices in each row. For example, quiver in figure~\ref{fig-Qbox} coincides with $\overline\Qc_\i$ where $\i = (\i_-, \i_+)$ and
\begin{align}
\label{iw0}
\i_+ &= (4,3,2,1,4,3,2,4,3,4), \\
\label{iw0bar}
\i_- &= (\overline 4, \overline 3, \overline 2, \overline 1, \overline 4, \overline 3, \overline 2, \overline 4, \overline 3, \overline 4),
\end{align}
with $\overline j$ defined in~\eqref{bars}. Similarly, after erasing all of its frozen variables, the $\Dc_4$-quiver in Figure~\ref{fig-D4} coincides with the quiver~$\overline\Qc_\i$ with $\i = (\i_-, \i_-)$ and $\i_-$ as in~\eqref{iw0bar}.

\subsection{Combinatorics of word transformations}
\label{subsec-Weyl-comb}

In the next subsection we describe some quiver mutation sequences that we will use to analyze the cluster realization of $U_q(\mathfrak{sl}_{n+1})$. These mutations arise from certain transformations of double words that were explained to us by M.~Gekhtman. Consider the cyclic shift operators that take a word $\i = \i_1 = (i_1, \dots, i_p)$ into its $k$-th cyclic shift $\i_k = (i_k, \dots, i_p, i_1, \dots, i_{k-1})$. If there exists a cyclic shift $\i_k$ that is a reducible word, we say that the original word $\i$ is \emph{cyclically reducible.}

Recall that a \emph{Coxeter element} $c$ of the Weyl group $W$ is one is given by a reduced word in which each simple reflection appears exactly once.  We shall now describe a particular sequence of cyclic shifts and moves of types~\eqref{braid}~--~\eqref{21} that reduces a word $\ifull$ describing a cluster chart on the big double Bruhat cell $G^{w_0,w_0}$ to the word $\icox$ describing a cluster chart on the Coxeter cell $G^{c,c}$.

Let us introduce several words that we will use frequently in the sequel. First, for any words $\i = (i_1, \dots, i_p)$ in $\mathfrak{A}_+$ and $\mathbf{j} = (j_1, \dots, j_p)$ in $\mathfrak{A}_-$, we set
$$
\overline \i = (\overline i_1, \dots, \overline i_p) \quad\text{and}\quad \underline{\mathbf{j}} = (\underline{j}_1, \dots, \underline{j}_p)
$$
where $\overline i $ and $\underline i$ are defined in~\eqref{bars}. We also define
$$
\i_{[i,j]} =
\begin{cases}
(i,i+1,\dots,j) &\text{if} \quad i \le j, \\
(i,i-1,\dots,j) &\text{if} \quad i \ge j,
\end{cases}
$$
so that, for example, $\overline\i_{[i,j]} = \i_{[\overline j, \overline i]}$. Now, let us consider words
$$
\i_c = \i_{[1,n]}, \qquad \i_{w_0} = \hr{\i_{[n,1]},\i_{[n,2]},\dots,\i_{[n,n]}}
$$
and their conjugates $\overline\i_c$ and $\overline \i_{w_0}$. For $n=4$, the words $\i_{w_0}$ and $\overline \i_{w_0}$ are given by formulas~\eqref{iw0} and~\eqref{iw0bar} respectively.

We now describe word transformations
\begin{align}
\label{phi1}
& \phi_1 \colon \ifull \longmapsto \ihalf, \\
\label{phi2}
& \phi_2 \colon \ihalf \longmapsto \icox, \\
\label{phi3}
& \phi_3 \colon \isf \longmapsto \isym.
\end{align}
where
$$
\ifull = \hr{\overline \i_{w_0}, \i_{w_0}}, \qquad
\ihalf = \hr{\overline \i_{w_0}, \i_c}, \qquad
\icox = \hr{\overline \i_c, \i_c},
$$
and
$$
\isf = \hr{\underline\i_c,\i_c,\overline\i_{w_0}}, \qquad
\isym = \hr{\underline\i_c,\overline\i_{w_0}}.
$$

\subsubsection{The transformation $\phi_1$}

We factor the transformation $\phi_1$ into a composite of $n-1$ ``waves'', where the $j$-th wave consists of $n-j$ steps. After performing the $k$-th step of the $(j+1)$-st wave of $\phi_1$, we shall obtain the word
$$
\ifull^{(j,k)} = \hr{\overline \i_{w_0},1, \dots, j,\i_{[n-k,j+1]},\i_{[n,j+2]},\dots,\i_{[n,n]} }.
$$
Note that our conventions imply
$$
\ifull^{(j-1,n-j)} = \ifull^{(j,0)} \qquad\text{and}\qquad \ifull^{(n-1,0)} = \ihalf.
$$
Now, the $(k+1)$-st step of the $j$-th wave is given as follows. Note that
$$
\ifull^{(j,k)} = \hr{\overline \i_{w_0},1, \dots, j,n-k,\i_{[n-k-1,j+1]},\i_{[n,j+2]},\dots,\i_{[n,n]} }.
$$
First, we commute the letter $n-k$ all the way to the left through the word $\overline \i_{w_0}$. In particular, we apply exactly $k+1$ shufflings of the form $(k-n,n-k) \mapsto (n-k,k-n)$. Then, we perform the cyclic shift that erases the letter $n-k$ on the very left and writes it on the far right, to obtain
\beq
\label{temp1}
\hr{\overline \i_{w_0},1, \dots, j,\i_{[n-k-1,j+1]},\i_{[n,j+2]},\dots,\i_{[n,n]},n-k}.
\eeq
The part of the latter word which reads $\hr{ \i_{[n,j+2]},\dots,\i_{[n,n]},n-k }$ can be rewritten as follows:
\begin{multline}
\hr{\i_{[n,j+2]},\dots,\i_{[n,n+1-k]},\i_{[n,n+2-k]},\dots,\i_{[n,n]},n-k} = \\
\label{temp2}
\hr{\i_{[n,j+2]},\dots,\i_{[n,n+1-k]},n-k,\i_{[n,n+2-k]},\dots,\i_{[n,n]}}
\end{multline}
We shall now restrict our attention to the subword
\beq
\label{temp3}
\hr{\i_{[n,n-k]},\i_{[n,n+1-k]},n-k}.
\eeq
We rewrite it as
$$
\hr{\i_{[n,n+1-k]},\i_{[n,n+2-k]},n-k,n+1-k,n-k},
$$
and then apply a braid move to get
$$
\hr{\i_{[n,n+1-k]},\i_{[n,n+2-k]},n+1-k,n-k,n+1-k}.
$$
The latter is a particular case of the word
$$
\hr{\i_{[n,s]},\i_{[n,s+1]},s,\i_{[s-1,t-1]},\i_{[s,t]}}
$$
for $s=t=n+1-k$. We can once again rewrite it as
$$
\hr{\i_{[n,s+1]},\i_{[n,s+2]},s,s+1,s,\i_{[s-1,t-1]},\i_{[s,t]}}
$$
and use a braid move to get
$$
\hr{\i_{[n,s+1]},\i_{[n,s+2]},s+1,s,s+1,\i_{[s-1,t-1]},\i_{[s,t]}} = \hr{\i_{[n,s+1]},\i_{[n,s+2]},s+1,\i_{[s,t-1]},\i_{[s+1,t]}}.
$$
Proceeding by induction on $s$ we arrive at
$$
\hr{n,n-1,n,n-1,\i_{[n-2,t-1]},\i_{[n-1,t]}},
$$
use a braid move once again, to get
$$
\hr{n,n,n-1,n,\i_{[n-2,t-1]},\i_{[n-1,t]}} = \hr{n,n,\i_{[n-1,t-1]},\i_{[n,t]}},
$$
and finally, apply 2-1-move $(n,n) \mapsto n$ to obtain
$$
\hr{n,\i_{[n-1,t-1]},\i_{[n,t]}} = \hr{\i_{[n,t-1]},\i_{[n,t]}}.
$$
Recall that $t = n+1-k$ and thus the word~\eqref{temp3} has been reduced to $\hr{\i_{[n,n-k]},\i_{[n,n+1-k]}}$. Re-inserting it into the word~\eqref{temp2} and then~\eqref{temp1} we arrive at
$$
\ifull^{(j,k+1)} = \hr{\overline \i_{w_0},1, \dots, j,\i_{[n-k-1,j+1]},\i_{[n,j+2]},\dots,\i_{[n,n]} }.
$$
To summarize, the $(k+1)$-st step in each wave consists of
\begin{itemize}
\item $k+1$ shufflings $(k-n,n-k) \mapsto (n-k,k-n)$;
\item a cyclic shift $(n-k,\i) \mapsto (\i,n-k)$;
\item $k$ braid moves: $(n-s,n+1-s,n-s) \mapsto (n+1-s,n-s,n+1-s)$, $s=k,\dots,2,1$;
\item a 2-1-move $(n,n) \mapsto n$.
\end{itemize}

\begin{example}
Let us spell out $\phi_1 \colon \ifull \longmapsto \ihalf$ in case $n=4$. Since the $(k+1)$-st step of each wave starts with shuffling the letter $n-k$ through $\overline \i_{w_0}$ we will indicate the number of moves $(k-n,n-k) \mapsto (n-k,k-n)$ by $\stackrel{k+1}\longmapsto$. Now, the first wave reads
\begin{align*}
\text{Step 1:} &&
(\overline \i_{w_0},4,3,2,1,4,3,2,4,3,4) &\stackrel{1}\longmapsto (4,\overline \i_{w_0},3,2,1,4,3,2,4,3,4) \\
&& \longmapsto (\overline \i_{w_0},3,2,1,4,3,2,4,3,4,4) &\longmapsto (\overline \i_{w_0},3,2,1,4,3,2,4,3,4) \\
\text{Step 2:} &&
(\overline \i_{w_0},3,2,1,4,3,2,4,3,4) &\stackrel{2}\longmapsto (3,\overline \i_{w_0},2,1,4,3,2,4,3,4) \\
&& \longmapsto (\overline \i_{w_0},2,1,4,3,2,4,3,4,3) &\longmapsto (\overline \i_{w_0},2,1,4,3,2,4,4,3,4) \\
&& &\longmapsto (\overline \i_{w_0},2,1,4,3,2,4,3,4) \\
\text{Step 3:} &&
(\overline \i_{w_0},2,1,4,3,2,4,3,4) &\stackrel{3}\longmapsto (2,\overline \i_{w_0},1,4,3,2,4,3,4) \\
&& \longmapsto (\overline \i_{w_0},1,4,3,2,4,3,4,2) &\longmapsto (\overline \i_{w_0},1,4,3,4,3,2,3,4) \\
&& \longmapsto (\overline \i_{w_0},1,4,4,3,4,2,3,4) &\longmapsto (\overline \i_{w_0},1,4,3,4,2,3,4)
\end{align*}
The second wave becomes
\begin{align*}
\text{Step 1:} &&
(\overline \i_{w_0},1,4,3,4,2,3,4) &\stackrel{1}\longmapsto (4,\overline \i_{w_0},1,3,4,2,3,4) \\
&& \longmapsto (\overline \i_{w_0},1,3,4,2,3,4,4) &\longmapsto (\overline \i_{w_0},1,3,4,2,3,4) \\
\text{Step 2:} &&
(\overline \i_{w_0},1,3,4,2,3,4) &\stackrel{2}\longmapsto (3,\overline \i_{w_0},1,4,2,3,4) \\
&& \longmapsto (\overline \i_{w_0},1,4,2,3,4,3) &\longmapsto (\overline \i_{w_0},1,4,2,4,3,4) \\
&& &\longmapsto (\overline \i_{w_0},1,4,2,3,4)
\end{align*}
And finally, the third wave is
\begin{align*}
\text{Step 1:} &&
(\overline \i_{w_0},1,4,2,3,4) &\stackrel{1}\longmapsto (4,\overline \i_{w_0},1,2,3,4) \\
&&\longmapsto (\overline \i_{w_0},1,2,3,4,4) &\longmapsto (\overline \i_{w_0},1,2,3,4).
\end{align*}
\end{example}

\subsubsection{The transformation $\phi_2$}

Similarly to $\phi_1$, we break the transformation $\phi_2$ into $n-1$ waves, such that the $j$-th wave consists of $n-j$ steps, although in $\phi_2$ we also include a preliminary and a concluding step. The preliminary one is a cyclic shift that changes $\ihalf = (\overline\i_{w_0},\i_c)$ into $\ihalf^{(0,0)} = (\i_c,\overline\i_{w_0})$. Now, after the $k$-th step of the $(j+1)$-st wave of the transformation $\phi_2$, we shall obtain the word
$$
\ihalf^{(j,k)} = \hr{\i_c, \overline 1, \dots, \overline j, \overline \i_{[n-k,j+1]}, \overline \i_{[n,j+2]},\dots,\overline \i_{[n,n]} }.
$$
The steps of $\phi_2$ are similar to the ones of $\phi_1$ with the following minor alterations: letters $i \in \mathfrak U_+$ are replaced with $\overline i \in \mathfrak A_-$, and $\overline\i_{w_0}$ is replaced with $\i_c$. The latter also implies that it takes just one shuffling to move the letter $\overline i$ through $\i_c$ for any $i = 1, \dots, n$. Thus, the $(k+1)$-st step in each wave of $\phi_2$ consists of
\begin{itemize}
\item a shuffling $(k,-k) \mapsto (-k,k)$;
\item a cyclic shift $(-k,\i) \mapsto (\i,-k)$;
\item $k$ braid moves: $(-s-1,-s,-s-1) \mapsto (-s,-s-1,-s)$, $s=k,\dots,2,1$;
\item a 2-1-move $(-1,-1) \mapsto -1$.
\end{itemize}
Finally, we arrive at the word $\ihalf^{(n-1,0)} = (\i_c,\overline\i_c)$. The concluding step is simply one more cyclic shift after which we obtain $\icox$.

\subsubsection{The transformation $\phi_3$} We break the transformation $\phi_3$ into a sequence of $n$ waves where the $j$-th wave consists of $j$ steps. After the $j$-th wave we shall turn the word $\isf^{(0,0)} = \isf$ into
$$
\isf^{(j,0)} = \hr{\underline\i_c,1,2,\dots,n-j,\overline\i_{w_0}}.
$$
Similarly, after the $k$-th step of the $j$-th wave, here $1 \le k \le j$, we obtain
$$
\isf^{(j,k)} = \hr{\underline\i_c,1,2,\dots,n-j-1,n-j+k,\overline\i_{w_0}}
$$
Note that our notations imply
$$
\isf^{(n,0)} = \isym.
$$
The $(k+1)$-st step of the $j$-th wave looks as follows. Setting $s = n-j+k$, we first apply $n+1-s$ shufflings to move the letter $s$ all the way to the right through $\overline\i_{w_0}$. Next, we use a cyclic shift to put $s$ on the far left. Then, if~$k \ne j$ we apply the braid move $(s,s+1,s) \mapsto (s+1,s,s+1)$ and arrive at $\isf^{(j,k+1)}$, whereas if $k = j$ we apply the 2-1-move $(n,n) \mapsto n$ and arrive at $\isf^{(j+1,0)}$.

\begin{example}
Let us spell out the transformation $\phi_3$ for $n=2$. We have
$$
\isf = (2,1,1,2,\overline\i_{w_0}) \qquad\text{where}\qquad \overline\i_{w_0} = (-1,-2,-1)
$$
and the first wave becomes
\begin{align*}
\text{Step 1:} &&
(2,1,1,2,\overline\i_{w_0}) &\stackrel{1}\longmapsto (2,1,1,\overline\i_{w_0},2) \\
&& \longmapsto (2,2,1,1,\overline\i_{w_0}) &\longmapsto (2,1,1,\overline\i_{w_0}).
\end{align*}
The second wave reads as
\begin{align*}
\text{Step 1:} &&
(2,1,1,\overline\i_{w_0}) &\stackrel{2}\longmapsto (2,1,\overline\i_{w_0},1) \\
&& \longmapsto (1,2,1,\overline\i_{w_0}) &\longmapsto (2,1,2,\overline\i_{w_0}) \\
\text{Step 2:} &&
(2,1,2,\overline\i_{w_0}) &\stackrel{1}\longmapsto (2,1,\overline\i_{w_0},2) \\
&&\longmapsto (2,2,1,\overline\i_{w_0}) &\longmapsto (2,1,\overline\i_{w_0}).
\end{align*}
\end{example}

\subsection{Quiver mutations}
\label{subsec-quiver-mut}

Now we interpret the transformations~\eqref{phi1}~--~\eqref{phi2} in terms of quivers. By an abuse of notation, we denote transformations of quivers by the same symbols as the transformations of the double words. We have already seen how braid moves, shufflings, and 2-1-moves correspond to quiver mutations. Moreover, the cyclic quiver $\overline\Qc_\i$ evidently does not change under cyclic shifts of the word $\i$.

\vspace{-1pt}

\begin{figure}[h]
\begin{tikzpicture}[every node/.style={inner sep=0, minimum size=0.5cm, thick, draw, circle}, x=0.75cm, y=0.75cm]

\node (B1) at (0,2) {\tiny$X_1^1$};
\node (B2) at (2,2) {\tiny$X_2^1$};
\node (B3) at (4,2) {\tiny$X_3^1$};
\node (B4) at (6,2) {\tiny$X_4^1$};
\node (B5) at (8,2) {\tiny$X_5^1$};

\node (C5) at (0,4) {\tiny$X_1^2$};
\node (C1) at (2,4) {\tiny$X_2^2$};
\node (C2) at (4,4) {\tiny$X_3^2$};
\node (C3) at (6,4) {\tiny$X_4^2$};
\node (C4) at (8,4) {\tiny$X_5^2$};

\node (D4) at (0,6) {\tiny$X_1^3$};
\node (D5) at (2,6) {\tiny$X_2^3$};
\node (D1) at (4,6) {\tiny$X_3^3$};
\node (D2) at (6,6) {\tiny$X_4^3$};
\node (D3) at (8,6) {\tiny$X_5^3$};

\node (E3) at (0,8) {\tiny$X_1^4$};
\node (E4) at (2,8) {\tiny$X_2^4$};
\node (E5) at (4,8) {\tiny$X_3^4$};
\node (E1) at (6,8) {\tiny$X_4^4$};
\node (E2) at (8,8) {\tiny$X_5^4$};

\draw [<-, thick] (B1) to (B2);
\draw [<-, thick] (B2) to (B3);
\draw [<-, thick] (B3) to (B4);
\draw [<-, thick] (B4) to (B5);
\draw [->, thick] (B5) to [bend right = 15] (B1);
\draw [<-, thick] (C1) to (C2);
\draw [<-, thick] (C2) to (C3);
\draw [<-, thick] (C3) to (C4);
\draw [->, thick] (C4) to [bend right = 15] (C5);
\draw [->, thick] (C5) to (C1);
\draw [<-, thick] (D1) to (D2);
\draw [<-, thick] (D2) to (D3);
\draw [->, thick] (D3) to [bend right = 15] (D4);
\draw [->, thick] (D4) to (D5);
\draw [->, thick] (D5) to (D1);
\draw [<-, thick] (E1) to (E2);
\draw [->, thick] (E2) to [bend right = 15] (E3);
\draw [->, thick] (E3) to (E4);
\draw [->, thick] (E4) to (E5);
\draw [->, thick] (E5) to (E1);

\draw [->, thick] (B2) to (C2);
\draw [->, thick] (C2) to (D2);
\draw [->, thick] (D2) to (E2);
\draw [->, thick] (B3) to (C3);
\draw [->, thick] (C3) to (D3);
\draw [->, thick] (E3) to [out=-30, in=150] (D3);
\draw [->, thick] (B4) to (C4);
\draw [->, thick] (D4) to [out=-30, in=150] (C4);
\draw [<-, thick] (D4) to (E4);
\draw [->, thick] (C5) to [out=-30, in=150] (B5);
\draw [<-, thick] (C5) to (D5);
\draw [<-, thick] (D5) to (E5);

\draw [->, thick] (C1) to (B2);
\draw [->, thick] (D1) to (C2);
\draw [->, thick] (C2) to (B3);
\draw [->, thick] (E1) to (D2);
\draw [->, thick] (D2) to (C3);
\draw [->, thick] (C3) to (B4);
\draw [->, thick] (C5) to (D4);
\draw [->, thick] (D4) to (E3);
\draw [->, thick] (D5) to (E4);

\draw [->, thick] (B1) to (C5);
\draw [->, thick] (C1) to (D5);
\draw [->, thick] (D1) to (E5);

\end{tikzpicture}
\caption{Quiver $\Qc_4^{\mathrm{box}}$.}
\label{fig-Qbox}
\end{figure}

\vspace{-1pt}

\subsubsection{Quiver mutations for $\phi_1$}
\label{subsec-muts-double1}

We shall denote the cyclic quiver for the word $\ifull$ by $\Qc_n^{\mathrm{box}}$; the quiver $\Qc_4^{\mathrm{box}}$ is shown in Figure~\ref{fig-Qbox}. Note, that the nodes of $\Qc_n^{\mathrm{box}}$ are arranged into a square grid with $n$ rows and $n+1$ columns. After each step of $\phi_1$ we shall rearrange the nodes and rename them so that the node in position $(c,r)$ is denoted by $X^r_c$. Now, the $(k+1)$-st step of the $j$-th wave of $\phi_1$ looks as follows:
\begin{itemize}
\item mutate consecutively at vertices $X_{n+1}^{n-k}$, $X_n^{n-k}$, \dots, $X_{n+1-k}^{n-k}$;
\item mutate consecutively at vertices $X_{n-k}^{n-k}$, $X_{n-k}^{n+1-k}$, \dots, $X_{n-k}^n$;
\item shift the vertex $X_{n-k}^n$ to the position $(n-k,2n-j)$;
\item shift vertices $X_{n-k}^r$ to the position $(n-k,r+1)$ for $n-k \le r \le n-1$;
\item shift vertices $X_c^{n-k}$ to the position $(c-1,n-k)$ for $n+1-k \le c \le n+1$;
\item shift the vertex $X_j^{n-k}$ to the position $(n+1,n-k)$;
\item rename vertices according to their new position.
\end{itemize}

\begin{figure}[h]
\begin{tikzpicture}[every node/.style={inner sep=0, minimum size=0.5cm, thick, draw, circle}, x=0.75cm, y=0.5cm]

\node (1) at (-4,2) {\tiny{$X_1^1$}};
\node (2) at (-2,2) {\tiny{$X_2^1$}};
\node (3) at (0,2) {\tiny{$X_3^1$}};
\node (4) at (2,2) {\tiny{$X_4^1$}};
\node (5) at (4,2) {\tiny{$X_5^1$}};
\node (6) at (-3,4) {\tiny{$X_2^2$}};
\node (7) at (-1,4) {\tiny{$X_3^2$}};
\node (8) at (1,4) {\tiny{$X_4^2$}};
\node (9) at (3,4) {\tiny{$X_5^2$}};
\node (10) at (-2,6) {\tiny{$X_3^3$}};
\node (11) at (0,6) {\tiny{$X_4^3$}};
\node (12) at (2,6) {\tiny{$X_5^3$}};
\node (13) at (-1,8) {\tiny{$X_4^4$}};
\node (14) at (1,8) {\tiny{$X_5^4$}};
\node (15) at (0,10) {\tiny{$X_4^5$}};
\node (16) at (-1,12) {\tiny{$X_3^6$}};
\node (17) at (1,12) {\tiny{$X_4^6$}};
\node (18) at (-2,14) {\tiny{$X_2^7$}};
\node (19) at (0,14) {\tiny{$X_3^7$}};
\node (20) at (2,14) {\tiny{$X_4^7$}};

\draw [->, thick] (13) -- (15);
\draw [->, thick] (15) to [out = -60, in=120] (14);
\draw [->, thick] (14) -- (13);
\draw [->, thick] (13) -- (11);
\draw [->, thick] (11) -- (14);
\draw [->, thick] (14) -- (12);
\draw [->, thick] (12) -- (11);
\draw [->, thick] (11) -- (10);
\draw [->, thick] (10) -- (7);
\draw [->, thick] (7) -- (11);
\draw [->, thick] (11) -- (8);
\draw [->, thick] (8) -- (12);
\draw [->, thick] (12) -- (9);
\draw [->, thick] (9) -- (8);
\draw [->, thick] (8) -- (7);
\draw [->, thick] (7) -- (6);
\draw [->, thick] (6) -- (2);
\draw [->, thick] (2) -- (7);
\draw [->, thick] (7) -- (3);
\draw [->, thick] (3) -- (8);
\draw [->, thick] (8) -- (4);
\draw [->, thick] (4) -- (9);
\draw [->, thick] (9) -- (5);
\draw [->, thick] (5) -- (4);
\draw [->, thick] (4) -- (3);
\draw [->, thick] (3) -- (2);
\draw [->, thick] (2) -- (1);

\draw [->, thick] (5) to [bend right = 15] (1);
\draw [->, thick] (1) to [out = 30, in=-160] (9);
\draw [->, thick] (9) to [bend right = 18] (6);
\draw [->, thick] (6) to [out = 35, in=-155] (12);
\draw [->, thick] (12) to [bend right = 21] (10);
\draw [->, thick] (10) to [out = 40, in=-150] (14);
\draw [->, thick] (14) to [bend right = 24] (13);

\draw [->, thick] (18) -- (19);
\draw [->, thick] (19) -- (20);
\draw [->, thick] (20) -- (17);
\draw [->, thick] (17) -- (19);
\draw [->, thick] (19) -- (16);
\draw [->, thick] (16) -- (17);
\draw [->, thick] (17) -- (15);

\draw [->, thick] (15) to [out = 120, in=-60] (16);
\draw [->, thick] (16) to [bend left = 24] (17);
\draw [->, thick] (17) to [out = 140, in = -40] (18);
\draw [->, thick] (18) to [bend left = 21] (20);

\end{tikzpicture}
\caption{Quiver $\Qc_4^{\mathrm{cone}}$.}
\label{fig-Qcone}
\end{figure}

After applying all $n-1$ waves, we arrive at the quiver $\Qc_n^{\mathrm{cone}}$. We show the quiver $\Qc_4^{\mathrm{cone}}$ in Figure~\ref{fig-Qcone}, in which we have shifted all vertices in the $r$-th row to the left by $\frac{r-1}{2}$ if $1 \le r \le n$ and by $\frac{2n-1-r}{2}$ if $n \le r \le 2n-1$. 

Let us denote by $\Qc_n^{\overline w_0, c}$ the cyclic quiver corresponding to the word $\ihalf$. Note that the full subquiver formed by the bottom $n$ rows of $\Qc_n^{\mathrm{cone}}$ coincides with the cyclic quiver $\Qc_n^{\overline w_0, c}$. At the same time, the full subquiver formed by the top $n-2$ rows of $\Qhalf$ is the cyclic quiver corresponding to the word $\hr{\underline{\i}_{\mathrm{cox}}, \overline\i_{w_0}}$ where both $\icox$ and $\i_{w_0}$ are written in an alphabet with $n-2$ letters. We denote this quiver by $\Qc_{n-2}^{\underline c,\overline w_0}$. 
%Finally, the only vertex of $\Qhalf$ left out is $X_n^{n+1}$. 

\begin{figure}[h]
\begin{tikzpicture}[every node/.style={inner sep=0, minimum size=0.5cm, thick, draw, circle}, x=0.75cm, y=0.75cm]

\node (1) at (2,2) {\tiny{$Y_2^1$}};
\node (2) at (4,2) {\tiny{$Y_3^1$}};
\node (3) at (6,2) {\tiny{$Y_4^1$}};
\node (4) at (8,2) {\tiny{$Y_5^1$}};
\node (5) at (0,2) {\tiny{$Y_1^1$}};
\node (6) at (4,4) {\tiny{$Y_3^2$}};
\node (7) at (6,4) {\tiny{$Y_4^2$}};
\node (8) at (8,4) {\tiny{$Y_5^2$}};
\node (9) at (2,4) {\tiny{$Y_2^2$}};
\node (10) at (6,6) {\tiny{$Y_4^3$}};
\node (11) at (8,6) {\tiny{$Y_5^3$}};
\node (12) at (4,6) {\tiny{$Y_3^3$}};
\node (13) at (8,8) {\tiny{$Y_5^4$}};
\node (14) at (6,8) {\tiny{$Y_4^4$}};

\draw [->, thick] (14) to [bend left = 24] (13);
\draw [->, thick] (13) -- (11);
\draw [->, thick] (11) to [out=120, in=-45] (14);
\draw [->, thick] (14) -- (12);
\draw [->, thick] (12) to [bend left = 21] (11);
\draw [->, thick] (11) -- (10);
\draw [->, thick] (10) -- (7);
\draw [->, thick] (7) -- (11);
\draw [->, thick] (11) -- (8);
\draw [->, thick] (8) to [out=125, in=-45] (12);
\draw [->, thick] (12) -- (9);
\draw [->, thick] (9) to [bend left = 18] (8);
\draw [->, thick] (8) -- (7);
\draw [->, thick] (7) -- (6);
\draw [->, thick] (6) -- (2);
\draw [->, thick] (2) -- (7);
\draw [->, thick] (7) -- (3);
\draw [->, thick] (3) -- (8);
\draw [->, thick] (8) -- (4);
\draw [->, thick] (4) to [out=130, in=-45] (9);
\draw [->, thick] (9) -- (5);
\draw [->, thick] (5) to [bend left = 15] (4);
\draw [->, thick] (4) -- (3);
\draw [->, thick] (3) -- (2);
\draw [->, thick] (2) -- (1);

\draw [->, thick] (5) -- (1);
\draw [->, thick] (1) -- (9);
\draw [->, thick] (9) -- (6);
\draw [->, thick] (6) -- (12);
\draw [->, thick] (12) -- (10);
\draw [->, thick] (10) -- (14);
\draw [->, thick] (14) -- (13);

\end{tikzpicture}
\caption{Quiver $\Qc_4^{\overline w_0,c}$.}
\label{fig-Q0c}
\end{figure}

\subsubsection{Quiver mutations for $\phi_2$}
\label{subsec-muts-double2}

We now apply $\phi_2$ to the quiver $\Qc_n^{\overline w_0,c}$, or equivalently to the first $n$ rows of~$\Qc_n^{\mathrm{cone}}$. Let us arrange the vertices in the bottom $n$ rows of $\Qc_n^{\mathrm{cone}}$ as in Figure~\ref{fig-Q0c}, where $Y_i^i = X_{n+1}^i$ for $1 \le i \le n$, and $Y_{j+1}^i = X_j^i$ for $1 \le i \le j \le n$.
 After each step of $\phi_2$ we shall rearrange the vertices and rename them, so that the vertex in position $(c,r)$ is denoted by $Y^r_c$. The $k$-th step of the $j$-th wave now reads as follows:
\begin{itemize}
\item mutate at vertex $Y_k^k$;
\item mutate consecutively at vertices $Y_{k+1}^k$, $Y_{k+1}^{k-1}$, \dots, $Y_{k+1}^1$;
\item shift the vertex $Y_{k+1}^1$ to the position $(k+1,j+1-n)$;
\item shift vertices $Y_{k+1}^r$ to the position $(k+1,r-1)$ for $2 \le r \le k$;
\item shift the vertex $Y_k^k$ to the position $(k+1,k)$;
\item shift the vertex $Y_{n+2-j}^k$ to the position $(k,k)$;
\item rename vertices according to their new position.
\end{itemize}

\begin{figure}[h]
\begin{tikzpicture}[every node/.style={inner sep=0, minimum size=0.5cm, thick, draw, circle}, x=0.75cm, y=0.5cm]

\node (1) at (-2,0) {\tiny{$Y_1^1$}};
\node (2) at (0,0) {\tiny{$Y_2^1$}};
\node (3) at (2,0) {\tiny{$Y_3^1$}};
\node (4) at (-1,2) {\tiny{$Y_2^2$}};
\node (5) at (1,2) {\tiny{$Y_3^2$}};
\node (6) at (0,4) {\tiny{$Y_3^3$}};
\node (7) at (-1,6) {\tiny{$C_1^1$}};
\node (8) at (1,6) {\tiny{$C_2^1$}};
\node (9) at (-1,8) {\tiny{$C_1^2$}};
\node (10) at (1,8) {\tiny{$C_2^2$}};
\node (11) at (-1,10) {\tiny{$C_1^3$}};
\node (12) at (1,10) {\tiny{$C_2^3$}};
\node (13) at (-1,12) {\tiny{$C_1^4$}};
\node (14) at (1,12) {\tiny{$C_2^4$}};
\node (15) at (0,14) {\tiny{$X_1^1$}};
\node (16) at (-1,16) {\tiny{$X_1^2$}};
\node (17) at (1,16) {\tiny{$X_2^2$}};
\node (18) at (-2,18) {\tiny{$X_1^3$}};
\node (19) at (0,18) {\tiny{$X_2^3$}};
\node (20) at (2,18) {\tiny{$X_3^3$}};

\draw [->, thick] (3) -- (2);
\draw [->, thick] (2) -- (1);
\draw [->, thick] (1) -- (4);
\draw [->, thick] (4) -- (2);
\draw [->, thick] (2) -- (5);
\draw [->, thick] (5) -- (4);
\draw [->, thick] (4) -- (6);

\draw [->, thick] (6) to [out = -60, in=120] (5);
\draw [->, thick] (5) to [bend right = 21] (4);
\draw [->, thick] (4) to [out = -30, in=135] (3);
\draw [->, thick] (3) to [bend right = 21] (1);

\draw [->, thick] (8) -- (6);
\draw [->, thick] (6) to [out = 120, in=-60] (7);
\draw [->, thick] (7) -- (8);
\draw [->, thick] (8) -- (9);
\draw [->, thick] (9) -- (10);
\draw [->, thick] (10) -- (11);
\draw [->, thick] (11) -- (12);
\draw [->, thick] (12) -- (13);
\draw [->, thick] (13) -- (14);
\draw [->, thick] (14) -- (15);
\draw [->, thick] (15) -- (13);
\draw [->, thick] (13) to [bend left = 21] (14);
\draw [->, thick] (14) to [out = -135, in=45] (11);
\draw [->, thick] (11) to [bend left = 21] (12);
\draw [->, thick] (12) to [out = -135, in=45] (9);
\draw [->, thick] (9) to [bend left = 21] (10);
\draw [->, thick] (10) to [out = -135, in=45] (7);
\draw [->, thick] (7) to [bend left = 21] (8);

\draw [->, thick] (18) -- (19);
\draw [->, thick] (19) -- (20);
\draw [->, thick] (20) -- (17);
\draw [->, thick] (17) -- (19);
\draw [->, thick] (19) -- (16);
\draw [->, thick] (16) -- (17);
\draw [->, thick] (17) -- (15);

\draw [->, thick] (15) to [out = 120, in=-60] (16);
\draw [->, thick] (16) to [bend left = 21] (17);
\draw [->, thick] (17) to [out = 140, in=-40] (18);
\draw [->, thick] (18) to [bend left = 21] (20);

\end{tikzpicture}
\caption{Quiver $\Qc_4^{\mathrm{candy}}$.}
\label{fig-Qcandy}
\end{figure}

Finally, we arrive at the quiver $\Qc_n^{\mathrm{candy}}$, which is shown in Figure~\ref{fig-Qcandy} for $n=4$. As before, we applied horizontal shifts to each row of $\Qc_n^{\mathrm{candy}}$ and relabelled its vertices: the top and bottom $n-1$ rows consist of vertices $X_i^j$ and $Y_j^i$ with $1 \le i \le j \le n-1$ respectively, vertices in the middle $n$ rows are labelled $C_1^i$ and $C_2^i$, $1 \le i \le n$. The top and bottom $n-2$ rows of the quiver $\Qc_n^{\mathrm{candy}}$ form identical quivers $\Qc_{n-2}^{\underline c,\overline w_0}$, the identification is given by $X_i^j \to Y_{n-i}^{n-j}$. Note that the middle $n$ rows form a cyclic quiver corresponding to the word $\icox$.

\subsubsection{Quiver mutations for $\phi_3$}
\label{subsec-muts-single}

Let us consider factorization coordinates defined by the word $\i = \overline\i_{w_0}$ on the reduced double Bruhat cell $G^{w_0,e}/\Ad H$, and let $\overline\Qc_\i$ be the corresponding cyclic quiver. Recall that the quiver $\overline\Qc_\i$ is obtained from the non-cyclic quiver $\Qc_\i$ by amalgamating the first and the last vertex in each row. Therefore, $\overline\Qc_\i$ has a detached vertex, the one that correspond to the product of the arguments $x_n x_{s_n}$ of the two $H_n$'s in the decomposition
$$
H_1(x_1) \dots H_n(x_n)\prod_{i=1}^n\hr{F_iH_i(x_{s_i+1}) \dots F_1H_1(x_{s_i+i})} \qquad\text{where}\qquad s_i = n+i(i-1)/2.
$$
However, we shall use slight modifications of the corresponding cluster chart and the quiver. Namely, we shall replace the product $x_nx_{s_n}$ by $x_nx_{s_{n-1}}x_{s_n}$, that is multiply it additionally by the argument of the second $H_{n-1}$ in the above product. We shall denote the resulting quiver by $\Qc_n^{\overline w_0}$; it differs from $\overline\Qc_\i$ only in that the vertex under discussion is no longer detached but has exactly the same arrows as the one corresponding to the factorization coordinate~$x_{s_{n-1}}$. The quiver $\Qc_4^{\overline w_0}$ is shown in Figure~\ref{fig-Q0}, and the vertex of interest is labelled $X_4^4$ there.

\begin{figure}[h]
\begin{tikzpicture}[every node/.style={inner sep=0, minimum size=0.5cm, thick, draw, circle}, x=1.5cm, y=1.5cm]

\node (1) at (1,1) {\tiny $X_1^1$};
\node (2) at (2,1) {\tiny $X_2^1$};
\node (3) at (3,1) {\tiny $X_3^1$};
\node (4) at (4,1) {\tiny $X_4^1$};
\node (5) at (2,2) {\tiny $X_2^2$};
\node (6) at (3,2) {\tiny $X_3^2$};
\node (7) at (4,2) {\tiny $X_4^2$};
\node (8) at (3,3) {\tiny $X_3^3$};
\node (9) at (4,3) {\tiny $X_4^3$};
\node (10) at (4,4) {\tiny $X_4^4$};

\draw [->,thick] (4) -- (3);
\draw [->,thick] (3) -- (2);
\draw [->,thick] (2) -- (1);
\draw [->,thick] (7) -- (6);
\draw [->,thick] (6) to (5);

\draw [->,thick] (5) -- (2);
\draw [->,thick] (8) -- (6);
\draw [->,thick] (6) -- (3);
\draw [->,thick] (9) -- (7);
\draw [->,thick] (7) -- (4);

\draw [->,thick] (2) -- (6);
\draw [->,thick] (6) -- (9);
\draw [->,thick] (3) -- (7);

\draw [->,thick] (1) to [bend left = 25] (4);
\draw [->,thick] (4) to [out = 120, in =-45] (5);
\draw [->,thick] (5) to [bend left = 30] (7);
\draw [->,thick] (7) to [out = 120, in = -60] (8);

\draw [->,thick] (6) to [out = 70, in = -110] (10);
\draw [->,thick] (10) to [bend left = 30] (7);

\end{tikzpicture}
\caption{Quiver $\Qc_4^{\overline w_0}$.}
\label{fig-Q0}
\end{figure}

We are now ready to describe the quiver transformation $\phi_3$. We break it into a preliminary step followed by $n-2$ waves, where $j$-th wave consists of $j$ steps. In turn, there are $k+1$ mutations involved in the $k$-th step of the $j$-th wave. At the end of each step we as usual, shift the vertices and relabel them according to their position, so that vertex $X_c^r$ has coordinates $(c,r)$. The preliminary step reads as follows:
\begin{itemize}
\item mutate at vertices $X_i^i$ for $i = 1, \dots, n-1$;
\item move vertex $X_{n-1}^{n-1}$ to the position $(n,n+1)$ and relabel it accordingly.
\end{itemize}
\begin{remark}
Note that after applying this preliminary step, the full subquiver formed by the bottom $n-2$ rows is the cyclic quiver corresponding to the double word $\isf = \hr{\underline\i_c,\i_c,\overline\i_{w_0}}$, where $\i_{w_0}$, $\i_c$, and $\underline\i_c$ are all written in the smaller alphabet $\{\pm1,\ldots,\pm(n-2)\}$. 
This observation suggests an existence of a non-trivial Poisson morphism between the reduced double Bruhat cells $PGL_{n+1}^{w_0,e}/\Ad H$ and $PGL_{n-1}^{w_0,c}/\Ad_H$. We do not investigate this subject here, leaving it for a separate publication.
\end{remark}

Now the remaining part of the transformation $\phi_3$ consists of $n-2$ waves, and the $k$-th step of the $j$-th wave is as follows:
\begin{itemize}
\item mutate consecutively at vertices $X_{n-1-j+k}^{n-2-j+k}, \dots, X_n^{n-2-j+k}$;
\item move the vertex $X_n^{n-2-j+k}$ to the position $(n-1-j+k,n-1-j+k)$ if $k \ne j$ and to the position $(n,n+1+j)$ if $k=j$;
\item move vertex $X_c^{n-2-j+k}$ to the position $(c+1,n-2-j+k)$ for $n-2-j+k \le c \le n-1$;
\item relabel vertices according to their new position.
\end{itemize}

\begin{figure}[h]
\begin{tikzpicture}[every node/.style={inner sep=0, minimum size=0.5cm, thick, draw, circle}, x=0.75cm, y=0.5cm]

\node (1) at (-2,0) {\tiny{$X_1^1$}};
\node (2) at (0,0) {\tiny{$X_2^1$}};
\node (3) at (2,0) {\tiny{$X_3^1$}};
\node (4) at (-1,2) {\tiny{$X_2^2$}};
\node (5) at (1,2) {\tiny{$X_3^2$}};
\node (6) at (0,4) {\tiny{$X_3^3$}};
\node (7) at (0,5.5) {\tiny $X_3^4$};
\node (8) at (0,7) {\tiny $X_3^5$};
\node (9) at (0,8.5) {\tiny $X_3^6$};
\node (10) at (0,10) {\tiny $X_3^7$};

\draw [->, thick] (3) -- (2);
\draw [->, thick] (2) -- (1);
\draw [->, thick] (1) -- (4);
\draw [->, thick] (4) -- (2);
\draw [->, thick] (2) -- (5);
\draw [->, thick] (5) -- (4);
\draw [->, thick] (4) -- (6);

\draw [->, thick] (6) to [out = -60, in=120] (5);
\draw [->, thick] (5) to [bend right = 21] (4);
\draw [->, thick] (4) to [out = -30, in=135] (3);
\draw [->, thick] (3) to [bend right = 21] (1);

\draw[->, thick] (4) to [out = 70, in=-130] (7);
\draw[->, thick] (7) to [out = -50, in=110] (5);
\draw[->, thick] (4) to [out = 85, in=-130] (8);
\draw[->, thick] (8) to [out = -50, in=95] (5);
\draw[->, thick] (4) to [out = 100, in=-130] (9);
\draw[->, thick] (9) to [out = -50, in=80] (5);
\draw[->, thick] (4) to [out = 115, in=-130] (10);
\draw[->, thick] (10) to [out = -50, in=65] (5);

\end{tikzpicture}
\caption{Quiver $\Qc_4^{\mathrm{mitre}}$.}
\label{fig-Qmitre}
\end{figure}

The final result is a quiver that we denote by $\Qc_n^{\mathrm{mitre}}$. Figure~\ref{fig-Qmitre} shows the quiver $\Qc_4^{\mathrm{mitre}}$, where we again applied horizontal shifts in each row and relabelled the vertices to make the quiver more readable. Note that the bottom $n-2$ rows of $\Qc_n^{\mathrm{mitre}}$ form the quiver $\Qc_{n-2}^{\underline c,\overline w_0}$, while the vertices $X_{n-1}^c$ with $n-1 \le c \le 2n-1$ are attached to the rest of the quiver in an identical way.

\subsection{Symmetric cluster realization of $U_q(\mathfrak{sl}_{n+1})$}
\label{single-copy-muts}

Let us now observe that the part of the quiver $\Qc_n^{\mathrm{sf}}$ that sits above the dashed line in Figure~\ref{D4-sf} is identical to the quiver $\Qc_n^{\overline w_0}$, except that the vertex 28 is disconnected instead of being $2$-valent as in Figure~\ref{fig-Q0}. More precisely, the two quivers are related by the simple basis change discussed in part (2) of Lemma~\ref{geom-mut}. We can thus perform the sequence of mutations described in Section~\ref{subsec-muts-single} on the universal cover of a punctured disk, thereby obtaining a new seed $\wdt\Theta^{\mathrm{sym}}_n$ and networks $\Nc_n^{\mathrm{sym}_\pm}$. Note that the bottom parts of $\Nc_n^{\mathrm{sym}_\pm}$ coincide with those of $\Nc_n^{\mathrm{sf}_\pm}$. For $n=4$, the top part of the network $\Nc_n^{\mathrm{sym}_-}$ is shown in Figure~\ref{net-sym}, and the dashed line shows where it is glued to the bottom part of $\Nc_n^{\mathrm{sym}_-}$. Now, we can take the quotient of the seed $\wdt\Theta^{\mathrm{sym}}_n$ by the deck group and arrive at a seed $\Theta^{\mathrm{sym}}_{n}$, whose quiver $\Qsym$ is shown in Figure~\ref{Q4-sym} for $n=4$.

\begin{figure}[h]
\begin{tikzpicture}[every node/.style={inner sep=0, minimum size=0.4cm, thick, draw, circle}, x=0.75cm, y=0.45cm]

\node (1) [rectangle] at (-1,0) {\scriptsize 1};
\node (2) at (0,2) {\scriptsize 2};
\node (3) [rectangle] at (1,0) {\scriptsize 3};
\node (4) [rectangle] at (-2,2) {\scriptsize 4};
\node (5) at (-1,4) {\scriptsize 5};
\node (6) at (0,6) {\scriptsize 6};
\node (7) at (1,4) {\scriptsize 7};
\node (8) [rectangle] at (2,2) {\scriptsize 8};
\node (9) [rectangle] at (-3,4) {\scriptsize 9};
\node (10) at (-2,6) {\scriptsize 10};
\node (11) at (-1,8) {\scriptsize 11};
\node (12) at (2,10) {\scriptsize 12};
\node (13) at (1,8) {\scriptsize 13};
\node (14) at (2,6) {\scriptsize 14};
\node (15) [rectangle] at (3,4) {\scriptsize 15};
\node (16) [rectangle] at (-4,6) {\scriptsize 16};
\node (17) at (-3,8) {\scriptsize 17};
\node (18) at (0,10) {\scriptsize 18};
\node (19) at (0,14) {\scriptsize 19};
\node (20) at (0,17) {\scriptsize 20};
\node (21) at (0,15) {\scriptsize 21};
\node (22) at (-1,12) {\scriptsize 22};
\node (23) at (3,8) {\scriptsize 23};
\node (24) [rectangle] at (4,6) {\scriptsize 24};
\node (25) at (-2,10) {\scriptsize 25};
\node (26) at (1,12) {\scriptsize 26};
\node (27) at (0,16) {\scriptsize 27};
\node (28) at (0,18) {\scriptsize 28};

\draw [->, thick] (17) -- (25);
\draw [->, thick] (25) -- (11);
\draw [->, thick] (11) -- (18);
\draw [->, thick] (18) -- (13);
\draw [->, thick] (13) -- (12);
\draw [->, thick] (12) -- (18);
\draw [->, thick] (18) -- (25);
\draw [->, thick] (25) -- (22);
\draw [->, thick] (22) -- (18);
\draw [->, thick] (18) -- (26);
\draw [->, thick] (26) -- (22);
\draw [->, thick] (22) -- (19);

\draw [->, thick] (19) to [out = -45, in=120] (26);
\draw [->, thick] (26) to [bend right = 21] (22);
\draw [->, thick] (22) to [out = -30, in=135] (12);
\draw [->, thick] (12) to [bend right = 18] (25);
\draw [->, thick] (25) to [out = -25, in=150] (23);
\draw [->, thick] (23) to [bend right = 15] (17);

%\draw[->, thick] (22) to [out = 75, in=-135] (21);
%\draw[->, thick] (21) to [out = -45, in=105] (26);
%\draw[->, thick] (22) to [out = 90, in=-135] (27);
%\draw[->, thick] (27) to [out = -45, in=90] (26);
%\draw[->, thick] (22) to [out = 105, in=-135] (20);
%\draw[->, thick] (20) to [out = -45, in=75] (26);
%\draw[->, thick] (22) to [out = 120, in=-135] (28);
%\draw[->, thick] (28) to [out = -45, in=60] (26);

\draw[BrickRed, dashed] (-4,8) to (4,8);

\draw [->,thick] (1) -- (3);

\draw [->,thick] (4) -- (2);
\draw [->,thick] (2) -- (8);

\draw [->,thick] (9) -- (5);
\draw [->,thick] (5) -- (7);
\draw [->,thick] (7) -- (15);

\draw [->,thick] (16) -- (10);
\draw [->,thick] (10) -- (6);
\draw [->,thick] (6) -- (14);
\draw [->,thick] (14) -- (24);

\draw [->,thick] (3) -- (2);
\draw [->,thick] (2) -- (5);
\draw [->,thick] (5) -- (10);
\draw [->,thick] (10) -- (17);

\draw [->,thick] (23) -- (14);
\draw [->,thick] (14) -- (7);
\draw [->,thick] (7) -- (2);
\draw [->,thick] (2) -- (1);

\draw [->,thick] (8) -- (7);
\draw [->,thick] (7) -- (6);
\draw [->,thick] (6) -- (11);
\draw [->,thick] (13) -- (6);
\draw [->,thick] (6) -- (5);
\draw [->,thick] (5) -- (4);

\draw [->,thick] (15) -- (14);
\draw [->,thick] (14) -- (13);
\draw [->,thick] (11) -- (10);
\draw [->,thick] (10) -- (9);

\draw [->,thick] (24) -- (23);
\draw [->,thick] (17) -- (16);

\draw [->,thick,dashed] (24) -- (15);
\draw [->,thick,dashed] (15) -- (8);
\draw [->,thick,dashed] (8) -- (3);
\draw [->,thick,dashed] (1) -- (4);
\draw [->,thick,dashed] (4) -- (9);
\draw [->,thick,dashed] (9) -- (16);

\end{tikzpicture}
\caption{Quiver $\Qc_4^{\mathrm{sym}}$.}
\label{Q4-sym}
\end{figure}

\begin{figure}[h]
\begin{tikzpicture}[every node/.style={inner sep=0, minimum size=0.15cm, circle, draw}, x=0.45cm,y=0.45cm]

\foreach \i in {3,9,...,30}
{
	\node[draw=none, text=blue] at (\i-2,-0.5) {\tiny 13};
	\node[draw=none, text=blue] at (\i-0.5,-0.5) {\tiny 23};
	\node[draw=none, text=blue] at (\i+0.5,-0.5) {\tiny 17};
	\node[draw=none, text=blue] at (\i+2,-0.5) {\tiny 11};

	\node[draw=none, text=blue] at (\i-3,0.5) {\tiny 18};
	\node[draw=none, text=blue] at (\i-1.5,0.5) {\tiny 12};
	\node[draw=none, text=blue] at (\i+0.5,0.5) {\tiny 25};

	\node[draw=none, text=blue] at (\i-2.5,1.5) {\tiny 26};
	\node[draw=none, text=blue] at (\i+0.5,1.5) {\tiny 22};

	\node[draw=none, text=blue] at (\i,2.5) {\tiny 19};
	\node[draw=none, text=blue] at (\i-3,3.5) {\tiny 21};
	\node[draw=none, text=blue] at (\i,4.5) {\tiny 27};
	\node[draw=none, text=blue] at (\i-3,5.5) {\tiny 20};
	\node[draw=none, text=blue] at (\i,6.5) {\tiny 28};
}

\node[draw=none, text=blue] at (30,0.5) {\tiny 18};
\node[draw=none, text=blue] at (30,3.5) {\tiny 21};
\node[draw=none, text=blue] at (30,5.5) {\tiny 20};

\foreach \i in {3,9,...,30}
	\node[fill=white] (a\i) at (\i,-1) {};

\foreach \i in {0,2,...,30}
 \node[fill=white] (b\i) at (\i,0) {};
\foreach \i in {1,3,...,30}
 \node[fill=black] (b\i) at (\i,0) {};

\foreach \i in {0,6,...,30}
	\node[fill=black] (c\i) at (\i,1) {};
\foreach \i in {1,7,...,30}
	\node[fill=white] (c\i) at (\i,1) {};
\foreach \i in {2,8,...,30}
	\node[fill=black] (c\i) at (\i,1) {};
\foreach \i in {5,11,...,30}
	\node[fill=white] (c\i) at (\i,1) {};

\foreach \i in {0,6,...,30}
	\node[fill=white] (d\i) at (\i,2) {};
\foreach \i in {1,7,...,30}
	\node[fill=black] (d\i) at (\i,2) {};

\foreach \i in {0,6,...,30}
	\node[fill=white] (e\i) at (\i,3) {};
\foreach \i in {3,9,...,30}
	\node[fill=black] (e\i) at (\i,3) {};

\foreach \i in {0,6,...,30}
	\node[fill=black] (f\i) at (\i,4) {};
\foreach \i in {3,9,...,30}
	\node[fill=white] (f\i) at (\i,4) {};

\foreach \i in {0,6,...,30}
	\node[fill=white] (g\i) at (\i,5) {};
\foreach \i in {3,9,...,30}
	\node[fill=black] (g\i) at (\i,5) {};

\foreach \i in {0,6,...,30}
	\node[fill=black] (h\i) at (\i,6) {};
\foreach \i in {3,9,...,30}
	\node[fill=white] (h\i) at (\i,6) {};

\foreach \i in {0,6,...,30}
	\node[fill=white] (i\i) at (\i,7) {};

\draw[dashed, BrickRed, thick] (3,-2) to (15,-2);

\draw [->] (-1,0) to (b0);
\draw [->] (b0) to (b1);
\draw [->] (b1) to (b2);
\draw [<-, thick, OliveGreen] (b2) to (b3);
\draw [->] (b3) to (b4);
\draw [->, thick, OliveGreen] (b4) to (b5);
\draw [->] (b5) to (b6);
\draw [->, thick, OliveGreen] (b6) to (b7);
\draw [->] (b7) to (b8);
\draw [->, thick, OliveGreen] (b8) to (b9);
\draw [->, thick, OliveGreen] (b9) to (b10);
\draw [<-] (b10) to (b11);
\draw [->, thick, OliveGreen] (b11) to (b12);
\draw [<-] (b12) to (b13);
\draw [->, thick, OliveGreen] (b13) to (b14);
\draw [<-] (b14) to (b15);
\draw [<-, thick, OliveGreen] (b15) to (b16);
\draw [<-, thick, OliveGreen] (b16) to (b17);
\draw [<-, thick, OliveGreen] (b17) to (b18);
\draw [<-, thick, OliveGreen] (b18) to (b19);
\draw [<-, thick, OliveGreen] (b19) to (b20);
\draw [<-] (b20) to (b21);
\draw [->] (b21) to (b22);
\draw [->] (b22) to (b23);
\draw [->] (b23) to (b24);
\draw [->] (b24) to (b25);
\draw [->] (b25) to (b26);
\draw [<-] (b26) to (b27);
\draw [->] (b27) to (b28);
\draw [->] (b28) to (b29);
\draw [->] (b29) to (b30);
\draw [->] (b30) to (31,0);

\draw [->] (-1,1) to (c0);
\draw [->] (c0) to (c1);
\draw [<-, thick, OliveGreen] (c1) to (c2);
\draw [->] (c2) to (c5);
\draw [->, thick, OliveGreen] (c5) to (c6);
\draw [->] (c6) to (c7);
\draw [->, thick, OliveGreen] (c7) to (c8);
\draw [->, thick, OliveGreen] (c8) to (c11);
\draw [<-] (c11) to (c12);
\draw [->, thick, OliveGreen] (c12) to (c13);
\draw [<-] (c13) to (c14);
\draw [<-] (c14) to (c17);
\draw [<-] (c17) to (c18);
\draw [<-] (c18) to (c19);
\draw [<-] (c19) to (c20);
\draw [<-, thick, OliveGreen] (c20) to (c23);
\draw [<-, thick, OliveGreen] (c23) to (c24);
\draw [<-, thick, OliveGreen] (c24) to (c25);
\draw [<-] (c25) to (c26);
\draw [<-] (c26) to (c29);
\draw [<-] (c29) to (c30);
\draw [<-] (c30) to (31,1);

\draw [->] (-1,2) to (d0);
\draw [<-, thick, OliveGreen] (d0) to (d1);
\draw [->] (d1) to (d6);
\draw [->, thick, OliveGreen] (d6) to (d7);
\draw [->, thick, OliveGreen] (d7) to (d12);
\draw [<-] (d12) to (d13);
\draw [<-] (d13) to (d18);
\draw [<-] (d18) to (d19);
\draw [<-] (d19) to (d24);
\draw [<-] (d24) to (d25);
\draw [<-, thick, OliveGreen] (d25) to (d30);
\draw [<-] (d30) to (31,2);

\draw [->] (-1,3) to (e0);
\draw [->, thick, OliveGreen] (e0) to (e3);
\draw [->] (e3) to (e6);
\draw [<-] (e6) to (e9);
\draw [->] (e9) to (e12);
\draw [<-] (e12) to (e15);
\draw [->] (e15) to (e18);
\draw [<-] (e18) to (e21);
\draw [->] (e21) to (e24);
\draw [<-] (e24) to (e27);
\draw [->, thick, OliveGreen] (e27) to (e30);
\draw [<-] (e30) to (31,3);

\draw [->] (-1,4) to (f0);
\draw [->] (f0) to (f3);
\draw [->, thick, OliveGreen] (f3) to (f6);
\draw [->] (f6) to (f9);
\draw [<-] (f9) to (f12);
\draw [->] (f12) to (f15);
\draw [<-] (f15) to (f18);
\draw [->] (f18) to (f21);
\draw [<-] (f21) to (f24);
\draw [->, thick, OliveGreen] (f24) to (f27);
\draw [<-] (f27) to (f30);
\draw [<-] (f30) to (31,4);

\draw [->] (-1,5) to (g0);
\draw [->] (g0) to (g3);
\draw [->] (g3) to (g6);
\draw [->, thick, OliveGreen] (g6) to (g9);
\draw [->] (g9) to (g12);
\draw [<-] (g12) to (g15);
\draw [->] (g15) to (g18);
\draw [<-] (g18) to (g21);
\draw [->, thick, OliveGreen] (g21) to (g24);
\draw [<-] (g24) to (g27);
\draw [<-] (g27) to (g30);
\draw [<-] (g30) to (31,5);

\draw [->] (-1,6) to (h0);
\draw [->] (h0) to (h3);
\draw [->] (h3) to (h6);
\draw [->] (h6) to (h9);
\draw [->, thick, OliveGreen] (h9) to (h12);
\draw [->] (h12) to (h15);
\draw [<-] (h15) to (h18);
\draw [->, thick, OliveGreen] (h18) to (h21);
\draw [<-] (h21) to (h24);
\draw [<-] (h24) to (h27);
\draw [<-] (h27) to (h30);
\draw [<-] (h30) to (31,6);

\draw [->] (-1,7) to (i0);
\draw [->] (i0) to (i6);
\draw [->] (i6) to (i12);
\draw [->, thick, OliveGreen] (i12) to (i18);
\draw [<-] (i18) to (i24);
\draw [<-] (i24) to (i30);
\draw [<-] (i30) to (31,7);

\draw [->] (2,-2) to [out=90, in=-150] (a3);
\draw [->, thick, OliveGreen] (8,-2) to [out=90, in=-150] (a9);
\draw [<-, thick, OliveGreen] (14,-2) to [out=90, in=-150] (a15);
\draw [->] (20,-2) to [out=90, in=-150] (a21);
\draw [->] (26,-2) to [out=90, in=-150] (a27);

\draw [->, thick, OliveGreen] (4,-2) to [out=90, in=-30] (a3);
\draw [<-, thick, OliveGreen] (10,-2) to [out=90, in=-30] (a9);
\draw [->] (16,-2) to [out=90, in=-30] (a15);
\draw [->] (22,-2) to [out=90, in=-30] (a21);
\draw [->] (28,-2) to [out=90, in=-30] (a27);

\draw [->] (0,-2) to (b0);
\draw [->] (1,-2) to [out=90,in=-90] (b2);
\draw [->, thick, OliveGreen] (5,-2) to [out=90,in=-90] (b4);
\draw [->, thick, OliveGreen] (6,-2) to (b6);
\draw [->, thick, OliveGreen] (7,-2) to [out=90,in=-90] (b8);
\draw [<-, thick, OliveGreen] (11,-2) to [out=90,in=-90] (b10);
\draw [<-, thick, OliveGreen] (12,-2) to (b12);
\draw [<-, thick, OliveGreen] (13,-2) to [out=90,in=-90] (b14);
\draw [->] (17,-2) to [out=90,in=-90] (b16);
\draw [->] (18,-2) to (b18);
\draw [->] (19,-2) to [out=90,in=-90] (b20);
\draw [->] (23,-2) to [out=90,in=-90] (b22);
\draw [->] (24,-2) to (b24);
\draw [->] (25,-2) to [out=90,in=-90] (b26);
\draw [->] (29,-2) to [out=90,in=-90] (b28);
\draw [->] (30,-2) to (b30);

\draw [->, thick, OliveGreen] (a3) to (b3);
\draw [<-] (a9) to (b9);
\draw [<-, thick, OliveGreen] (a15) to (b15);
\draw [->] (a21) to (b21);
\draw [->] (a27) to (b27);

\draw [->] (b1) to (c1);
\draw [->, thick, OliveGreen] (b2) to (c2);
\draw [->, thick, OliveGreen] (b5) to (c5);
\draw [->, thick, OliveGreen] (b7) to (c7);
\draw [<-] (b8) to (c8);
\draw [<-, thick, OliveGreen] (b11) to (c11);
\draw [<-, thick, OliveGreen] (b13) to (c13);
\draw [<-] (b14) to (c14);
\draw [->] (b17) to (c17);
\draw [->] (b19) to (c19);
\draw [<-, thick, OliveGreen] (b20) to (c20);
\draw [->] (b23) to (c23);
\draw [->] (b25) to (c25);
\draw [->] (b26) to (c26);
\draw [->] (b29) to (c29);

\draw [->] (c0) to (d0);
\draw [->, thick, OliveGreen] (c1) to (d1);
\draw [->, thick, OliveGreen] (c6) to (d6);
\draw [<-] (c7) to (d7);
\draw [<-, thick, OliveGreen] (c12) to (d12);
\draw [<-] (c13) to (d13);
\draw [->] (c18) to (d18);
\draw [<-] (c19) to (d19);
\draw [->] (c24) to (d24);
\draw [<-, thick, OliveGreen] (c25) to (d25);
\draw [->] (c30) to (d30);

\draw [->, thick, OliveGreen] (d0) to (e0);
\draw [<-] (d6) to (e6);
\draw [<-] (d12) to (e12);
\draw [<-] (d18) to (e18);
\draw [<-] (d24) to (e24);
\draw [<-, thick, OliveGreen] (d30) to (e30);

\draw [->, thick, OliveGreen] (e3) to (f3);
\draw [<-] (e9) to (f9);
\draw [<-] (e15) to (f15);
\draw [<-] (e21) to (f21);
\draw [<-, thick, OliveGreen] (e27) to (f27);

\draw [->] (f0) to (g0);
\draw [->, thick, OliveGreen] (f6) to (g6);
\draw [<-] (f12) to (g12);
\draw [<-] (f18) to (g18);
\draw [<-, thick, OliveGreen] (f24) to (g24);
\draw [->] (f30) to (g30);

\draw [->] (g3) to (h3);
\draw [->, thick, OliveGreen] (g9) to (h9);
\draw [<-] (g15) to (h15);
\draw [<-, thick, OliveGreen] (g21) to (h21);
\draw [->] (g27) to (h27);

\draw [->] (h0) to (i0);
\draw [->] (h6) to (i6);
\draw [->, thick, OliveGreen] (h12) to (i12);
\draw [<-, thick, OliveGreen] (h18) to (i18);
\draw [->] (h24) to (i24);
\draw [->] (h30) to (i30);

\end{tikzpicture}
\caption{Top part of the network $\Nc_4^{\mathrm{sym}_-}$.}
\label{net-sym}
\end{figure}

Denote by $\Dsym$ the based quantum torus algebra corresponding to the quiver $\Qc_n^{\mathrm{sym}}$. Let us also denote by $\Phi^{\mathrm{sym}}$ the unitary operator on $\mathcal{P}_{\lambda}$ given by the composite of all quantum mutations taking us from the seed $\Theta_n^{\mathrm{sf}}$ to $\Theta_n^{\mathrm{sym}}$. Then the combinatorial description of $\iota$ given in Corollary~\ref{cor-folded}, together with Propositions~\ref{measure-invariance}~and~\ref{mut-descent}, implies

\begin{cor}
\label{gsv-measure}
Let
$$
M_n^{\mathrm{sym}_\pm}(i,j;k,l) = \mathrm{pr}\hr{Z_{\mathcal{N}^{\mathrm{sym}_\pm}_n}(t_i,h_j;t_k,h_l)}
$$
be the boundary measurements in the networks $\Nc_n^{\mathrm{sym}_\pm}$, and set
$$
\iota_{\mathrm{sym}} = \Ad_{\Phi^{\mathrm{sym}}\circ\Phi^{\mathrm{sf}}}\cdot\iota.
$$
Then we have
\begin{align*}
&\iota_{\mathrm{sym}}(K_i) = {M_n^{\mathrm{sym}_+}(i+1,i+1;i,i)}, &
&\iota_{\mathrm{sym}}(E_i) = {M_n^{\mathrm{sym}_+}(i+1,i;i,i)}, \\
&\iota_{\mathrm{sym}}(K'_i) = {M_n^{\mathrm{sym}_-}(i,i;i+1,i+1)}, &
&\iota_{\mathrm{sym}}(F_i) = {M_n^{\mathrm{sym}_-}(i,i+1;i+1,i+1)}.
\end{align*}
\end{cor}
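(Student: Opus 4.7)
The plan is to bootstrap from Corollary~\ref{cor-folded}, which already gives the analogous description in the self-folded cluster chart, by tracking how the remaining sequence of mutations encoded in $\Phi^{\mathrm{sym}}$ acts on the boundary measurements. Concretely, by Corollary~\ref{cor-folded} we already know, for instance, that
$$
\Ad_{\Phi^{\mathrm{sf}}}\cdot\iota(K_i) = M_n^{\mathrm{sf}_+}(i+1,i+1;i,i),
$$
and similarly for $E_i$, $F_i$, $K'_i$. It therefore suffices to verify that further conjugation by $\Phi^{\mathrm{sym}}$ transports each right-hand side into the corresponding boundary measurement in $\Nc_n^{\mathrm{sym}_\pm}$.

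The first step is to check that the quiver mutation sequence of Section~\ref{subsec-muts-single}, which takes (the top half of) $\Qc_n^{\mathrm{sf}}$ to $\Qc_n^{\mathrm{sym}}$, lifts to a bona fide network mutation sequence on the universal cover of $D_{2,1}$ in the sense of Figure~\ref{net-mut}. This is exactly what is packaged in Section~\ref{single-copy-muts}: the subquiver above the dashed line in Figure~\ref{D4-sf} is, up to the basis change described in part~(2) of Lemma~\ref{geom-mut}, the quiver $\Qc_n^{\overline w_0}$, and the mutation sequence of Section~\ref{subsec-muts-single} is performed in the universal cover so that each intermediate mutation occurs at a vertex which is 4-valent with alternating incoming and outgoing arrows (the hypothesis of part~(1) of Lemma~\ref{geom-mut}). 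In particular, the lifted mutations always satisfy the condition $(e_k^n,e_k^m)=0$ required by Proposition~\ref{mut-descent}.

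Granting this, I would apply Proposition~\ref{measure-invariance} at each step to conclude that the weighted path counts $Z_{\Nc}(t_i,h_j;t_k,h_l)$ on the universal cover are preserved, and then Proposition~\ref{mut-descent} to descend the invariance to the projected measurements $\pr\bigl(Z_{\Nc}(\cdots)\bigr)$ on the punctured disk. Since each individual quantum mutation at a vertex $k$ is implemented on the positive representation $\Pc_\lambda$ by the unitary operator $\Phi^{\hbar}(x_k)$ (see Section~\ref{h-dilog-section}), the product of these unitaries over the full mutation sequence is by definition the operator $\Phi^{\mathrm{sym}}$. Composing this with $\Phi^{\mathrm{sf}}$ and invoking the base case yields the four claimed identities.

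The main technical obstacle is not conceptual but bookkeeping: one must verify step by step that the mutation sequence of Section~\ref{subsec-muts-single} stays within the class of network mutations allowed by Proposition~\ref{mut-descent} — in particular, that no intermediate mutation violates the $(e_k^n,e_k^m)=0$ condition on the universal cover, and that the single $2$-valent vertex (the analog of vertex~$28$ in Figure~\ref{D4-sf}, labelled $X_n^n$ in $\Qc_n^{\overline w_0}$) is correctly accounted for via the basis change of Lemma~\ref{geom-mut}(2). Once this verification is carried out, the Corollary follows from Corollary~\ref{cor-folded} together with Propositions~\ref{measure-invariance} and~\ref{mut-descent} with essentially no further computation.
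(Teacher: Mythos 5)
Your proposal is correct and follows exactly the route the paper takes: the paper itself derives this corollary by citing Corollary~\ref{cor-folded} together with Propositions~\ref{measure-invariance} and~\ref{mut-descent}, with the $2$-valent/disconnected vertex discrepancy handled by the basis change of Lemma~\ref{geom-mut}(2), precisely as you describe. Your write-up merely makes explicit the bookkeeping that the paper leaves implicit, so nothing further is needed.
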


Let us order the disconnected nodes of the quiver $\Qsym$ in the same way as the faces of networks $\Nc_n^{\mathrm{sym}_\pm}$, c.f. Figures~\ref{Q4-sym} and~\ref{net-sym}. Now, we denote by $\omega_0,\dots,\omega_n$ the quantum torus algebra elements corresponding to the top $n+1$ nodes of the quiver $\Qsym$ counting bottom to top. For example, in Figure~\ref{Q4-sym} we get
$$
\omega_0 = X_{19}, \qquad
\omega_1 = X_{21}, \qquad
\omega_2 = X_{27}, \qquad
\omega_3 = X_{20}, \qquad
\omega_4 = X_{28}.
$$
Note that $\omega_0$ comes from the only 2-valent vertex in the quiver $\Qsym$, while $\omega_1, \dots, \omega_n$ come from the disconnected ones. Then the central elements $\Omega_j\in\Dstd$ defined in~\eqref{single-center-gens} take the form 
$$ 
\Omega_j = \omega_j \qquad\text{for}\qquad j = 1, \dots, n.
$$
This is most easily seen using the network description of $\Omega_j$ given in Remark~\ref{single-casimir-rmk}; one checks that each of the ``rows'' of $\Nc_n^{\mathrm{std}_\pm}$ described in the Remark becomes a single face of the mutated network $\Nc_n^{\mathrm{sym}_\pm}$. 

Consider the subquiver $\Qc^{\mathrm{sym}\dag}_{n}$ of $\Qc^{\mathrm{sym}}_{n}$ obtained by forgetting all disconnected vertices. Corollary~\ref{gsv-measure} implies that the center of the corresponding quantum torus algebra $\Dc_n^{\mathrm{sym}\dag}$ is the Laurent polynomial ring in the $n$ generators
$$
\iota\hr{K_1K'_1}\prod_{j=1}^{n}\omega_j^{j-n-1}
\qquad\text{and}\qquad
\iota\hr{K_iK'_i} \qquad\text{for}\qquad i = 2, \dots, n.
$$
We denote by $\Vc_n$ the positive representation of $\Dc_n^{\mathrm{sym}\dag}$ in which these generators act by the identity. Choosing a polarization, we can identify $\Vc_n$ with $L^2\hr{\R^{\frac{n(n+1)}{2}}}$.

We also consider a subquiver $\Qc_n^{\mathrm{com}}$ of $\Qc_n^{\mathrm{sym}}$ that consists of the only 2-valent vertex and $n$ disconnected vertices, so that there are $n+1$ vertices in $\Qc_n^{\mathrm{com}}$ and no arrows. The corresponding subalgebra $\Dc_n^{\mathrm{com}} \subset \Dsym$ is commutative and is generated by $\omega_0, \omega_1,\dots, \omega_n$. For any real numbers $\lambda_0, \lambda_1,\dots, \lambda_n$ satisfying $\sum_{k=0}^n\lambda_k=0$, we denote by $\C_\la$ the 1-dimensional positive representation of $\Dc_n^{\mathrm{com}}$ in which 
\beq
\label{spec}
\omega_0 \longmapsto e^{2\pi\hbar\lambda_0}
\qquad\text{and}\qquad
\omega_k\mapsto e^{2\pi\hbar\hr{\lambda_k-\lambda_{k-1}}}
\qquad\text{for}\qquad k=1, \dots, n.
\eeq
Given a quiver $\Qc$ we abuse notation and write $\rk(\Qc)$ for the rank of its adjacency matrix. Then
$$
\rk\hr{\Qc_n^{\mathrm{sym}}} = \rk\hr{\Qc_n^{\mathrm{sym}\dag}}+\rk\hr{\Qc_n^{\mathrm{com}}},
$$
we can apply Lemma~\ref{rep-concatenation} and obtain
\beq
\label{single-concat}
\Pc_\la \simeq \Vc_n \otimes \C_\la.
\eeq
One readily observes that the expressions for $E_1,\ldots, E_n$ and $F_2,\ldots, F_n$ given in Corollary~\ref{gsv-measure} do not depend on the cluster variables in the commutative subalgebra $\Dc_n^{\mathrm{com}} \subset \Dsym$, and thus act by the identity on the second tensor factor in~\eqref{single-concat}. The formula for $F_1$, however, does involve variables from $\Dc_n^{\mathrm{com}}$, and encodes all information about the central character of the positive representation.

\begin{figure}[h]
\begin{tikzpicture}[every node/.style={inner sep=0, minimum size=0.15cm, circle, draw}, x=0.45cm,y=0.45cm]

\foreach \i in {3,9,...,30}
{
	\node[draw=none, text=blue] at (\i,2.5) {\tiny 19};
	\node[draw=none, text=blue] at (\i-3,3.5) {\tiny 21};
	\node[draw=none, text=blue] at (\i,4.5) {\tiny 27};
	\node[draw=none, text=blue] at (\i-3,5.5) {\tiny 20};
	\node[draw=none, text=blue] at (\i,6.5) {\tiny 28};
}

\node[draw=none, text=blue] at (30,3.5) {\tiny 21};
\node[draw=none, text=blue] at (30,5.5) {\tiny 20};

\foreach \i in {0,6,...,30}
	\node[fill=white] (e\i) at (\i,3) {};
\foreach \i in {3,9,...,30}
	\node[fill=black] (e\i) at (\i,3) {};

\foreach \i in {0,6,...,30}
	\node[fill=black] (f\i) at (\i,4) {};
\foreach \i in {3,9,...,30}
	\node[fill=white] (f\i) at (\i,4) {};

\foreach \i in {0,6,...,30}
	\node[fill=white] (g\i) at (\i,5) {};
\foreach \i in {3,9,...,30}
	\node[fill=black] (g\i) at (\i,5) {};

\foreach \i in {0,6,...,30}
	\node[fill=black] (h\i) at (\i,6) {};
\foreach \i in {3,9,...,30}
	\node[fill=white] (h\i) at (\i,6) {};

\foreach \i in {0,6,...,30}
	\node[fill=white] (i\i) at (\i,7) {};

\node[minimum size=0.4cm, thick] (d0) at (0,1.7) {\scriptsize $v_0$};
\node[minimum size=0.4cm, thick] (d6) at (6,1.7) {\scriptsize $v_1$};
\node[minimum size=0.4cm, thick] (d12) at (12,1.7) {\scriptsize $v_2$};
\node[minimum size=0.4cm, thick] (d18) at (18,1.7) {\scriptsize $v_3$};
\node[minimum size=0.4cm, thick] (d24) at (24,1.7) {\scriptsize $v_4$};
\node[minimum size=0.4cm, thick] (d30) at (30,1.7) {\scriptsize $v_5$};

\draw [->] (-1,3) to (e0);
\draw [->, thick, OliveGreen] (e0) to (e3);
\draw [->] (e3) to (e6);
\draw [<-] (e6) to (e9);
\draw [->] (e9) to (e12);
\draw [<-] (e12) to (e15);
\draw [->] (e15) to (e18);
\draw [<-] (e18) to (e21);
\draw [->] (e21) to (e24);
\draw [<-] (e24) to (e27);
\draw [->, thick, OliveGreen] (e27) to (e30);
\draw [<-] (e30) to (31,3);

\draw [->] (-1,4) to (f0);
\draw [->] (f0) to (f3);
\draw [->, thick, OliveGreen] (f3) to (f6);
\draw [->] (f6) to (f9);
\draw [<-] (f9) to (f12);
\draw [->] (f12) to (f15);
\draw [<-] (f15) to (f18);
\draw [->] (f18) to (f21);
\draw [<-] (f21) to (f24);
\draw [->, thick, OliveGreen] (f24) to (f27);
\draw [<-] (f27) to (f30);
\draw [<-] (f30) to (31,4);

\draw [->] (-1,5) to (g0);
\draw [->] (g0) to (g3);
\draw [->] (g3) to (g6);
\draw [->, thick, OliveGreen] (g6) to (g9);
\draw [->] (g9) to (g12);
\draw [<-] (g12) to (g15);
\draw [->] (g15) to (g18);
\draw [<-] (g18) to (g21);
\draw [->, thick, OliveGreen] (g21) to (g24);
\draw [<-] (g24) to (g27);
\draw [<-] (g27) to (g30);
\draw [<-] (g30) to (31,5);

\draw [->] (-1,6) to (h0);
\draw [->] (h0) to (h3);
\draw [->] (h3) to (h6);
\draw [->] (h6) to (h9);
\draw [->, thick, OliveGreen] (h9) to (h12);
\draw [->] (h12) to (h15);
\draw [<-] (h15) to (h18);
\draw [->, thick, OliveGreen] (h18) to (h21);
\draw [<-] (h21) to (h24);
\draw [<-] (h24) to (h27);
\draw [<-] (h27) to (h30);
\draw [<-] (h30) to (31,6);

\draw [->] (-1,7) to (i0);
\draw [->] (i0) to (i6);
\draw [->] (i6) to (i12);
\draw [->, thick, OliveGreen] (i12) to (i18);
\draw [<-] (i18) to (i24);
\draw [<-] (i24) to (i30);
\draw [<-] (i30) to (31,7);

\draw [->, thick, OliveGreen] (d0) to (e0);
\draw [<-] (d6) to (e6);
\draw [<-] (d12) to (e12);
\draw [<-] (d18) to (e18);
\draw [<-] (d24) to (e24);
\draw [<-, thick, OliveGreen] (d30) to (e30);

\draw [->, thick, OliveGreen] (e3) to (f3);
\draw [<-] (e9) to (f9);
\draw [<-] (e15) to (f15);
\draw [<-] (e21) to (f21);
\draw [<-, thick, OliveGreen] (e27) to (f27);

\draw [->] (f0) to (g0);
\draw [->, thick, OliveGreen] (f6) to (g6);
\draw [<-] (f12) to (g12);
\draw [<-] (f18) to (g18);
\draw [<-, thick, OliveGreen] (f24) to (g24);
\draw [->] (f30) to (g30);

\draw [->] (g3) to (h3);
\draw [->, thick, OliveGreen] (g9) to (h9);
\draw [<-] (g15) to (h15);
\draw [<-, thick, OliveGreen] (g21) to (h21);
\draw [->] (g27) to (h27);

\draw [->] (h0) to (i0);
\draw [->] (h6) to (i6);
\draw [->, thick, OliveGreen] (h12) to (i12);
\draw [<-, thick, OliveGreen] (h18) to (i18);
\draw [->] (h24) to (i24);
\draw [->] (h30) to (i30);

\end{tikzpicture}
\caption{Network $\Bc_4$.}
\label{net-B}
\end{figure}

\begin{figure}[h]
\begin{tikzpicture}[every node/.style={inner sep=0, minimum size=0.15cm, circle, draw}, x=0.45cm,y=0.45cm]

\foreach \i in {3,9,...,30}
{
	\node[draw=none, text=blue] at (\i-2,-0.5) {\tiny 13};
	\node[draw=none, text=blue] at (\i-0.5,-0.5) {\tiny 23};
	\node[draw=none, text=blue] at (\i+0.5,-0.5) {\tiny 17};
	\node[draw=none, text=blue] at (\i+2,-0.5) {\tiny 11};

	\node[draw=none, text=blue] at (\i-3,0.5) {\tiny 18};
	\node[draw=none, text=blue] at (\i-1.5,0.5) {\tiny 12};
	\node[draw=none, text=blue] at (\i+0.5,0.5) {\tiny 25};

	\node[draw=none, text=blue] at (\i-2.5,1.5) {\tiny 26};
	\node[draw=none, text=blue] at (\i+0.5,1.5) {\tiny 22};
}

\node[draw=none, text=blue] at (30,0.5) {\tiny 18};

\node[minimum size=0.4cm, thick] (e0) at (0,3.3) {\scriptsize $v_0$};
\node[minimum size=0.4cm, thick] (e6) at (6,3.3) {\scriptsize $v_1$};
\node[minimum size=0.4cm, thick] (e12) at (12,3.3) {\scriptsize $v_2$};
\node[minimum size=0.4cm, thick] (e18) at (18,3.3) {\scriptsize $v_3$};
\node[minimum size=0.4cm, thick] (e24) at (24,3.3) {\scriptsize $v_4$};
\node[minimum size=0.4cm, thick] (e30) at (30,3.3) {\scriptsize $v_5$};

\foreach \i in {3,9,...,30}
	\node[fill=white] (a\i) at (\i,-1) {};

\foreach \i in {0,2,...,30}
 \node[fill=white] (b\i) at (\i,0) {};
\foreach \i in {1,3,...,30}
 \node[fill=black] (b\i) at (\i,0) {};

\foreach \i in {0,6,...,30}
	\node[fill=black] (c\i) at (\i,1) {};
\foreach \i in {1,7,...,30}
	\node[fill=white] (c\i) at (\i,1) {};
\foreach \i in {2,8,...,30}
	\node[fill=black] (c\i) at (\i,1) {};
\foreach \i in {5,11,...,30}
	\node[fill=white] (c\i) at (\i,1) {};

\foreach \i in {0,6,...,30}
	\node[fill=white] (d\i) at (\i,2) {};
\foreach \i in {1,7,...,30}
	\node[fill=black] (d\i) at (\i,2) {};

\draw[dashed, BrickRed, thick] (3,-2) to (15,-2);

\draw [->] (-1,0) to (b0);
\draw [->] (b0) to (b1);
\draw [->] (b1) to (b2);
\draw [<-, thick, OliveGreen] (b2) to (b3);
\draw [->] (b3) to (b4);
\draw [->, thick, OliveGreen] (b4) to (b5);
\draw [->] (b5) to (b6);
\draw [->, thick, OliveGreen] (b6) to (b7);
\draw [->] (b7) to (b8);
\draw [->, thick, OliveGreen] (b8) to (b9);
\draw [->, thick, OliveGreen] (b9) to (b10);
\draw [<-] (b10) to (b11);
\draw [->, thick, OliveGreen] (b11) to (b12);
\draw [<-] (b12) to (b13);
\draw [->, thick, OliveGreen] (b13) to (b14);
\draw [<-] (b14) to (b15);
\draw [<-, thick, OliveGreen] (b15) to (b16);
\draw [<-, thick, OliveGreen] (b16) to (b17);
\draw [<-, thick, OliveGreen] (b17) to (b18);
\draw [<-, thick, OliveGreen] (b18) to (b19);
\draw [<-, thick, OliveGreen] (b19) to (b20);
\draw [<-] (b20) to (b21);
\draw [->] (b21) to (b22);
\draw [->] (b22) to (b23);
\draw [->] (b23) to (b24);
\draw [->] (b24) to (b25);
\draw [->] (b25) to (b26);
\draw [<-] (b26) to (b27);
\draw [->] (b27) to (b28);
\draw [->] (b28) to (b29);
\draw [->] (b29) to (b30);
\draw [->] (b30) to (31,0);

\draw [->] (-1,1) to (c0);
\draw [->] (c0) to (c1);
\draw [<-, thick, OliveGreen] (c1) to (c2);
\draw [->] (c2) to (c5);
\draw [->, thick, OliveGreen] (c5) to (c6);
\draw [->] (c6) to (c7);
\draw [->, thick, OliveGreen] (c7) to (c8);
\draw [->, thick, OliveGreen] (c8) to (c11);
\draw [<-] (c11) to (c12);
\draw [->, thick, OliveGreen] (c12) to (c13);
\draw [<-] (c13) to (c14);
\draw [<-] (c14) to (c17);
\draw [<-] (c17) to (c18);
\draw [<-] (c18) to (c19);
\draw [<-] (c19) to (c20);
\draw [<-, thick, OliveGreen] (c20) to (c23);
\draw [<-, thick, OliveGreen] (c23) to (c24);
\draw [<-, thick, OliveGreen] (c24) to (c25);
\draw [<-] (c25) to (c26);
\draw [<-] (c26) to (c29);
\draw [<-] (c29) to (c30);
\draw [<-] (c30) to (31,1);

\draw [->] (-1,2) to (d0);
\draw [<-, thick, OliveGreen] (d0) to (d1);
\draw [->] (d1) to (d6);
\draw [->, thick, OliveGreen] (d6) to (d7);
\draw [->, thick, OliveGreen] (d7) to (d12);
\draw [<-] (d12) to (d13);
\draw [<-] (d13) to (d18);
\draw [<-] (d18) to (d19);
\draw [<-] (d19) to (d24);
\draw [<-] (d24) to (d25);
\draw [<-, thick, OliveGreen] (d25) to (d30);
\draw [<-] (d30) to (31,2);

\draw [->] (2,-2) to [out=90, in=-150] (a3);
\draw [->, thick, OliveGreen] (8,-2) to [out=90, in=-150] (a9);
\draw [<-, thick, OliveGreen] (14,-2) to [out=90, in=-150] (a15);
\draw [->] (20,-2) to [out=90, in=-150] (a21);
\draw [->] (26,-2) to [out=90, in=-150] (a27);

\draw [->, thick, OliveGreen] (4,-2) to [out=90, in=-30] (a3);
\draw [<-, thick, OliveGreen] (10,-2) to [out=90, in=-30] (a9);
\draw [->] (16,-2) to [out=90, in=-30] (a15);
\draw [->] (22,-2) to [out=90, in=-30] (a21);
\draw [->] (28,-2) to [out=90, in=-30] (a27);

\draw [->] (0,-2) to (b0);
\draw [->] (1,-2) to [out=90,in=-90] (b2);
\draw [->, thick, OliveGreen] (5,-2) to [out=90,in=-90] (b4);
\draw [->, thick, OliveGreen] (6,-2) to (b6);
\draw [->, thick, OliveGreen] (7,-2) to [out=90,in=-90] (b8);
\draw [<-, thick, OliveGreen] (11,-2) to [out=90,in=-90] (b10);
\draw [<-, thick, OliveGreen] (12,-2) to (b12);
\draw [<-, thick, OliveGreen] (13,-2) to [out=90,in=-90] (b14);
\draw [->] (17,-2) to [out=90,in=-90] (b16);
\draw [->] (18,-2) to (b18);
\draw [->] (19,-2) to [out=90,in=-90] (b20);
\draw [->] (23,-2) to [out=90,in=-90] (b22);
\draw [->] (24,-2) to (b24);
\draw [->] (25,-2) to [out=90,in=-90] (b26);
\draw [->] (29,-2) to [out=90,in=-90] (b28);
\draw [->] (30,-2) to (b30);

\draw [->, thick, OliveGreen] (a3) to (b3);
\draw [<-] (a9) to (b9);
\draw [<-, thick, OliveGreen] (a15) to (b15);
\draw [->] (a21) to (b21);
\draw [->] (a27) to (b27);

\draw [->] (b1) to (c1);
\draw [->, thick, OliveGreen] (b2) to (c2);
\draw [->, thick, OliveGreen] (b5) to (c5);
\draw [->, thick, OliveGreen] (b7) to (c7);
\draw [<-] (b8) to (c8);
\draw [<-, thick, OliveGreen] (b11) to (c11);
\draw [<-, thick, OliveGreen] (b13) to (c13);
\draw [<-] (b14) to (c14);
\draw [->] (b17) to (c17);
\draw [->] (b19) to (c19);
\draw [<-, thick, OliveGreen] (b20) to (c20);
\draw [->] (b23) to (c23);
\draw [->] (b25) to (c25);
\draw [->] (b26) to (c26);
\draw [->] (b29) to (c29);

\draw [->] (c0) to (d0);
\draw [->, thick, OliveGreen] (c1) to (d1);
\draw [->, thick, OliveGreen] (c6) to (d6);
\draw [<-] (c7) to (d7);
\draw [<-, thick, OliveGreen] (c12) to (d12);
\draw [<-] (c13) to (d13);
\draw [->] (c18) to (d18);
\draw [<-] (c19) to (d19);
\draw [->] (c24) to (d24);
\draw [<-, thick, OliveGreen] (c25) to (d25);
\draw [->] (c30) to (d30);

\draw [->, thick, OliveGreen] (d0) to (e0);
\draw [<-] (d6) to (e6);
\draw [<-] (d12) to (e12);
\draw [<-] (d18) to (e18);
\draw [<-] (d24) to (e24);
\draw [<-, thick, OliveGreen] (d30) to (e30);

\end{tikzpicture}
\caption{Network $\Ac_4$.}
\label{net-A}
\end{figure}

Let us break the network $\Nc_n^{\mathrm{sym}_-}$ into a pair of networks $\Bc_n$ and $\Ac_n$ shown in Figures~\ref{net-B} and~\ref{net-A} respectively; there $\Ac_n$ is glued to the bottom part of $\Nc_n^{\mathrm{sf}_-}$ along the dashed line. For all $1 \le k \le n+1$, let $\mathscr B_k$ be the set of paths $\gamma \colon v_0 \to v_k$ in $\Bc_n$, and $\mathscr A_k$ be the set of paths $\gamma \colon v_k \to h_2$ in $\Ac_n$. We also denote by $\mathscr A_0$ the set of all paths $\gamma \colon t_1 \to h_2$ contained entirely in $\Ac_n$. Finally, we write $\gamma_1$ and $\gamma_2$ for the unique paths in $\Ac_n$ connecting $t_1$ to $v_0$ and $t_2$ to $h_2$ respectively.

Let us define
\beq
\label{alpha-def}
A_k^{(n+1)} = \pr\hr{\sum_{\gamma\in\mathscr A_k}X_{\hs{\gamma_1\gamma\gamma_2^{-1}}}}
\qquad\text{and}\qquad
B_k^{(n+1)} = \pr\hr{\sum_{\gamma\in\mathscr B_k}X_{[\gamma]}}
\eeq
where for a path $\gamma$, $[\gamma]$ denotes the homology class of the projection of that path in the network obtained by collapsing the boundary components of the punctured disk to points, and collapsing all vertices $v_0, \dots, v_{n+1}$ to a single vertex $v$. Set $B_0^{(n+1)}=1$, then it follows from Corollary~\ref{gsv-measure} that
$$
\iota_{\mathrm{sym}}(F_1) = \sum_{k=0}^{n+1} A_k^{(n+1)} \otimes B_k^{(n+1)},
$$
where both sides of the equation are regarded as operators on $\Pc_\la \simeq \Vc_n \otimes \C_\la$.

\begin{lemma}
The elements $B_k^{(n+1)}$ satisfy, and are determined by, the recurrence relation
$$
B_k^{(n+1)} = \omega_0^k \hr{q^{\delta_{k,1}-1} B_{k-1}^{(n)} + B_k^{(n)}} ,
$$
where $\delta_{i,j}$ is the Kronecker delta, and the initial conditions
$$
B_0^{(1)} = 1, \qquad B_1^{(1)} = \omega_n, \qquad B_k^{(1)} = 0 \qquad\text{if}\qquad k\ne0,1.
$$
Here $B_i^{(j)}$ is regarded as a function of the variables $\omega_{n-j}, \dots, \omega_n$.
\end{lemma}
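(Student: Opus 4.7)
My approach is to exploit the nested structure of the faces $\omega_0, \omega_1, \ldots, \omega_n$ in the network $\Bc_{n+1}$ as depicted in Figure~\ref{net-B}: these faces form a stack above the row of distinguished vertices $v_0, v_1, \ldots, v_{n+1}$, with $\omega_0$ playing the role of the innermost (2-valent) face. Peeling off this innermost layer leaves a subnetwork combinatorially isomorphic to $\Bc_n$, in which the variables $\omega_i$ of $\Bc_{n+1}$ play the role of $\omega_{i-1}$ in $\Bc_n$ after an index shift. The claimed recurrence is then the manifestation of this peeling operation on path sums.

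First I would argue that every path $\gamma\colon v_0\to v_k$ in $\Bc_{n+1}$ crosses the $\omega_0$-stratum exactly $k$ times, so that after applying $\pr$ the weight $X_{[\gamma]}$ acquires a common prefactor of $\omega_0^k$. Second, I would partition the set $\mathscr B_k$ into two subsets according to how the path enters and leaves the $\omega_0$-layer relative to the row of $v_i$'s. Each subset is in bijection with a set of paths in the reduced network $\Bc_n$: the first subset corresponds to paths $v_0\to v_k$ in $\Bc_n$, producing the summand $\omega_0^k\, B^{(n)}_k$; the second subset corresponds to paths $v_0\to v_{k-1}$ in $\Bc_n$, producing the summand $\omega_0^k\, B^{(n)}_{k-1}$ up to a quantum correction.

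The $q$-coefficient in front of $B^{(n)}_{k-1}$ is the delicate ingredient. It arises because the lifted homology classes on the universal cover $\wdt\Gamma$ for this second type of path have a nontrivial self-pairing under the form $\ha{\cdot,\cdot}$ from~\eqref{formdef}, and the map $\pr$ of~\eqref{pr} therefore introduces a prefactor of $q^{-1}$ in the generic case. The special behavior at $k=1$ reflects the fact that there is only a single $\omega_0$-crossing, so one of the contributions to $\ha{\lambda,\lambda}$ degenerates, cancelling one factor of $q^{-1}$ and producing the $\delta_{k,1}$ correction in the exponent. The base cases $B^{(1)}_k$ are then verified by direct enumeration of paths in the minimal nontrivial network, and uniqueness of the sequence $B^{(n+1)}_k$ determined by the recurrence together with these initial values is automatic from the triangular form of the recursion.

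I expect the principal technical obstacle to be the careful bookkeeping of these $\pr$-corrections rather than the combinatorial decomposition of paths. One must explicitly compute $\ha{\lambda,\mu}$ on each pair of lifted path classes that arise, keeping track of the ordering of the non-commutative factors in the quantum torus algebra, and confirm that the resulting power of $q$ matches the claimed coefficient $q^{\delta_{k,1}-1}$ in all cases.
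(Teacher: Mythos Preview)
Your proposal is correct and follows essentially the same approach as the paper: partition $\mathscr B_k$ into two subsets, identify each with a family of paths in the smaller network $\Bc_n$, and track the $q$-power coming from the $\pr$ map. The paper's version is slightly more concrete in that it specifies the splitting criterion directly as the direction (left versus right) of the penultimate edge of $\gamma$ before it reaches $v_k$, which makes the two bijections with $\mathscr B_k^{(n)}$ and $\mathscr B_{k-1}^{(n)}$ and the resulting $q$-coefficient immediate to read off; your ``how the path enters and leaves the $\omega_0$-layer'' is the same dichotomy phrased more abstractly.
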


\begin{proof}
It is evident that the initial conditions are satisfied. To obtain the recurrence, we write the set of paths $\mathscr B_k$ as a disjoint union of two subsets: $\mathscr B_k^\shortleftarrow$ and $\mathscr B_k^\shortrightarrow$. These subsets are defined as follows. Note that the last edge of any path $\gamma \in \mathscr B_k$ points down to $v_k$, while the one before last points either to the left, in which case we declare $\gamma \in \mathscr B_k^\shortleftarrow$, or to the right, then $\gamma \in \mathscr B_k^\shortrightarrow$. Now, one can check that
$$
\pr\hr{\sum_{\gamma\in\mathscr B_k^\shortleftarrow} X_{[\gamma]}} = \omega_0^k B_k^{(n)}
\qquad\text{and}\qquad
\pr\hr{\sum_{\gamma\in\mathscr B_k^\shortrightarrow}X_{[\gamma]}} = q^{\delta_{k,1}-1}\omega_0^k B_{k-1}^{(n)},
$$
and the statement of the Lemma follows.
\end{proof}

\begin{cor}
Set  $\varpi_k = \prod_{0\le j\le k}\omega_j$. Then we have
\beq
\label{sym-fn-calc}
B_k^{(n+1)} = q^{1-k}e_k(\varpi_0, \ldots, \varpi_n) \qquad\text{for}\qquad 1\le k\le n+1,
\eeq
where $e_k(\varpi_0, \ldots, \varpi_n)$ is the $k$-th elementary symmetric polynomial in the variables $\varpi_j$. 
\end{cor}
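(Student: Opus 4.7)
I proceed by induction on $n$, passing through a slightly more general abstract statement. For each $m \ge 1$ introduce formal commuting variables $x_0, \ldots, x_{m-1}$ and polynomials $\wdt B^{(m)}_k(x_0, \ldots, x_{m-1})$ defined by $\wdt B^{(m)}_0 = 1$, $\wdt B^{(m)}_k = 0$ for $k > m$, $\wdt B^{(1)}_1(x_0) = x_0$, and the recurrence
\[
\wdt B^{(m+1)}_k(x_0, \ldots, x_m) \;=\; x_0^k\bigl(q^{\delta_{k,1}-1}\,\wdt B^{(m)}_{k-1}(x_1, \ldots, x_m) + \wdt B^{(m)}_k(x_1, \ldots, x_m)\bigr).
\]
The same path-decomposition argument used to prove the preceding Lemma applies to the sub-networks of $\Bc_n$ obtained by truncating from the bottom, and shows that under the substitution $x_i \mapsto \omega_{n-m+i+1}$ one recovers the $B^{(m)}_k$ of the text. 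The Corollary then reduces, via $m = n+1$ and $x_i = \omega_i$, to the identity
\[
\wdt B^{(m)}_k(x_0, \ldots, x_{m-1}) \;=\; q^{1-k}\,e_k(y_0, y_1, \ldots, y_{m-1}),\qquad y_i := x_0 x_1 \cdots x_i,
\]
valid for all $1 \le k \le m$.

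Before running the induction I would record a key commutation fact used throughout: inspection of $\Qc_n^{\mathrm{sym}}$ shows that $\omega_0, \omega_1, \ldots, \omega_n$ commute pairwise in the quantum torus, since the vertices carrying $\omega_1, \ldots, \omega_n$ are disconnected, and the $2$-valent vertex carrying $\omega_0$ has both of its neighbours outside $\{\omega_0, \ldots, \omega_n\}$. Consequently the $\varpi_k$'s commute pairwise and $e_k(\varpi_0, \ldots, \varpi_n)$ is unambiguously defined.

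The induction itself is then a direct calculation. The base case $m=1$ reads $\wdt B^{(1)}_1(x_0) = x_0 = e_1(y_0)$. For the inductive step, substitute the hypothesis into the recurrence. Using $q^{\delta_{k,1}-1}\cdot q^{2-k} = q^{1-k}$ when $k \ge 2$, and handling $k=1$ separately via $\wdt B^{(m)}_0 = 1$ together with $q^{\delta_{1,1}-1}=q^0$, one obtains uniformly
\[
\wdt B^{(m+1)}_k \;=\; q^{1-k}\,x_0^k\bigl(e_{k-1}(y'_0, \ldots, y'_{m-1}) + e_k(y'_0, \ldots, y'_{m-1})\bigr), \qquad y'_i := x_1 \cdots x_{i+1}.
\]
To identify this with $q^{1-k}e_k(y_0, \ldots, y_m)$, I apply the classical identity $e_k(z_0, z_1, \ldots, z_m) = e_k(z_1, \ldots, z_m) + z_0\,e_{k-1}(z_1, \ldots, z_m)$ with $z_0 = y_0 = x_0$ and $z_i = y_i = x_0 y'_{i-1}$, and use commutativity to pull $x_0^j$ out of $e_j(x_0 y'_0, \ldots, x_0 y'_{m-1})$. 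This closes the induction and yields the formula.

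No genuine obstacle arises; the only delicate piece of bookkeeping is the $q$-factor $q^{\delta_{k,1}-1}$ in the recurrence. The formula $\wdt B^{(m)}_k = q^{1-k}e_k$ does not extend to $k=0$ (it would predict $q$ rather than the correct value $1$), but this potential mismatch is exactly compensated by the anomalous value $q^0$ of the coefficient at $k=1$, so the identification between the recurrence and the classical $e_k$-recursion goes through cleanly for every $1 \le k \le m+1$.
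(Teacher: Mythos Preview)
Your proof is correct and is precisely the straightforward induction the paper leaves to the reader: verify that $q^{1-k}e_k(\varpi_0,\ldots,\varpi_n)$ satisfies the recurrence and initial conditions of the preceding Lemma, using the classical identity $e_k(z_0,\ldots,z_m)=e_k(z_1,\ldots,z_m)+z_0\,e_{k-1}(z_1,\ldots,z_m)$ together with homogeneity. Your handling of the anomalous $q$-factor at $k=1$ and your observation that the $\omega_j$ pairwise commute are exactly the points that need checking.
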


Note that under the specialization~\eqref{spec}, we have 
$$
\varpi_k = e^{2\pi\hbar\lambda_k} \qquad\text{for all}\qquad 0\le k\le n.
$$ 
Therefore combining~\eqref{sym-fn-calc} with Proposition~\ref{gsv-measure}, we obtain
\begin{prop}
As an operator on $\Pc_\la \simeq  \Vc_n \otimes \C_\la$, we have
\beq
\label{single-formula}
\iota_{\mathrm{sym}}(F_1) = A_0^{(n+1)} \otimes 1 + \sum_{k=1}^{n+1}q^{1-k} A_k^{(n+1)} \otimes e_k\hr{e^{2\pi\hbar\lambda_0},\ldots,e^{2\pi\hbar\lambda_n}}.
\eeq
\end{prop}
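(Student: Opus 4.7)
The statement is a combinatorial bookkeeping result: starting from Corollary~\ref{gsv-measure} specialized to $i=1$, we have
$$
\iota_{\mathrm{sym}}(F_1) \;=\; M_n^{\mathrm{sym}_-}(1,2;2,2) \;=\; \pr\!\hr{Z_{\Nc_n^{\mathrm{sym}_-}}(t_1,h_2;t_2,h_2)}.
$$
So the first step of the plan is to expand the right-hand side as a weighted sum over directed paths $\gamma\colon t_1\to h_2$ in $\Nc_n^{\mathrm{sym}_-}$, with weights computed relative to the reference path $\gamma_2\colon t_2\to h_2$ living entirely in $\Ac_n$.

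Next, I would classify these paths according to how they interact with the horizontal interface $\{v_0,v_1,\dots,v_{n+1}\}$ along which $\Nc_n^{\mathrm{sym}_-}$ decomposes as $\Ac_n\cup\Bc_n$. Inspection of Figures~\ref{net-B} and~\ref{net-A} shows that a directed path from $t_1$ to $h_2$ either stays entirely in $\Ac_n$ (contributing $\mathscr A_0$), or it enters $\Bc_n$ through the unique incoming interface vertex $v_0$, wanders up into $\Bc_n$ along some $\beta\in\mathscr B_k$ terminating at some $v_k$ with $k\geq1$, and then returns to $h_2$ via some $\alpha\in\mathscr A_k$ inside $\Ac_n$. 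In the second case the full path is the concatenation $\gamma_1\beta\alpha$, and its homology class relative to $\gamma_2$ splits (after collapsing the interface $\{v_0,\ldots,v_{n+1}\}$ to a point and the two boundary components to points) into the $\Ac_n$-part $[\gamma_1\alpha\gamma_2^{-1}]$ and the $\Bc_n$-part $[\beta]$. This is the geometric origin of the factorization $A_k^{(n+1)}\otimes B_k^{(n+1)}$ in~\eqref{alpha-def}.

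The third step is to promote this combinatorial decomposition to an operator identity. Here one uses the concatenation lemma (Lemma~\ref{rep-concatenation}) together with the observation that the subquivers $\Qc_n^{\mathrm{sym}\dag}$ and $\Qc_n^{\mathrm{com}}$ give a splitting of $\Dsym$ under which $\Pc_\la\simeq\Vc_n\otimes\C_\la$; the $A_k^{(n+1)}$ live in the first tensor factor, while the $B_k^{(n+1)}$ depend only on $\omega_0,\ldots,\omega_n$ and hence live in the second. One must check that the $q$-commutation correction implemented by $\pr$ indeed factors as a product of the corrections for the $\Ac_n$- and $\Bc_n$-pieces; this follows from the bilinear form~\eqref{formdef} by noting that after the collapse, the faces of $\Bc_n$ are disjoint from the faces of $\Ac_n$, so the cross-terms $\langle\cdot,\cdot\rangle$ between $[\gamma_1\alpha\gamma_2^{-1}]$ and $[\beta]$ vanish. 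The outcome is
$$
\iota_{\mathrm{sym}}(F_1) \;=\; A_0^{(n+1)}\otimes 1 \,+\, \sum_{k=1}^{n+1} A_k^{(n+1)}\otimes B_k^{(n+1)}.
$$

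Finally, I would substitute the closed form $B_k^{(n+1)} = q^{1-k}e_k(\varpi_0,\ldots,\varpi_n)$ from the preceding Corollary, and use the prescription~\eqref{spec} to specialize the action of $\omega_0,\omega_1,\ldots,\omega_n$ on $\C_\la$, giving $\varpi_k = \omega_0\omega_1\cdots\omega_k \mapsto e^{2\pi\hbar\lambda_k}$ for $0\le k\le n$. Plugging this into the symmetric functions yields the claimed expression. The main technical obstacle is the \emph{clean separation} step: one needs to confirm that the quantum weights of paths $\gamma_1\beta\alpha$ relative to $\gamma_2$ really do split as a product of an $\Ac_n$-weight and a $\Bc_n$-weight, with no residual quantum correction between the two halves. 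I expect this to reduce to the orthogonality $(e_i^{\mathrm{sym}\dag},e_j^{\mathrm{com}})=0$ for variables in the two subquivers, together with the path structure of Figures~\ref{net-B} and~\ref{net-A} that forces every path through the interface to use $v_0$ as its entry point.
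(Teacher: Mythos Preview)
Your proposal is correct and follows the same route as the paper. In fact, the paper treats the factorization
\[
\iota_{\mathrm{sym}}(F_1) = \sum_{k=0}^{n+1} A_k^{(n+1)} \otimes B_k^{(n+1)}
\]
as already established (it is stated just before the Lemma on the recurrence for $B_k^{(n+1)}$, with the remark that it ``follows from Corollary~\ref{gsv-measure}''), and then obtains the Proposition in one line by substituting the closed form $B_k^{(n+1)}=q^{1-k}e_k(\varpi_0,\ldots,\varpi_n)$ and the specialization $\varpi_k\mapsto e^{2\pi\hbar\lambda_k}$. Your outline supplies exactly the path-classification argument (through the interface $\{v_0,\ldots,v_{n+1}\}$) that the paper leaves implicit; the ``clean separation'' concern you raise is real but is handled, as you anticipate, by the fact that the $B_k^{(n+1)}$ involve only the commutative variables $\omega_0,\ldots,\omega_n$ of $\Dc_n^{\mathrm{com}}$, which pair trivially with the $\Dc_n^{\mathrm{sym}\dag}$ variables appearing in $A_k^{(n+1)}$ under the concatenation of Lemma~\ref{rep-concatenation}.
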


\subsection{Coxeter embedding of $\Delta(U_q(\sl_{n+1}))$}
\label{diag-embed-top}

The middle part of quiver $\Qfull$ in Figure~\ref{Q4-full} coincides with the quiver shown in Figure~\ref{fig-Qbox}, which describes factorization coordinates on $G^{w_0,w_0}/\Ad_H$. Thus we may perform the sequence of mutations exaplined in~\ref{subsec-muts-double1} and~\ref{subsec-muts-double2} on $\Qfull$. We denote the resulting quiver $\Qcox$, see Figure~\ref{Q4-cox}, the corresponding seed $\Theta_n^{\mathrm{cox}}$, and the quantum torus algebra $\Dcox$. We also denote by $\Nc_n^{\mathrm{cox}_\pm}$ the networks obtained from $\Nc_n^{\mathrm{full}_\pm}$ under these mutations. The top part of the network $\Nc_4^{\mathrm{cox}_-}$ is shown in Figure~\ref{net-cox}.

Let us denote by $\Phi^{\mathrm{cox}}$ the unitary operator on $\mathcal{P}_{\lambda}\otimes\mathcal{P}_\mu$ given by the composite of all quantum mutations transforming the seed $\Theta_n^{\mathrm{full}}$ into $\Theta_n^{\mathrm{cox}}$. 

\begin{cor}
\label{cox-embed}
Let
$$
M_n^{\mathrm{cox}_\pm}(i,j;k,l) = \mathrm{pr}\hr{Z_{\Nc_n^{\mathrm{cox}_\pm}}(t_i,h_j;t_k,h_l)}
$$
be the boundary measurements in the networks $\Nc_n^{\mathrm{cox}_\pm}$, and set
$$
\tau_{\mathrm{cox}} = \Ad_{\Phi^{\mathrm{cox}}\circ\Phi^{\mathrm{top}}}\cdot\tau.
$$
Then we have
\begin{align*}
&\tau_{\mathrm{cox}}(K_i) = {M_n^{\mathrm{cox}_+}(i+1,i+1;i,i)}, &
&\tau_{\mathrm{cox}}(E_i) = {M_n^{\mathrm{cox}_+}(i+1,i;i,i)}, \\
&\tau_{\mathrm{cox}}(K'_i) = {M_n^{\mathrm{cox}_-}(i,i;i+1,i+1)}, &
&\tau_{\mathrm{cox}}(F_i) = {M_n^{\mathrm{cox}_-}(i,i+1;i+1,i+1)}.
\end{align*}

\end{cor}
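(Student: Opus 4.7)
The plan is to deduce Corollary~\ref{cox-embed} from Corollary~\ref{cor-full} by tracking how the boundary measurements transform under the mutation sequence carrying $\Theta_n^{\mathrm{full}}$ to $\Theta_n^{\mathrm{cox}}$. By Corollary~\ref{cor-full}, after applying $\Ad_{\Phi^{\mathrm{top}}}$ we already have the desired network-theoretic expressions for the Chevalley generators in terms of the boundary measurements $M_n^{\mathrm{full}_\pm}(i,j;k,l)$. It therefore remains to show that conjugation by $\Phi^{\mathrm{cox}}$ transforms these measurements into the measurements $M_n^{\mathrm{cox}_\pm}(i,j;k,l)$ of the networks $\Nc_n^{\mathrm{cox}_\pm}$.

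First I would identify the portion of $\Qfull$ on which the mutation sequence defining $\Phi^{\mathrm{cox}}$ acts. As noted in Section~\ref{subsec-full-toda}, the middle part of $\Qfull$ coincides with the quiver $\Qc_n^{\mathrm{box}}$ of Figure~\ref{fig-Qbox}, which encodes factorization coordinates on the reduced double Bruhat cell $G^{w_0,w_0}/\Ad_H$. The combinatorial description in Sections~\ref{subsec-muts-double1} and~\ref{subsec-muts-double2} realizes the transformations $\phi_1$ and $\phi_2$ as explicit sequences of quiver mutations taking $\Qc_n^{\mathrm{box}}$ to $\Qc_n^{\mathrm{cone}}$ and then to $\Qc_n^{\mathrm{candy}}$. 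Since the quiver lives on an annulus (two punctures in $D_{2,2}$), these mutations must be interpreted on the universal cover: each mutation at a node of the annular quiver is replaced by the simultaneous sequence of commuting mutations at all vertices in its fibre in the planar cover, exactly as in Section~\ref{single-copy-muts}.

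Next I would combine two tools to push the measurement formulas through this mutation sequence. Proposition~\ref{measure-invariance} guarantees that on the universal cover, the weighted path counts $Z_{\wdt\Nc}(v,w;v_0,w_0)$ transform covariantly under each individual quantum mutation in the sequence: if $\wdt\Nc'$ is obtained from $\wdt\Nc$ by a face mutation, then the corresponding quantum mutation carries $Z_{\wdt\Nc}$ to $Z_{\wdt\Nc'}$. Proposition~\ref{mut-descent} then allows us to descend via the projection $\pr$ back to the annular networks, because at each stage in the mutation sequence the hypothesis $(e_k^n,e_k^m)=0$ for all $n,m$ is satisfied (this is automatic since the mutations of $\phi_1,\phi_2$ occur at vertices whose lifts to the universal cover are mutually non-adjacent, the point being exactly why these mutations commute in fibres). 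Composing these two propositions along the whole mutation sequence, I conclude that
\[
\Ad_{\Phi^{\mathrm{cox}}}\bigl(M_n^{\mathrm{full}_\pm}(i,j;k,l)\bigr) = M_n^{\mathrm{cox}_\pm}(i,j;k,l),
\]
where $\Nc_n^{\mathrm{cox}_\pm}$ are by definition the networks obtained from $\Nc_n^{\mathrm{full}_\pm}$ by the same mutation sequence, as indicated in Figure~\ref{net-cox}.

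Composing this with Corollary~\ref{cor-full} yields the four claimed identities for $\tau_{\mathrm{cox}}(K_i)$, $\tau_{\mathrm{cox}}(E_i)$, $\tau_{\mathrm{cox}}(K'_i)$, $\tau_{\mathrm{cox}}(F_i)$. The main obstacle, and the place where care is needed, is the bookkeeping in the descent step: one must verify that every mutation in the explicit sequences of Sections~\ref{subsec-muts-double1}--\ref{subsec-muts-double2}, when lifted to the universal cover of the annulus, occurs at vertices whose lifts are pairwise non-adjacent so that Proposition~\ref{mut-descent} applies, and that the choice of reference path $\gamma_0$ used to define the weighted path counts is consistent throughout the sequence. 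Granting this essentially combinatorial verification, which mirrors the single-copy argument already carried out in the proof of Corollary~\ref{gsv-measure}, the corollary follows.
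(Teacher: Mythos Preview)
Your proposal is correct and matches the paper's approach. The paper does not write out an explicit proof of this corollary, but the surrounding text makes clear that it follows from Corollary~\ref{cor-full} together with Propositions~\ref{measure-invariance} and~\ref{mut-descent}, exactly as in the parallel argument for Corollary~\ref{gsv-measure}; your write-up spells out precisely this reasoning.
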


\begin{figure}[p]
\begin{tikzpicture}[every node/.style={inner sep=0, minimum size=0.4cm, thick, draw, circle}, x=0.75cm, y=0.5cm]

\node [rectangle] (1) at (-1,0) {\scriptsize 1};
\node (3) at (0,2) {\scriptsize 3};
\node [rectangle] (5) at (1,0) {\scriptsize 5};
\node [rectangle] (6) at (-2,2) {\scriptsize 6};
\node (2) at (-1,4) {\scriptsize 2};
\node (10) at (0,6) {\scriptsize 10};
\node (4) at (1,4) {\scriptsize 4};
\node [rectangle] (14) at (2,2) {\scriptsize 14};
\node [rectangle] (15) at (-3,4) {\scriptsize 15};
\node (7) at (-2,6) {\scriptsize 7};
\node (9) at (-1,8) {\scriptsize 9};
\node (11) at (1,8) {\scriptsize 11};
\node (13) at (2,6) {\scriptsize 13};
\node [rectangle] (27) at (3,4) {\scriptsize 27};
\node [rectangle] (28) at (-4,6) {\scriptsize 28};
\node (16) at (-3,8) {\scriptsize 16};
\node (26) at (3,8) {\scriptsize 26};
\node [rectangle] (44) at (4,6) {\scriptsize 44};

\node (8) at (-2,10) {\scriptsize 8};
\node (21) at (0,10) {\scriptsize 21};
\node (12) at (2,10) {\scriptsize 12};
\node (45) at (-1,12) {\scriptsize 45};
\node (25) at (1,12) {\scriptsize 25};
\node (43) at (0,14) {\scriptsize 43};
\node (49) at (-1,16) {\scriptsize 49};
\node (42) at (1,16) {\scriptsize 42};
\node (47) at (-1,18) {\scriptsize 47};
\node (46) at (1,18) {\scriptsize 46};
\node (35) at (-1,20) {\scriptsize 35};
\node (22) at (1,20) {\scriptsize 22};
\node (41) at (-1,22) {\scriptsize 41};
\node (29) at (1,22) {\scriptsize 29};
\node (37) at (0,24) {\scriptsize 37};
\node (24) at (-1,26) {\scriptsize 24};
\node (48) at (1,26) {\scriptsize 48};
\node (50) at (-2,28) {\scriptsize 50};
\node (20) at (0,28) {\scriptsize 20};
\node (40) at (2,28) {\scriptsize 40};

\node (38) at (-3,30) {\scriptsize 38};
\node (34) at (-1,30) {\scriptsize 34};
\node (30) at (1,30) {\scriptsize 30};
\node (23) at (3,30) {\scriptsize 23};

\node (51) at (0,32) {\scriptsize 51};
\node (19) at (-2,32) {\scriptsize 19};
\node (33) at (-1,34) {\scriptsize 33};
\node (32) at (0,36) {\scriptsize 32};
\node (31) at (1,34) {\scriptsize 31};
\node (17) at (2,32) {\scriptsize 17};
\node (39) at (-4,32) {\scriptsize 39};
\node (36) at (-3,34) {\scriptsize 36};
\node (18) at (-2,36) {\scriptsize 18};
\node (52) at (-1,38) {\scriptsize 52};

\draw [->,thick] (19) -- (33);

\draw [->,thick] (33) -- (32);
\draw [->,thick] (32) -- (31);

%\draw [->,thick] (33) to [bend left = 15] (52);
%\draw [->,thick] (52) to [bend left = 15] (31);
\draw [->,thick] (31) -- (17);

\draw [->,thick] (51) -- (31);
\draw [->,thick] (31) to [out = 140, in = -30] (18);
\draw [->,thick] (18) to (33);
\draw [->,thick] (33) -- (51);

\draw [->,thick] (17) to [out = 145, in =-30] (36);
\draw [->,thick] (36) to (19);

\draw [->,thick] (31) -- (33);
\draw [->,thick] (17) -- (51);
\draw [->,thick] (51) -- (19);
\draw [->,thick] (39) to [bend left = 15] (17);
\draw [->,thick] (36) to [bend left = 20] (31);
\draw [->,thick] (33) to (36);
\draw [->,thick] (19) to (39);

\draw[->, thick] (23) to [out=145, in=-20] (39);
\draw[->, thick] (39) to (38);
\draw[->, thick] (38) to (19);
\draw[->, thick] (19) to (34);
\draw[->, thick] (34) to (51);
\draw[->, thick] (51) to (30);
\draw[->, thick] (30) to (17);
\draw[->, thick] (17) to (23);
\draw[->, thick] (23) to (40);
\draw[->, thick] (40) to (30);
\draw[->, thick] (30) to (20);
\draw[->, thick] (20) to (34);
\draw[->, thick] (34) to (50);

\draw[BrickRed, dashed] (-4,24) to (4,24);

\draw [->, thick] (16) -- (8);
\draw [->, thick] (8) -- (9);
\draw [->, thick] (9) -- (21);
\draw [->, thick] (21) -- (11);
\draw [->, thick] (11) -- (12);
\draw [->, thick] (12) -- (21);
\draw [->, thick] (21) -- (8);
\draw [->, thick] (8) -- (45);
\draw [->, thick] (45) -- (21);
\draw [->, thick] (21) -- (25);
\draw [->, thick] (25) -- (45);
\draw [->, thick] (45) -- (43);

\draw [->, thick] (43) to [out = -60, in=120] (25);
\draw [->, thick] (25) to [bend right = 21] (45);
\draw [->, thick] (45) to [out = -30, in=135] (12);
\draw [->, thick] (12) to [bend right = 18] (8);
\draw [->, thick] (8) to [out = -20, in=150] (26);
\draw [->, thick] (26) to [bend right = 15] (16);

\draw [->, thick] (42) -- (43);
\draw [->, thick] (43) to [out = 120, in=-60] (49);
\draw [->, thick] (49) -- (42);
\draw [->, thick] (42) -- (47);
\draw [->, thick] (47) -- (46);
\draw [->, thick] (46) -- (35);
\draw [->, thick] (35) -- (22);
\draw [->, thick] (22) -- (41);
\draw [->, thick] (41) -- (29);
\draw [->, thick] (29) -- (37);
\draw [->, thick] (37) -- (41);
\draw [->, thick] (41) to [bend left = 21] (29);
\draw [->, thick] (29) to [out = -135, in=45] (35);
\draw [->, thick] (35) to [bend left = 21] (22);
\draw [->, thick] (22) to [out = -135, in=45] (47);
\draw [->, thick] (47) to [bend left = 21] (46);
\draw [->, thick] (46) to [out = -135, in=45] (49);
\draw [->, thick] (49) to [bend left = 21] (42);

\draw [->, thick] (50) -- (20);
\draw [->, thick] (20) -- (40);
\draw [->, thick] (40) -- (48);
\draw [->, thick] (48) -- (20);
\draw [->, thick] (20) -- (24);
\draw [->, thick] (24) -- (48);
\draw [->, thick] (48) -- (37);

\draw [->, thick] (37) to [out = 120, in=-60] (24);
\draw [->, thick] (24) to [bend left = 21] (48);
\draw [->, thick] (48) to [out = 140, in=-40] (50);
\draw [->, thick] (50) to [bend left = 18] (40);
\draw [->, thick] (40) to [out = 140, in=-20] (38);
\draw [->, thick] (38) to [bend left = 15] (23);

\draw[BrickRed, dashed] (-4,14) to (4,14);

\draw [->,thick] (1) -- (5);

\draw [->,thick] (6) -- (3);
\draw [->,thick] (3) -- (14);

\draw [->,thick] (15) -- (2);
\draw [->,thick] (2) -- (4);
\draw [->,thick] (4) -- (27);

\draw [->,thick] (28) -- (7);
\draw [->,thick] (7) -- (10);
\draw [->,thick] (10) -- (13);
\draw [->,thick] (13) -- (44);

\draw [->,thick] (5) -- (3);
\draw [->,thick] (3) -- (2);
\draw [->,thick] (2) -- (7);
\draw [->,thick] (7) -- (16);

\draw [->,thick] (26) -- (13);
\draw [->,thick] (13) -- (4);
\draw [->,thick] (4) -- (3);
\draw [->,thick] (3) -- (1);

\draw [->,thick] (14) -- (4);
\draw [->,thick] (4) -- (10);
\draw [->,thick] (10) -- (9);
\draw [->,thick] (11) -- (10);
\draw [->,thick] (10) -- (2);
\draw [->,thick] (2) -- (6);

\draw [->,thick] (27) -- (13);
\draw [->,thick] (13) -- (11);
\draw [->,thick] (9) -- (7);
\draw [->,thick] (7) -- (15);

\draw [->,thick] (44) -- (26);
\draw [->,thick] (16) -- (28);

\draw [->,thick,dashed] (44) -- (27);
\draw [->,thick,dashed] (27) -- (14);
\draw [->,thick,dashed] (14) -- (5);
\draw [->,thick,dashed] (1) -- (6);
\draw [->,thick,dashed] (6) -- (15);
\draw [->,thick,dashed] (15) -- (28);

\end{tikzpicture}
\caption{Quiver $\Qc_4^{\mathrm{cox}}$.}
\label{Q4-cox}
\end{figure}

\begin{figure}[h]
\begin{tikzpicture}[every node/.style={inner sep=0, minimum size=0.15cm, circle, draw}, x=0.45cm,y=0.45cm]

\foreach \i in {3,9,...,30}
{
	\node[draw=none, text=blue] at (\i-2,-0.5) {\tiny 11};
	\node[draw=none, text=blue] at (\i-0.5,-0.5) {\tiny 26};
	\node[draw=none, text=blue] at (\i+0.5,-0.5) {\tiny 16};
	\node[draw=none, text=blue] at (\i+2,-0.5) {\tiny 9};

	\node[draw=none, text=blue] at (\i-3,0.5) {\tiny 21};
	\node[draw=none, text=blue] at (\i-1.5,0.5) {\tiny 12};
	\node[draw=none, text=blue] at (\i+0.5,0.5) {\tiny 8};

	\node[draw=none, text=blue] at (\i-2.5,1.5) {\tiny 25};
	\node[draw=none, text=blue] at (\i+0.5,1.5) {\tiny 45};

	\node[draw=none, text=blue] at (\i,2.5) {\tiny 43};

	\node[draw=none, text=blue] at (\i-1.5,3.5) {\tiny 42};
	\node[draw=none, text=blue] at (\i+1.5, 3.5) {\tiny 49};
	\node[draw=none, text=blue] at (\i-1.5,4.5) {\tiny 47};
	\node[draw=none, text=blue] at (\i+1.5,4.5) {\tiny 46};
	\node[draw=none, text=blue] at (\i-1.5,5.5) {\tiny 22};
	\node[draw=none, text=blue] at (\i+1.5,5.5) {\tiny 35};
	\node[draw=none, text=blue] at (\i-1.5,6.5) {\tiny 41};
	\node[draw=none, text=blue] at (\i+1.5,6.5) {\tiny 29};

	\node[draw=none, text=blue] at (\i,7.5) {\tiny 37};

	\node[draw=none, text=blue] at (\i+2.5,8.5) {\tiny 24};
	\node[draw=none, text=blue] at (\i-0.5,8.5) {\tiny 48};

	\node[draw=none, text=blue] at (\i-3,9.5) {\tiny 20};
	\node[draw=none, text=blue] at (\i-0.5,9.5) {\tiny 40};
	\node[draw=none, text=blue] at (\i+1.5,9.5) {\tiny 50};

	\node[draw=none, text=blue] at (\i-2,10.5) {\tiny 30};
	\node[draw=none, text=blue] at (\i-0.5,10.5) {\tiny 23};
	\node[draw=none, text=blue] at (\i+0.5,10.5) {\tiny 38};
	\node[draw=none, text=blue] at (\i+2,10.5) {\tiny 34};
}

\node[draw=none, text=blue] at (30,0.5) {\tiny 18};
\node[draw=none, text=blue] at (30,9.5) {\tiny 20};

\foreach \i in {0,2,...,30}
{
	\node[fill=white] (b\i) at (\i,0) {};
	\node[fill=white] (l\i) at (\i,10) {};
}

\foreach \i in {1,3,...,30}
{
	\node[fill=black] (b\i) at (\i,0) {};
	\node[fill=black] (l\i) at (\i,10) {};
}

\foreach \i in {0,6,...,30}
{
	\node[fill=black] (c\i) at (\i,1) {};
	\node[fill=white] (d\i) at (\i,2) {};
	\node[fill=white] (j\i) at (\i,8) {};
	\node[fill=white] (e\i) at (\i,3) {};
	\node[fill=black] (f\i) at (\i,4) {};
	\node[fill=white] (g\i) at (\i,5) {};
	\node[fill=black] (h\i) at (\i,6) {};
	\node[fill=white] (i\i) at (\i,7) {};
	\node[fill=black] (k\i) at (\i,9) {};
}

\foreach \i in {3,9,...,30}
{
	\node[fill=white] (a\i) at (\i,-1) {};
	\node[fill=black] (e\i) at (\i,3) {};
	\node[fill=white] (f\i) at (\i,4) {};
	\node[fill=black] (g\i) at (\i,5) {};
	\node[fill=white] (h\i) at (\i,6) {};
	\node[fill=black] (i\i) at (\i,7) {};
	\node[fill=white] (m\i) at (\i,11) {};
}

\foreach \i in {1,7,...,30}
{
	\node[fill=white] (c\i) at (\i,1) {};
	\node[fill=black] (d\i) at (\i,2) {};
	\node[fill=white] (k\i) at (\i,9) {};
}

\foreach \i in {2,8,...,30}
	\node[fill=black] (c\i) at (\i,1) {};

\foreach \i in {4,10,...,30}
	\node[fill=black] (k\i) at (\i,9) {};

\foreach \i in {5,11,...,30}
{
	\node[fill=white] (c\i) at (\i,1) {};
	\node[fill=black] (j\i) at (\i,8) {};
	\node[fill=white] (k\i) at (\i,9) {};
}
\draw[dashed, BrickRed, thick] (3,-2) to (15,-2);

\draw [->] (-1,0) to (b0);
\draw [->] (b0) to (b1);
\draw [->] (b1) to (b2);
\draw [<-, OliveGreen] (b2) to (b3);
\draw [->] (b3) to (b4);
\draw [->, OliveGreen] (b4) to (b5);
\draw [->] (b5) to (b6);
\draw [->, OliveGreen] (b6) to (b7);
\draw [->] (b7) to (b8);
\draw [->, OliveGreen] (b8) to (b9);
\draw [->, OliveGreen] (b9) to (b10);
\draw [<-] (b10) to (b11);
\draw [->, OliveGreen] (b11) to (b12);
\draw [<-] (b12) to (b13);
\draw [->, OliveGreen] (b13) to (b14);
\draw [<-] (b14) to (b15);
\draw [<-, OliveGreen] (b15) to (b16);
\draw [<-, OliveGreen] (b16) to (b17);
\draw [<-, OliveGreen] (b17) to (b18);
\draw [<-, OliveGreen] (b18) to (b19);
\draw [<-, OliveGreen] (b19) to (b20);
\draw [<-] (b20) to (b21);
\draw [->] (b21) to (b22);
\draw [->] (b22) to (b23);
\draw [->] (b23) to (b24);
\draw [->] (b24) to (b25);
\draw [->] (b25) to (b26);
\draw [<-] (b26) to (b27);
\draw [->] (b27) to (b28);
\draw [->] (b28) to (b29);
\draw [->] (b29) to (b30);
\draw [->] (b30) to (31,0);

\draw [->] (-1,1) to (c0);
\draw [->] (c0) to (c1);
\draw [<-, OliveGreen] (c1) to (c2);
\draw [->] (c2) to (c5);
\draw [->, OliveGreen] (c5) to (c6);
\draw [->] (c6) to (c7);
\draw [->, OliveGreen] (c7) to (c8);
\draw [->, OliveGreen] (c8) to (c11);
\draw [<-] (c11) to (c12);
\draw [->, OliveGreen] (c12) to (c13);
\draw [<-] (c13) to (c14);
\draw [<-] (c14) to (c17);
\draw [<-] (c17) to (c18);
\draw [<-] (c18) to (c19);
\draw [<-] (c19) to (c20);
\draw [<-, OliveGreen] (c20) to (c23);
\draw [<-, OliveGreen] (c23) to (c24);
\draw [<-, OliveGreen] (c24) to (c25);
\draw [<-] (c25) to (c26);
\draw [<-] (c26) to (c29);
\draw [<-] (c29) to (c30);
\draw [<-] (c30) to (31,1);

\draw [->] (-1,2) to (d0);
\draw [<-, OliveGreen] (d0) to (d1);
\draw [->] (d1) to (d6);
\draw [->, OliveGreen] (d6) to (d7);
\draw [->, OliveGreen] (d7) to (d12);
\draw [<-] (d12) to (d13);
\draw [<-] (d13) to (d18);
\draw [<-] (d18) to (d19);
\draw [<-] (d19) to (d24);
\draw [<-] (d24) to (d25);
\draw [<-, OliveGreen] (d25) to (d30);
\draw [<-] (d30) to (31,2);

\draw [->] (-1,3) to (e0);
\draw [->, OliveGreen] (e0) to (e3);
\draw [->] (e3) to (e6);
\draw [<-] (e6) to (e9);
\draw [->] (e9) to (e12);
\draw [<-] (e12) to (e15);
\draw [->] (e15) to (e18);
\draw [<-] (e18) to (e21);
\draw [->] (e21) to (e24);
\draw [<-] (e24) to (e27);
\draw [->, OliveGreen] (e27) to (e30);
\draw [<-] (e30) to (31,3);

\draw [->] (-1,4) to (f0);
\draw [->] (f0) to (f3);
\draw [->, OliveGreen] (f3) to (f6);
\draw [->] (f6) to (f9);
\draw [<-] (f9) to (f12);
\draw [->] (f12) to (f15);
\draw [<-] (f15) to (f18);
\draw [->] (f18) to (f21);
\draw [<-] (f21) to (f24);
\draw [->, OliveGreen] (f24) to (f27);
\draw [<-] (f27) to (f30);
\draw [<-] (f30) to (31,4);

\draw [->] (-1,5) to (g0);
\draw [->] (g0) to (g3);
\draw [->] (g3) to (g6);
\draw [->, OliveGreen] (g6) to (g9);
\draw [->] (g9) to (g12);
\draw [<-] (g12) to (g15);
\draw [->] (g15) to (g18);
\draw [<-] (g18) to (g21);
\draw [->, OliveGreen] (g21) to (g24);
\draw [<-] (g24) to (g27);
\draw [<-] (g27) to (g30);
\draw [<-] (g30) to (31,5);

\draw [->] (-1,6) to (h0);
\draw [->] (h0) to (h3);
\draw [->] (h3) to (h6);
\draw [->] (h6) to (h9);
\draw [->, OliveGreen] (h9) to (h12);
\draw [->] (h12) to (h15);
\draw [<-] (h15) to (h18);
\draw [->, OliveGreen] (h18) to (h21);
\draw [<-] (h21) to (h24);
\draw [<-] (h24) to (h27);
\draw [<-] (h27) to (h30);
\draw [<-] (h30) to (31,6);

\draw [->] (-1,7) to (i0);
\draw [->] (i0) to (i6);
\draw [->] (i6) to (i12);
\draw [->, OliveGreen] (i12) to (i15);
\draw [->, OliveGreen] (i15) to (i18);
\draw [<-] (i18) to (i24);
\draw [<-] (i24) to (i30);
\draw [<-] (i30) to (31,7);

\draw [->] (2,-2) to [out=90, in=-150] (a3);
\draw [->, OliveGreen] (8,-2) to [out=90, in=-150] (a9);
\draw [<-, OliveGreen] (14,-2) to [out=90, in=-150] (a15);
\draw [->] (20,-2) to [out=90, in=-150] (a21);
\draw [->] (26,-2) to [out=90, in=-150] (a27);

\draw [->, OliveGreen] (4,-2) to [out=90, in=-30] (a3);
\draw [<-, OliveGreen] (10,-2) to [out=90, in=-30] (a9);
\draw [->] (16,-2) to [out=90, in=-30] (a15);
\draw [->] (22,-2) to [out=90, in=-30] (a21);
\draw [->] (28,-2) to [out=90, in=-30] (a27);

\draw [->] (0,-2) to (b0);
\draw [->] (1,-2) to [out=90,in=-90] (b2);
\draw [->, OliveGreen] (5,-2) to [out=90,in=-90] (b4);
\draw [->, OliveGreen] (6,-2) to (b6);
\draw [->, OliveGreen] (7,-2) to [out=90,in=-90] (b8);
\draw [<-, OliveGreen] (11,-2) to [out=90,in=-90] (b10);
\draw [<-, OliveGreen] (12,-2) to (b12);
\draw [<-, OliveGreen] (13,-2) to [out=90,in=-90] (b14);
\draw [->] (17,-2) to [out=90,in=-90] (b16);
\draw [->] (18,-2) to (b18);
\draw [->] (19,-2) to [out=90,in=-90] (b20);
\draw [->] (23,-2) to [out=90,in=-90] (b22);
\draw [->] (24,-2) to (b24);
\draw [->] (25,-2) to [out=90,in=-90] (b26);
\draw [->] (29,-2) to [out=90,in=-90] (b28);
\draw [->] (30,-2) to (b30);

\draw [->, OliveGreen] (a3) to (b3);
\draw [<-] (a9) to (b9);
\draw [<-, OliveGreen] (a15) to (b15);
\draw [->] (a21) to (b21);
\draw [->] (a27) to (b27);

\draw [->] (b1) to (c1);
\draw [->, OliveGreen] (b2) to (c2);
\draw [->, OliveGreen] (b5) to (c5);
\draw [->, OliveGreen] (b7) to (c7);
\draw [<-] (b8) to (c8);
\draw [<-, OliveGreen] (b11) to (c11);
\draw [<-, OliveGreen] (b13) to (c13);
\draw [<-] (b14) to (c14);
\draw [->] (b17) to (c17);
\draw [->] (b19) to (c19);
\draw [<-, OliveGreen] (b20) to (c20);
\draw [->] (b23) to (c23);
\draw [->] (b25) to (c25);
\draw [->] (b26) to (c26);
\draw [->] (b29) to (c29);

\draw [->] (c0) to (d0);
\draw [->, OliveGreen] (c1) to (d1);
\draw [->, OliveGreen] (c6) to (d6);
\draw [<-] (c7) to (d7);
\draw [<-, OliveGreen] (c12) to (d12);
\draw [<-] (c13) to (d13);
\draw [->] (c18) to (d18);
\draw [<-] (c19) to (d19);
\draw [->] (c24) to (d24);
\draw [<-, OliveGreen] (c25) to (d25);
\draw [->] (c30) to (d30);

\draw [->, OliveGreen] (d0) to (e0);
\draw [<-] (d6) to (e6);
\draw [<-] (d12) to (e12);
\draw [<-] (d18) to (e18);
\draw [<-] (d24) to (e24);
\draw [<-, OliveGreen] (d30) to (e30);

\draw [<-] (e0) to (f0);
\draw [->, OliveGreen] (e3) to (f3);
\draw [<-] (e6) to (f6);
\draw [<-] (e9) to (f9);
\draw [<-] (e12) to (f12);
\draw [<-] (e15) to (f15);
\draw [<-] (e18) to (f18);
\draw [<-] (e21) to (f21);
\draw [<-] (e24) to (f24);
\draw [<-, OliveGreen] (e27) to (f27);
\draw [<-] (e30) to (f30);

\draw [->] (f0) to (g0);
\draw [<-] (f3) to (g3);
\draw [->, OliveGreen] (f6) to (g6);
\draw [<-] (f9) to (g9);
\draw [<-] (f12) to (g12);
\draw [<-] (f15) to (g15);
\draw [<-] (f18) to (g18);
\draw [<-] (f21) to (g21);
\draw [<-, OliveGreen] (f24) to (g24);
\draw [<-] (f27) to (g27);
\draw [->] (f30) to (g30);

\draw [<-] (g0) to (h0);
\draw [->] (g3) to (h3);
\draw [<-] (g6) to (h6);
\draw [->, OliveGreen] (g9) to (h9);
\draw [<-] (g12) to (h12);
\draw [<-] (g15) to (h15);
\draw [<-] (g18) to (h18);
\draw [<-, OliveGreen] (g21) to (h21);
\draw [<-] (g24) to (h24);
\draw [->] (g27) to (h27);
\draw [<-] (g30) to (h30);

\draw [->] (h0) to (i0);
\draw [<-] (h3) to (i3);
\draw [->] (h6) to (i6);
\draw [<-] (h9) to (i9);
\draw [->, OliveGreen] (h12) to (i12);
\draw [<-] (h15) to (i15);
\draw [<-, OliveGreen] (h18) to (i18);
\draw [<-] (h21) to (i21);
\draw [->] (h24) to (i24);
\draw [<-] (h27) to (i27);
\draw [->] (h30) to (i30);

\foreach \i in {0,6,...,30}
{
	\draw [->] (j\i) to (i\i);
	\draw [->] (k\i) to (j\i);
}

\foreach \i in {1,7,...,30}
	\draw [->] (l\i) to (k\i);

\foreach \i in {3,9,...,30}
	\draw [->] (m\i) to (l\i);

\foreach \i in {4,10,...,30}
	\draw [->] (l\i) to (k\i);

\foreach \i in {5,11,...,30}
{
	\draw [->] (k\i) to (j\i);
	\draw [->] (l\i) to (k\i);
}

\draw [->] (2,12) to [out=-90, in=150] (m3);
\draw [->] (8,12) to [out=-90, in=150] (m9);
\draw [->] (14,12) to [out=-90, in=150] (m15);
\draw [->] (20,12) to [out=-90, in=150] (m21);
\draw [->] (26,12) to [out=-90, in=150] (m27);

\draw [->] (4,12) to [out=-90, in=30] (m3);
\draw [->] (10,12) to [out=-90, in=30] (m9);
\draw [->] (16,12) to [out=-90, in=30] (m15);
\draw [->] (22,12) to [out=-90, in=30] (m21);
\draw [->] (28,12) to [out=-90, in=30] (m27);

\draw [->] (0,12) to (l0);
\draw [->] (1,12) to [out=-90,in=90] (l2);
\draw [->] (5,12) to [out=-90,in=90] (l4);
\draw [->] (6,12) to (l6);
\draw [->] (7,12) to [out=-90,in=90] (l8);
\draw [->] (11,12) to [out=-90,in=90] (l10);
\draw [->] (12,12) to (l12);
\draw [->] (13,12) to [out=-90,in=90] (l14);
\draw [->] (17,12) to [out=-90,in=90] (l16);
\draw [->] (18,12) to (l18);
\draw [->] (19,12) to [out=-90,in=90] (l20);
\draw [->] (23,12) to [out=-90,in=90] (l22);
\draw [->] (24,12) to (l24);
\draw [->] (25,12) to [out=-90,in=90] (l26);
\draw [->] (29,12) to [out=-90,in=90] (l28);
\draw [->] (30,12) to (l30);

\draw [->] (-1,8) to (j0);
\draw [<-] (j0) to (j5);
\draw [->] (j5) to (j6);
\draw [<-] (j6) to (j11);
\draw [->] (j11) to (j12);
\draw [<-] (j12) to (j17);
\draw [->] (j17) to (j18);
\draw [<-] (j18) to (j23);
\draw [->] (j23) to (j24);
\draw [<-] (j24) to (j29);
\draw [->] (j29) to (j30);
\draw [<-] (j30) to (31,8);

\draw [<-] (-1,9) to (k0);
\draw [<-] (k0) to (k1);
\draw [<-] (k1) to (k4);
\draw [->] (k4) to (k5);
\draw [<-] (k5) to (k6);
\draw [<-] (k6) to (k7);
\draw [<-] (k7) to (k10);
\draw [->] (k10) to (k11);
\draw [<-] (k11) to (k12);
\draw [<-] (k12) to (k13);
\draw [<-] (k13) to (k16);
\draw [->] (k16) to (k17);
\draw [<-] (k17) to (k18);
\draw [<-] (k18) to (k19);
\draw [<-] (k19) to (k22);
\draw [->] (k22) to (k23);
\draw [<-] (k23) to (k24);
\draw [<-] (k24) to (k25);
\draw [<-] (k25) to (k28);
\draw [->] (k28) to (k29);
\draw [<-] (k29) to (k30);
\draw [<-] (k30) to (31,9);

\draw [<-] (-1,10) to (l0);
\draw [<-] (l0) to (l1);
\draw [<-] (l1) to (l2);
\draw [<-] (l2) to (l3);
\draw [->] (l3) to (l4);
\draw [<-] (l4) to (l5);
\draw [<-] (l5) to (l6);
\draw [<-] (l6) to (l7);
\draw [<-] (l7) to (l8);
\draw [<-] (l8) to (l9);
\draw [->] (l9) to (l10);
\draw [<-] (l10) to (l11);
\draw [<-] (l11) to (l12);
\draw [<-] (l12) to (l13);
\draw [<-] (l13) to (l14);
\draw [<-] (l14) to (l15);
\draw [->] (l15) to (l16);
\draw [<-] (l16) to (l17);
\draw [<-] (l17) to (l18);
\draw [<-] (l18) to (l19);
\draw [<-] (l19) to (l20);
\draw [<-] (l20) to (l21);
\draw [->] (l21) to (l22);
\draw [<-] (l22) to (l23);
\draw [<-] (l23) to (l24);
\draw [<-] (l24) to (l25);
\draw [<-] (l25) to (l26);
\draw [<-] (l26) to (l27);
\draw [->] (l27) to (l28);
\draw [<-] (l28) to (l29);
\draw [<-] (l29) to (l30);
\draw [<-] (l30) to (31,10);

\end{tikzpicture}
\caption{Middle part of the network $\Nc_4^{\mathrm{cox}_-}$.}
\label{net-cox}
\end{figure}

\subsection{$\mathcal{P}_\lambda\otimes\mathcal{P}_\mu$ as a concatenation of positive representations}

By the results of the previous section, the positive representation $\mathcal{P}_\lambda\otimes\mathcal{P}_\mu$ of $U_q\hr{\mathfrak{sl}_{n+1}}$ is unitary equivalent to one defined by the cluster seed $\Theta_n^{\mathrm{cox}}$ in Corollary~\ref{cox-embed}. Let us now divide the quiver $\Qcox$ as indicated in Figure~\ref{Q4-cox}, and analyze the corresponding subquivers.

Recall that the quiver $\Qc_n^{\mathrm{std}}$ arises from the standard triangulation of a disk $D_{2,1}$. The quiver $\Qc_n^{\mathrm{sym}}$ is mutation equivalent to $\Qc_n^{\mathrm{std}}$, and $\Qc_n^{\mathrm{sym}\dag}$ is obtained from $\Qc_n^{\mathrm{sym}}$ by forgetting the disconnected nodes. Now, let us glue the two open boundary components of $D_{2,1}$ to get a thrice punctured sphere $S_3$. The corresponding quiver is obtained from $\Qc_n^{\mathrm{std}}$ by amalgamating pairs of frozen variables in the same row. Since we never mutate at frozen variables, we can apply to the resulting quiver the same sequence of mutations that turns $\Qc_n^{\mathrm{std}}$ into $\Qc_n^{\mathrm{sym}}$. Naturally, the resulting quiver is obtained from $\Qc_n^{\mathrm{sym}}$ by amalgamating pairs of frozen variables in the same row; we denote it by $\Qsph$. Finally, we denote by $\Qc_n^{\mathrm{sph}\dag}$ the quiver obtained from $\Qsph$ by forgetting the disconnected nodes. 

Having set up this notation, we see that the quiver $\Qc_n^{\mathrm{cox}}$ is divided in the following three parts:
\begin{enumerate}
\item the quiver $\Qc_n^{\mathrm{sph}\dag}$ formed by the nodes on or above the top dashed line;
\item the quiver $\Qc_n^{\mathrm{chain}}$, formed by the nodes on or between the two dashed lines;
\item the quiver $\Qc_n^{\mathrm{sym}\dag}$ formed by the nodes on or below the bottom dashed line.
\end{enumerate}

Denote by $\Dsph$ and $\Dc_n^{\mathrm{sph}\dag}$ the based quantum torus algebras corresponding respectively to the quivers $\Qsph$ and $\Qc_n^{\mathrm{sph}\dag}$. One can see that the elements $\iota_{\mathrm{sym}}(K_j)$ and $\iota_{\mathrm{sym}}(K'_j)$ lie in the subalgebra $\Dc_n^{\mathrm{sph}\dag} \subset \Dsym$. The following Lemma follows from calculating the ranks of the aforementioned quivers.

\begin{lemma}
The center of the quantum torus algebra $\Dc_n^{\mathrm{sph}\dag}$ is the Laurent polynomial ring generated by the elements
$$
\iota_{\mathrm{sym}}(K_j) \qquad\text{and}\qquad \iota_{\mathrm{sym}}(K'_j)
$$
for all $1 \le j \le n$.
\end{lemma}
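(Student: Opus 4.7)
Following the template of the two preceding center lemmas, I will argue in three steps: (i) exhibit the $2n$ proposed generators as cluster Laurent monomials in $\Dc_n^{\mathrm{sph}\dag}$ and check that they are central; (ii) verify they are algebraically independent and primitive; and (iii) count the corank of the skew form on the lattice of $\Qc_n^{\mathrm{sph}\dag}$ to match. For step (i), Corollary~\ref{gsv-measure} expresses $\iota_{\mathrm{sym}}(K_j)=M_n^{\mathrm{sym}_+}(j+1,j+1;j,j)$ and $\iota_{\mathrm{sym}}(K'_j)=M_n^{\mathrm{sym}_-}(j,j;j+1,j+1)$ as path sums in the networks $\Nc_n^{\mathrm{sym}_\pm}$. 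An inspection of these networks shows that only faces sitting in the top $2n-1$ rows of $\Qsym$ are traversed, so after descending to the amalgamated quiver $\Qsph$ each image is a cluster monomial in $\Dc_n^{\mathrm{sph}\dag}$. Centrality then amounts to a direct pairing check: one reads off the exponent vector of each $\iota_{\mathrm{sym}}(K_j)$ and $\iota_{\mathrm{sym}}(K'_j)$ and verifies that after the amalgamation which identifies the two frozen boundary columns of $\Qsym$, their pairings against every basis vector of $\Lambda(\Qc_n^{\mathrm{sph}\dag})$ vanish. Geometrically, this reflects the fact that on a thrice-punctured sphere the three boundary monodromies are central in the associated quantum character variety.

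For step (ii), the same explicit network formulas show that the exponent vectors of $\iota_{\mathrm{sym}}(K_1),\dots,\iota_{\mathrm{sym}}(K_n)$ and $\iota_{\mathrm{sym}}(K'_1),\dots,\iota_{\mathrm{sym}}(K'_n)$ can be arranged in a block-triangular pattern with unit leading entries on pairwise disjoint coordinates (the ``staircase'' is produced by the nested paths indexed by $j$). Primitivity and linear independence of the exponent vectors follow at once, so the $2n$ elements generate a Laurent polynomial ring inside the center.

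The heart of the argument is step (iii), where I will show that no further central directions exist. The plan is to compute the corank of the adjacency matrix of $\Qc_n^{\mathrm{sph}\dag}$ by comparing with the rank data for $\Qsym$ and $\Qc_n^{\mathrm{sym}\dag}$ that is already established earlier in the section. Amalgamation induces a surjection $\Lambda(\Qsym)\twoheadrightarrow\Lambda(\Qsph)$ which identifies one pair of frozen basis vectors in each of $2n+1$ rows and endows the target with the pushforward skew form; removing the $n+1$ disconnected vertices to pass to $\Qc_n^{\mathrm{sph}\dag}$ splits off a trivial summand that contributes no central generators on our list. Using the known description of the center of $\Dsym$ (built from the $\Omega_j$'s, the $\iota(K_jK'_j)$'s, and the contributions of the disconnected vertices), one tracks which central directions survive the amalgamation and which new ones are produced: the amalgamation couples each boundary $\Omega_j^1, \Omega_j^2$ pair and forces the images of $K_j$ and $K'_j$, taken separately, to become central, adding exactly $2n$ independent generators and leaving no room for further ones.

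The main obstacle is the bookkeeping in step (iii): amalgamation alters both the lattice and the skew form at once, and a na\"ive count can easily be off by one in either direction. I expect to handle this by writing the adjacency matrix of $\Qsym$ in block form adapted to the two frozen boundary columns and reading the change in rank off directly, exactly as was done for the $\Dsq$ center lemma earlier in Section~\ref{diag-embed-std}.
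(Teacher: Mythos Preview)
Your approach is essentially the same as the paper's, which offers only the single sentence ``follows from calculating the ranks of the aforementioned quivers,'' exactly parallel to the two earlier center lemmas you are modeling on. Your three-step plan (centrality, primitivity and independence, corank count) is the standard template and is correct.

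Two remarks. First, your bookkeeping in step (iii) slips: amalgamation from $\Qsym$ to $\Qsph$ identifies $n$ pairs of frozen vertices (one pair per boundary row), not $2n+1$; and in passing from $\Qsph$ to $\Qc_n^{\mathrm{sph}\dag}$ you remove the $n$ genuinely disconnected nodes $\omega_1,\ldots,\omega_n$, not $n+1$ (the vertex $\omega_0$ is $2$-valent, not disconnected, and remains). With these corrected, the vertex count is $|V(\Qc_n^{\mathrm{sph}\dag})|=n(n+3)-2n=n(n+1)$, and since $\rk(\Qc_n^{\mathrm{sph}\dag})=n(n-1)$ (stated explicitly later in the proof of Proposition~\ref{prop-concat}), the corank is exactly $2n$ as needed.

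Second, your proposed route to the rank in step (iii), by tracking the center of $\Dsym$ through amalgamation via a block decomposition of the adjacency matrix, is considerably more involved than what the paper does: there the rank is simply computed directly for the explicit quiver. Your method would work, but since the paper already records the value $n(n-1)$, you can just cite it.
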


Therefore, the quantum torus $\Dc_n^{\mathrm{sph}\dag}$ has positive representations $\Mc_{\lambda,\mu}$ labelled by pairs $(\lambda,\mu)$ of points in $\hgt_n$, in which for all $1\le j\le n$
$$
\tau_{\mathrm{cox}}(K_j) \longmapsto e^{2\pi\hbar (\lambda_j-\lambda_{j-1})}
\qquad\text{and}\qquad
\tau_{\mathrm{cox}}(K_j') \longmapsto e^{2\pi\hbar (\mu_j-\mu_{j-1})}.
$$
The quantum torus $\Dsph$ is generated by its subalgebra $\Dc_n^{\mathrm{sph}\dag}$ and central elements $\omega_1,\ldots, \omega_n$ corresponding to the disconnected nodes of the quiver $\Qsph$. Thus, it has positive representations $\Mc_{\lambda,\mu}^\nu$ labelled by triples $(\la, \mu, \nu)$ of points in $\hgt_n$ where the central elements $\omega_1,\ldots, \omega_n$ act via multiplication by the scalars 
$$
\omega_j \mapsto e^{2\pi\hbar\hr{\nu_j-\nu_{j-1}}} \qquad\text{for}\qquad j = 1, \dots,  n.
$$
Note that we have isomorphisms of Hilbert spaces
$$
\Mc_{\lambda,\mu} \simeq \Mc_{\lambda,\mu}^\nu \simeq L^2\hr{\R^{\frac{n(n-1)}{2}}}.
$$

Finally, let us consider the quiver $\Qc_n^{\mathrm{chain}}$. We divide its nodes into rows numbered top to bottom, so that the top dashed line crosses the 0-th row, and the bottom one crosses the $(n+1)$-st row. We denote the  quantum torus generators corresponding to the vertices in the 0-th and $(n+1)$-st rows by $\alpha_0$ and $\alpha_{n+1}$ respectively, and those corresponding to vertices in the $j$-th row by $\alpha_j^1$ and $\alpha_j^2$, so that there is a double arrow from the vertex labeled $(j,1)$ to the one labeled $(j,2)$. For example, in the notations of the Figure~\ref{Q4-cox} we get
\begin{align*}
&\alpha_0 = X_{37}, &
&\alpha_1^1 = X_{41}, &
&\alpha_2^1 = X_{35}, &
&\alpha_3^1 = X_{47}, &
&\alpha_4^1 = X_{49}, \\
&\alpha_5 = X_{43}, &
&\alpha_1^2 = X_{29}, &
&\alpha_2^2 = X_{22}, &
&\alpha_3^2 = X_{46}, &
&\alpha_4^2 = X_{42}.
\end{align*}
The quantum torus algebra associated to the quiver $\Qc_n^{\mathrm{chain}}$ has a positive representation modeled on the space $L^2(\R^n)$ of square-integrable functions in the variables $x_j-x_{j+1}$, where $j =1, \dots, n$, defined by
\beq
\label{alpha-def}
\begin{aligned}
&\alpha_0 \longmapsto e^{-2\pi\hbar p_1}, &\qquad\qquad
&\alpha_j^1 \longmapsto e^{2\pi\hbar(x_{j+1}-x_j)}, \\
&\alpha_{n+1} \longmapsto e^{2\pi\hbar p_{n+1}}, &\qquad\qquad
&\alpha_j^2 \longmapsto e^{2\pi\hbar(p_j-p_{j+1} +x_j-x_{j+1})}.
\end{aligned}
\eeq

\begin{prop}
\label{prop-concat}
We have an isomorphism of Hilbert spaces
\beq
\label{double-concat}
\Pc_\la \otimes \Pc_\mu \simeq \Vc_n \otimes L^2\hr{\R^n} \otimes \Mc_{\lambda,\mu}.
\eeq
\end{prop}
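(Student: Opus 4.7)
The plan is to pass to the cluster seed $\Theta_n^{\mathrm{cox}}$, where by the discussion of Section~\ref{diag-embed-top} the unitary $\Phi^{\mathrm{cox}}\circ\Phi^{\mathrm{top}}$ identifies $\Pc_\la\otimes\Pc_\mu$ with the positive representation of $\Dcox$ in which the transformed Casimirs $\Omega_j^1,\Omega_j^2$ act by $e^{2\pi\hbar(\la_j-\la_{j-1})},e^{2\pi\hbar(\mu_j-\mu_{j-1})}$ respectively and each $\tau_{\mathrm{cox}}(K_jK'_j)$ acts by $1$. The quiver $\Qcox$ already comes equipped with the tripartite decomposition into $\Qc_n^{\mathrm{sym}\dag}$, $\Qc_n^{\mathrm{chain}}$, and $\Qc_n^{\mathrm{sph}\dag}$ separated by the two dashed lines, and we will combine this with Lemma~\ref{rep-concatenation} applied twice.

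First, I would set up a lattice decomposition $\Lambda_{\mathrm{cox}} = \Lambda_{\mathrm{sym}\dag}+\Lambda_{\mathrm{chain}}+\Lambda_{\mathrm{sph}\dag}$ determined by the partition of $V(\Qcox)$ into the three subquivers, where the sublattices overlap precisely along the two extremal vertices $\alpha_0$ and $\alpha_{n+1}$ of the chain that lie on the dashed lines. A direct inspection of $\Qcox$ (as in Figure~\ref{Q4-cox}) shows that the top and bottom subquivers are not joined by any arrow, so $\Lambda_{\mathrm{sym}\dag}$ and $\Lambda_{\mathrm{sph}\dag}$ are orthogonal under the Poisson form $(\cdot,\cdot)$, and the chain interacts with each of them only through its extremal vertex. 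This is precisely the orthogonality-plus-isotropic-overlap condition that allows Lemma~\ref{rep-concatenation} to be applied iteratively: first to split off the top piece $\Qc_n^{\mathrm{sph}\dag}\cup\{\alpha_0\}$ from the bottom piece $\Qc_n^{\mathrm{sym}\dag}\cup\Qc_n^{\mathrm{chain}}\setminus\{\alpha_0\}$, and then to split $\Qc_n^{\mathrm{chain}}$ off from $\Qc_n^{\mathrm{sym}\dag}$ along $\alpha_{n+1}$.

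Next, I would track the central characters through each application of the lemma. The central character of $\Dcox$ specified by $(\lambda,\mu)$ must be shown to restrict to the character of $\Dsph$ giving $\Mc_{\la,\mu}$, to the character of $\Dsym^\dag$ giving $\Vc_n$, and to the trivial character on $\Dc_n^{\mathrm{chain}}$ corresponding to the representation~\eqref{alpha-def} on $L^2(\R^n)$. Since $\Dc_n^{\mathrm{chain}}$ has trivial center (its quiver has full-rank adjacency matrix once the extremal vertices are attached), the $L^2(\R^n)$ realization is canonical up to unitary equivalence, while on the top and bottom pieces the generators $\iota_{\mathrm{sym}}(K_j),\iota_{\mathrm{sym}}(K'_j)$ and the Casimirs $\Omega_j^1,\Omega_j^2$ receive their prescribed values directly from the definition of $\Pc_\la\otimes\Pc_\mu$, with the constraint $\tau_{\mathrm{cox}}(K_jK'_j)=1$ providing the compatibility condition needed to glue the three characters together.

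The main obstacle will be the rank-and-kernel hypothesis of Lemma~\ref{rep-concatenation}, namely that $Z_{\Lambda_{\mathrm{cox}}}=\Lambda_{\mathrm{cox}}\cap Z_{\Lambda_{\le 0}\oplus\Lambda_{\ge 0}}$ and that the induced map on the quotients modulo kernels is an isomorphism at each of the two splittings. This is a purely combinatorial verification about the adjacency matrix of $\Qcox$: one must check that the only central elements picked up by the direct sum $\Lambda_{\le 0}\oplus\Lambda_{\ge 0}$ beyond those already central in $\Lambda_{\mathrm{cox}}$ are the ones corresponding to the shared vertex $\alpha_0$ (respectively $\alpha_{n+1}$), and that specifying $\tau_{\mathrm{cox}}(K_jK'_j)=1$ is exactly what ensures the matching central characters on either side. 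Once this bookkeeping is completed, the isomorphism~\eqref{double-concat} follows immediately by composing the two applications of the lemma with the unitary $\Phi^{\mathrm{cox}}\circ\Phi^{\mathrm{top}}$.
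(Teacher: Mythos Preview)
Your approach is essentially the paper's: pass to the seed $\Theta_n^{\mathrm{cox}}$, exploit the tripartite splitting of $\Qcox$ into $\Qc_n^{\mathrm{sym}\dag}$, $\Qc_n^{\mathrm{chain}}$, $\Qc_n^{\mathrm{sph}\dag}$, and invoke Lemma~\ref{rep-concatenation}. The paper streamlines your two iterative applications into a single rank check: since the hypothesis of the lemma amounts to rank additivity, it suffices to compute $\rk(\Qsq)=2n(n+1)$ and verify that it equals $\rk(\Qc_n^{\mathrm{sym}\dag})+\rk(\Qc_n^{\mathrm{chain}})+\rk(\Qc_n^{\mathrm{sph}\dag})=n(n+1)+2n+n(n-1)$.

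Two inaccuracies in your account are worth flagging. First, $\Dc_n^{\mathrm{chain}}$ does \emph{not} have trivial center: the chain quiver has $2n+2$ vertices and rank $2n$, so its center is $2$-dimensional. The $L^2(\R^n)$ realization~\eqref{alpha-def} therefore corresponds to a particular central character, and it is precisely the rank additivity above that guarantees this is the character inherited from $\Pc_\lambda\otimes\Pc_\mu$; you cannot sidestep this by declaring the middle piece centerless. Second, your description of how the central characters distribute is off. The paper's key observation is that under the mutation sequence the Casimirs $\Omega_j^1,\Omega_j^2$ are carried to $\iota_{\mathrm{sym}}(K_j),\iota_{\mathrm{sym}}(K'_j)$, and \emph{both} of these live in the top piece $\Dc_n^{\mathrm{sph}\dag}$, not split between top and bottom as you suggest. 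This is exactly what pins the character of the top factor to $\Mc_{\lambda,\mu}$; the bottom factor $\Vc_n$ receives its character from the remaining constraint $\tau_{\mathrm{cox}}(K_jK'_j)=1$.
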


\begin{proof}
One can verify that the sequence of mutations taking the seed $\Theta_n^{\mathrm{sq}}$ to $\Theta_n^{\mathrm{cox}}$ transforms the central elements as follows
$$
\Omega_j^1 \longmapsto \iota_{\mathrm{sym}}(K_j)
\qquad\text{and}\qquad
\Omega_j^2 \longmapsto \iota_{\mathrm{sym}}(K_j'),
$$
therefore the weights in the two sides of the equation coincide. Now, we calculate the ranks of the quivers involved and obtain
$$
\rk\hr{\Qsq} = 2n(n+1),
$$
while
$$
\rk\hr{\Qc_n^{\mathrm{sym}\dag}} = n(n+1), \qquad
\rk\hr{\Qc_n^{\mathrm{chain}}} = 2n, \qquad
\rk\hr{\Qc_n^{\mathrm{sph}\dag}} = n(n-1).
$$
Therefore
$$
\rk\hr{\Qsq} =
\rk\hr{\Qc_n^{\mathrm{sym}\dag}} +
\rk\hr{\Qc_n^{\mathrm{chain}}} +
\rk\hr{\Qc_n^{\mathrm{sph}\dag}},
$$
so Lemma~\ref{rep-concatenation} applies and yields the desired isomorphism of Hilbert spaces.
\end{proof}

Inspecting the network $\Nc_n^{\mathrm{cox}_-}$, see Figure~\ref{net-cox}, one can verify that the expressions $\tau_{\mathrm{cox}}(E_i)$, $\tau_{\mathrm{cox}}(K_i)$, and $\tau_{\mathrm{cox}}(F_j)$ for $1 \le i \le n$ and $2 \le j \le n$ depend only on the cluster variables from the subalgebra $\Dc_n^{\mathrm{sym}\dag} \subset \Dcox$, and thus act by the identity on the second and the third tensor factor in~\eqref{double-concat}. Moreover, since the relevant parts of the networks $\Nc_n^{\mathrm{cox}_-}$ and $\Nc_n^{\mathrm{sym}_-}$ are identical, we have the following equality of operators on $\Pc_\la \otimes \Pc_\mu \simeq \Vc_n \otimes L^2\hr{\R^n} \otimes \Mc_{\la,\mu}$
\beq
\label{easy-gens}
\begin{aligned}
\tau_{\mathrm{cox}}(E_j) &= \iota_{\mathrm{sym}}(E_j) \otimes 1 \otimes 1, & &1\le j\le n,\\
\tau_{\mathrm{cox}}(K_j) &= \iota_{\mathrm{sym}}(K_j) \otimes 1 \otimes 1, & &1\le j\le n,\\
\tau_{\mathrm{cox}}(F_j) &= \iota_{\mathrm{sym}}(F_j) \otimes 1 \otimes 1, & &2\le j\le n.
\end{aligned}
\eeq
Once again the action of $F_1$ must be treated separately, since the relevant boundary measurement $M_n^{\mathrm{cox}_-}(1,2;2,2)$ involves cluster variables from the quiver $\Qc_n^{\mathrm{chain}}$.

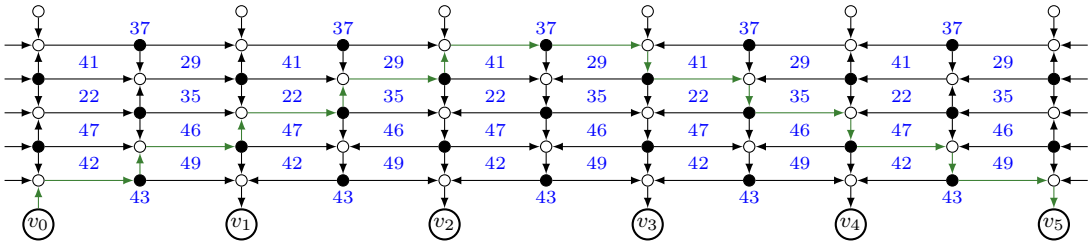
\begin{figure}[h]
\begin{tikzpicture}[every node/.style={inner sep=0, minimum size=0.15cm, circle, draw}, x=0.45cm,y=0.45cm]

\foreach \i in {3,9,...,30}
{
	\node[draw=none, text=blue] at (\i,2.5) {\tiny 43};

	\node[draw=none, text=blue] at (\i-1.5,3.5) {\tiny 42};
	\node[draw=none, text=blue] at (\i+1.5, 3.5) {\tiny 49};
	\node[draw=none, text=blue] at (\i-1.5,4.5) {\tiny 47};
	\node[draw=none, text=blue] at (\i+1.5,4.5) {\tiny 46};
	\node[draw=none, text=blue] at (\i-1.5,5.5) {\tiny 22};
	\node[draw=none, text=blue] at (\i+1.5,5.5) {\tiny 35};
	\node[draw=none, text=blue] at (\i-1.5,6.5) {\tiny 41};
	\node[draw=none, text=blue] at (\i+1.5,6.5) {\tiny 29};

	\node[draw=none, text=blue] at (\i,7.5) {\tiny 37};
}

\node[minimum size=0.4cm, thick] (d0) at (0,1.7) {\scriptsize $v_0$};
\node[minimum size=0.4cm, thick] (d6) at (6,1.7) {\scriptsize $v_1$};
\node[minimum size=0.4cm, thick] (d12) at (12,1.7) {\scriptsize $v_2$};
\node[minimum size=0.4cm, thick] (d18) at (18,1.7) {\scriptsize $v_3$};
\node[minimum size=0.4cm, thick] (d24) at (24,1.7) {\scriptsize $v_4$};
\node[minimum size=0.4cm, thick] (d30) at (30,1.7) {\scriptsize $v_5$};

\foreach \i in {0,6,...,30}
{
	\node[fill=white] (e\i) at (\i,3) {};
	\node[fill=black] (f\i) at (\i,4) {};
	\node[fill=white] (g\i) at (\i,5) {};
	\node[fill=black] (h\i) at (\i,6) {};
	\node[fill=white] (i\i) at (\i,7) {};
	\node[fill=white] (j\i) at (\i,8) {};
}

\foreach \i in {3,9,...,30}
{
	\node[fill=black] (e\i) at (\i,3) {};
	\node[fill=white] (f\i) at (\i,4) {};
	\node[fill=black] (g\i) at (\i,5) {};
	\node[fill=white] (h\i) at (\i,6) {};
	\node[fill=black] (i\i) at (\i,7) {};
}

\draw [->] (-1,3) to (e0);
\draw [->, OliveGreen] (e0) to (e3);
\draw [->] (e3) to (e6);
\draw [<-] (e6) to (e9);
\draw [->] (e9) to (e12);
\draw [<-] (e12) to (e15);
\draw [->] (e15) to (e18);
\draw [<-] (e18) to (e21);
\draw [->] (e21) to (e24);
\draw [<-] (e24) to (e27);
\draw [->, OliveGreen] (e27) to (e30);
\draw [<-] (e30) to (31,3);

\draw [->] (-1,4) to (f0);
\draw [->] (f0) to (f3);
\draw [->, OliveGreen] (f3) to (f6);
\draw [->] (f6) to (f9);
\draw [<-] (f9) to (f12);
\draw [->] (f12) to (f15);
\draw [<-] (f15) to (f18);
\draw [->] (f18) to (f21);
\draw [<-] (f21) to (f24);
\draw [->, OliveGreen] (f24) to (f27);
\draw [<-] (f27) to (f30);
\draw [<-] (f30) to (31,4);

\draw [->] (-1,5) to (g0);
\draw [->] (g0) to (g3);
\draw [->] (g3) to (g6);
\draw [->, OliveGreen] (g6) to (g9);
\draw [->] (g9) to (g12);
\draw [<-] (g12) to (g15);
\draw [->] (g15) to (g18);
\draw [<-] (g18) to (g21);
\draw [->, OliveGreen] (g21) to (g24);
\draw [<-] (g24) to (g27);
\draw [<-] (g27) to (g30);
\draw [<-] (g30) to (31,5);

\draw [->] (-1,6) to (h0);
\draw [->] (h0) to (h3);
\draw [->] (h3) to (h6);
\draw [->] (h6) to (h9);
\draw [->, OliveGreen] (h9) to (h12);
\draw [->] (h12) to (h15);
\draw [<-] (h15) to (h18);
\draw [->, OliveGreen] (h18) to (h21);
\draw [<-] (h21) to (h24);
\draw [<-] (h24) to (h27);
\draw [<-] (h27) to (h30);
\draw [<-] (h30) to (31,6);

\draw [->] (-1,7) to (i0);
\draw [->] (i0) to (i6);
\draw [->] (i6) to (i12);
\draw [->, OliveGreen] (i12) to (i15);
\draw [->, OliveGreen] (i15) to (i18);
\draw [<-] (i18) to (i24);
\draw [<-] (i24) to (i30);
\draw [<-] (i30) to (31,7);

\draw [->, OliveGreen] (d0) to (e0);
\draw [<-] (d6) to (e6);
\draw [<-] (d12) to (e12);
\draw [<-] (d18) to (e18);
\draw [<-] (d24) to (e24);
\draw [<-, OliveGreen] (d30) to (e30);

\draw [<-] (e0) to (f0);
\draw [->, OliveGreen] (e3) to (f3);
\draw [<-] (e6) to (f6);
\draw [<-] (e9) to (f9);
\draw [<-] (e12) to (f12);
\draw [<-] (e15) to (f15);
\draw [<-] (e18) to (f18);
\draw [<-] (e21) to (f21);
\draw [<-] (e24) to (f24);
\draw [<-, OliveGreen] (e27) to (f27);
\draw [<-] (e30) to (f30);

\draw [->] (f0) to (g0);
\draw [<-] (f3) to (g3);
\draw [->, OliveGreen] (f6) to (g6);
\draw [<-] (f9) to (g9);
\draw [<-] (f12) to (g12);
\draw [<-] (f15) to (g15);
\draw [<-] (f18) to (g18);
\draw [<-] (f21) to (g21);
\draw [<-, OliveGreen] (f24) to (g24);
\draw [<-] (f27) to (g27);
\draw [->] (f30) to (g30);

\draw [<-] (g0) to (h0);
\draw [->] (g3) to (h3);
\draw [<-] (g6) to (h6);
\draw [->, OliveGreen] (g9) to (h9);
\draw [<-] (g12) to (h12);
\draw [<-] (g15) to (h15);
\draw [<-] (g18) to (h18);
\draw [<-, OliveGreen] (g21) to (h21);
\draw [<-] (g24) to (h24);
\draw [->] (g27) to (h27);
\draw [<-] (g30) to (h30);

\draw [->] (h0) to (i0);
\draw [<-] (h3) to (i3);
\draw [->] (h6) to (i6);
\draw [<-] (h9) to (i9);
\draw [->, OliveGreen] (h12) to (i12);
\draw [<-] (h15) to (i15);
\draw [<-, OliveGreen] (h18) to (i18);
\draw [<-] (h21) to (i21);
\draw [->] (h24) to (i24);
\draw [<-] (h27) to (i27);
\draw [->] (h30) to (i30);

\foreach \i in {0,6,...,30}
	\draw [->] (j\i) to (i\i);

\end{tikzpicture}
\caption{Network $\mathcal C_4$.}
\label{net-C}
\end{figure}

Figure~\ref{net-C} shows, for $n=4$, a part of the network $\Nc_n^{\mathrm{cox}_-}$ relevant to the quiver $\Qc_n^{\mathrm{chain}}$. As was mentioned above, the part of $\Nc_n^{\mathrm{cox}_-}$ lying below what is shown in Figure~\ref{net-C} coincides with the bottom part of $\Nc_n^{\mathrm{sf}_-}$ glued by the dashed line to $\Ac_n$, see Figures~\ref{net-sf-} and~\ref{net-A}. For all $1 \le k \le n+1$, let $\mathscr C_k$ be the set of paths $\gamma \colon v_0 \to v_k$ in $\Cc_n$. We then define
\beq
\label{c-def}
C_k^{(n+1)} = \pr\hr{\sum_{\gamma\in \mathscr C_k}X_{[\gamma]}}
\qquad\text{for}\qquad
1\le k\le n+1,
\eeq
and declare $C_0^{(n+1)}=1$. Let us also recall the elements $A_k^{(n+1)}$ defined by~\eqref{alpha-def}. Then Corollary~\ref{cox-embed} implies

\begin{cor}
\label{double-cor}
As operators on $\Pc_\la \otimes \Pc_\mu \simeq \Vc_n \otimes L^2\hr{\R^n} \otimes \Mc_{\la,\mu}$, we have
$$
\tau_{\mathrm{cox}}\hr{F_1} = \sum_{k=0}^{n+1} A_k^{(n+1)} \otimes C_k^{(n+1)} \otimes 1.
$$
\end{cor}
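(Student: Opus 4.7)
By Corollary~\ref{cox-embed}, $\tau_{\mathrm{cox}}(F_1) = M_n^{\mathrm{cox}_-}(1,2;2,2) = \mathrm{pr}\!\left(Z_{\Nc_n^{\mathrm{cox}_-}}(t_1,h_2;t_2,h_2)\right)$, so it suffices to analyze this weighted sum combinatorially in the network $\Nc_n^{\mathrm{cox}_-}$. My plan is to decompose the network into three horizontal bands separated by the two dashed lines of Figure~\ref{Q4-cox} (and Figure~\ref{net-cox}): a bottom ``symmetric'' band dual to $\Qc_n^{\mathrm{sym}\dag}$, a middle ``chain'' band $\Cc_n$ dual to $\Qc_n^{\mathrm{chain}}$ (shown in Figure~\ref{net-C}), and a top ``sphere'' band dual to $\Qc_n^{\mathrm{sph}\dag}$, with the bottom and top bands interacting only through the $n+2$ vertices $v_0,\ldots,v_{n+1}$ of the chain band.

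By direct inspection of edge orientations in Figure~\ref{net-C}, at the lower interface of the chain band the vertex $v_0$ is the unique source while $v_1,\ldots,v_{n+1}$ are sinks, and no directed path inside $\Cc_n$ reaches the upper interface. Consequently every directed path $\gamma\colon t_1\to h_2$ in $\Nc_n^{\mathrm{cox}_-}$ either lies entirely in the bottom band, or decomposes uniquely as a concatenation $\gamma=\gamma_{\mathrm{bot}}^{(1)}\cdot\gamma_{\mathrm{mid}}\cdot\gamma_{\mathrm{bot}}^{(2)}$ with $\gamma_{\mathrm{mid}}\in\mathscr C_k$ for some $k\in\{1,\ldots,n+1\}$; in either case $\gamma$ avoids the sphere band entirely, as does the reference path $\gamma_0\colon t_2\to h_2$.

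The next step is to translate this path decomposition into the tensor decomposition $\Pc_\lambda\otimes\Pc_\mu\simeq\Vc_n\otimes L^2(\R^n)\otimes\Mc_{\lambda,\mu}$ provided by Proposition~\ref{prop-concat}. Writing $[\gamma\gamma_0^{-1}]$ as a sum of its projections onto the sublattices arising from the three subquivers, the fact that both $\gamma$ and $\gamma_0$ avoid the sphere band forces the sphere component to vanish, producing the factor $1\in\Mc_{\lambda,\mu}$. The bottom band of $\Nc_n^{\mathrm{cox}_-}$, together with its interface to the chain band, coincides with the network obtained by gluing $\Ac_n$ to the bottom part of $\Nc_n^{\mathrm{sf}_-}$, so the sum of contributions from the bottom component grouped by $k$ is precisely $A_k^{(n+1)}\in\Vc_n$ as defined in~\eqref{alpha-def}; likewise the sum of middle-component contributions over $\gamma_{\mathrm{mid}}\in\mathscr C_k$ is $C_k^{(n+1)}\in L^2(\R^n)$ as in~\eqref{c-def}, with the convention $C_0^{(n+1)}=1$ matching the case where $\gamma$ stays entirely in the bottom band. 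Summing over $k$ yields the claimed formula.

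The main technical obstacle will be showing that the map $\mathrm{pr}$ of~\eqref{pr}, and in particular its quantum correction $q^{\frac{1}{2}\langle\lambda,\lambda\rangle}$ built from the form~\eqref{formdef}, distributes cleanly across the three tensor factors so that $\pr(X_{[\gamma\gamma_0^{-1}]})$ really does split as a tensor product of boundary-measurement weights on each band. Geometrically this reduces to checking that $\langle\cdot,\cdot\rangle$ is block-diagonal with respect to the decomposition of $\tLambda$ induced by the three subquivers, which in turn follows from the observation that the three bands of $\Qc_n^{\mathrm{cox}}$ share only frozen vertices (the interface vertices $v_0,\ldots,v_{n+1}$ and the analogous vertices along the upper dashed line), so that the cycle sublattices of the three bands lie in mutually orthogonal summands of the skew form $(\cdot,\cdot)$.
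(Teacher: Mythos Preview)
Your proposal is correct and follows essentially the same approach as the paper, which offers no proof beyond the phrase ``Then Corollary~\ref{cox-embed} implies'' together with the preceding observation that the part of $\Nc_n^{\mathrm{cox}_-}$ lying below the chain band coincides with $\Ac_n$ glued to the bottom of $\Nc_n^{\mathrm{sf}_-}$. Your explicit path decomposition and attention to the $\mathrm{pr}$-factorization fill in details the paper leaves implicit; the only imprecision is that the interface vertices along the dashed lines are not literally frozen cluster variables---the relevant structural fact is rather that they span the isotropic sublattice $\Lambda_0$ in the concatenation setup of Lemma~\ref{rep-concatenation}, which is what underlies Proposition~\ref{prop-concat} and makes the tensor splitting go through.
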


\section{$q$-deformed open Toda lattice}
\label{sec-Toda}

We now recall, following \cite{KLS02}, the definition of the quantum integrable system that plays a central role in our story. Let $p$ and $x$ be generators of the Heisenberg algebra with commutator $[p,x]=(2\pi i)^{-1}$, and let $z$ be an indeterminate. Then the Lax operator of the $q$-deformed Toda lattice is the operator-valued matrix

$$
L(z,p,x) =
\begin{pmatrix}
z-z^{-1}e^{2\pi \hbar p} & e^{-2\pi \hbar x} \\
-e^{2\pi \hbar(p+x)} & 0
\end{pmatrix}.
$$
Consider the trigonometric $R$-matrix
\begin{align*}
R(z,w) &= \frac{qz^2-q^{-1}w^2}{z^2-w^2}\hr{e_{11}\otimes e_{11}+e_{22}\otimes e_{22}} + \hr{e_{11}\otimes e_{22}+e_{22}\otimes e_{11}} \\
& + \hr{q-q^{-1}}\frac{zw}{z^2-w^2}\hr{e_{12}\otimes e_{21} + e_{21}\otimes e_{12}},
\end{align*}
where $e_{jk}$ is the $2\times 2$-matrix whose only nonzero entry is a 1 in position $(j,k)$. Then we have the fundamental commutation relation
$$
R_{12}(z,w)L_1(z)L_2(w) = L_2(w) L_1(z)R_{12}(z,w),
$$
where $L_1(z) = L(z) \otimes 1$, $L_2(z) = 1 \otimes L(z)$, and we abbreviate $L(z) = L(z,p,x)$. To obtain the $q$-deformed Toda lattice associated to Lie algebra $\mathfrak{gl}_{n+1}$, one considers the Heisenberg algebra with canonical generators $\hc{p_i, x_i}$ where $ i=1, \dots, n+1$, and forms the monodromy matrix
\beq
\label{monodromy-matrix}
T^{(n+1)}(z) =
\begin{pmatrix} 
A_{n+1}(z) & B_{n+1}(z)\\
C_{n+1}(z) & D_{n+1}(z)
\end{pmatrix}
= L(z,p_1,x_1)L(z,p_2,x_2)\cdots L(z,p_{n+1},x_{n+1})
\eeq
Then the monodromy matrix also satisfies the fundamental commutation relation
\beq
\label{monod-fcr}
R_{12}(z,w)T^{(n+1)}_1(z)T^{(n+1)}_2(w) = T^{(n+1)}_2(w) T^{(n+1)}_1(z)R_{12}(z,w).
\eeq
\begin{defn}
The $q$-deformed $\mathfrak{gl}_{n+1}$ open Toda Hamiltonians $H^{(n+1)}_k$, $1\le k\le n+1$, are the Heisenberg algebra elements defined by
$$
A_{n+1}(z) = \sum_{k=0}^{n+1}\hr{-q}^{k-2}z^{n+1-2k}H^{(n+1)}_k.
$$
By convention, we declare $H_0^{(n+1)}=1.$  
\end{defn}
The following Proposition easily follows from the relation \eqref{monod-fcr}.
\begin{prop}
\label{toda-int}
The $q$-deformed $\mathfrak{gl}_{n+1}$ open Toda Hamiltonians commute: one has
$$
\hs{H^{(n+1)}_j,H^{(n+1)}_k}=0 \qquad\text{for any}\qquad 0\le j,k \le n+1.
$$
\end{prop}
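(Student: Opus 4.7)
The plan is to extract the commutativity of the Hamiltonians from the fundamental commutation relation~\eqref{monod-fcr} by reading off a single scalar component. Concretely, I would examine the $(e_{11}\otimes e_{11})$-component of the operator identity
$$
R_{12}(z,w)T^{(n+1)}_1(z)T^{(n+1)}_2(w) = T^{(n+1)}_2(w)T^{(n+1)}_1(z)R_{12}(z,w).
$$
Since $R_{12}(z,w)$ preserves the subspaces spanned by $e_{11}\otimes e_{11}$ and $e_{22}\otimes e_{22}$, and acts on $e_{11}\otimes e_{11}$ by the scalar $(qz^2-q^{-1}w^2)/(z^2-w^2)$, reading off this component of both sides gives
$$
\frac{qz^2-q^{-1}w^2}{z^2-w^2}A_{n+1}(z)A_{n+1}(w) = \frac{qz^2-q^{-1}w^2}{z^2-w^2}A_{n+1}(w)A_{n+1}(z),
$$
so that $[A_{n+1}(z),A_{n+1}(w)]=0$ as an identity of operator-valued rational functions in the spectral parameters $z$ and $w$.

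Once this is established, the Proposition follows by comparing coefficients of $z^iw^j$ on both sides of the above vanishing commutator. Indeed, by the definition of the Hamiltonians we have $A_{n+1}(z) = \sum_{k=0}^{n+1}(-q)^{k-2}z^{n+1-2k}H^{(n+1)}_k$, and $A_{n+1}(w)$ has an analogous expansion, so the identity $[A_{n+1}(z),A_{n+1}(w)]=0$ unpacks into the system of identities $[H^{(n+1)}_j,H^{(n+1)}_k]=0$ for all $0\le j,k\le n+1$.

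The only thing to verify directly is the RTT relation~\eqref{monod-fcr} for the full monodromy matrix, starting from the single-site relation already stated for $L(z,p,x)$. This is the standard coproduct argument in the quantum inverse scattering method: given that $R_{12}(z,w)L_1(z)L_2(w) = L_2(w)L_1(z)R_{12}(z,w)$ holds at a single site, one uses that the Heisenberg generators at different sites commute to propagate the relation through the product defining $T^{(n+1)}(z)$ site by site, by inserting $R_{12}(z,w)R_{12}(z,w)^{-1}$ between successive $L$-factors and repeatedly applying the single-site RTT. I expect the main (and only) point requiring some care to be bookkeeping in this induction, namely checking that the Heisenberg generators $(p_i,x_i)$ at distinct sites genuinely commute with the $L$-factors at other sites, so that the single-site move can be carried out independently at each stage. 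Everything else is a direct scalar extraction.
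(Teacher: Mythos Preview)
Your proposal is correct and is precisely the standard argument the paper is alluding to: the paper's proof consists of the single sentence ``The following Proposition easily follows from the relation~\eqref{monod-fcr},'' and your extraction of the $(e_{11}\otimes e_{11})$-component to obtain $[A_{n+1}(z),A_{n+1}(w)]=0$, followed by comparison of coefficients, is exactly how one makes that sentence explicit.
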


%Note that we have $H^{(n+1)}_0 = 1$ and $H^{(n+1)}_{n+1}=e^{2\pi\hbar(p_1+\cdots p_{n+1})}.$ 
The $\mathfrak{gl}_{n+1}$ open Toda Hamiltonians act naturally on the Hilbert space $L^2(\R^{n+1})$, where
$$
p_j \longmapsto \frac{1}{2\pi i}\partial_j \qquad\text{and}\qquad x_j \longmapsto x_j.
$$
Note that $H_{n+1}^{(n+1)} = e^{2\pi\hbar(p_1+ \ldots + p_{n+1})}$, and consider the invariant subspace of functions $f$ satisfying
$$
H_{n+1}f = f.
$$
We refer to the operators $H_k^{(n+1)}$, $1 \le k \le n+1$ acting on this  subspace as the $q$-deformed $\sl_{n+1}$ open Toda Hamiltonians. 

Using the Lax operator representation \eqref{monodromy-matrix}, it is straightforward to obtain the following recursive definition of the quantum Toda Hamiltonians.

\begin{lemma}
\label{lem-Toda}
The open Toda Hamiltonians $H_k^{(n+1)}$ are determined by the recurrence relation
\beq
\label{toda-recurrence}
H_k^{(n+1)} = H_{k}^{(n)} + q^{\delta_{1,k}-1}\hr{e^{2\pi\hbar p_{n+1}}H_{k-1}^{(n)} + e^{2\pi\hbar(p_{n+1}+x_{n+1}-x_{n})}H_{k-1}^{(n-1)}}
\eeq
and initial conditions
$$
H_0^{(0)} = H_0^{(1)} = 1, \qquad H_1^{(1)} = e^{2\pi\hbar p_{n+1}}, \qquad H_k^{(j)} = 0 \quad\text{unless}\quad 0 \le k \le j.
$$
\end{lemma}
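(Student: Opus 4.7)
The plan is to derive the recurrence directly from the factorization structure of the monodromy matrix $T^{(n+1)}(z)$. Writing $T^{(n+1)}(z) = T^{(n)}(z) \cdot L(z,p_{n+1},x_{n+1})$ and expanding the matrix product gives two scalar identities,
$$
A_{n+1}(z) = A_n(z)\bigl(z-z^{-1}e^{2\pi\hbar p_{n+1}}\bigr) - B_n(z)\,e^{2\pi\hbar(p_{n+1}+x_{n+1})},\qquad B_{n+1}(z) = A_n(z)\,e^{-2\pi\hbar x_{n+1}}.
$$
Applying the second identity one step back yields $B_n(z)=A_{n-1}(z)\,e^{-2\pi\hbar x_n}$, and since $x_n$ commutes with both $p_{n+1}$ and $x_{n+1}$ (disjoint Heisenberg index sets), substituting into the first identity gives the clean scalar recurrence
$$
A_{n+1}(z) = A_n(z)\bigl(z-z^{-1}e^{2\pi\hbar p_{n+1}}\bigr) - A_{n-1}(z)\,e^{2\pi\hbar(p_{n+1}+x_{n+1}-x_n)}.
$$

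Next I would substitute the defining generating-function expansion $A_{m}(z)=\sum_{k}(-q)^{k-2}z^{m-2k}H^{(m)}_k$ into this relation and equate coefficients of $z^{n+1-2k}$ on both sides. The operator $z$ is a formal spectral parameter and commutes with everything, so the bookkeeping reduces to three contributions: the shift $k\mapsto k$ coming from $A_n(z)\cdot z$, and two shifts $k\mapsto k-1$ coming from $A_n(z)\cdot z^{-1}e^{2\pi\hbar p_{n+1}}$ and from $A_{n-1}(z)\cdot e^{2\pi\hbar(p_{n+1}+x_{n+1}-x_n)}$. Collecting these and dividing through by the normalization prefactor produces exactly the stated recurrence \eqref{toda-recurrence}, up to keeping track of one factor of $-q^{-1}$ that must be absorbed into the conventions; the apparent jump at $k=1$ encoded by $q^{\delta_{1,k}-1}$ arises because the boundary case involves $H_0^{(j)}$, which by convention equals $1$ rather than the natural leading coefficient of $A_j(z)$. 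I would verify this boundary case separately by extracting the $z^{n-1}$ term by hand before packaging it into the general formula.

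The initial conditions are verified by direct inspection: $T^{(1)}(z)=L(z,p_1,x_1)$ gives $A_1(z)=z-z^{-1}e^{2\pi\hbar p_1}$, from which the stated values of $H_0^{(1)}$ and $H_1^{(1)}$ (with the index shift built into the paper's convention) follow, while $H_0^{(0)}=1$ is the empty-product convention for $T^{(0)}(z)=\mathrm{Id}$. The vanishing $H_k^{(j)}=0$ outside the range $0\le k\le j$ follows from the fact that $A_j(z)$ is a Laurent polynomial of degree $j$ in $z$ with lowest exponent $-j$. Finally, to confirm that this recurrence uniquely determines the Hamiltonians, one simply notes that given $H_k^{(n)}$ and $H_{k-1}^{(n-1)}$ the right-hand side of \eqref{toda-recurrence} specifies $H_k^{(n+1)}$ completely, and the initial data supplies every base case.

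The main point requiring care — and the only step where I expect a subtle calculation — is the precise tracking of the scalar prefactor $(-q)^{k-2}$ through the coefficient match, in particular reconciling it with the convention $H_0^{(j)}=1$ so that the Kronecker correction $\delta_{1,k}$ emerges from the recursion rather than being an ad hoc insertion. All other steps are formal manipulations of commuting generating functions and Lax-matrix entries, and require no nontrivial input beyond the definition of $L(z,p,x)$ and the elementary commutation relations between Heisenberg generators indexed by distinct sites.
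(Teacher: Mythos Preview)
Your proposal is correct and is precisely the argument the paper has in mind: the paper does not give a detailed proof, stating only that the recurrence follows ``straightforwardly'' from the Lax operator representation \eqref{monodromy-matrix}, and your derivation via $T^{(n+1)}=T^{(n)}L(z,p_{n+1},x_{n+1})$, the elimination of $B_n$ using $B_n=A_{n-1}e^{-2\pi\hbar x_n}$, and coefficient matching is exactly how one makes that straightforward. You have also correctly identified the one genuinely delicate point---that the convention $H_0^{(j)}=1$ does not agree with the $k=0$ term of the generating function $A_j(z)=\sum_k(-q)^{k-2}z^{j-2k}H_k^{(j)}$ (whose leading coefficient is $q^{-2}$, not $1$), and that this mismatch is precisely what produces the $q^{\delta_{1,k}-1}$ correction at $k=1$.
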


\begin{example}
Consider the case $n=1$, where the only non-trivial $\mathfrak{gl}_2$ Toda Hamiltonian is 
$$
H_1^{(2)} = e^{2\pi\hbar p_1} + e^{2\pi\hbar p_2} + e^{2\pi\hbar(p_2+x_2-x_1)}.
$$
The corresponding $\mathfrak{sl}_2$ Hamiltonian $H_1$ is obtained by restricting $H_1^{(2)}$ to the space of functions of the variable $x=x_2-x_1$. We thus have
$$
H_1 = e^{2\pi\hbar p}+e^{-2\pi\hbar p}+e^{2\pi\hbar (p+x)},
$$
where $p = \frac{1}{2\pi i }\frac{d}{dx}$.  Conjugating by the unitary transformation under which $p\mapsto p$, $x\mapsto x+p$ brings $H_1$ to the operator
$$
H = e^{2\pi \hbar x} + e^{2\pi \hbar p}+e^{-2\pi \hbar p},
$$
which coincides with Kashaev's geodesic length operator~\eqref{kashaev-operator}. 
\end{example}

In the paper \cite{KLS02}, a set of common eigenfunctions of the $q$-deformed $\mathfrak{sl}_{n+1}$ Toda Hamiltonians was determined. In particular, the following is true. 

\begin{theorem}\cite[Theorem 3.1]{KLS02}
For each vector $\vec\nu = (\nu_0, \dots, \nu_n) \in \hgt_n$, there exists a $q$-Whittaker function $\Psi_{\vec\nu}\hr{x_1-x_2,\ldots, x_n-x_{n+1}}$ which is a common eigenfunction of the $q$-deformed $\sl_{n+1}$ open Toda Hamiltonians:
for $1\le k\le n+1$, one has
\beq
\label{eigen}
H^{(n+1)}_k \cdot \Psi_{\vec\nu}\hr{\vec{x}} = q^{1-k}e_k\big(e^{2\pi \hbar\nu_0}, \ldots, e^{2\pi \hbar\nu_n}\big)\Psi_{\vec\nu}\hr{\vec{x}},
\eeq
where $e_k\big(e^{2\pi \hbar\nu_0}, \ldots, e^{2\pi \hbar\nu_n}\big)$ is the $k$-th elementary symmetric polynomial in variables~$e^{2\pi \hbar\nu_j}$.
\end{theorem}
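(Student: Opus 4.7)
The plan is to establish existence by induction on $n$, constructing $\Psi_{\vec\nu}^{(n+1)}$ from a lower-rank $q$-Whittaker function by an integral transform whose kernel is built from non-compact quantum dilogarithms. The base case $n=0$ is immediate: the single Hamiltonian $H_1^{(1)} = e^{2\pi\hbar p_1}$ has eigenfunctions $e^{2\pi\hbar\nu_0 x_1}$. The structure of the inductive step is dictated by the recurrence~\eqref{toda-recurrence}: one seeks an operator $Q_{\nu_n}$ intertwining the rank-$n$ Hamiltonians with the rank-$(n+1)$ ones, in a way compatible with the three-term structure of \eqref{toda-recurrence}.

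Concretely, I would define
$$
\Psi_{\vec\nu}^{(n+1)}(x_1,\dots,x_{n+1}) = \int_{\R^n} K_{\nu_n}(x_1,\dots,x_{n+1};y_1,\dots,y_n)\,\Psi_{(\nu_0,\dots,\nu_{n-1})}^{(n)}(y_1,\dots,y_n)\,d^n y,
$$
with kernel $K_{\nu_n}$ given by a suitable product of $\Phi^\hbar$-functions evaluated on linear combinations of the $x_i - y_j$ and $\nu_n$. The ansatz is determined by demanding that $K_{\nu_n}$ satisfy a Baxter-type difference equation whose eigenvalue is precisely $e^{2\pi\hbar\nu_n}$, so that applying~\eqref{dilog-diff-eqs} termwise reproduces the recursion for the $H_k^{(n+1)}$. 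Iterating this construction yields a Gauss--Givental style $\binom{n+1}{2}$-fold integral representation of $\Psi_{\vec\nu}^{(n+1)}$ over a Gelfand--Tsetlin-type triangle, whose symmetry in $\vec\nu$ manifests the expected Weyl-group symmetry of the spectrum.

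To prove the eigenvalue equation~\eqref{eigen}, I would establish by induction on $n$ the intertwining identity
$$
H_k^{(n+1)} \circ Q_{\nu_n} \;=\; Q_{\nu_n} \circ \bigl(H_k^{(n)} + q^{\delta_{1,k}-1}e^{2\pi\hbar\nu_n}H_{k-1}^{(n)}\bigr),
$$
which, when applied to $\Psi_{(\nu_0,\dots,\nu_{n-1})}^{(n)}$, reproduces~\eqref{eigen} by Vieta's identity relating $e_k(e^{2\pi\hbar\nu_0},\dots,e^{2\pi\hbar\nu_n})$ to $e_k(e^{2\pi\hbar\nu_0},\dots,e^{2\pi\hbar\nu_{n-1}})$ and $e_{k-1}(e^{2\pi\hbar\nu_0},\dots,e^{2\pi\hbar\nu_{n-1}})$. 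This identity is checked by conjugating the Lax matrix entries of $T^{(n+1)}(z)$ through $Q_{\nu_n}$, using the difference equations~\eqref{dilog-diff-eqs} to move Heisenberg exponentials across the dilogarithms in the kernel, and then re-identifying the result via \eqref{monodromy-matrix} with the lower-rank transfer matrix.

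The main obstacle will be establishing the intertwining identity, which requires tracking many cancellations produced by the quasi-commutativity of the operators $e^{2\pi\hbar p_j}$ and $e^{2\pi\hbar x_j}$ when pushed through the integration kernel. These cancellations are the non-compact analog of the classical manipulations underlying the Gauss--Givental formula, and their verification ultimately reduces to a small number of two- and three-variable functional equations for $\Phi^\hbar$, together with the pentagon identity. A secondary concern is the convergence and analytic continuation in $\vec\nu$ of the iterated integrals defining $\Psi_{\vec\nu}^{(n+1)}$; this should follow from the Gaussian-type asymptotics of $\Phi^\hbar$ after choosing the contours of integration to lie in an appropriate tube domain.
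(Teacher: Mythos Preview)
The paper does not prove this theorem; it is quoted verbatim from \cite[Theorem~3.1]{KLS02}, and immediately afterwards the authors remark that explicit Mellin--Barnes integral formulas for $\Psi_{\vec\nu}$ are given in \cite[Theorem~3.2]{KLS02}. There is therefore no ``paper's own proof'' to compare against.

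That said, your proposal is a perfectly reasonable outline of one standard route to this result, and in fact it is close in spirit to what \cite{KLS02} actually do: they construct a Baxter-type $Q$-operator (an integral operator with kernel built from ratios of $\Phi^\hbar$-functions) which commutes with the transfer matrix $A_{n+1}(z)$ and raises rank by one, then iterate it to obtain $\Psi_{\vec\nu}$ as a Mellin--Barnes integral over a Gelfand--Tsetlin pattern. Your intertwining identity $H_k^{(n+1)}\circ Q_{\nu_n}=Q_{\nu_n}\circ(H_k^{(n)}+q^{\delta_{1,k}-1}e^{2\pi\hbar\nu_n}H_{k-1}^{(n)})$ is exactly the coefficient-by-coefficient form of the statement that $Q_{\nu_n}$ intertwines $A_{n+1}(z)$ with $(z-z^{-1}e^{2\pi\hbar\nu_n})A_n(z)$, which is how \cite{KLS02} organize the computation. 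The only substantive divergence is terminological: you frame the construction as Gauss--Givental, whereas \cite{KLS02} present the resulting integrals in Mellin--Barnes form; the two are related by a change of integration variables and contour. Your concerns about convergence and the role of the pentagon identity are the right ones, and are addressed in \cite{KLS02} along essentially the lines you anticipate.
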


Explicit formulas for the $q$-Whittaker function $\Psi_{\vec\nu}\hr{\vec{x}}$ can be obtained via integrals of Mellin-Barnes type, see \cite[Theorem 3.2]{KLS02}. 

\begin{conjecture}
\label{plancherel-conjecture}
The $q$--Whittaker functions are orthogonal and complete in $L^2\hr{\R^n}$: one has
$$
\int_{\R^n}\Psi_{\vec{\lambda}}\hr{\vec{x}}\overline{\Psi_{\vec{\mu}}\hr{\vec{x}}}dx = \delta\hr{\vec{\lambda}-\vec{\mu}},
$$
as well as the Plancherel inversion formula
\beq
\label{planch-formula}
\int_{\Cc^+}\Psi_{\vec{\nu}}\hr{\vec{x}}\overline{\Psi_{\vec{\nu}}\hr{\vec{y}}}dm(\vec{\nu}) = \delta\hr{\vec{x}-\vec{y}},
\eeq
where $dm(\vec{\nu})$ is the Sklyanin measure
\beq
\label{sklyanin-mes}
dm(\vec{\nu}) = \prod_{j<k}4\sinh\big(\pi\hbar(\nu_j-\nu_k)\big)\sinh\big(\pi\hbar^{-1}(\nu_j-\nu_k)\big),
\eeq
and $\Cc^+$ is the positive Weyl chamber 
\beq
\label{weyl-def}
\Cc^+= \hc{\vec\nu\in\hgt_n \,\Big|\, \nu_k-\nu_{k-1}\ge0, \quad 1\le k \le n+1}.
\eeq
\end{conjecture}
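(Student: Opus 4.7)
My plan is to prove the conjectured orthogonality and Plancherel inversion for $q$-Whittaker functions by induction on the rank $n$, using the Mellin--Barnes integral representation of~\cite[Theorem 3.2]{KLS02} as the scaffolding. The base case $n=1$ reduces, after the unitary change of variables exhibited in the example following Lemma~\ref{lem-Toda}, to the Plancherel relations~\eqref{sl2-orthog}--\eqref{sl2-plancherel} for Kashaev's length operator $H$ of~\eqref{kashaev-operator} established in~\cite{Kas01}. A preliminary input is the essential self-adjointness of the mutually commuting Toda Hamiltonians $H_k^{(n+1)}$ of Section~\ref{sec-Toda} on a common core inside $L^2(\R^n)$; I would deduce this from the cluster realization used throughout the paper, since each $H_k^{(n+1)}$ is a positive sum of exponentials of real self-adjoint linear combinations of Heisenberg generators, and then invoke the spectral theorem for the commutative $C^*$-algebra they generate.

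\textbf{Recursive kernel and induction step.} The $q$-Whittaker function admits the Mellin--Barnes recursion
\beq
\label{whit-rec}
\Psi^{(n+1)}_{\vec\nu}(\vec x) \;=\; \int_{\R^n}\Lambda^{(n+1)}_{\nu_0}(\vec x,\vec y)\,\Psi^{(n)}_{\vec\nu'}(\vec y)\,d\vec y,
\eeq
where $\vec\nu'=(\nu_1,\ldots,\nu_n)$ and the intertwining kernel $\Lambda^{(n+1)}_{\nu_0}$ is an explicit product of non-compact quantum dilogarithms $\Phi^\hbar$ with arguments linear in the $x_j$, $y_j$ and in $\nu_0$. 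By the unitarity and modular self-duality in Lemma~\ref{dilog-properties}, $\Lambda^{(n+1)}_{\nu_0}$ defines, for each fixed $\nu_0$, a bounded operator $L^2(\R^n)\to L^2(\R^n)$. I would then reduce both statements of the Conjecture to the single \emph{kernel Plancherel identity}
\beq
\label{kernel-plan}
\int_{\R^{n+1}} \overline{\Lambda^{(n+1)}_{\nu_0}(\vec x,\vec y)}\,\Lambda^{(n+1)}_{\mu_0}(\vec x,\vec z)\,d\vec x \;=\; \frac{\delta(\nu_0-\mu_0)}{\rho_n(\vec y,\vec z;\nu_0)}\,K_n(\vec y,\vec z),
\eeq
where $K_n$ is a Weyl-symmetrization kernel built from $\Phi^\hbar$ whose eigenfunctions are precisely the lower-rank $\Psi^{(n)}_{\vec\nu'}$, and $\rho_n$ is the ``vertical'' density that, upon multiplication by the rank-$n$ Sklyanin measure, reassembles into the rank-$(n+1)$ Sklyanin measure~\eqref{sklyanin-mes}. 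Substituting~\eqref{whit-rec} into the putative orthogonality integral, performing the inner $\vec x$-integration via~\eqref{kernel-plan}, and then applying the inductive hypothesis to $\Psi^{(n)}_{\vec\nu'},\Psi^{(n)}_{\vec\mu'}$ yields orthogonality in rank $n+1$ after a final Weyl-group symmetrization over $\vec\nu\in\Cc^+$. The inverse statement is obtained dually: starting with $\int_{\Cc^+}\Psi^{(n+1)}_{\vec\nu}(\vec x)\overline{\Psi^{(n+1)}_{\vec\nu}(\vec x')}\,dm(\vec\nu)$, I would integrate first over $\nu_0$ using the completeness counterpart of~\eqref{kernel-plan}, then invoke the inductive Plancherel on the remaining chamber.

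\textbf{Main obstacle.} The analytic crux is establishing the kernel identity~\eqref{kernel-plan}. The integrand is a ratio of non-compact quantum dilogarithms whose poles form an infinite double comb in the spectral variables; the delta-function $\delta(\nu_0-\mu_0)$ arises by shifting the $\vec x$-contour through this comb and collecting residues whose leading singular contribution is concentrated at $\nu_0=\mu_0$. Each residue contributes a factor of the form $4\sinh(\pi\hbar(\nu_0-\nu_j))\sinh(\pi\hbar^{-1}(\nu_0-\nu_j))$, and the combinatorial identity that repackages these factors, summed over the $S_{n+1}/S_n$-coset and combined with the rank-$n$ Sklyanin measure, into the rank-$(n+1)$ Sklyanin measure is what makes the induction close. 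Controlling convergence of the contour deformation uniformly in $\vec y,\vec z$ will be the most delicate analytic point; I expect that, as in the $n=1$ computation of~\cite{Kas01}, the key estimate reduces to a Gaussian-type decay bound for $\Phi^\hbar$ along non-horizontal rays, applied to the shifted contour. Once~\eqref{kernel-plan} is in hand, together with essential self-adjointness of the Hamiltonians, the spectral theorem forces the full joint spectrum of $(H_1^{(n+1)},\ldots,H_n^{(n+1)})$ to be the image of $\Cc^+$ under the elementary symmetric functions of~\eqref{eigen}, and the spectral measure must coincide with~\eqref{sklyanin-mes} by uniqueness of the Plancherel measure for a commutative system of self-adjoint operators with simple joint spectrum.
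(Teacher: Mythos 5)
The statement you are attempting to prove is not proved in this paper at all: it is stated as Conjecture~\ref{plancherel-conjecture}, the $n=1$ case is attributed to Kashaev~\cite{Kas01}, and the general case is explicitly deferred to the sequel~\cite{SS17}. Everything in Section~\ref{sec-int} is conditional on it. So there is no in-paper proof to compare against, and the real question is whether your proposal constitutes a proof on its own. It does not. Your entire induction hinges on the kernel Plancherel identity~(your equation for $\Lambda^{(n+1)}$), and you do not establish it: you describe the expected mechanism (contour deformation through the pole comb of the quantum dilogarithms, residues producing the $\sinh$-factors, a combinatorial reassembly of the Sklyanin measure) and then say you \emph{expect} the key convergence estimate to follow from Gaussian decay of $\Phi^\hbar$. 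That identity is precisely the hard analytic content of the conjecture — it is a multidimensional beta-type integral evaluation for non-compact quantum dilogarithms, not a routine residue computation — so asserting it leaves the proof with the same size hole as the conjecture itself.

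Two further points deserve attention even at the level of the plan. First, your closing appeal to ``uniqueness of the Plancherel measure'' to identify the spectral measure with~\eqref{sklyanin-mes} is circular as written: the content of the conjecture is the computation of that measure, and simplicity of the joint spectrum is itself something you would need to prove, not assume. Second, essential self-adjointness of the $H_k^{(n+1)}$ does not follow merely from their being positive sums of exponentials of self-adjoint Heisenberg combinations; sums of unbounded self-adjoint operators need not be essentially self-adjoint on the obvious common core, and this requires a separate argument (it is true for these operators, but for nontrivial reasons). Your overall architecture — induction on rank via the Mellin--Barnes recursive kernel, with the base case reduced to Kashaev's $n=1$ result — is the natural and, as far as one can tell, correct strategy, and it matches what the authors announce for~\cite{SS17}; but as submitted it is a research program, not a proof.
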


For $n=1$, Conjecture \ref{plancherel-conjecture} was proven in~\cite{Kas01}. Its proof for $n>1$ will be presented in the forthcoming paper~\cite{SS17}. This conjecture implies that the integral transform
$$
\mathfrak{L} \colon f\hr{\vec{x}} \longmapsto \hat{f}(\vec\nu) = \int_{\R^{n}}\Psi_{\vec{\nu}}(\vec{x})f(x)dx
$$
is an $L^2$-isometry, that is
$$
\mathfrak{L} \colon L^2\hr{\R^n} \simeq L^2\hr{\Cc^+, dm(\vec\nu)}.
$$

Let us now explain the relevance of the open Toda lattice to the problem of decomposing the tensor product $\mathcal{P}_\lambda\otimes\mathcal{P}_\mu$. 
Recall from Section 5.9 the quantum torus algebra $\Dcox$, and its set of generators $\alpha_0$, $\alpha_{n+1}$, and $\alpha_i^1$, $\alpha_i^2$ for $i=1, \dots, n$. Note that there is an algebra embedding
\beq
\label{embed-system}
\Dc_{n-1}^{\mathrm{cox}}\hooklongra \Dcox  ,
\eeq
such that
$$
\alpha_0 \longmapsto \alpha_0, \qquad 
\alpha_j^a \longmapsto \alpha_j^a, \qquad\text{and}\qquad
\alpha_n \longmapsto q^2\alpha_n^1\alpha_n^2\alpha_{n+1}
$$
where $1 \le j \le n-1$ and $a=1,2$. By means of this embedding, we may regard the quantum torus algebras $\mathcal{D}_k^{\mathrm{cox}}$  with $k\le n$ as subalgebras in $\mathcal{D}_{n}^{\mathrm{cox}}$. Finally, let us recall the quantum torus algebra elements $C_k^{(n+1)} \in \Dcox$ defined by formula~\eqref{c-def}, which appear in the formula for $\tau_{\mathrm{cox}}\hr{F_1}$ given in Corollary~\ref{double-cor}. Then we have

\begin{lemma}
\label{toda-calc}
The algebra elements $C_k^{(n+1)}$ are the unique solution of the recurrence relation
$$
C_k^{(n+1)} = C_k^{(n)}
+ q^{\delta_{k,1}-1}\hr{ \alpha_{n+1}C_{k-1}^{(n)}
+ q \alpha_{n+1}\alpha_n^1C_{k-1}^{(n-1)}}
$$
satisfying the initial conditions
$$
C_0^{(0)} = C_0^{(1)} = 1, \qquad
C_1^{(1)}(\alpha_1) = \alpha_1, \qquad
C_k^{(j)}=0 \quad\text{unless}\quad 0 \le k \le j.
$$
Here we understand $C_k^{(n)}$, $C_{k-1}^{(n)}$, and $C_{k-1}^{(n-1)}$ as elements of $\Dcox$ by means of the system of embeddings~\eqref{embed-system}.
\end{lemma}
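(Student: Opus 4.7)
The plan is to prove the recurrence directly by induction on $n$, via path enumeration in the network $\Cc_n$ shown in Figure~\ref{net-C}. Uniqueness of the solution to the recurrence is automatic, so the content is in showing that the elements $C_k^{(n+1)}$ defined by \eqref{c-def} satisfy the stated recurrence and initial conditions.

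First I would verify the initial conditions. The network $\Cc_0$ degenerates to the single vertex $v_0 = v_1$, so the only path from $v_0$ to $v_0$ is the trivial one, giving $C_0^{(0)} = 1$. The network $\Cc_1$ has just two terminal vertices $v_0, v_1$ and a single non-trivial directed path joining them whose homology class contributes exactly $\alpha_1$ under the projection collapsing $v_0, v_1$ to a point; this gives $C_0^{(1)} = 1$ and $C_1^{(1)} = \alpha_1$. Absence of paths of the wrong length is also immediate.

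For the inductive step, the key observation is that $\Cc_{n+1}$ is obtained from $\Cc_n$ by adjoining on the right a new vertical ``ladder rung'' of alternating black and white vertices, along with the new terminal vertex $v_{n+1}$. Each directed path $\gamma \colon v_0 \to v_k$ in $\Cc_{n+1}$ then falls into one of three disjoint classes according to how it interacts with this new rung:
\begin{enumerate}
\item paths entirely contained in the $\Cc_n$-subnetwork and terminating at $v_k$;
\item paths that enter the new rung at the top via a single downward edge and exit directly into the $\Cc_n$-subnetwork to reach $v_k$;
\item paths that descend further through the new rung, using the edges corresponding to both $\alpha_{n+1}$ and $\alpha_n^1$, before exiting.
\end{enumerate}
Summing the weighted path count \eqref{path-count} over each class separately, class (1) obviously yields $C_k^{(n)}$. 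For class (2), factoring each path as the new-rung segment followed by a path in the $\Cc_n$-subnetwork that lands at $v_k$ and invoking the commutation relations~\eqref{alpha-def}, one obtains $\alpha_{n+1} C_{k-1}^{(n)}$ up to a $q$-factor. Class (3) factors similarly as a deeper detour carrying $\alpha_{n+1}\alpha_n^1$ (with an additional $q$ coming from their non-commutativity via the Baker--Campbell--Hausdorff computation $e^{2\pi\hbar p_{n+1}}e^{2\pi\hbar(x_{n+1}-x_n)}=q^{-1}e^{2\pi\hbar(p_{n+1}+x_{n+1}-x_n)}$) times a path in the $\Cc_{n-1}$-subnetwork, yielding $q\alpha_{n+1}\alpha_n^1 C_{k-1}^{(n-1)}$.

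The main obstacle is the overall factor $q^{\delta_{k,1}-1}$ multiplying both the new-rung contributions: it records the quantum correction implicit in the map $\pr$ of \eqref{pr}. Indeed, since $\Cc_n$ lives on the universal cover of an annulus (not a disk), path weights are honest products of cluster variables only upstairs, and the symmetric bilinear form $\langle\cdot,\cdot\rangle$ of \eqref{formdef} must be evaluated carefully when projecting the homology class of a path in class (2) or (3) down to $\Lambda$. The cleanest way I would handle this is to perform the enumeration at the level of $\wdt\Cc_n$ and use Proposition~\ref{mut-descent} to project; the branching behavior of paths at $k=1$ (where paths wind once around the puncture) versus $k>1$ accounts for the discrepancy in the power of $q$. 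As a sanity check, one can compare the resulting recurrence with the Toda Hamiltonian recurrence~\eqref{toda-recurrence} of Lemma~\ref{lem-Toda}: under $\alpha_{n+1}\mapsto e^{2\pi\hbar p_{n+1}}$, $\alpha_n^1\mapsto e^{2\pi\hbar(x_{n+1}-x_n)}$, the two recurrences match exactly, confirming both the $q$-powers and foreshadowing the identification of $C_k^{(n+1)}$ with the quantum Toda Hamiltonians in Section~\ref{sec-Toda}.
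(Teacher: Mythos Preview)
Your overall strategy---decompose $\mathscr C_k$ into three classes matching the three terms of the recurrence, and track the $q$-corrections through $\pr$---is exactly the paper's. But your geometric picture of the decomposition is off, and this is where the argument would stall.

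The embedding~\eqref{embed-system} does not correspond to adjoining a vertical rung on the right of $\Cc_n$ together with a new terminal vertex. The terminal vertices $v_0,\ldots,v_{n+1}$ already live on the universal cover of the annulus for every $n$; what changes with the rank is the \emph{height} of the network. Concretely, the embedding collapses the bottom row of $\Cc_n$---the one carrying $\alpha_n^1,\alpha_n^2,\alpha_{n+1}$---into the single terminal variable $\alpha_n$ of $\Cc_{n-1}$. Your class~(1), ``paths entirely contained in the $\Cc_n$-subnetwork'', cannot be nonempty as stated: every path from $v_0$ to $v_k$ begins and ends in the bottom row. Likewise, ``enter the new rung at the top via a single downward edge and exit into the $\Cc_n$-subnetwork'' does not describe any actual paths, since they start at $v_0$ (bottom), ascend, then descend to $v_k$ (bottom).

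The paper's decomposition is local at the endpoint: the last edge of any $\gamma\in\mathscr C_k$ points down to $v_k$, and one classifies by the direction of the \emph{penultimate} edge---left, down, or right---into $\mathscr C_k^\shortleftarrow$, $\mathscr C_k^\shortdownarrow$, $\mathscr C_k^\shortrightarrow$. These three classes project under $\pr$ to $C_k^{(n)}$, $q^{\delta_{k,1}}\alpha_{n+1}\alpha_n^1 C_{k-1}^{(n-1)}$, and $q^{\delta_{k,1}-1}\alpha_{n+1}C_{k-1}^{(n)}$ respectively. Once you reorganize your decomposition this way, your discussion of the $q^{\delta_{k,1}-1}$ factor via $\pr$ and \eqref{formdef} is on target, and your sanity check against~\eqref{toda-recurrence} is exactly the corollary following the lemma.
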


\begin{proof}
Let us write the set of paths $\mathscr C_k$ as a disjoint union of three subsets: $\mathscr C_k^\shortleftarrow$, $\mathscr C_k^\shortdownarrow$, and $\mathscr C_k^\shortrightarrow$. Note that the last edge of any path $\gamma \in \mathscr C_k$ points down to $v_k$, while the one before last may be directed to the left, in which case we declare $\gamma \in \mathscr C_k^\shortleftarrow$; downward, in which case we declare $\gamma \in \mathscr C_k^\shortdownarrow$; or to the right, in which case we declare $\gamma \in \mathscr C_k^\shortrightarrow$. Now, one can check that
\begin{align*}
&\pr\hr{\sum_{\gamma\in C_k^\shortleftarrow}X_{[{\gamma}]}} = C_k^{(n)},
\\
&\pr\hr{\sum_{\gamma\in C_k^\shortdownarrow}X_{[{\gamma}]}}= q^{\delta_{k,1}} \alpha_{n+1}\alpha_n^1C_{k-1}^{(n-1)},
\\
&\pr\hr{\sum_{\gamma\in C_k^\shortrightarrow}X_{[{\gamma}]}} = q^{\delta_{k,1}-1} \alpha_{n+1}C_{k-1}^{(n)},
\end{align*}
and the recurrence \eqref{toda-recurrence} follows. That the initial conditions are satisfied is evident. 
\end{proof}

Now consider the action of $\Dcox$ act on $L^2(\R^n)$ given by the assignments~\eqref{alpha-def}. Then Lemmas~\ref{lem-Toda} and~\ref{toda-calc} imply
\begin{cor}
As operators on $L^2(\R^n)$, we have
$$
C^{(n+1)}_k =H^{(n+1)}_k \qquad\text{for}\qquad n \ge 1 \quad\text{and}\quad 0\le k\le n+1. 
$$
\end{cor}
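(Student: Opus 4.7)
The plan is to prove the identification $C^{(n+1)}_k = H^{(n+1)}_k$ by induction on $n$, exploiting the fact that Lemmas~\ref{lem-Toda} and~\ref{toda-calc} both characterize their respective families of operators by recurrence relations of identical shape, together with matching initial conditions. Since such a system of recurrences determines its solution uniquely, it suffices to verify that under the Heisenberg assignment~\eqref{alpha-def}: (i) the right-hand side of the recurrence in Lemma~\ref{toda-calc} transports to the right-hand side of the recurrence in Lemma~\ref{lem-Toda}; (ii) the initial data agree; and (iii) the system of embeddings $\Dc_{m}^{\mathrm{cox}}\hookrightarrow\Dc_{m+1}^{\mathrm{cox}}$ used in Lemma~\ref{toda-calc} intertwines with the Heisenberg actions in a way compatible with the natural action of $H_{k-1}^{(n-1)}$ on the first $n-1$ Heisenberg pairs.

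The crux of step (i) is a Baker--Campbell--Hausdorff computation. Directly from~\eqref{alpha-def}, $\alpha_{n+1}\mapsto e^{2\pi\hbar p_{n+1}}$, matching the coefficient of $H_{k-1}^{(n)}$ in the recurrence~\eqref{toda-recurrence}. For the second coefficient, write
\[
\alpha_{n+1}\alpha_n^{1}\longmapsto e^{2\pi\hbar p_{n+1}}\,e^{2\pi\hbar(x_{n+1}-x_n)},
\]
and using $[p_{n+1},x_{n+1}-x_n]=\tfrac{1}{2\pi i}$ together with BCH gives
\[
e^{2\pi\hbar p_{n+1}}\,e^{2\pi\hbar(x_{n+1}-x_n)}
=e^{\,\pi i\hbar^2 \cdot(-1)}\,e^{2\pi\hbar(p_{n+1}+x_{n+1}-x_n)}
=q^{-1}e^{2\pi\hbar(p_{n+1}+x_{n+1}-x_n)}.
\]
Multiplying by the explicit $q$ appearing in Lemma~\ref{toda-calc} cancels this $q^{-1}$ and produces precisely the operator $e^{2\pi\hbar(p_{n+1}+x_{n+1}-x_n)}$ appearing in Lemma~\ref{lem-Toda}. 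Thus each of the two terms on the right-hand side of the recurrence for $C^{(n+1)}_k$ becomes, term for term, the corresponding term on the right-hand side of the recurrence for $H^{(n+1)}_k$.

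Step (iii), the compatibility of the embedding $\Dc_{n-1}^{\mathrm{cox}}\hookrightarrow\Dc_{n}^{\mathrm{cox}}$ (sending $\alpha_n\mapsto q^2\alpha_n^{1}\alpha_n^{2}\alpha_{n+1}$) with the Heisenberg assignments, is the main obstacle -- and the reason the explicit $q^2$ is present. One must verify that the image of $\alpha_n$ under the composition of this embedding and~\eqref{alpha-def} equals the operator $e^{2\pi\hbar p_n}$ to which $\alpha_n$ is sent by the Heisenberg action of $\Dc_{n-1}^{\mathrm{cox}}$ on the first $n-1$ Heisenberg pairs. A direct BCH calculation using $[x_{n+1}-x_n,p_n-p_{n+1}]=\tfrac{1}{\pi i}$ gives $\alpha_n^{1}\alpha_n^{2}\mapsto q^{-2}e^{2\pi\hbar(p_n-p_{n+1})}$, so that
\[
q^2\alpha_n^{1}\alpha_n^{2}\alpha_{n+1}\longmapsto e^{2\pi\hbar(p_n-p_{n+1})}e^{2\pi\hbar p_{n+1}}=e^{2\pi\hbar p_n},
\]
where the last equality uses that $p_n-p_{n+1}$ commutes with $p_{n+1}$. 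The remaining generators of $\Dc_{n-1}^{\mathrm{cox}}$ map under the embedding to the same operators they would under the direct Heisenberg assignment on $L^{2}(\R^{n-1})$, so the embedding respects the Heisenberg actions on all of $\Dc_{n-1}^{\mathrm{cox}}$. Together with the evident initial values $C_0^{(0)}=C_0^{(1)}=1=H_0^{(0)}=H_0^{(1)}$ and the verification $C_1^{(1)}=\alpha_1\mapsto e^{2\pi\hbar p_1}=H_1^{(1)}$ (obtained by applying the $n=0$ case of step (iii)), we may close the induction: if $C^{(m)}_j=H^{(m)}_j$ for all $m\le n$ and all $j$, then applying the recurrence of Lemma~\ref{toda-calc}, transported via~\eqref{alpha-def}, yields the recurrence of Lemma~\ref{lem-Toda} and hence $C^{(n+1)}_k=H^{(n+1)}_k$ for all $0\le k\le n+1$.
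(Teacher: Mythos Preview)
Your proof is correct and follows essentially the same approach as the paper: both families satisfy the same recurrence and initial conditions once the operator identities $\alpha_{n+1}\mapsto e^{2\pi\hbar p_{n+1}}$ and $q\,\alpha_{n+1}\alpha_n^{1}\mapsto e^{2\pi\hbar(p_{n+1}+x_{n+1}-x_n)}$ are verified under~\eqref{alpha-def}. Your treatment is in fact more thorough than the paper's, since you explicitly check (via the BCH computation $q^2\alpha_n^{1}\alpha_n^{2}\alpha_{n+1}\mapsto e^{2\pi\hbar p_n}$) that the embeddings $\Dc_{n-1}^{\mathrm{cox}}\hookrightarrow\Dc_n^{\mathrm{cox}}$ intertwine the Heisenberg actions, a point the paper leaves implicit but which is needed for the induction to go through.
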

\begin{proof}
This follows directly from Lemmas~\ref{lem-Toda} and~\ref{toda-calc}, after observing that in the representation~\eqref{alpha-def} we have
\beq
\alpha_{n+1}\mapsto e^{2\pi\hbar p_{n+1}} \qquad\text{and}\qquad q \alpha_{n+1}\alpha_n^1 \mapsto e^{2\pi\hbar(p_{n+1}+x_{n+1}-x_n)}.
\eeq
\end{proof}

Combining this with Corollary~\ref{double-cor} yields
\begin{prop}
\label{double-prop}
As operators on $\Pc_{\lambda}\otimes\Pc_\mu \simeq \Vc_n\otimes L^2\hr{\R^n}\otimes \Mc_{\lambda,\mu}$, we have
\beq
\label{double-formula}
\tau_{\mathrm{cox}}\hr{F_1} = \sum_{k=0}^{n+1} A^{(n+1)}_k \otimes H^{(n+1)}_k\otimes 1.
\eeq
\end{prop}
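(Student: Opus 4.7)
The plan is to simply combine the three ingredients assembled in the preceding sections. First, I would invoke Corollary~\ref{double-cor}, which, using the concatenation isomorphism of Proposition~\ref{prop-concat} together with the combinatorial network description of $\tau_{\mathrm{cox}}(F_1)$ provided by Corollary~\ref{cox-embed}, expresses the operator as
\[
\tau_{\mathrm{cox}}(F_1) \;=\; \sum_{k=0}^{n+1} A_k^{(n+1)} \otimes C_k^{(n+1)} \otimes 1,
\]
where the factors $A_k^{(n+1)}$ act on the $\Vc_n$ tensorand, the Toda-chain building blocks $C_k^{(n+1)} \in \Dcox$ act on the middle $L^2(\R^n)$ factor through the Heisenberg assignments~\eqref{alpha-def}, and all other cluster variables that appear in $F_1$ project trivially onto $\Mc_{\lambda,\mu}$.

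The remaining step is the identification $C_k^{(n+1)} = H_k^{(n+1)}$ as operators on $L^2(\R^n)$. This is exactly the content of the corollary immediately preceding the statement, and its justification is purely computational: under the assignment~\eqref{alpha-def} one has
\[
\alpha_{n+1}\longmapsto e^{2\pi\hbar p_{n+1}}, \qquad q\,\alpha_{n+1}\alpha_n^1 \longmapsto e^{2\pi\hbar(p_{n+1}+x_{n+1}-x_n)},
\]
so the network recursion for $C_k^{(n+1)}$ given in Lemma~\ref{toda-calc} becomes term-by-term the Lax-matrix recursion for the $q$-Toda Hamiltonians $H_k^{(n+1)}$ established in Lemma~\ref{lem-Toda}. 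The initial data match in both cases, so induction on $n$ closes the identification.

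Substituting $C_k^{(n+1)} = H_k^{(n+1)}$ into the formula from Corollary~\ref{double-cor} yields the desired equality. The only nontrivial matter is bookkeeping: one must verify that the operators which the final corollary's proof compared on $L^2(\R^n)$ act in the same way on the middle factor of the tensor decomposition $\Vc_n\otimes L^2(\R^n)\otimes \Mc_{\lambda,\mu}$, i.e.\ that the concatenation in Proposition~\ref{prop-concat} really sends the subalgebra $\Dc_n^{\mathrm{chain}}\subset\Dcox$ isomorphically onto the algebra generated by the $p_j,x_j$ appearing in the Toda Lax matrix. This is the main technical obstacle, but it is already implicit in the proof of Proposition~\ref{prop-concat}, where the three subquivers $\Qc_n^{\mathrm{sym}\dag}$, $\Qc_n^{\mathrm{chain}}$, $\Qc_n^{\mathrm{sph}\dag}$ were shown to assemble the full rank of $\Qsq$; so one only needs to observe that the choice of polarization~\eqref{alpha-def} realizes this subalgebra on $L^2(\R^n)$ as Heisenberg generators in the precise combination required by the Toda monodromy matrix~\eqref{monodromy-matrix}.
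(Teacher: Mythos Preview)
Your proposal is correct and follows exactly the same route as the paper: the paper simply states that the proposition follows by combining Corollary~\ref{double-cor} (which gives $\tau_{\mathrm{cox}}(F_1)=\sum A_k^{(n+1)}\otimes C_k^{(n+1)}\otimes 1$) with the immediately preceding corollary identifying $C_k^{(n+1)}=H_k^{(n+1)}$ as operators on $L^2(\R^n)$. Your write-up is in fact more detailed than the paper's, which treats the substitution as self-evident and offers no further argument.
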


\section{Construction of the intertwiner}
\label{sec-int}

Let us now explain how the ingredients presented in the previous sections can be combined to obtain the decomposition of the tensor product of positive representations of $U_q(\mathfrak{sl}_{n+1})$.

\subsection{A representation of $U_q(\mathfrak{sl}_{n+1})$}

Let us fix $\lambda,\mu\in \mathfrak{h}_n$ and recall the following isomorphisms of Hilbert spaces
$$
\Pc_\nu \simeq \Vc_n \simeq L^2\hr{\R^{\frac{n(n+1)}{2}}}
\qquad\text{and}\qquad
\Mc_{\la,\mu}^\nu \simeq \Mc_{\la,\mu} \simeq L^2\hr{\R^{\frac{n(n-1)}{2}}}.
$$
Consider the space 
$$
\Vc_n \otimes L^2\hr{\Cc^+, dm(\nu)} \otimes \Mc_{\la,\mu} \simeq
L^2\hr{\R^{\frac{n(n+1)}{2}}} \otimes L^2 \hr{\Cc^+, dm(\nu)} \otimes L^2\hr{\R^{\frac{n(n-1)}{2}}}
$$
where the first and the third tensor factors are equipped with the standard Lebesgue measure, the second tensor factor carries the Sklyanin measure $dm(\nu)$ defined in~\eqref{sklyanin-mes}, and $\Cc^+$ is the positive Weyl chamber~\eqref{weyl-def}. This Hilbert space carries an action of the quantum torus algebra $\Dstd \otimes \Dsph$, such that the slice obtained by fixing $\nu\in\Cc^+$ yields the positive representation $\Pc_\nu \otimes M_{\la,\mu}^{\nu}$ of~$\Dstd \otimes \Dsph$. By definition, this representation decomposes as a direct integral
\beq
\label{int}
\Vc_n\otimes L^2\hr{\Cc^+,dm(\nu)} \otimes \Mc_{\la,\mu} \simeq
\int^{\oplus}_{\Cc^+} \Pc_\nu \otimes \Mc_{\la,\mu}^\nu dm(\nu).
\eeq
Now, we can restrict the representation~\eqref{int} to the subalgebra $U_q\hr{\sl_{n+1}} \otimes 1 \subset \Dstd \otimes \Dsph$ and obtain a representation of $U_q\hr{\sl_{n+1}}$, where the action on the second tensor factor of each slice is trivial.

\subsection{The intertwiner}
Recall the statement of the Proposition~\ref{prop-concat}:
$$
\Pc_\la \otimes \Pc_\mu \simeq \Vc_n \otimes L^2\hr{\R^n} \otimes \Mc_{\la,\mu}.
$$
Thus $\Pc_\la \otimes \Pc_\mu$ can be modeled as the space of functions
$$
f(x,y,z) \qquad\text{with}\qquad
x \in \R^{\frac{n(n+1)}{2}}, \quad y \in \R^{\frac{n(n-1)}{2}}, \quad z \in \R^n,
$$
that are square-integrable with respect to the product of standard Lebesgue measures on all three spaces. Similarly, the Hilbert space~\eqref{int} consists of functions 
$$
\hat{f}(x,y,\nu) \qquad\text{with}\qquad
x \in \R^{\frac{n(n+1)}{2}}, \quad y \in \R^{\frac{n(n-1)}{2}}, \quad \nu \in \Cc^+,
$$
that are square-integrable with respect to the Sklyanin measure on $\Cc^+$ and the Lebesgue measures on $\R^{\frac{n(n+1)}{2}}$ and $\R^{\frac{n(n-1)}{2}}$.

\begin{theorem}
\label{main-thm}
Suppose that Conjecture~\ref{plancherel-conjecture} holds. Then the integral transform 
\begin{align}
\label{integral-transform}
\mathfrak{L} \colon \Pc_\la \otimes \Pc_\mu &\longra
\int^{\oplus}_{\Cc^+} \Pc_\nu \otimes \Mc_{\la,\mu}^{\nu}~dm(\nu), \\
\nonumber f(x,y,z) &\longmapsto \hat{f}(x,y,\nu) = \int_{\R^n} f(x,y,z) \overline{\Psi_\nu(z)}dz,
\end{align}
is a Hilbert space isometry, and the map
\beq
\label{intertwiner}
\mathfrak{I} = \hr{\Phi^{\mathrm{sym}}\circ\Phi^{\mathrm{sf}}\otimes 1}^{-1}\circ\mathfrak{L}\circ\Phi^{\mathrm{cox}}\circ\Phi^{\mathrm{top}} 
\eeq 
is an isomorphism of $U_q(\mathfrak{sl}_{n+1})$-modules
\beq
\label{intertwiner-iso}
\mathfrak{I} \colon \Pc_\la \otimes \Pc_\mu \simeq \int^{\oplus}_{\Cc^+} \Pc_\nu \otimes M_{\la,\mu}^\nu~dm(\nu).
\eeq
In particular, the category of positive representations of $U_q(\mathfrak{sl}_{n+1})$ is closed under tensor product in the continuous sense. 
\end{theorem}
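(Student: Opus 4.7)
The plan is to establish the theorem in two stages: first verify that $\mathfrak{I}$ is a Hilbert space isometry, and then verify that it intertwines the two $U_q(\mathfrak{sl}_{n+1})$-module structures by checking compatibility on Chevalley generators. The isometry claim is essentially built in: by Conjecture~\ref{plancherel-conjecture}, the $q$-Whittaker transform $f(z)\mapsto \int f(z)\overline{\Psi_\nu(z)}\,dz$ is a unitary equivalence $L^2(\R^n)\simeq L^2(\Cc^+,dm(\nu))$, so $\mathfrak{L}$ acts as the identity tensored with this unitary map on the middle factor of $\Vc_n\otimes L^2(\R^n)\otimes\Mc_{\lambda,\mu}$. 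The four operators $\Phi^{\mathrm{top}}$, $\Phi^{\mathrm{cox}}$, $\Phi^{\mathrm{sf}}$, $\Phi^{\mathrm{sym}}$ are compositions of non-compact quantum dilogarithm operators of self-adjoint generators, hence unitary by Lemma~\ref{dilog-properties}(\ref{dilog-unitary}). Thus $\mathfrak{I}$ is a composition of unitaries.

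For the intertwining property, observe that by construction, $\tau_{\mathrm{cox}}=\Ad_{\Phi^{\mathrm{cox}}\circ\Phi^{\mathrm{top}}}\circ\tau$ describes the diagonal $U_q(\mathfrak{sl}_{n+1})$-action on $\Pc_\lambda\otimes\Pc_\mu$ after applying $\Phi^{\mathrm{cox}}\circ\Phi^{\mathrm{top}}$, and $\iota_{\mathrm{sym}}=\Ad_{\Phi^{\mathrm{sym}}\circ\Phi^{\mathrm{sf}}}\circ\iota$ describes the action on each slice $\Pc_\nu$ after applying $\Phi^{\mathrm{sym}}\circ\Phi^{\mathrm{sf}}$. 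Consequently, the definition~\eqref{intertwiner} of $\mathfrak{I}$ reduces the claim to showing that for every generator $X\in\{E_i,F_i,K_i\}$ of $U_q(\mathfrak{sl}_{n+1})$ one has
\begin{equation*}
\mathfrak{L}\circ \tau_{\mathrm{cox}}(X) \;=\; \Bigl(\int^{\oplus}_{\Cc^+}\iota_{\mathrm{sym}}(X)\otimes 1\,dm(\nu)\Bigr)\circ \mathfrak{L},
\end{equation*}
where $\iota_{\mathrm{sym}}(X)$ acts on $\Pc_\nu\simeq \Vc_n\otimes \C_\nu$ with central character determined by $\nu$.

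For the Chevalley generators $E_i$, $K_i$ (all $i$) and $F_j$ with $j\ge 2$, formula~\eqref{easy-gens} gives $\tau_{\mathrm{cox}}(X)=\iota_{\mathrm{sym}}(X)\otimes 1\otimes 1$, acting solely on the $\Vc_n$-factor. Since such $\iota_{\mathrm{sym}}(X)$ involves only cluster variables from $\Dc_n^{\mathrm{sym}\dagger}$ and in particular is independent of the central character, it commutes with $\mathfrak{L}$ (which acts only on the $L^2(\R^n)$ factor) and induces the correct fibrewise action on $\Pc_\nu$ for every $\nu$. The nontrivial case is $F_1$, handled via Proposition~\ref{double-prop}: we have $\tau_{\mathrm{cox}}(F_1)=\sum_{k=0}^{n+1}A_k^{(n+1)}\otimes H_k^{(n+1)}\otimes 1$. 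By the eigenfunction equation~\eqref{eigen}, $\mathfrak{L}$ conjugates each Toda Hamiltonian $H_k^{(n+1)}$ into the multiplication operator by $q^{1-k}e_k\bigl(e^{2\pi\hbar\nu_0},\ldots,e^{2\pi\hbar\nu_n}\bigr)$ on $L^2(\Cc^+,dm(\nu))$. Comparison with the formula~\eqref{single-formula} for $\iota_{\mathrm{sym}}(F_1)$ acting on $\Pc_\nu\simeq \Vc_n\otimes\C_\nu$ shows that these coincide fibre-by-fibre, establishing the intertwining property for $F_1$.

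The one subtlety — which I expect to be the main conceptual (rather than computational) obstacle — is making rigorous the identification between the diagonalization $\mathfrak{L}$ produces and the abstract direct integral decomposition on the right-hand side of~\eqref{intertwiner-iso}. Concretely, one must justify that the tensor factor $\C_\nu$ appearing in the single-copy decomposition $\Pc_\nu\simeq\Vc_n\otimes\C_\nu$ is correctly identified, for the diagonal action, with the character of $\Dc_n^{\mathrm{com}}\subset\Dsym$ specified by the spectral parameter $\nu$, and that the Sklyanin measure $dm(\nu)$ on $\Cc^+$ coincides with the Plancherel measure of the $q$-Whittaker transform. All of this is immediate once the Plancherel inversion formula~\eqref{planch-formula} is in hand, but rests entirely on the analytic input deferred to~\cite{SS17}. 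Granting this, the proof amounts to combining the preceding propositions with the eigenvalue equation, yielding the theorem.
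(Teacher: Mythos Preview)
Your proposal is correct and follows essentially the same approach as the paper's proof: unitarity of $\mathfrak{I}$ from the dilogarithm properties plus Conjecture~\ref{plancherel-conjecture}, the intertwining property for $E_i,K_i,F_{j\ge2}$ directly from~\eqref{easy-gens}, and the crucial case of $F_1$ by comparing~\eqref{single-formula} with~\eqref{double-formula} via the eigenvalue equation~\eqref{eigen}. The paper's proof is slightly terser and invokes self-adjointness of the Toda Hamiltonians to justify moving $H_k^{(n+1)}$ across the integral kernel, a point you left implicit; otherwise the arguments are the same.
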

\begin{proof}
The algebraic crux of the proof is to show that the map $\mathfrak{I}$ intertwines the actions of $U_q(\mathfrak{sl}_{n+1})$ on each side. 
As observed in formula~\eqref{easy-gens}, the actions of the Chevalley generators $E_i$, $K_i$, and $F_j$ for $1 \le i \le n$ and $2 \le j \le n$ already match on both sides. Thus, it remains to show that the actions of the generator $F_1$ are intertwined as well. By comparing formulas~\eqref{single-formula} and~\eqref{double-formula} we observe that the relevant boundary measurements $M_n^{\mathrm{sym}_-}(1,2;2,2)$ and $M_n^{\mathrm{cox}_-}(1,2;2,2)$ become identical if the $k$-th Coxeter-Toda Hamiltonian $H_k^{(n+1)}$ is replaced by its eigenfunction, see~\eqref{eigen}. By the self-adjointness of the Toda Hamiltonians, we have
$$
\int_{\R^n} \hr{H_k^{(n+1)} \cdot f(x,y,z)} \overline{\Psi_\nu(z)} dz = \int_{\R^n} f(x,y,z)\overline{\hr{H_k^{(n+1)} \cdot \Psi_\nu(z)}} dz,
$$
thus
$$
\mathfrak{L} \cdot f(x,y,z) = q^{1-k} e_k \big(e^{2\pi \hbar \nu_0}, \ldots, e^{2\pi \hbar \nu_n} \big) \hat{f}(x,y,\nu),
$$
and the fact that $\mathfrak{I}$ is an intertwiner for the $U_q(\sl_{n+1})$ actions follows. The transformations $\Phi^{\mathrm{top}}$, $\Phi^{\mathrm{cox}}$, $\Phi^{\mathrm{sf}}$, and $\Phi^{\mathrm{sym}}$ are unitary since they are products of noncompact quantum dilogarithms of self-adjoint Heisenberg algebra generators; the unitarity of $\mathfrak{L}$ is the content of Conjecture~\ref{plancherel-conjecture}. 
\end{proof}

\section{Comparison with the results of Ponsot and Teschner for $n=1$}
\label{sec-comp}

We conclude by comparing our results with those obtained in~\cite{PT01} for the $U_q(\sl_2)$ Clebsch-Gordan maps 
$$
C_{s_1,s_2} \colon \Pc_{s_1} \otimes \Pc_{s_2} \longra \int_{\R_{\ge0}}^\oplus \Pc_{s_3}~dm(s_3).
$$
Note that when $n=1$, our formula~\eqref{intertwiner} for the intertwiner $\mathfrak{I}$ simplifies. Indeed, in the rank 1 case we have $\Phi^{\mathrm{cox}} = \Phi^{\mathrm{sym}} = \mathrm{id}$, since $\Theta_1^{\mathrm{sf}} = \Theta_1^{\mathrm{sym}}$ and $\Theta_1^{\mathrm{full}} = \Theta_1^{\mathrm{cox}}$, and thus there is no need to perform any additional Lie-theoretic mutations following those corresponding to flips of triangulation. Moreover, the quantum torus algebra $\Dc_1^{\mathrm{sph}}$ associated to the thrice-punctured sphere is abelian, so that all representations $\Mc_{\la,\mu}^{\nu}$ appearing in~\eqref{intertwiner-iso} are 1-dimensional.

Ponsot and Teschner's Clebsch-Gordan maps were reconsidered in \cite{NT13} in the context of quantum Teichm\"uller theory. Let us first remark that realization of the positive representation $\mathcal{P}_s\simeq L^2(\R)$ referred to in \cite{NT13} as the ``Whittaker model'' is nothing but the representation of $U_q(\mathfrak{sl}_2)$ defined by the quantum torus embedding $\iota_{\mathrm{sf}} = \Ad_{\Phi^{\mathrm{sf}}}\cdot\iota$, see Corollary~\ref{cor-folded}. This realization is the one associated to the self-folded triangulation of the punctured disk, which in the case $n=1$ is obtained from the standard one by applying a single cluster mutation. Now, in \cite{NT13} the Clebsch-Gordan intertwiner is factored into a product 
$$
C_{s_1,s_2} = S_1\circ C_1\circ T_{12}^{-1}
$$ of three unitary transformations $S_1,C_1$, and $T_{12}^{-1}$. In our terms, these transformations can be interpreted as follows. The action of $U_q(\mathfrak{sl}_2)$ on $\mathcal{P}_{s_1}\otimes \mathcal{P}_{s_2}$ considered in \cite{NT13} is the one associated to the triangulation of the twice punctured disk shown in the middle of the top row of Figure~\ref{top-muts}. Let us denote the corresponding seed by $\Theta_1^{\mathrm{fold}}$. Then we require 4 flips of triangulation to pass from $\Theta_1^{\mathrm{fold}}$ to the seed $\Theta_1^{\mathrm{full}}$ corresponding to the bottom left of Figure~\ref{top-muts}, which translates to a total of $4$ mutations since $n=1$. Now, the transformation~$T_{12}^{-1}$ is nothing but the quantum mutation operator corresponding to the first of these flips. After conjugating by $T_{12}^{-1}$, the action of $\Delta(E)$ on $\mathcal{P}_{s_1}\otimes\mathcal{P}_{s_2}$ is given by $\iota_{\mathrm{sf}}(E)\otimes 1$. 

 The composite of the remaining three quantum mutation operators coincides with the transformation $C_1$. After conjugating by $C_1\circ T_{12}^{-1}$, the action of the quadratic Casimir of $U_q(\mathfrak{sl}_2)$ on $\mathcal{P}_{s_1}\otimes\mathcal{P}_{s_2}$ is transformed to that of $1\otimes H$, where $H$ is Kashaev's geodesic length operator
\beq
H = e^{2\pi \hbar x} + e^{2\pi \hbar p}+e^{-2\pi \hbar p}.
\eeq
Finally, the operator $S_1$ is simply the $n=1$ case of the integral transform~\eqref{integral-transform}, which diagonalizes the operator $H$. 

\appendix

\section{Proof of the Proposition~\ref{mut-descent}}
\label{appendix-1}

Let $\Gamma$ be a bi-colored graph on a punctured disk, and $\wdt\Nc$ be a network on the universal cover that projects onto $\Gamma$. We choose a connected fundamental domain in $\Nc$, and think of it as lying on the $0$-th sheet of the universal cover. For any fixed $n \in \Z$ we denote faces on the $n$-th sheet of the cover by $\Fc_k^n$. Let $\wdt\Theta = (\wdt\La, (\cdot,\cdot), \hc{e_i^n}, \wdt{I_0})$ be the cluster seed corresponding to the network $\wdt\Nc$ where a vector $e_k^n$ corresponds to the face $\Fc_k^n$. In this setup, it is evident that
\beq
\label{dist}
(e_i^n,e_j^m) = 0 \qquad\text{for}\qquad \hm{m-n}>1 \quad\text{and any}\quad i,j \in V(\Qc).
\eeq
In turn, the above property implies
\beq
\label{app-distr}
(e_i, e_j) = (e_i^m, e_j^{m-1}+e_j^m + e_j^{m+1})
\eeq
for any $m \in \Z$.

\begin{defn}
We say that a network face $\Fc_i \in F(\wdt\Nc)$ is \emph{left interior} if $(e_i^n,e_j^{n-1})=0$ for all~$j$. Similarly, the face $\Fc_i$ is said to be \emph{right interior} if $(e_i^n,e_j^{n+1})=0$ for all $j$.
\end{defn}

A face may be both left and right interior -- these are the network faces that do not touch the boundary of the fundamental domain. Note also that if $\Fc_k$ is either left or right interior, then $(e_k^{(n)},e_k^{(m)})=0$ for all $n,m$. 

\begin{lemma}
\label{lem-index}
In all quivers under consideration, the definition of the form $\ha{\cdot,\cdot}$ is independent of the choice of fundamental domain. Namely, let $e_i^n$ satisfy $(e_i^n,e_j^{n\pm1})=0$ for all $j \ne i$, and set
$$
\bar e_j^n =
\begin{cases}
e_i^{n\pm1}, & \text{if}\quad j=i, \\
e_j^n, & \text{if}\quad j \ne  i.
\end{cases}
$$
Denote by $\ha{\ha{\cdot, \cdot}}$ the bilinear form defined by~\eqref{formdef} with respect to the basis $\{\bar e_i^n\}$. Then we have
$$
\ha{\ha{\bar e_i^n, \bar e_j^m}} = \ha{e_i^{n\pm1}, e_j^m}.
$$
\end{lemma}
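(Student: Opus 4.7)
The plan is to verify the identity by a direct case analysis on the relative order of the labels $n$ and $m$ and on whether $j = i$ or $j \ne i$. Three ingredients will drive the argument: the locality property~\eqref{dist}, which makes pairings vanish whenever two sheet indices differ by more than one; the $\Z$-equivariance of $(\cdot,\cdot)$ under the deck group action, which promotes the hypothesis $(e_i^n,e_j^{n\pm1}) = 0$ for $j \ne i$ to $(e_i^k,e_j^{k\pm1}) = 0$ for every $k \in \Z$; and the identity~\eqref{app-distr}, which expresses the downstairs pairing $(e_i,e_j)$ as a sum of at most three upstairs pairings. I will treat the $+$ sign throughout; the $-$ sign case is symmetric.

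For $j = i$ the verification is essentially trivial. Shifting the whole $i$-th column by one sheet preserves all upstairs pairings by deck invariance, e.g.\ $(\bar e_i^n, \bar e_i^m) = (e_i^{n+1}, e_i^{m+1}) = (e_i^n, e_i^m)$, and $\pi(\bar e_i^0) = \pi(e_i^1) = e_i$, so~\eqref{formdef} returns identical values in the two bases.

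The substantive content is the case $j \ne i$, where $\bar e_j^m = e_j^m$ while $\bar e_i^n = e_i^{n+1}$. I will compare the branch of~\eqref{formdef} selected by the bar labels $(n,m)$ on the left with the branch selected by the genuine upstairs labels $(n+1,m)$ on the right. The two ``generic'' subcases $n > m$ and $n + 1 < m$ are immediate: both sides apply the same branch, and every term depends only on actual upstairs pairings, which are unaffected by relabeling. The two ``boundary'' subcases $n = m$ and $n = m - 1$ are where the two sides invoke different branches of~\eqref{formdef} and must be reconciled by hand.

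The main obstacle will be the boundary subcase $n = m$. Here the left-hand side reduces, using~\eqref{dist} to drop $(e_i^{n+1},e_j^{n-1})$, to $-(e_i^{n+1},e_j^{n+1})$, while the right-hand side equals $(e_i^{n+1},e_j^n) - (e_i,e_j)$. Reconciling them uses the deck-invariant form of the hypothesis to kill $(e_i^{n+1},e_j^n)$, together with~\eqref{app-distr} applied at index $n + 1$: the two outer terms in $(e_i,e_j) = (e_i^{n+1}, e_j^n + e_j^{n+1} + e_j^{n+2})$ vanish, leaving $(e_i,e_j) = (e_i^{n+1},e_j^{n+1})$, so the two sides agree. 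The symmetric boundary subcase $n = m - 1$ is handled by an analogous application of the same three ingredients. Once all four subcases of $j \ne i$ are checked, together with the $j = i$ step, the $-$ sign version follows by reversing all shifts throughout.
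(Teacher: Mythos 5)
Your overall strategy is the same as the paper's: a direct case-by-case comparison of the branches of \eqref{formdef}, using the locality property \eqref{dist}, the decomposition \eqref{app-distr}, and the deck-translated hypothesis. (The paper shortens the case analysis by invoking the symmetry of $\ha{\cdot,\cdot}$ to reduce to $m\ge n$, but that is cosmetic.) Your generic subcases, your $j=i$ step, and the subcase $m=n+1$ are fine as described.

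The problem is in your treatment of the boundary subcase $n=m$. You claim that ``the deck-invariant form of the hypothesis kills $(e_i^{n+1},e_j^n)$.'' It does not. For the $+$ sign the hypothesis is $(e_i^k,e_j^{k+1})=0$ for $j\ne i$, i.e.\ the face $\Fc_i$ is \emph{right} interior; the pairing $(e_i^{n+1},e_j^n)$ is of the opposite type, and its vanishing would mean $\Fc_i$ is \emph{left} interior as well, which is not assumed (and fails for faces touching one side of the fundamental domain, which is exactly the situation the lemma is designed for). Consequently your intermediate identity $(e_i,e_j)=(e_i^{n+1},e_j^{n+1})$ is also unjustified: \eqref{app-distr} together with the hypothesis only yields $(e_i,e_j)=(e_i^{n+1},e_j^n)+(e_i^{n+1},e_j^{n+1})$. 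The case nevertheless closes, because in the right-hand side $(e_i^{n+1},e_j^n)-(e_i,e_j)$ the offending term \emph{cancels} against its copy inside $(e_i,e_j)$ rather than vanishing, leaving $-(e_i^{n+1},e_j^{n+1})$ as required; this cancellation is what the paper's computation actually uses. You should repair the justification accordingly, and verify that your ``analogous'' treatment of $m=n+1$ does not silently invoke the same false vanishing --- it need not, since there too the term $(e_i^{n+1},e_j^n)$ survives on both sides and drops out by cancellation.
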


\begin{proof}
We consider the case in which $e_i^n$ satisfies $(e_i^n,e_j^{n+1})=0$ for all $j \ne i$, the other is treated similarly. By the symmetry of the form, it's enough to show that for all $i,j \in V(\Qc)$ and $m \ge n$ we have
\beq
\label{shift-goal}
\ha{\ha{\bar e_i^n, \bar e_j^m}} = \ha{e_i^{n+1}, e_j^m}.
\eeq
Note, that the property~\eqref{dist} still holds for the basis $\hc{\bar e_i^n}$, thus for $m>n+1$ the equality~\eqref{shift-goal} is immediate. Now, if $m=n+1$, we have
$$
\langle\langle \bar e_i^n, \bar e_j^m \rangle\rangle
= (\bar e_i^n, \bar e_j^{n-1} + \bar e_j^n)
= (e_i^{n+1}, e_j^{n-1} + e_j^n)
= (e_i^{n+1}, e_j^n),
$$
where we used the equality~\eqref{app-distr}. On the other hand, we have
$$
\ha{e_i^{n+1}, e_j^m}
= \{e_i^{n+1}, e_j^{n+1}\}
= (e_i^{n+1}, e_j^n - e_j^{n+2})
= (e_i^{n+1}, e_j^n).
$$
Finally, for $m=n$, we get
$$
\langle\langle \bar e_i^n, \bar e_j^m \rangle\rangle
= (\bar e_i^n, \bar e_j^{n-1} - \bar e_j^{n+1})
= (e_i^{n+1}, \bar e_j^{n-1} - \bar e_j^{n+1})
= - (e_i^{n+1}, e_j^{n+1}),
$$
while
$$
\langle e_i^{n+1}, e_j^n \rangle
= (e_i^{n+1}, -e_j^{n+1} - e_j^{n+2})
= -(e_i^{n+1}, e_j^{n+1}).
$$
\end{proof}

In this article we only consider mutations at faces of the network which are either left or right interior. The following Lemma shows that the definition of the form $\ha{\cdot, \cdot}$ is invariant under such mutations.

\begin{lemma}
\label{lem-pr-mut}
Let $\{f_i^n\}$ be the basis of $\tLambda$ obtained by applying $\tilde\mu_k$ to the basis $\{e_i^n\}$, where $\Fc_k^n$ is either left or right interior. Denote by $\ha{\cdot, \cdot}_k$ the bilinear form defined by~\eqref{formdef} with respect to the basis $\{f_i^n\}$. Then for all $\lambda, \mu\in\tLambda$ we have
$$
\ha{ \lambda, \mu}_k = \ha{ \lambda, \mu}.
$$

\end{lemma}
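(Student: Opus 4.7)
By the symmetry of $\ha{\cdot,\cdot}$ (proved immediately after \eqref{formdef} using deck invariance), it suffices to treat the case in which $\Fc_k$ is left interior; the right interior case follows by an analogous argument. Under the left-interior hypothesis, \eqref{dist} combined with the definition forces $(e_i^n, e_k^m)_{\tLambda} = 0$ unless $m \in \{n-1, n\}$, and in particular $(e_k^n, e_k^m) = 0$ for all $n, m$. Consequently all mutations in the fiber $\pi^{-1}(k)$ pairwise commute, and $\tilde\mu_k$ acts on the basis by
\[
 f_i^n = e_i^n + [P_i]_+ \, e_k^{n-1} + [Q_i]_+ \, e_k^n \quad (i \ne k), \qquad f_k^n = -e_k^n,
\]
where $P_i = (e_i^n, e_k^{n-1})_{\tLambda}$ and $Q_i = (e_i^n, e_k^n)_{\tLambda}$ are independent of $n$ by deck invariance.

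Both $\ha{\cdot,\cdot}$ and $\ha{\cdot,\cdot}_k$ are bilinear, so the desired identity reduces to a check on pairs of basis vectors. Cases with $i = k$ or $j = k$ are handled directly using the relation $f_k^n = -e_k^n$ and bilinearity. For $i, j \ne k$, I would expand $(f_i^n, f_j^m)_{\tLambda}$ in the original basis---the $e_k$--$e_k$ cross terms vanish, leaving corrections linear in the pairings $P_i, Q_i, P_j, Q_j$. To handle the $(e_i, e_j)_{\Lambda}$ term appearing in \eqref{formdef}, one first establishes the basis-free identity
\[
 (\pi(\lambda), \pi(\mu))_{\Lambda} = \sum_{l \in \Z} (\lambda, \sigma^l \mu)_{\tLambda}\,,
\]
which is an immediate consequence of \eqref{app-distr}, and applies it to $\lambda = f_i^n$ and $\mu = f_j^m$. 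Substituting these expansions into each of the three branches of \eqref{formdef} expresses the difference $\ha{f_i^n, f_j^m}_k - \ha{e_i^n, e_j^m}$ as a sum of terms of the form $[X]_+ Y - [Y]_+ X$, where $X, Y$ range over $\{P_i, Q_i, P_j, Q_j\}$ and the coefficients are signs determined by the relative positions of $n$ and $m$.

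The main obstacle I anticipate is verifying that this combination cancels. The key tools will be the identity $[X]_+ - [-X]_+ = X$, together with the telescoping that occurs between the three branches of \eqref{formdef} as the sheet index of $e_k^m$ interacts with those of $e_i^n$ and $e_j^m$ through the implicit sign function in the case-split. The constraints coming from left interiority---that $e_k$ on a given sheet sees only faces on the same sheet or the sheet below---together with the deck invariance of the pairings, are exactly what is needed to assemble these terms into a telescoping sum that vanishes. The computation is local to a finite block of consecutive sheets, and, while intricate, amounts to a finite algebraic check that can be organized into a small number of cases depending on the signs of $P_i, Q_i, P_j, Q_j$.
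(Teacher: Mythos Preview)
Your plan is essentially the paper's: reduce to basis pairs, write $f_i^n$ as $e_i^n$ plus $[\cdot]_+$-multiples of $e_k$'s on adjacent sheets, then expand both sides of $\ha{f_i^n,f_j^m}_k=\ha{f_i^n,f_j^m}$ using \eqref{formdef} and the rewriting $(e_i,e_j)=(e_i^m,e_j^{m-1}+e_j^m+e_j^{m+1})$ from \eqref{app-distr}. The paper first invokes Lemma~\ref{lem-index} to reduce to the right-interior case and then carries out exactly this expansion for $m=n$ and $m=n+1$ (the case $m>n+1$ being trivial).

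Two remarks. First, your target quantity is $\ha{f_i^n,f_j^m}_k-\ha{f_i^n,f_j^m}$, not $\ha{f_i^n,f_j^m}_k-\ha{e_i^n,e_j^m}$; the phrasing in your proposal slips here. Second, you overestimate the difficulty of the final check: no sign-case analysis on the $[\cdot]_+$ coefficients, no use of $[X]_+-[-X]_+=X$, and no telescoping is required. Because $(e_k^n,e_k^m)=0$ for all $n,m$, all cross terms in the expansion that are quadratic in the $[\cdot]_+$ coefficients vanish outright, and the remaining linear-in-$[\cdot]_+$ terms match \emph{term by term} on the two sides once \eqref{app-distr} is applied---the $[\cdot]_+$'s simply ride along as inert coefficients. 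So the verification is a direct (if tedious) comparison rather than a combinatorial cancellation argument.
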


\begin{proof}
The proof is a straightforward verification. By Lemma~\ref{lem-index}, we can shift the fundamental domain to ensure that the faces $\Fc_k^n$ are right interior, so that $(e_k^n,e_j^{n+1})=0$ for all $j$. Then for $j\neq k$, we have
$$
f_j^n = e_j^n + [\eps_{jk}^0]_+ e_k^n + [\eps_{jk}^+]_+ e_k^{n+1},
$$
with
$$
\eps_{jk}^0 = (e^n_j, e^n_k), \qquad\text{and}\qquad \eps_{jk}^+ = (e^n_j, e^{n+1}_k).
$$
We also have
$$
f_k^n = -e_k^n.
$$
By the symmetry of the form $\ha{\cdot, \cdot}$, it suffices to show that for $m \ge n$ and all $i,j \in V(\Qc)$ we have
\beq
\label{toshow}
\langle f_i^n, f_j^m \rangle_k = \langle f_i^n, f_j^m \rangle.
\eeq
The equality \eqref{toshow} is a tautology for $m>n+1$, thus it is left to consider the cases $m=n+1$ and $m=n$. Suppose that $m=n+1$ and $i,j\neq k$. 
Then
\begin{align*}
\ha{f_i^n, f_j^m}_k = \big(f_i^n,f_j^n+f_j^{n-1}\big)
= &\big( e_i^n + [\eps_{ik}^0]_+ e_k^n + [\eps_{ik}^+]_+ e_k^{n+1},
e_j^n + [\eps_{jk}^0]_+ e_k^n + [\eps_{jk}^+]_+ e_k^{n+1} \big) \\
+ &\big( e_i^n + [\eps_{ik}^0]_+ e_k^n + [\eps_{ik}^+]_+ e_k^{n+1},
e_j^{n-1} + [\eps_{jk}^0]_+ e_k^{n-1} + [\eps_{jk}^+]_+ e_k^n] \big).
\end{align*}
The latter can be simplified to
\beq
\label{A1}
\big( e_i^n, e_j^{n-1} + e_j^n + [\eps_{jk}^0]_+ e_k^n + [\eps_{jk}^+]_+ (e_k^n+e_k^{n+1}) \big)
+ [\eps_{ik}^0]_+ \big( e_k^n, e_j^{n-1}+e_j^n \big)
+ [\eps_{ik}^+]_+ \big( e_k^{n+1},e_j^n \big).
\eeq
On the other hand, we have
$$
\ha{f_i^n, f_j^m} = \big\langle e_i^n + [\eps_{ik}^0]_+ e_k^n + [\eps_{ik}^+]_+ e_k^{n+1},
e_j^{n+1} + [\eps_{jk}^0]_+ e_k^{n+1} + [\eps_{jk}^+]_+ e_k^{n+2} \big\rangle.
$$
Unwrapping the right hand side we arrive at~\eqref{A1} once again. In case $j=k$ we get
$$
\ha{ f_i^n, f_k^{n+1} }_k = \big( f_i^n, f_k^{n-1} + f_k^n \big) = \big( f_i^n, f_k^n \big) = -\big( e^n_i, e^n_k \big)
$$
and
$$
\ha{ f_i^n, f_k^{n+1} }
= \big\langle e_i^n + [\eps_{ik}^0]_+ e_k^n + [\eps_{ik}^+]_+ e_k^{n+1}, -e_k^{n+1} \big\rangle
= -\big( e^n_i,e^{n-1}_k + e^n_k \big)
= -\big( e^n_i, e^n_k \big).
$$
In case $i=k$ we obtain
$$
\big\langle f_k^n, f_j^{n+1} \big\rangle_k = \big( f_k^n, f_j^{n-1} + f_j^n \big) = -\big( e_k^n, e_j^{n-1} + e_j^n \big)
$$
and
$$
\big\langle f_k^n, f_j^{n+1} \big\rangle = - \big\langle e_k^n, e_j^{n+1} + [\eps_{jk}^0]_+ e_k^{n+1} + [\eps_{jk}^+]_+ e_k^{n+2} \big\rangle = -\big( e_k^n, e_j^{n-1} + e_j^n \big).
$$

The remaining case $n=m$ is treated in a similarly tedious fashion. We have
$$
\big\langle f_i^n, f_k^n \big\rangle_k = \big( f_i^n, f_k^{n-1} - f_k^{n+1} \big) = \big( e^n_i, e^{n+1}_k \big),
$$
while
$$
\big\langle f_i^n, f_k^n \big\rangle = \big( e^n_i, e_k^{n+1} - e_k^{n-1} \big) = \big( e^n_i, e^{n+1}_k \big).
$$
Finally, when $i,j\neq k$ we get
$$
\big\langle f_i^n, f_j^n \big\rangle_k = \big( f_i^n, f_j^{n-1} - f_j^{n+1} \big)
$$
which simplifies to
\beq
\label{A2}
\big( e_i^n, e_j^{n-1} - e_j^{n+1} - [\eps_{jk}^0]_+ e_k^{n+1} + [\eps_{jk}^+]_+ e_k^n \big) + [\eps_{ik}^0]_+ \big( e_k^n, e_j^{n-1} \big) - [\eps_{ik}^+]_+ \big( e_k^{n+1}, e_j^{n+1} \big).
\eeq
On the other hand,
$$
\big\langle f_i^n, f_j^n \big\rangle = \big\langle e_i^n + [\eps_{ik}^0]_+ e_k^n + [\eps_{ik}^+]_+ e_k^{n+1}, e_j^n + [\eps_{jk}^0]_+ e_k^n + [\eps_{jk}^+]_+ e_k^{n+1} \big\rangle
$$
and the right hand side can be transformed to~\eqref{A2}.
\end{proof}

Before we move to the proof of the Proposition~\ref{mut-descent}, let us establish a few preparatory statements. Note that any $\la \in \tLambda$ can be written as
$$
\la = \sum_{i\in V(\Qc)}\sum_{n\in \Z} c^i_n e_i^n
$$
where $c^i_n \in \Z$. We  set
$$
\la^{< m} = \sum_{i\in V(\Qc)}\sum_{n< m} c^i_n e_i^n
\qquad\text{and}\qquad
\la^{m} = \sum_{i\in V(\Qc)} c^i_m e_i^m.
$$
Let us define
$$
\beta_{k,m}(\la) = \big( \la^{< m+1}, e_k^m \big) + \big( \la^m, e_k^{m+1} \big) - \big( \pi(\la^{< m+1}), e_k \big).
$$

Let us emphasize once again that the map $\pr$ is not a homomorphism of algebras. The following two lemma follow directly from the definitions of $\beta_{k,m}$ and the map $\pr$.

\begin{lemma}
\label{mult-lemma}
We have
$$
\pr\big( X_\la \cdot X_{-e_k^m} \big) = q^{2\beta_{k,m}(\la)} \cdot \pr(X_{\la}) \cdot X_{-e_k}.
$$
\end{lemma}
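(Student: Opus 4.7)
The plan is to unfold both sides of the claimed identity using the definitions of $\pr$ and of the quantum torus multiplication, and then reduce matters to an equality of scalars in the exponent of $q$. Applying $\pr(X_\mu) = q^{\frac{1}{2}\ha{\mu,\mu}}X_{\pi(\mu)}$, the commutation rule $X_\alpha X_\beta = q^{-(\alpha,\beta)}X_{\alpha+\beta}$, and the identity $\pi(\la - e_k^m) = \pi(\la) - e_k$, both sides produce the same monomial $X_{\pi(\la) - e_k}$. Using bilinearity of $\ha{\cdot,\cdot}$ together with its symmetry (which follows from the same reasoning that underlies Lemma~\ref{lem-index}), the claim reduces to the scalar identity
$$
(\la, e_k^m) - \ha{\la, e_k^m} + \tfrac{1}{2}\ha{e_k^m, e_k^m} = 2\beta_{k,m}(\la) + (\pi(\la), e_k).
$$

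The second step is to verify this identity by decomposing $\la = \la^{<m} + \la^m + \la^{>m}$ and expanding $\ha{\la, e_k^m}$ using the three-case formula~\eqref{formdef}. The $\la^{>m}$-contribution produces precisely the $(\pi(\la^{>m}), e_k)$ term that cancels one piece of $(\pi(\la),e_k)$ on the right-hand side. The $\la^{<m}$-contribution produces $2(\la^{<m}, e_k^m) - (\pi(\la^{<m}),e_k)$, which together with the remaining $(\pi(\la^{<m}), e_k)$ reproduces the $\la^{<m}$-part of $2\beta_{k,m}(\la)$. The $\la^m$-contributions on the two sides match via the distribution identity~\eqref{app-distr}, $(\pi(\la^m), e_k) = (\la^m, e_k^{m-1}+e_k^m+e_k^{m+1})$, which encodes the essential input of translation invariance of $(\cdot,\cdot)$ under the deck group.

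What remains after these cancellations is the term $\tfrac{1}{2}\ha{e_k^m, e_k^m}$, which by the $n=m$ case of~\eqref{formdef} equals $\tfrac{1}{2}\big((e_k^m, e_k^{m-1}) - (e_k^m, e_k^{m+1})\big)$; translation invariance and skew-symmetry give $(e_k^m, e_k^{m+1}) = -(e_k^m, e_k^{m-1})$, so this simplifies to $(e_k^m, e_k^{m-1})$. This vanishes under the standing hypothesis $(e_k^n, e_k^m)=0$ for all $n,m\in\Z$, which is precisely the condition ensuring that $\tilde\mu_k$ is well-defined and under which the lemma will be applied in the proof of Proposition~\ref{mut-descent}.

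The argument is therefore purely mechanical; the only real difficulty lies in the systematic tracking of the three cases of~\eqref{formdef} and the careful use of translation invariance across sheets of the universal cover, but there are no conceptual obstacles.
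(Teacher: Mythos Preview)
Your proof is correct and is precisely the direct unwinding of definitions that the paper has in mind when it says the lemma ``follow[s] directly from the definitions of $\beta_{k,m}$ and the map $\pr$.'' You also correctly isolate the one nontrivial point: the residual term $\tfrac{1}{2}\ha{e_k^m,e_k^m}=(e_k^m,e_k^{m-1})$ vanishes only under the standing hypothesis $(e_k^n,e_k^m)=0$ of Proposition~\ref{mut-descent}, which the paper leaves implicit in the statement of the lemma.
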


\begin{lemma}
\label{tel-lemma}
The quantity $\beta_{k,m}(\la)$ satisfies
$$
\beta_{k,m+1}(\la) = \beta_{k,m}(\la) + (e_k^{m},\la).
$$
\end{lemma}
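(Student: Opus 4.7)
The strategy is a direct computation: expand both $\beta_{k,m+1}(\la)$ and $\beta_{k,m}(\la)$ according to their definitions, decompose $\la$ into its graded pieces $\la^n$, and show that all terms except $-(\la, e_k^m)$ cancel out.

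First I would write $\la^{<m+2} = \la^{<m+1} + \la^{m+1}$ and $\la^{<m+1} = \la^{<m} + \la^m$, and then expand the pairings $(\la^{<m+2}, e_k^{m+1})$ and $(\la^{<m+1}, e_k^m)$ in the definition. Using the orthogonality property~\eqref{dist}, which states that $(e_i^n, e_j^p) = 0$ whenever $\hm{n-p}>1$, all terms at sheets of distance at least two from the target drop out, leaving a manageable sum of elementary pairings between $\la^{m-1}, \la^m, \la^{m+1}$ and $e_k^m, e_k^{m+1}, e_k^{m+2}$.

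Next I would handle the boundary correction. Taking the difference $-(\pi(\la^{<m+2}), e_k) + (\pi(\la^{<m+1}), e_k) = -(\pi(\la^{m+1}), e_k)$, I would invoke identity~\eqref{app-distr} with the shift parameter equal to $m+1$, so that
$$
(\pi(\la^{m+1}), e_k) = (\la^{m+1}, e_k^m + e_k^{m+1} + e_k^{m+2}).
$$
This is the essential step: it converts the projected pairing into a pairing on $\tLambda$, making it comparable with the other terms in the expression.

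Finally, I would substitute the previous displayed equation into the expanded difference. The terms $(\la^{m+1}, e_k^{m+1})$ and $(\la^{m+1}, e_k^{m+2})$ cancel, as does $(\la^m, e_k^{m+1})$, leaving precisely
$$
-(\la^{m-1}, e_k^m) - (\la^m, e_k^m) - (\la^{m+1}, e_k^m) = -(\la, e_k^m) = (e_k^m, \la),
$$
where in the middle equality I extend the sum back to all of $\la$ using~\eqref{dist}, and in the last equality I use skew-symmetry. The main obstacle here is purely notational, namely keeping the grading bookkeeping consistent across the three sheets involved and choosing the correct shift of~\eqref{app-distr} so that the boundary correction term matches the pairings already present.
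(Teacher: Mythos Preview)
Your proposal is correct and is precisely the direct verification the paper has in mind; the paper itself states only that the lemma ``follows directly from the definitions of $\beta_{k,m}$ and the map $\pr$'' without giving any details, so your argument is in fact more explicit than what appears in the paper.
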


Fianlly, we are ready to prove Proposition~\ref{mut-descent}.

\begin{proof}[Proof of Proposition~\ref{mut-descent}]
Choose $\lambda \in \tLambda$ and denote by $r$ and $s$ respectively the smallest and largest integers $n$ for which $(\la, e^n_k)\ne0$. Then we have
$$
\tilde\mu_k (X_\la) = X_\la \cdot \frac
{\Psi^q \hr{q^{2(e_k^r,\la)} X_{-e_k^r}} \dots \Psi^q \hr{q^{2(e_k^s,\la)} X_{-e_k^s}}}
{\Psi^q \hr{X_{-e_k^r}} \dots \Psi^q \hr{X_{-e_k^s}}}.
$$
Note that the factors within the fraction commute, since $(e_k^n, e_k^m)=0$ for all $m,n \in \Z$.  Applying Lemma~\ref{mult-lemma}, we get
\beq
\label{telescopic-product}
\pr\hr{\tilde\mu_k(X_{\la})} = X_{\pi(\la)} \cdot \frac
{\Psi^q \hr{q^{2(e_k^r,\la)+2\beta_{k,r}(\la)} X_{-e_k}}
\dots \Psi^q \hr{q^{2(e_k^s,\la)+2\beta_{k,s}(\la)} X_{-e_k}}}
{\Psi^q\left( q^{2\beta_{k,r}(\la)}X_{-e_k} \right)
\dots \Psi^q \hr{q^{2\beta_{k,s}(\la)}X_{-e_k}}}.
\eeq 
Observe that $\beta_{k,r}(\la)=0$ and $\beta_{k,s+1}(\la) = (e_k,\pi(\la))$. Thus, by applying Lemma~\ref{tel-lemma} we see that the product in~\eqref{telescopic-product} telescopes to yield
$$
\pr\hr{\tilde\mu_k(X_\la)} = \pr\hr{X_\la} \Psi^q\big( q^{2\hr{e_k,\pi(\la)}}X_k^{-1} \big) \Psi^q\left( X_{k}^{-1} \right)^{-1} = \mu_k\hr{\pr(X_\la)},
$$
as claimed. 
\end{proof}

\bibliographystyle{alpha}

\end{document}